%%%%%%%%%%%%%%%%%%%%%%%%%%%%%%%%%%%%%%%
% Davidson--Ramsey--Shalit 
% Isomorphism of operator algebras
% Version of April 13, 2011.
%%%%%%%%%%%%%%%%%%%%%%%%%%%%%%%%%%%%%%%
\documentclass{amsart}

\usepackage{amsmath}
\usepackage{amsfonts}
\usepackage{amssymb}

%%%%%%%%%%%%%%%%%%%%%%%%%%%%%%%%%%%%%%%
%      Cites in bold rather than roman.
\makeatletter
\def\@cite#1#2{{\m@th\upshape\bfseries%
[{#1\if@tempswa{\m@th\upshape\mdseries, #2}\fi}]}}
\makeatother

\numberwithin{equation}{section}

\setcounter{MaxMatrixCols}{30}
\setcounter{secnumdepth}{2}

\setcounter{tocdepth}{1}

\newtheorem{theorem}{Theorem}[section]
\newtheorem{lemma}[theorem]{Lemma}
\newtheorem{proposition}[theorem]{Proposition}

\newtheorem{definition}[theorem]{Definition}
\newtheorem{corollary}[theorem]{Corollary}

\theoremstyle{definition}
\newtheorem{example}[theorem]{Example}
\newtheorem{remark}[theorem]{Remark}

%\newenvironment{proof}{\bf Proof. \rm}{$\Box$}
%      Proof environment

%

\newcommand{\be}{\begin{equation}}
\newcommand{\ee}{\end{equation}}
\newcommand{\bes}{\begin{equation*}}
\newcommand{\ees}{\end{equation*}}

\newcommand{\cH}{\mathcal{H}}
\newcommand{\cK}{\mathcal{K}}
\newcommand{\cL}{\mathcal{L}}
\newcommand{\cB}{\mathcal{B}}
\newcommand{\cM}{\mathcal{M}}

\newcommand{\cF}{\mathcal{F}}

\newcommand{\cA}{\mathcal{A}}

\newcommand{\cO}{\mathcal{O}}

\newcommand{\cT}{\mathcal{T}}
\newcommand{\cU}{\mathcal{U}}

\newcommand{\lel}{\left\langle}
\newcommand{\rir}{\right\rangle}

\newcommand{\mb}[1]{\mathbb{#1}}

\newcommand{\ad}{\operatorname{ad}}
\newcommand{\alg}{\operatorname{Alg}}
\newcommand{\Aut}{\operatorname{Aut}}
\newcommand{\diag}{\operatorname{diag}}
\newcommand{\dist}{\operatorname{dist}}
\newcommand{\Int}{\operatorname{int}}
\newcommand{\Mult}{\operatorname{Mult}}
\newcommand{\Sing}{\operatorname{Sing}}
\newcommand{\spn}{\operatorname{span}}
\newcommand{\id}{\operatorname{id}}
\newcommand{\lip}{\langle}
\newcommand{\rip}{\rangle}
\newcommand{\ip}[1]{\lip #1 \rip}

\renewcommand{\phi}{\varphi}

\newcommand{\wot}{\textsc{wot}}
\newcommand{\ol}{\overline}
\newenvironment{sbmatrix}{\left[\begin{smallmatrix}}{\end{smallmatrix}\right]}
\newcommand{\ca}{\mathrm{C}^*}

%%%%%%%%%%%%%%%%%%%%%%%%%%%%%%%%%%%%%%%
\begin{document}

\title[Isomorphism of operator algebras]%
{The isomorphism problem for some\\ universal operator algebras}

\author[K.R. Davidson]{Kenneth R. Davidson}
\thanks{First author partially supported by an NSERC grant.}
\address{Pure Mathematics Dept.\\
University of Waterloo\\
Waterloo, ON\; N2L--3G1\\
CANADA}
\email{krdavids@uwaterloo.ca}

\author[C. Ramsey]{Christopher Ramsey}
%\address{Department of Pure Mathematics, University of Waterloo.}
\email{ciramsey@uwaterloo.ca}

\author[O.M. Shalit]{Orr Moshe Shalit}
%\address{Department of Pure Mathematics, University of Waterloo.}
\email{oshalit@uwaterloo.ca}

%%%%%%%%%%%%%%%%%%%%%%%%
\begin{abstract}
This paper addresses the isomorphism problem for the universal 
(nonself-adjoint) operator algebras 
generated by a row contraction subject to homogeneous polynomial relations. 
We find that two such algebras are isometrically isomorphic if and only if 
the defining polynomial relations are the same up to a unitary change of variables, 
and that this happens if and only if the associated subproduct systems are isomorphic. 
The proof makes use of the complex analytic structure of the character space,
together with some recent results on subproduct systems. 
Restricting attention to commutative operator algebras defined by a radical ideal of relations 
yields strong resemblances with classical algebraic geometry. 
These commutative operator algebras turn out to be algebras of analytic functions 
on algebraic varieties. 
We prove a projective Nullstellensatz connecting closed ideals 
and their zero sets. 
Under some technical assumptions, we find that
two such algebras are isomorphic as algebras if and only if they are similar, 
and we obtain a clear geometrical picture of when this happens. 
This result is obtained with tools from algebraic geometry, 
reproducing kernel Hilbert spaces, and some new complex-geometric 
rigidity results of independent interest. 
The C*-envelopes of these algebras are also determined. 
The Banach-algebraic and the algebraic classification results are shown
 to hold for the \wot-closures of these algebras as well.
\end{abstract}

\keywords{Non-selfadjoint operator algebras, subproduct systems, 
reproducing kernel Hilbert spaces}
\subjclass[2000]{47L30, 47A13}
\maketitle

%%%%%%%%%%%%%%%%%%%%%%%%%%%%%%%%%%%%%%%
\section{Introduction}
%%%%%%%%%%%%%%%%%%%%%%%%%%%%%%%%%%%%%%%

A fundamental problem is: \emph{given polynomials 
$p_1, \ldots, p_k$ in $\mb{C}[z_1, \ldots, z_d]$, 
find all solutions to the system of equations}
\be\label{eq:equations}
p_i(x_1, \ldots, x_d) = 0 \quad, \quad i=1, \ldots, k.
\ee
When the indeterminates $x_i$ are understood to be complex numbers, 
the solution set is a complex variety, and this is the starting point 
of complex algebraic geometry. 
This problem makes sense in operator theory, where the indeterminants
are bounded linear operators on Hilbert space.
We consider both the case of arbitrary operators and polynomials 
in $d$ non-commuting variables, and the case of $d$ commuting operators
and polynomials in commuting variables.
The issue we study is the isomorphism problem for the universal 
(nonself-adjoint) operator algebra determined
by the solutions.
In some sense, we are attempting to develop non-commutative complex 
algebraic geometry in this context.
Ideas from classical algebraic geometry are an important influence on our development.

Let us first consider the case in which there are no relations. 
In the setting of multi-variable operator theory, 
(\ref{eq:equations}) has a \emph{universal} solution 
if one adds a reasonable norm constraint. 
The algebra that arises is Popescu's non-commutative disk algebra \cite{Popescu91}.
In the abelian case (the relations $x_ix_j - x_jx_i = 0$ for $1 \le i < j \le d$),
one obtains Arveson's algebra \cite{Arv98} of multipliers on
symmetric Fock space; and this is realized as a continuous multipliers
on a reproducing kernel Hilbert space of functions.

When one imposes a family of (non-commutative) relations,
the universal algebra is realized as a quotient of the
non-commutative disk algebra.
This can be considered as an abstract operator algebra in the sense
of Blecher, Ruan and Sinclair \cite{BRS}. 
However, it has been shown to have an explicit faithful 
representation on a subspace of
Fock space associated to the ideal of relations.
In the abelian case, the algebra is a quotient of the algebra of 
continuous multipliers on symmetric Fock space; and it has a
rather explicit faithful representation as an algebra of multipliers
on a reproducing kernel Hilbert space determined by the zero set
of the relations. 

Let $E$ be a finite dimensional Hilbert space and fix an orthonormal 
basis $e_1, \ldots, e_d$ for $E$. 
Let $L = (L_1, \ldots, L_d)$ be the $d$-shift on the free Fock space 
$\cF(E) := \oplus_{n\geq 0} E^{\otimes n}$, defined by
\bes
L_i e_{\alpha_1} \otimes \cdots \otimes e_{\alpha_n} = 
e_i \otimes e_{\alpha_1} \otimes \cdots \otimes e_{\alpha_n} 
\,\, , \,\, i = 1 \ldots, d .
\ees
By the Bunce-Frazho-Popescu Dilation Theorem \cite{Bunce,Frazho,Popescu89}, 
every pure row contraction $T = (T_1, \ldots, T_d)$ is the compression of 
$L^{(\infty)}$ (a direct sum of infinitely many copies of $L$) to a coinvariant subspace. 
In fact, the normed closed algebra 
$\mathfrak{A}_d = \overline{\alg }\{I,L_1, \ldots, L_d\}$ 
is the universal operator algebra generated by a row contraction \cite{Popescu91}.  
That is, for every row contraction $T = (T_1, \ldots, T_d)$, there is a 
unital, completely contractive, surjective homomorphism 
$\varphi : \mathfrak{A}_d \to \overline{\text{alg}}\{I,T_1, \ldots,T_d\}$ 
sending $L_i$ to $T_i$. 
So $L$ can be considered as the universal (row contractive) solution 
to (\ref{eq:equations}) when there are no relations.

The existence of a universal solution for no relations allows us to exhibit a natural construction of 
a universal solution to (\ref{eq:equations}) when $p_1, \ldots, p_k$ 
generate a nontrivial ideal $I$ (in the algebra 
$\mb{C}\lel z_1, \ldots, z_d\rir$ of polynomials in 
$d$ non-commuting variables with complex coefficients). 
Let $\tilde{I}$ be the norm closed ideal in $\mathfrak{A}_d$ 
generated by the set $\{p(L) : p \in I\}$. 
Then the quotient $\cA_I := \mathfrak{A}_d / \tilde{I}$ is the universal 
operator algebra generated by a row contraction subject to relations 
(\ref{eq:equations}), and the images of $L_1, \ldots, L_d$ 
constitute a universal solution. 
Several researchers noticed over the years that $\cA_I$ can be 
naturally identified with the compression of $\mathfrak{A}_d$ to 
the coinvariant subspace $\cF_I := \cF(E) \ominus [\tilde{I} \cF(E)]$ 
(see, in increasing order of generality, \cite{Arv98,BhatBhat,ShalitSolel}, 
and \cite{DavPittsPick, Popescu06}). 
The $d$-tuple $L^I = (L^I_1, \ldots, L^I_d)$ obtained by compressing 
$L$ to $\cF_I$ is a universal solution of (\ref{eq:equations}), 
and every pure row contraction that satisfies (\ref{eq:equations}) 
is a compression of $L^I$ to a coinvariant subspace. 
The variety of (row contractive) solutions of (\ref{eq:equations}) is in one-to-one 
correspondence with the unital completely contractive representations of $\cA_I$. 

A different, yet closely related, route which leads to these operator algebras 
is via subproduct systems. 
A benefit of this route is that it is ``coordinate free". 
A \emph{subproduct system} is a family $X = \{X(n)\}_{n \in \mb{N}}$ 
of Hilbert spaces satisfying 
\bes
X(m+n) \subseteq X(m) \otimes X(n) \, \, , \,\, m,n \in \mb{N},
\ees
and $X(0) = \mb{C}$. 
These objects were introduced in \cite{ShalitSolel} as a framework 
for the dilation theory of $cp$-semigroups; independently, 
they appeared in \cite{BhatMukherjee} under the name \emph{inclusion systems}, 
to facilitate computations in amalgamated product systems. 
Every subproduct system naturally gives rise to an operator algebra 
$\cA_X$ acting on the space $\cF_X := \oplus_{n \geq 0}X(n)$. 
The isometric isomorphism class of $\cA_X$ is an invariant of $X$.
Whether or not it is a complete invariant was a question left open 
in \cite{ShalitSolel} which we resolve in the affirmative here. 
When these algebras were introduced there was some hope%
\footnote{In the heart of the less experienced author.} 
that they will shed light on the subproduct systems that gave rise to them. 
But it turned out that the structure of the subproduct systems is easier to understand. 
Luckily, it was also noticed that there is a bijection between 
subproduct systems and ideals (in $\mb{C}\lel z_1, \ldots, z_d\rir$),
\bes
X \longleftrightarrow I^X
\ees
and that $\cA_X = \cA_{I^X}$. 
This gave rise to a different conceptual point of view by which to 
consider the universal operator algebras discussed above. 

The main result of this paper is the classification of the algebras $\cA_X$. 
In the general case the classification is up to (completely) isometric isomorphism; 
in the commutative case, when the ideal of relations $I$ is radical, we classify both up to (completely) isometric 
isomorphism and up to algebraic isomorphism---this under some reasonable 
technical assumptions on the geometry of the affine algebraic variety associated with the ideal of relations $I$.
In the latter case, it is shown that the geometry of the affine algebraic variety determines the algebraic and isometric structures of the algebra.

\medbreak
In more detail, the contents of this paper are as follows.

The notation is set up in Section \ref{sec:def}. 
Among other things the correspondence between subproduct systems 
and ideals is explained. 
Some examples and motivation are given in Section \ref{sec:mot}, 
and it is shown that two subproduct systems $X$ and $Y$ are isomorphic 
if and only if the corresponding ideals $I^X$ and $I^Y$ can be obtained,
one from the other, by unitary change of variables (Proposition \ref{prop:idealsps}).
Section \ref{sec:class} contains an analysis of the character spaces 
of the algebras $\cA_X$, and it is shown that these can be identified 
with a homogeneous algebraic variety intersected with the unit ball. 
Further, it is shown that the character spaces have a complex analytic 
structure that is preserved under isometric isomorphisms. 
 From this we infer that the existence of an isometric isomorphism from 
$\cA_X$ onto $\cA_Y$ implies the existence of a
\emph{vacuum preserving} isometric isomorphism 
(Proposition \ref{prop:exist_vacuum}). 
A result from \cite{ShalitSolel} then applies to give our first 
classification result, Theorem \ref{thm:alg_sps_iso}, 
that says that $\cA_X$ is isometrically isomorphic to $\cA_Y$ 
if and only if $X$ is isomorphic to $Y$ 
(and then $\cA_X$ and $\cA_Y$ are, in fact, unitarily equivalent).

{}From this point onward we concentrate on the commutative case
(so the relations in (\ref{eq:equations}) include all relations 
$x_i x_j = x_j x_i$, $i=1, \ldots, d$).
Moreover, we assume that the ideal $I^X$ is radical.
In Section \ref{sec:mult} a connection is made to the theory of 
reproducing kernel Hilbert spaces.  
It is shown that $\cA_X$ is an algebra of multipliers, and, in particular, 
an algebra of functions. 
In Section \ref{sec:Null} we consider some natural questions in a 
wide class of algebras of functions and prove a Nullstellensatz for 
closed homogeneous ideals (Theorem \ref{thm:homNull}). 
A direct corollary (Corollary \ref{cor:approxpoly})  is that in these algebras, 
any function that vanishes on a homogeneous algebraic variety can be 
approximated in the norm by polynomials vanishing on that variety.

Sections \ref{sec:isomorphism} and \ref{sec:class2} are the main course, 
with most of the hard work in the former, and the main results in the latter. 
The first result in Section \ref{sec:isomorphism} is that a unital isomorphism 
from $\cA_I$ to $\cA_J$ induces a holomorphic mapping between the character spaces. 
The rest of the section is therefore devoted to studying mappings between  
homogeneous algebraic varieties. 
Some complex-geometric rigidity results of independent interest are 
obtained (Theorem \ref{thm:linear} and Propositions \ref{prop:rigid} 
and \ref{prop:eigenvalueanalysis}). 
We then turn to prove that, given two homogeneous ideals $I$ and $J$, 
every invertible linear map between the varieties $V(I)$ and $V(J)$ 
that is length preserving on the varieties, gives rise to an isomorphism 
of the corresponding algebras $\cA_I$ and $\cA_J$ 
(Theorem \ref{thm:linear_induce_similarity}). 
We are able to prove this only when the varieties are what we call 
{\em tractable}, which just means that their geometry is not too complicated. 
The precise definition of a tractable variety is given before 
Theorem \ref{thm:Atilde}, but let us mention now that many interesting 
varieties are tractable, for example: irreducible varieties, 
varieties with two irreducible components, varieties of codimension $1$ 
and varieties in $\mb{C}^3$. 
Algebraically, this means that our methods work for, e.g., principal ideals, 
prime ideals and in three variables.

In Section \ref{sec:class2} we sum up all that we obtained to give the 
classification (in the commutative case) of the algebras $\cA_I$ when $I$ is radical.  
Theorem \ref{thm:iso_iso_alg} says that $\cA_I$ is isometrically isomorphic 
to $\cA_J$ if and only if there is a unitary transformation mapping the 
algebraic variety $V(I)$ onto $V(J)$. 
Theorem \ref{thm:algiso_lin} says that, when $V(I)$ and $V(J)$ are 
tractable, then $\cA_I$ is isomorphic to $\cA_J$ 
if and only if there is a linear map, that is length preserving on $V(I)$, 
that maps $V(I)$ onto $V(J)$ (and then the two algebras are, in fact, similar).
Using the geometric rigidity results Propositions \ref{prop:rigid} and 
\ref{prop:eigenvalueanalysis}, this implies 
an operator-algebraic rigidity result: if $I$ is prime or principal and $\cA_I$ is isomorphic 
(as an \emph{algebra}) to $\cA_J$, then $\cA_I$ is unitarily equivalent to $\cA_J$.
 
Section \ref{sec:aut} closes our treatment of the algebras $\cA_I$ with a study 
of the automorphism groups of these algebras. 
Theorem \ref{thm:autoAd} establishes a one-to-one correspondence between 
the isomorphisms of $\cA_d$ (which is the universal operator algebra generated 
by a commuting row contraction) and the automorphism group of the 
unit ball in $\mb{C}^d$. 
We then turn to study when an automorphism of $\cA_I$ is induced by 
an automorphism of $\cA_d$, and we find the automorphism group of the 
algebras corresponding to a union of subspaces.

In Section \ref{sec:Toeplitz} we look at the ``Toeplitz" C*-algebras 
$\cT_X = \ca(\cA_X)$. 
We find that, in the commutative case, $\cT_X$ is the C*-envelope 
of $\cA_X$, and this allows us to deduce that all completely isometric 
isomorphisms between such algebras are unitarily implemented.
We also bring some evidence for a connection between the $*$-algebraic 
structure of $\cT_X$ and the topology of the variety $V(I^X)$.

In the final section we treat the algebras obtained by taking the closure 
of the algebras $\cA_X$ in the weak-operator topology. 
We find that the algebraic and the Banach-algebraic classification 
remains unchanged, as well as the algebraic rigidity. 
We also show that in the radical commutative case every isomorphism 
is automatically bounded and continuous in the weak-operator and weak-$*$ topologies.

%%%%%%%%%%%%%%%%%%%%%%%%%%%%%%%%%%%%%%%
\section{Definitions and notation}\label{sec:def}
%%%%%%%%%%%%%%%%%%%%%%%%%%%%%%%%%%%%%%%

%%%%%%%%%%%%%%%%%%%%%%%%
\subsection{A word of explanation about notation}

In this paper we are concerned with two classes of operator algebras. 
The first class consists of universal operator algebras generated by a contractive row of operators subject to noncommutative homogeneous polynomial 
relations, and our objective is to classify these algebras up to isometric isomorphism
(we will find that when two such algebras are isometrically isomorphic,
then they are also completely isometrically isomorphic). 
The second class consists of universal operator algebras generated by a
contractive row of \emph{commuting} operators subject to 
(commutative) homogeneous polynomial relations, and our objective is to classify these algebras up to isometric isomorphism as well as up to (algebraic) isomorphism. 
Let us call the first class \emph{the noncommutative case} and 
the second class \emph{the commutative case}.

In this section we set up the notational framework for the paper. 
The commutative case is contained in the 
noncommutative case (we are simply adding the relations $z_i z_j = z_j z_i$), 
so in principle we can set up notation for the noncommutative case and use it 
consistently for the commutative case as well. 
However, since most of our attention will be directed towards the 
commutative case, and since it is natural to do so, 
we will set up a notational framework for the commutative case also. 
This will cause notational inconsistencies, but no confusion.

%%%%%%%%%%%%%%%%%%%%%%%%
\subsection{The noncommutative case}

In this paper,  a \emph{subproduct system} is a collection 
$X = \{X(n)\}_{n \in \mb{N}}$ of finite dimensional Hilbert spaces 
that satisfy $X(0) = \mb{C}$ and $X(m+n) \subseteq X(m) \otimes X(n)$.
Subproduct systems were introduced and studied in greater generality in \cite{ShalitSolel}.

Given a subproduct system $X$, let $E = X(1)$. 
Then $X(n) \subseteq E^{\otimes n}$. Write $p^X_n$ for the projections
$p^X_n : E^{n} \rightarrow X(n)$. 
Then $X$ has an associative multiplication that extends to tensor products 
given by product maps $U_{m,n}^X : X(m) \otimes X(n) \rightarrow X(m+n)$, 
\bes
U^X_{m,n} (x \otimes y)  = p^X_{m+n}(x \otimes y).
\ees

We define the $X$-Fock space, denoted $\cF_X$, to be  
$\cF_X := \oplus_{n\geq 0} X(n)$. 
If $E = X(1)$, then $\cF_X$ is a subspace of the full Fock space 
$\cF(E) := \oplus_{n\geq 0} E^{\otimes n}$. 
The symbol $\Omega_X$ will denote the \emph{vacuum vector} 
$\Omega_X = 1 \in X(0) \subseteq \cF_X$ of $\cF_X$.

Now fix an orthonormal basis $\{e_1, \ldots, e_d\}$ for $E$. 
Let $\mb{C}\lel z_1, \ldots, z_d\rir$ be the algebra of polynomials in 
$d$ noncommuting variables with complex coefficients. 
When $d$ is understood, we simply write $\mb{C}\lel z \rir$. 
If $p$ is a polynomial in $\mb{C}\lel z\rir$, we write $p(e)$ or $p$ for 
the element of $\cF(E)$ given by ``evaluating" $p$ at $e_1, \ldots, e_d$. 
For example, if $p(z) = z_1 z_2 - z_3 z_1 z_3$, 
then $p(e) = e_1 \otimes e_2 - e_3 \otimes e_1 \otimes e_3$. 

There is a natural bijection between homogeneous ideals in
$\mb{C}\lel z \rir$ and subproduct systems $X$ with $X(1) \subseteq E$ 
(after fixing an orthonormal basis $\{e_1, \ldots, e_d\}$ for $E$). 
If $X$ is a subproduct system, we denote the associated ideal by 
$I^X$, and if $I$ is a homogeneous ideal, we denote the associated 
subproduct system by $X_I$. 
The relation between $X$ and $I^X$ is the following:
\be\label{eq:idealspsbijection}
I^X=\spn\{p : p(e) \in E^{\otimes n} \ominus X(n) \textrm{ for some } n  \}.
\ee
See \cite[Section 7]{ShalitSolel} for details.

On $\cF(E)$ there are the natural left creation operators $L_1, \ldots, L_d$, given by 
\bes
L_i (e_{\alpha_1} \otimes \cdots \otimes e_{\alpha_n}) = 
e_i \otimes e_{\alpha_1} \otimes \cdots \otimes e_{\alpha_n} 
\,\, , \,\, i = 1 \ldots, d .
\ees
Let $S_1^X, \ldots, S_d^X$ denote their compression to $\cF_X$. 

We define $\cA_X$ to be the norm closed operator algebra generated 
by $I, S_1^X, \ldots, S_d^X$. 
This is the main object of study in this paper. 
Recall that $\cA_X$ is equal to the universal norm closed unital 
operator algebra generated by a row contraction subject to the 
relations in $I^X$ (see Section 8 in \cite{ShalitSolel} for details). 
We also define $\cT_X := \ca(\cA_X)$ and 
$\cO_X = \cT_X/\cK(\cF_X)$, where $\cK(\cF_X)$ is the algebra of 
compact operators on $\cF_X$.

In \cite{Viselter}, following terminology from \cite{MS98}, the algebra 
$\cA_X$ was denoted $\cT_+(X)$ and called \emph{the tensor algebra 
of $X$}, and the algebra $\cT_X$ was denoted $\cT(X)$ and called 
\emph{the Toeplitz algebra of $X$}. 
We shall also refer to $\cT_X$, sometimes, as the Toeplitz algebra of $X$.

There is another way to obtain the algebra $\cA_X$. 
Let $\mathfrak{A}_d$ be the noncommutative disc algebra, that is, 
the norm closed algebra generated by $I, L_1, \ldots, L_d$. 
By \cite[Theorem 3.9]{Popescu91}, $\mathfrak{A}_d$ is the universal 
unital operator algebra generated by a row contraction. 
If $\tilde{I}$ is the ideal in $\mathfrak{A}_d$ generated  by 
$\{p(L_1, \ldots, L_d) : p \in I^X\}$, then the quotient 
$\mathfrak{A}_d / \tilde{I}$ is also the universal unital operator algebra 
generated by a row contraction subject to the relations in $I^X$, 
thus it is completely isometrically isomorphic to $\cA_X$ \cite{Popescu06}.

Let $\cL_d$ be the noncommutative analytic Toeplitz algebra, 
that is, closure of of $\mathfrak{A}_d$ in the weak-operator topology (\wot).  
We also denote by $\cL_X$ the \wot-closure of $\cA_X$.

%%%%%%%%%%%%%%%%%%%%%%%%
\subsection{The commutative case}
When focusing on the commutative case it will be more natural to use 
the following framework. 

Let $E$ be a Hilbert space of dimension $d$. 
Denote by $E^n$ the symmetric tensor product of $E$ with itself $n$ times. 
For $x_1, x_2, \ldots, x_n \in E$, we write $x_1 x_2 \cdots x_n$ 
for their symmetric product in $E^n$. 
The family $\{E^n\}_{n \geq 0}$ forms a subproduct system in which 
the product is just the symmetric product.
Briefly, the commutative case is the case in which we take $X$ 
to be a subproduct subsystem of the symmetric subproduct system 
$\{E^{n}\}_{n \in \mb{N}}$. 
Such a subproduct system will be referred to below as a 
\emph{commutative subproduct system}, and note that multiplication 
in these subproduct systems is commutative.

In more detail, the notation for the commutative case will be almost 
the same as for the noncommutative case described above, but with 
the following adjustments made.

We replace the algebra $\mb{C}\lel z \rir$ with the algebra 
$\mb{C}[z_1, \ldots, z_d]$ of complex polynomials in $d$ (complex) variables. 
Again, when $d$ is understood, we write $\mb{C}[z]$. 
Also, we replace the full Fock space by the symmetric Fock space, 
also known as Drury-Arveson space, which we denote by $H^2_d$ (see \cite{Arv98}).

As in the noncommutative case, once we fix an orthonormal basis 
$\{e_1, \ldots, e_d\}$ for $E$, there is a natural bijection between 
homogeneous ideals in $\mb{C}[z]$  and commutative subproduct 
systems $X$ with $X(1) \subseteq E$. 
If $X$ is a subproduct system, we denote the associated ideal by $I^X$, 
and if $I$ is a homogeneous ideal, we denote the associated subproduct 
system by $X_I$. 
The relation between $X$ and $I^X$ is the following:
\bes
I^X=\spn\{p : p(e) \in E^n \ominus X(n) \textrm{ for some } n  \}.
\ees
Note that we are using the same notation, but now $I^X$ is understood 
to be an ideal in $\mb{C}[z]$.
Here and below, when given a polynomial 
$p(z) = p(z_1, \ldots, z_d) = \sum c_{i_1\cdots i_d} z_1^{i_1} \cdots z_d^{i_d}$,
we will write $p(e) = p(e_1, \ldots, e_d)$ for the element in the symmetric 
Fock space given by $\sum c_{i_1 \cdots i_d} e_1^{i_1} \cdots e_d^{i_d}$. 
For a multi-index $\alpha = (\alpha_1, \ldots, \alpha_d)$,
we will write $e^\alpha$ for the polynomial 
$z^\alpha = z_1^{\alpha_1} \cdots z_d^{\alpha_d}$ evaluated at $e$.
Let $Z_1, \ldots, Z_d$ denote the coordinate functions on $H^2_d$. 
Then $Z_i$ is the compression of $L_i$ to $H^2_d$, and 
$S_1^X, \ldots, S_d^X$ are also the compressions of the $Z_i$ to $\cF_X$. 

We denote by $\cA_d$  the norm closed algebra generated by $I, Z_1, \ldots, Z_d$. 
By \cite[Theorem 6.2]{Arv98}, (and also by the discussion in the 
previous subsection), $\cA_d$ is the universal unital operator algebra 
generated by a commuting row contraction.
If $\tilde{I}$ is the ideal in $\cA_d$ generated  by 
$\{p(Z_1, \ldots, Z_d) : p \in I^X\}$, then the quotient 
$\cA_d / \tilde{I}$ is completely isometrically isomorphic to $\cA_X$.

In the commutative case (and in that case only), when $I = I^X$, 
then we will also write $\cA_I$ instead of $\cA_X$. We will also write $\cL_I$ for $\cL_X$.

%%%%%%%%%%%%%%%%%%%%%%%%
\subsection{Ideals and zero sets}
If $I$ is an ideal in $\mb{C}[z]$ or in $\mb{C}\lel z \rir$, we let
\bes
V(I) = \{z \in \mb{C}^d :  p(z) = 0 \text{ for all } p \in I\}.
\ees
When $I$ is an ideal of polynomials in noncommutative variables, 
there is still a well defined notion of $p(z)$ for 
$z = (z_1, \ldots, z_d) \in \mb{C}^d$. 
In both the commutative and noncommutative cases the set $V(I)$ 
is an (affine) algebraic variety in $\mb{C}^d$. 
Throughout the paper we will use some well known results and 
terminology from algebraic geometry. 

An ideal $I \subseteq \mb{C}[z]$ is said to be {\em radical} if
\[
I = \sqrt{I} := \{p \in \mb{C}[z] : \exists n . p^n \in I\}.
\]

In algebraic geometry it is natural to associate to a homogeneous ideal 
a \emph{projective} variety (rather than an affine variety), 
but we do not do so for reasons that will become clear. 
The decisive role will be played by the sets
\bes
Z(I) = V(I) \cap \overline{\mb{B}}_d
\ees
and
\bes
Z^o(I) = V(I) \cap \mb{B}_d,
\ees
where $\mb{B}_d$ is the unit ball of $\mb{C}^d$.
The set of singular points of a variety $V$ will be denoted $\Sing (V)$.

%%%%%%%%%%%%%%%%%%%%%%%%%%%%%%%%%%%%%%%
\section{Motivation and examples}\label{sec:mot}
%%%%%%%%%%%%%%%%%%%%%%%%%%%%%%%%%%%%%%%

Two subproduct systems $X$ and $Y$ are said to be isomorphic, 
written $X \cong Y$, if there is a family $W = \{W_n\}_n$ of unitaries 
$W_n : X(n) \rightarrow Y(n)$ such that for all $m,n$,
\be\label{eq:iso}
W_{m+n} \circ U^X_{m,n} = U^Y_{m,n}\circ(W_m \otimes W_n).
\ee
It is clear that if $X \cong Y$ then $\cA_X$ is completely isometrically
 isomorphic to $\cA_Y$, because then the map
\bes
V:= \oplus_{n=0}^\infty W_n : \cF_X \rightarrow \cF_Y
\ees
is a unitary that gives rise to a completely isometric isomorphism 
$\varphi :\cA_X \rightarrow \cA_Y$ by 
\bes
\varphi(a) = VaV^* \,\, , \,\, a \in \cA_X.
\ees
Answering the converse question, \emph{``if $\cA_X$ is isometrically 
isomorphic to $\cA_Y$, does it follow that $X \cong Y$?"}, is our main 
objective in this section and the next. 
In \cite{ShalitSolel} it was verified within several special classes of 
subproduct systems that the answer is yes. 
In the next section we will show that the answer is yes in general.

Let us indicate why the above problem---classifying the algebras 
$\cA_X$ in terms of the subproduct systems $X$---is interesting.
First, the subproduct systems give a concrete and easily computable 
handle to the more complicated category of operator algebras. 
In the last few sections of \cite{ShalitSolel} several examples are given 
where it was possible to effectively distinguish between naturally 
defined operator algebras in terms of the associated subproduct systems. 
The second reason is that an isomorphism of 
subproduct systems is ``the same" as a unitary equivalence of the 
associated ideals defining the relations.

%%%%%%%%%%%%%%%%%%%%%%%%
\begin{proposition}\label{prop:idealsps}{\bf [Proposition 7.4, \cite{ShalitSolel}]}
Let $X$ and $Y$ be [commutative] subproduct systems with 
$\dim X (1) = \dim Y (1) = d < \infty$. 
Then $X$ is isomorphic to $Y$ if and only if there is a unitary linear 
change of variables in $\mb{C}\lel z \rir$ $\Big[\mb{C}[z]\Big]$ that sends 
$I^X$ onto $I^Y$. 
Moreover, every isomorphism of subproduct systems is induced by a 
unitary linear change of variables, and vice-versa.
\end{proposition}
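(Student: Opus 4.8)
The plan is to exploit the explicit description of the bijection $X \leftrightarrow I^X$ given in \eqref{eq:idealspsbijection}, which identifies $I^X$ with the span of those polynomials $p$ for which $p(e) \perp X(n)$ in the appropriate degree. The cleanest way to organize the argument is to work degree by degree: writing $E = X(1)$ and identifying $E^{\otimes n}$ (resp.\ $E^n$ in the commutative case) with the degree-$n$ homogeneous polynomials, the condition $p(e) \in E^{\otimes n} \ominus X(n)$ means precisely that $p$, as a degree-$n$ homogeneous polynomial, lies in the orthogonal complement $X(n)^\perp$. So $I^X$ is the ideal whose degree-$n$ part is exactly $X(n)^\perp$, and $X(n)$ is recovered as the orthogonal complement (inside degree-$n$ polynomials) of $(I^X)_n$. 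This makes the correspondence between subproduct systems and homogeneous ideals into a genuine degreewise orthogonal-complement duality.

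First I would treat the ``easy'' direction: a unitary $u \in U(E)$ induces a unitary $u^{\otimes n}$ on each $E^{\otimes n}$, and if $u$ carries $I^X$ onto $I^Y$ then, because $u^{\otimes n}$ is unitary and respects the degree grading, it carries $(I^X)_n$ onto $(I^Y)_n$ and hence carries $X(n) = (I^X)_n^\perp$ onto $Y(n) = (I^Y)_n^\perp$. Setting $W_n := u^{\otimes n}|_{X(n)} : X(n) \to Y(n)$ gives a family of unitaries. The compatibility condition \eqref{eq:iso} then follows because the product maps $U^X_{m,n}$ are restrictions of the orthogonal projections $p^X_{m+n}$, and $u^{\otimes(m+n)}$ intertwines $p^X_{m+n}$ with $p^Y_{m+n}$ (the projections onto subspaces that correspond under $u$); the change of variables being unitary is exactly what is needed for this intertwining of projections. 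In the commutative case one replaces $E^{\otimes n}$ by $E^n$ throughout, noting that $u^{\otimes n}$ descends to the symmetric power.

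For the converse, suppose $W = \{W_n\}$ is an isomorphism $X \cong Y$. The key point is that $W_1 : X(1) \to Y(1)$, i.e.\ $W_1 : E \to E$ (identifying both copies with $E$ via the fixed bases), is a unitary, and I claim $W_1$ induces $W$ in the sense that $W_n = W_1^{\otimes n}|_{X(n)}$ for all $n$; then the linear change of variables $z \mapsto W_1 z$ is the desired unitary substitution, and it carries $I^X = \bigoplus_n X(n)^\perp$ onto $I^Y = \bigoplus_n Y(n)^\perp$ because $W_1^{\otimes n}$ is unitary. The claim is proved by induction on $n$: the case $n=1$ is definitional, and for the inductive step one uses \eqref{eq:iso} with $m=1$, together with the fact that $X(n)$ is generated, as a subspace, by the image of $U^X_{1,n-1}$ applied to $X(1)\otimes X(n-1)$ — equivalently, that the product map $U^X_{1,n-1}$ is onto $X(n)$. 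This surjectivity is what makes subproduct systems tractable here and is where I would be most careful: it follows from $X(n) \subseteq X(1) \otimes X(n-1)$ and the fact that $U^X_{1,n-1}$ is the compression of the (surjective) multiplication $E \otimes E^{\otimes(n-1)} \to E^{\otimes n}$ to the subspace $X(n) \subseteq X(1)\otimes X(n-1)$, which acts as the identity on $X(n)$. Granting surjectivity, any $\xi \in X(n)$ is $U^X_{1,n-1}(\eta)$ for some $\eta \in X(1)\otimes X(n-1)$, and then $W_n \xi = U^Y_{1,n-1}(W_1 \otimes W_{n-1})\eta = U^Y_{1,n-1}(W_1 \otimes W_1^{\otimes(n-1)})\eta$, which one checks equals $W_1^{\otimes n}\xi$ by comparing with the ambient (unrestricted) multiplication maps on full/symmetric Fock space.

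The main obstacle, then, is the bookkeeping around surjectivity of the product maps $U^X_{1,n-1}$ and the precise identification of $U^X_{m,n}$ as a restriction of the ambient concatenation/symmetrization map; once that is pinned down, both directions are essentially forced. A secondary point requiring attention is keeping the two parallel setups (noncommutative, with $E^{\otimes n}$ and $\mb{C}\lel z\rir$; commutative, with $E^n$ and $\mb{C}[z]$) in sync — but since the commutative case is literally obtained by adding the relations $z_iz_j = z_jz_i$ and passing to symmetric tensors, and a unitary change of variables preserves these relations, the same argument applies verbatim with symmetric powers in place of full tensor powers. Since this is Proposition~7.4 of \cite{ShalitSolel}, I would in the write-up simply cite that source for the details and include only the sketch above.
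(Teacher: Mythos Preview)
Your proposal is correct and follows essentially the same approach as the paper: both directions hinge on extending $W_1$ to $\tilde W_n = W_1^{\otimes n}$ on $E^{\otimes n}$ and showing $\tilde W_n|_{X(n)} = W_n$, whence $\tilde W_n$ carries $X(n)^\perp$ onto $Y(n)^\perp$ and therefore $I^X$ onto $I^Y$. The paper organizes the converse slightly differently---deriving $W_n p^X_n = p^Y_n \tilde W_n$ directly and then using that $W_n$ is unitary (so $\|p^Y_n \tilde W_n \xi\| = \|\tilde W_n \xi\|$ forces $\tilde W_n \xi \in Y(n)$)---whereas you package the same content as an induction via surjectivity of $U^X_{1,n-1}$; your ``one checks'' step is exactly this norm argument, and once made explicit the two proofs coincide.
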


This theorem was stated in \cite{ShalitSolel} in the noncommutative case.
Since in \cite{ShalitSolel} a proof was not provided, 
we include one for the commutative case. 
A similar proof works in the noncommutative case.

\begin{proof}
Assume that $I^X$ is sent to $I^Y$ when applying a unitary change 
of variables in $\mb{C}[z]$. 
By this we mean that there is a unitary $U$ acting on $\mb{C}^d$ such that
\bes
I^Y = \{ f \circ U : f\in I^X\}.
\ees
We now define an isomorphism $W$ of subproduct systems from 
$X = X_{I^X}$ to $Y = X_{I^Y}$. 
We define a unitary $W_n$ on $E^{n}$ by sending $p(e_1, \ldots, e_d)$ 
(where $p(z_1, \ldots, z_d)$ is a homogeneous polynomial of degree $n$) 
to $p\circ U (e_1, \ldots, e_d) = p(U^t e_1, \ldots, U^t e_d)$. 
The unitary $W_n$ sends $X(n)^\perp$ to $Y(n)^\perp$, 
thus it sends $X(n)$ unitarily onto $Y(n)$. 
The family $W = \{W_n\}$ is an isomorphism of subproduct systems. 
To see this, notice that an arbitrary element of $Y(m+n)$ can be 
written as $\sum_i (p_i\circ U) (e) \otimes (q_i\circ U)(e)$, 
where $(p_i\circ U)(e) \in Y(m)$, and $(q_i\circ U)(e) \in Y(n)$. 
On the one hand, applying to such an element the inclusion map 
$Y(m+n) \rightarrow Y(m) \otimes Y(n)$ followed by $(W_m \otimes W_n)^{-1}$, 
we get the element $\sum_i p_i (e) \otimes q_i (e) \in X(m) \otimes X(n)$. 
On the other hand, applying to $\sum_i (p_i\circ U) (e) \otimes (q_i\circ U)(e)$ 
first $W_{m+n}^{-1}$ and then applying the inclusion 
$X(m+n) \rightarrow X(m) \otimes X(n)$ 
we again get the element $\sum_i p_i (e) \otimes q_i (e) \in X(m) \otimes X(n)$. 
Taking the adjoint of the above argument, we obtain (\ref{eq:iso}).

Conversely, assume that $W: X \rightarrow Y$ is an isomorphism 
of subproduct systems. 
We define a unitary $U = (u_{ij})_{i,j=1}^d$ by the following relations:
\bes
W_1 e_i = \sum_{j=1}^d u_{ij}e_j    \,\, , ,\,\, i=1, \ldots, d.
\ees

Reversing the reasoning above, we find that $U$ sends $I^X$ to $I^Y$.
Here are the details. $W_1$ extends to a unitary 
$\tilde{W}_n: E^{\otimes n} \rightarrow E^{\otimes n}$ by 
\bes
\tilde{W}_n (e_{i1} \otimes \cdots \otimes e_{i_n}) = 
(W_1 e_{i_1}) \otimes \cdots \otimes (W_1 e_{i_n}).
\ees
Because $W$ respects the product, 
\bes
W_n p^X_n (x_1 \otimes \cdots \otimes x_n) = 
p^Y(W_1 x_1 \otimes \cdots \otimes W_1 x_n).
\ees
Thus $W_n p^X_n = p^Y_n \tilde{W}_n$. 
Because $W_n$ is a unitary from $X(n)$ onto $Y(n)$ 
we have $\tilde{W}_n |_{X(n)} = W_n$. 
Thus $p(e) \mapsto p\circ U(e) = p(W_1 e_1, \ldots, W_1 e_d)$ 
sends $X(n)$ to $Y(n)$, and thus it sends $X(n)^\perp$ to $Y(n)^\perp$. 
It follows that $p(z) \mapsto p\circ U(z)$ sends $I^X$ to $I^Y$.
\end{proof}

%%%%%%%%%%%%%%%%%%%%%%%%
\begin{remark}
To a reader who is wondering why not forget about subproduct systems 
and classify these algebras using ``equivalence classes" of ideals,
we note, for example, the role of the integer $d$ in the above proposition.
\end{remark}

When the ideal $I^X$ is radical (in the commutative setting) we will show 
below that the geometry of a certain variety determines $\cA_X$. 
However, when $\cA_X$ comes from a non-radical ideal of relations, 
this geometrical classifying object disappears, 
and the subproduct systems is the next best thing.

%%%%%%%%%%%%%%%%%%%%%%%%
\begin{example}\label{expl:iso}
Let $I = \langle xy, y^2, x^3 \rangle$ and 
$J = \langle x(x+y), (x+y)^2,x^3 \rangle$ in $\mb{C}[x,y]$. 
There is a unique unital (algebraic) automorphism $\phi$ of 
$\mb{C}[x,y]$ determined by $\phi(x) = x$, $\phi(y) = x+y$.
Clearly, $\phi$ sends $I$ onto $J$, thus it induces an isomorphism of algebras
\bes
\overline{\phi}:\mb{C}[x,y] / I \rightarrow \mb{C}[x,y] / J.
\ees
Now write $X = X_I$ and $Y = X_J$.
Since $\cA_{X}$ and $\cA_{Y}$ are finite dimensional, 
they are the universal commutative unital algebras generated by a pair 
satisfying the relation in $I$ and in $J$, respectively.
Thus $\cA_{X} \cong \mb{C}[x,y]/I \cong \mb{C}[x,y] / J \cong \cA_{Y}$ as algebras.
More is true: $\cA_{X}$ and $\cA_{Y}$ are actually isometrically isomorphic. 

The Fock space $\cF_{X}$ is seen to have an orthonormal basis 
$\{\Omega_X, e_1, e_2, e_1^2\}$. 
In this basis we have
\bes
S_1^X = \begin{pmatrix}
  0 & 0 & 0 & 0  \\
  1 & 0 & 0  & 0  \\
  0 & 0 & 0 & 0 \\ 
  0 & 1 & 0 & 0  \\
\end{pmatrix} \,\, , \,\, 
S_2^X = \begin{pmatrix}
  0 & 0 & 0 & 0  \\
  0 & 0 & 0  & 0  \\
  1 & 0 & 0 & 0 \\ 
  0 & 0 & 0 & 0  \\
\end{pmatrix}.
\ees
It follows that 
\bes
\cA_X = \left\{\begin{pmatrix}
  a & 0 & 0 & 0  \\
  b & a & 0  & 0  \\
  c & 0 & a & 0 \\ 
  d & b & 0 & a  \\
\end{pmatrix} : a,b,c,d \in \mb{C} \right\}.
\ees
Similarly, $\cF_Y$ is seen to have 
$\{\Omega_Y, e_1, e_2, (e_1^2-2e_1e_2+e_2^2)/2 \}$ 
as an orthonormal basis.  (Recall that 
$\|e_1e_2\| = \|(e_1\otimes e_2 + e_2\otimes e_1)/2\| = 1/\sqrt2$.)
So we obtain the shifts
\bes
S_1^Y = \begin{pmatrix}
  0 & 0 & 0 & 0  \\
  1 & 0 & 0  & 0  \\
  0 & 0 & 0 & 0 \\ 
  0 & 1/2 & -1/2 & 0  \\
\end{pmatrix} \,\, , \,\, 
S_2^Y = \begin{pmatrix}
  0 & 0 & 0 & 0  \\
  0 & 0 & 0  & 0  \\
  1 & 0 & 0 & 0 \\ 
  0 & -1/2 & 1/2 & 0  \\
\end{pmatrix},
\ees
and the algebra
\bes
\cA_Y = \left\{\begin{pmatrix}
  a & 0 & 0 & 0  \\
  b & a & 0  & 0  \\
  c & 0 & a & 0 \\ 
  d & (b-c)/2 & (c-b)/2 & a  \\
\end{pmatrix} : a,b,c,d \in \mb{C} \right\}.
\ees
 From this description of the algebras it is not clear that they are isometric. 
But it can be checked that the unitary change of variables 
\[ x \mapsto (x - y) /\sqrt{2} \quad,\quad y \mapsto (x+y) /\sqrt{2} \]
sends $I$ onto $J$.
Thus by Proposition \ref{prop:idealsps} and the discussion before it, 
we conclude that $\cA_X$ and $\cA_Y$ are isometrically isomorphic 
(and, in fact, they are spatially isomorphic). 
It is hard to recognize this because the isometric  isomorphism will not send 
$\{S^X_1,S^X_2\}$ to $\{S^Y_1, S^Y_2\}$. 
\end{example}

The following example shows that if $I$ and $J$ are ideals in 
$\mb{C}[z_1,\ldots, z_d]$ that are related by a linear change of variables, 
then their universal operator algebras may not be isometrically isomorphic.

%%%%%%%%%%%%%%%%%%%%%%%%
\begin{example}\label{expl:notiso}
Let $I = \langle xy, y^3, x^3 \rangle$ and $J = \langle x(x+y), y^3,x^3 \rangle$. 
Again, there is a unique unital (algebraic) automorphism $\phi$ of $\mb{C}[x,y]$ 
determined by $\phi(x) = x$, $\phi(y) = x+y$.
Note that $\phi$ sends $I$ onto $J$.
Thus it induces an isomorphism of algebras
\bes
\overline{\phi}:\mb{C}[x,y] / I \rightarrow \mb{C}[x,y] / J.
\ees
Now write $X = X_I$ and $Y = X_J$.
Exactly as above, 
$\cA_{X} \cong \mb{C}[x,y]/I \cong \mb{C}[x,y] / J \cong \cA_{Y}$ 
as algebras.
However, $\cA_{X}$ and $\cA_{Y}$ are not isometrically isomorphic. 

The Fock space $\cF_{X}$ is seen to have an orthonormal basis 
$\{\Omega_X, e_1, e_2, e_1^2, e_2^2 \}$. In this basis we have
\bes
S_1^X = \begin{pmatrix}
  0 & 0 & 0 & 0 & 0  \\
  1 & 0 & 0  & 0  & 0\\
  0 & 0 & 0 & 0 & 0\\ 
  0 & 1 & 0 & 0  & 0\\
 0 & 0 & 0 & 0  & 0 \\
\end{pmatrix} \,\, , \,\, 
S_2^X = \begin{pmatrix}
  0 & 0 & 0 & 0 & 0  \\
  0 & 0 & 0  & 0  & 0\\
  1 & 0 & 0 & 0 & 0\\ 
  0 & 0 & 0 & 0  & 0\\
 0 & 0 & 1 & 0  & 0 \\
\end{pmatrix} .
\ees
It follows that 
\bes
\cA_X = \left\{\begin{pmatrix}
  a & 0 & 0 & 0 & 0  \\
  b & a & 0  & 0  & 0\\
  c & 0 & a & 0 & 0\\ 
  d & b & 0 & a  & 0\\
 e & 0 & c & 0  & a \\
\end{pmatrix}  : a,b,c,d,e \in \mb{C} \right\}.
\ees
Similarly, $\cF_Y$ is seen to have 
$\{\Omega_Y, e_1, e_2, (e_1^2  - 2e_1 e_2)/\sqrt{3} , e_2^2\}$ 
as an orthonormal basis, so we obtain the shifts
\bes
S_1^Y = \begin{pmatrix}
  0 & 0 & 0 & 0 & 0  \\
  1 & 0 & 0  & 0  & 0\\
  0 & 0 & 0 & 0 & 0\\ 
  0 & 1/\sqrt{3} & -1/\sqrt{3} & 0  & 0\\
 0 & 0 & 0 & 0  & 0 \\
\end{pmatrix} \,\, , \,\, 
S_2^Y = \begin{pmatrix}
  0 & 0 & 0 & 0 & 0  \\
  0 & 0 & 0  & 0  & 0\\
  1 & 0 & 0 & 0 & 0\\ 
  0 & -1/\sqrt{3} & 0 & 0  & 0\\
 0 & 0 & 1 & 0  & 0 \\
\end{pmatrix},
\ees
and the algebra
\bes
\cA_Y = \left\{\begin{pmatrix}
  a & 0 & 0 & 0 & 0  \\
  b & a & 0  & 0  & 0\\
  c & 0 & a & 0 & 0\\ 
  d & \frac{b-c}{\sqrt{3}} & \frac{-b}{\sqrt{3}} & a  & 0\\
 e & 0 & c & 0  & a \\
\end{pmatrix} : a,b,c,d,e \in \mb{C} \right\}.
\ees
Here (as in any finite dimensional example), we have 
$\cT_X = \cT_Y = M_5(\mb{C})$. 
How does one go about showing that the algebras $\cA_X$ 
and $\cA_Y$ are not isometrically isomorphic? 
We will provide an answer at the end of the next section.
\end{example}

%%%%%%%%%%%%%%%%%%%%%%%%%%%%%%%%%%%%%%%
\section{Classification of the algebras by their subproduct systems}\label{sec:class}
%%%%%%%%%%%%%%%%%%%%%%%%%%%%%%%%%%%%%%%

%%%%%%%%%%%%%%%%%%%%%%%%
\subsection{The character spaces as analytic varieties}
In this section, our subproduct systems are not necessarily commutative. 
Let $X$ be a subproduct system.
Let $\cM_X$ denote the space of all unital, multiplicative linear 
functionals on $\cA_X$. 
The maps in $\cM_X$ will be called \emph{characters}. 
Recall that every character is automatically contractive, 
hence completely contractive too.

The character space may be (homeomorphically) identified with the set 
\bes
Z(I^X) = \{z \in \overline{\mb{B}_d} :  p(z) = 0 \text{ for all } p \in I^X  \}
\ees
via the identification:
\be\label{eq:identification}
\cM_X \ni \rho \longleftrightarrow (\rho(S_1^X), \ldots, \rho(S_d^X)) \in Z(I^X).
\ee
See \cite[Section 10.2]{ShalitSolel} for details. 

We will also use the notation and identification 
\bes
\cM_X^o \cong Z^o(I^X) = \{z \in \mb{B}_d :   p(z) = 0  \text{ for all } p \in I^X \}.
\ees

The character corresponding to the point $0 \in Z(I^X)$ is called 
\emph{the vacuum state}, and is denoted by $\rho_0$. 
It is the unique multiplicative linear functional sending $I$ to $1$ 
and $S_i^X$ to $0$ for $i=1,\ldots,d$. 
The vacuum state is a vector state, and is given by
\bes
\rho_0(T) = \lel T \Omega_X, \Omega_X\rir.
\ees
We intentionally use the same notation for vacuum states acting on different algebras. 
If $\varphi : \cA_X \rightarrow \cA_Y$ and $\varphi^*(\rho_0) = \rho_0$ 
then we say that $\varphi$ \emph{preserves the vacuum state}. 
The following theorem explains the significance of the vacuum state to our discussion.

%%%%%%%%%%%%%%%%%%%%%%%%
\begin{theorem}{\bf [Theorem 9.7, \cite{ShalitSolel}]}\label{thm:iso_vacuum}
$X \cong Y$ if and only if $\cA_X$ and $\cA_Y$ are isometrically isomorphic 
via an isomorphism that preserves the vacuum state. 
In fact, if $\varphi : \cA_X \rightarrow \cA_Y$ is a vacuum preserving 
isometric isomorphism, then there is an isomorphism 
$V : X \rightarrow Y$ such that for all $T \in \cA_X$,
\bes
\varphi(T ) = V T V^* .
\ees 
\end{theorem}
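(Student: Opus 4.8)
The plan is to prove both implications; the forward one is soft. If $X\cong Y$ via a family $W=\{W_n\}$ of unitaries, take $W_0=\id$ on $X(0)=\mathbb C=Y(0)$, so that $V:=\bigoplus_{n\ge0}W_n:\cF_X\to\cF_Y$ is a unitary with $V\Omega_X=\Omega_Y$; relation \eqref{eq:iso} forces $VS^X_iV^*=S^Y_i$, hence $\varphi:=\operatorname{Ad}(V)|_{\cA_X}$ is a completely isometric isomorphism onto $\cA_Y$ with $\varphi^*(\rho_0)(T)=\langle VTV^*\Omega_Y,\Omega_Y\rangle=\langle T\Omega_X,\Omega_X\rangle=\rho_0(T)$ — essentially the computation already recorded in Section~\ref{sec:mot}. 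So I will prove the ``In fact'' clause, which contains the converse.

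Let $\varphi:\cA_X\to\cA_Y$ be vacuum-preserving and isometric; assume $X(1)=Y(1)=E$ for simplicity (if $I^X$ or $I^Y$ carries linear forms one repeats the argument with an orthonormal basis of $X(1)$). First I would record two soft facts. (i) $\Omega_X$ is cyclic for $\cA_X$ (the vectors $S^X_\alpha\Omega_X=p^X_{|\alpha|}(e_\alpha)$ span the dense space of polynomial vectors) and \emph{separating}: if $T\Omega_X=0$ then, for each $n$, the homogeneous part of $T$ of degree $n$ (for the circle action $S^X_i\mapsto\lambda S^X_i$) lies in the finite-dimensional space $\overline{\operatorname{span}}\{S^X_\alpha:|\alpha|=n\}$, so it is a homogeneous polynomial $q_n(S^X_1,\dots,S^X_d)$; since $q_n(S^X)\Omega_X$ is the degree-$n$ component of $T\Omega_X=0$, it vanishes, so $q_n\in I^X$ by \eqref{eq:idealspsbijection} and $q_n(S^X)=0$; hence $T=0$. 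The same holds over $Y$. (ii) Since $\varphi^*(\rho_0)=\rho_0$ we have $\varphi(\ker\rho_0)=\ker\rho_0$, and, $\varphi$ being a homeomorphism, $\varphi\big(\overline{(\ker\rho_0)^2}\big)=\overline{(\ker\rho_0)^2}$; note $\ker\rho_0=\overline{\bigoplus_{m\ge1}\cA^{(m)}}$ and $\overline{(\ker\rho_0)^2}=\overline{\bigoplus_{m\ge2}\cA^{(m)}}$, where $\cA^{(m)}$ is the homogeneous part of degree $m$.

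The crux is that $\varphi$ carries each $S^X_i$ to a unitary combination of the $S^Y_j$. The quotient $\ker\rho_0/\overline{(\ker\rho_0)^2}$ is \emph{isometrically} the Hilbert space $E$: using the contractive projection onto $\cA^{(1)}$ one sees the quotient norm of $[\sum_i a_iS^X_i]$ equals $\|\sum_i a_iS^X_i\|_{\mathrm{op}}$, and this equals $|a|$ (at most $\|\sum_i a_iL_i\|=|a|$ by compression, at least $\|\sum_i a_ie_i\|=|a|$ on applying to $\Omega_X$). Since $\varphi$ induces an isometric isomorphism between the corresponding quotients for $X$ and $Y$, there is a unitary $U=(c_{ij})$ on $E$ with $\varphi(S^X_i)=\sum_j c_{ij}S^Y_j+N_i$, $N_i\in\overline{(\ker\rho_0)^2}$. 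Now $N_i=0$: as $\|S^X_i\|_{\mathrm{op}}=1$ and $\varphi$ is isometric, $\|\sum_j c_{ij}S^Y_j+N_i\|_{\mathrm{op}}=1$; but $(\sum_j c_{ij}S^Y_j)\Omega_Y=Ue_i\in Y(1)$ has norm $1$ and is orthogonal to $N_i\Omega_Y\in\overline{\bigoplus_{m\ge2}Y(m)}$, so $1+\|N_i\Omega_Y\|^2=\|(\sum_j c_{ij}S^Y_j+N_i)\Omega_Y\|^2\le1$, forcing $N_i\Omega_Y=0$ and hence $N_i=0$ by (i). \emph{This step — that the isometry, unlike a bare algebra isomorphism, forbids a higher-order tail (witness the finite-dimensional nilpotent pairs of Section~\ref{sec:mot}, whose character spaces are a single point) — is where the real work lies; the rest is bookkeeping plus Proposition~\ref{prop:idealsps}.}

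Finally, $\varphi(S^X_i)=\sum_j c_{ij}S^Y_j$ with $U=(c_{ij})$ unitary; applying $\varphi$ to an arbitrary polynomial in the $S^X_i$ shows $U$ sends $I^X$ onto $I^Y$, so by Proposition~\ref{prop:idealsps} there is an isomorphism of subproduct systems $W=\{W_n\}$ induced by $U$, and $V:=\bigoplus_n W_n$ implements an isometric isomorphism $\operatorname{Ad}(V):\cA_X\to\cA_Y$ agreeing with $\varphi$ on the generators. Since $\varphi$ and $\operatorname{Ad}(V)$ are continuous unital homomorphisms agreeing on $I,S^X_1,\dots,S^X_d$, they coincide; thus $\varphi(T)=VTV^*$ for all $T\in\cA_X$, with $V=\bigoplus_n W_n$ and $\{W_n\}:X\to Y$ an isomorphism of subproduct systems, as required.
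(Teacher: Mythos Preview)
The paper does not contain its own proof of this statement: Theorem~\ref{thm:iso_vacuum} is quoted from \cite[Theorem~9.7]{ShalitSolel} and used as a black box, so there is no in-paper argument to compare against.

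Your proof is essentially correct and self-contained. One small inaccuracy in the forward direction: relation~\eqref{eq:iso} does not give $VS_i^XV^*=S_i^Y$ literally, but rather $VS_i^XV^*=\sum_j\langle W_1e_i,e_j\rangle\,S_j^Y$ (apply~\eqref{eq:iso} with $m=1$ to $e_i\otimes\xi$ for $\xi\in X(n)$); since $W_1$ is unitary this is still a generating set for $\cA_Y$, so $\operatorname{Ad}(V)$ is the desired completely isometric isomorphism and nothing changes. The converse is clean: identifying $\ker\rho_0/\overline{(\ker\rho_0)^2}$ \emph{isometrically} with $X(1)$ (via the contractive projection $P_1$ and the identity $\|\sum_ia_iS_i^X\|=\|\sum_ia_ie_i\|$) is exactly what upgrades the induced linear map to a unitary, and the separating-vector argument killing the higher-order tail $N_i$ is the crux. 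The reduction to Proposition~\ref{prop:idealsps} at the end is correct, as is the final uniqueness-on-generators step. The only point worth spelling out more carefully is the case $X(1)\subsetneq E$: there the ``extra'' shifts $S_i^X$ with $e_i\notin X(1)$ are actually zero (since $z_i\in I^X$), so $\cA_X^{(1)}\cong X(1)$ and your sketch ``repeat with an orthonormal basis of $X(1)$'' goes through verbatim, and in particular forces $\dim X(1)=\dim Y(1)$.
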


For $\lambda = (\lambda_1, \ldots, \lambda_d) \in Z(I^X)$, let us denote by 
$\rho_\lambda$ the character sending $S^X_i$ to $\lambda_i$. 
For every $T \in \cA_X$, the Gelfand transform gives rise to a 
continuous function on $\cM_X$ by 
\[
 \hat{T}(\lambda) = \rho_\lambda (T) .
\] 
If $p \in \mb{C}[z]$, then
$\widehat{p(S^X)}(\lambda) = \rho_\lambda(p(S^X)) = p(\lambda)$. 
If $T \in \cA_X$ and $p_n(S^X)$ converges to $T$ in norm, then by the 
contractivity of the Gelfand transform, $p_n$ converges uniformly 
to $\hat{T}$ on $\cM_X$. 
Therefore, for every fixed $\lambda \in \cM_X$, the function 
$\hat{T}_\lambda(t) = \hat{T}(t\lambda_1, \ldots, t\lambda_d)$
is analytic in $\mb{D}$.

Every continuous isomorphism $\varphi : \cA_X \rightarrow \cA_Y$ 
gives rise naturally to a homeomorphism 
$\varphi^* : \cM_Y \rightarrow \cM_X$ given by 
$\varphi^*(\rho) = \rho\circ \varphi$.

%%%%%%%%%%%%%%%%%%%%%%%%
\begin{lemma}\label{lem:interior}
If $\varphi$ is an isometric isomorphism, then $\varphi^*$ 
maps $\cM_Y^o$ onto $\cM_X^o$.
\end{lemma}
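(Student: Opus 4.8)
The plan is to characterize $\cM_Y^o$ and $\cM_X^o$ intrinsically in terms of the algebra, so that any isometric isomorphism must carry one onto the other. The key observation is that the interior points $\cM_X^o \cong Z^o(I^X)$ should be exactly the characters $\rho_\lambda$ for which the ``analytic disc'' through $\lambda$ genuinely extends past the character space, equivalently the characters that are \emph{not} in the Shilov boundary, or — more usefully for a direct argument — the characters $\rho$ such that $\rho$ extends to a character on a strictly larger domain. Concretely, for $\lambda \in Z(I^X)$ with $\|\lambda\| < 1$, the maps $t \mapsto \rho_{t\lambda}$ for $t$ in a neighborhood of $[0,1]$ in $\mb{C}$ (or better, a whole neighborhood of $\lambda$ in $Z^o(I^X)$) give a nonconstant holomorphic family of characters through $\rho_\lambda$; whereas for $\lambda$ with $\|\lambda\| = 1$ this fails in the radial direction since $\hat T_\lambda(t)$ is only defined and analytic on $\mb{D}$ and $\lambda$ sits at the boundary. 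So I would isolate the property: $\rho \in \cM_X^o$ iff there is a holomorphic map $f$ from a neighborhood $U$ of $0$ in $\mb{C}$ into $\cM_X$ with $f(0) = \rho_0$, $f(t_0) = \rho$ for some $t_0 \in (0,1)$, and $f$ extends holomorphically past $U$ — really, iff $\rho$ lies in the image of a holomorphic disc $f:\mb{D}\to\cM_X$ with $f(0)=\rho_0$ and $\rho = f(t_0)$, $|t_0|<1$, which is manifestly $\{\rho_{t\mu} : \mu \in Z(I^X), |t| < 1\} = \cM_X^o$.

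The steps, in order: (1) Recall from the excerpt that for each fixed $\lambda \in \cM_X$ the function $t \mapsto \hat T(t\lambda)$ is analytic in $\mb{D}$ for every $T$, so $t \mapsto \rho_{t\lambda}$ is a holomorphic $\mb{D}$-disc in $\cM_X$ through $\rho_0$; its image is $\{\rho_{t\lambda} : |t| < 1\}$, and as $\lambda$ ranges over $Z(I^X)$ these discs sweep out exactly $\cM_X^o = Z^o(I^X)$. (2) Observe that an isometric (hence continuous, unital) isomorphism $\varphi$ induces a homeomorphism $\varphi^* : \cM_Y \to \cM_X$, and it preserves composition, so it carries holomorphic discs to holomorphic discs: if $f: \mb{D} \to \cM_Y$ is holomorphic — meaning $t \mapsto \rho_{f(t)}(T)$ is analytic for every $T \in \cA_Y$ — then $t \mapsto \varphi^*(f(t))$ satisfies $\rho_{\varphi^*(f(t))}(S) = \rho_{f(t)}(\varphi(S))$, analytic in $t$ since $\varphi(S) \in \cA_Y$; so $\varphi^* \circ f$ is a holomorphic disc in $\cM_X$. (3) The only remaining wrinkle is the base point: the radial disc $t \mapsto \rho_{t\lambda}$ passes through the vacuum $\rho_0$, and I need to know $\varphi^*$ relates vacuum to vacuum — but this excerpt does \emph{not} yet give vacuum preservation (that is Proposition~\ref{prop:exist_vacuum}, proved later). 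So instead I characterize $\cM^o$ without reference to the vacuum: $\rho \in \cM_X^o$ iff $\rho$ lies on some holomorphic disc $f : \mb{D} \to \cM_X$ with $\rho = f(t_0)$ for some $|t_0| < 1$ \emph{and} $f$ is non-degenerate/extends — cleaner: iff for every neighborhood basis, $\rho$ is an interior point of the image of a holomorphic disc, equivalently (by the identification with $Z(I^X)$) iff $\rho_\lambda$ with $\|\lambda\| < 1$. Since boundary characters $\rho_\lambda$, $\|\lambda\|=1$, cannot lie in the interior of the image of any holomorphic disc into $\cM_X$ (the disc would have to leave $Z(I^X)$, as the radial function is only analytic on $\mb{D}$ and $\lambda$ is a boundary value), this property transfers under $\varphi^*$.

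The main obstacle I anticipate is exactly step (3): making precise an \emph{isomorphism-invariant} description of ``interior point'' that does not presuppose the vacuum-preservation result (which logically comes afterward in the paper). The safe route is to use the holomorphic structure on the whole character space: $\cM_X^o$ is precisely the set of points possessing a neighborhood (in $\cM_X$) that is the biholomorphic image of an open subset of a complex variety, while boundary points do not — and this ``being a manifold/variety point in the relative interior'' is plainly preserved by the homeomorphism $\varphi^*$, which we must check intertwines the holomorphic structures (it sends $\hat T$ to $\widehat{\varphi(T)} \circ (\varphi^*)^{-1}$, so pulls holomorphic functions back to holomorphic functions). Once that is in place, $\varphi^*(\cM_Y^o) = \cM_X^o$ is immediate, and applying the same argument to $\varphi^{-1}$ gives the reverse inclusion, hence ``onto.''
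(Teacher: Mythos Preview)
Your strategy---characterize $\cM_X^o$ intrinsically as the set of characters lying in the image of some nonconstant holomorphic disc $f:\mb{D}\to\cM_X$, then observe that $\varphi^*$ carries such discs to such discs because $t\mapsto\rho_{f(t)}(\varphi(S))$ is analytic for every $S\in\cA_X$---is a genuinely different route from the paper's, and potentially a stronger one: nothing in it uses that $\varphi$ is isometric, so once completed it would in fact establish the content of Remark~\ref{rem:bounded_interior}. The paper instead argues directly by contradiction: assuming $(\varphi^*)^{-1}$ sends a boundary point $\rho=(1,0,\dots,0)$ of $\cM_X$ to an interior point $\rho_{t_0\lambda}$ of $\cM_Y$, it sets $T=\varphi(S_1^X)$, uses the isometry to get $\|T\|=1$, applies the maximum modulus principle to $\hat T_\lambda$ on $\mb{D}$ (which attains the value $1$ at $t_0$), concludes $\hat T_\lambda\equiv 1$, and then invokes the Ces\`aro decomposition together with $\|T\|=1$ a second time to force $T=I$, contradicting injectivity of $\varphi$.

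There is, however, a real gap at the crux of your argument. You assert that a boundary character $\rho_\lambda$ with $\|\lambda\|=1$ cannot equal $f(t_0)$ for any nonconstant holomorphic $f:\mb{D}\to\cM_X$ with $|t_0|<1$, but your parenthetical justification (``the disc would have to leave $Z(I^X)$, as the radial function is only analytic on $\mb D$ and $\lambda$ is a boundary value'') addresses only the \emph{radial} disc $t\mapsto t\lambda$; it does nothing to exclude some other disc hitting $\lambda$ at an interior parameter. What is actually needed here is itself a maximum-modulus argument: if $f:\mb{D}\to\overline{\mb{B}}_d$ is holomorphic with $f(t_0)=\lambda$, $\|\lambda\|=1$, then $g(t)=\langle f(t),\lambda\rangle$ satisfies $|g|\le 1$ and $g(t_0)=1$, hence $g\equiv 1$, and equality in Cauchy--Schwarz forces $f\equiv\lambda$. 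With this supplied your characterization becomes correct and the remainder of your outline goes through; note that this missing step is essentially the same phenomenon the paper exploits, just applied to the coordinate functional $\langle\cdot,\lambda\rangle$ on the $\cM_X$ side rather than to $\widehat{\varphi(S_1^X)}$ on the $\cM_Y$ side.
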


\begin{proof}
Let $\rho \in \cM_X \setminus \cM_X^o$. By applying a unitary transformation to 
the variables we may assume that $\rho = (1,0,\ldots,0)$. 
Assume that $(\varphi^*)^{-1} \rho = \rho_{t_0 \lambda}$, 
where $t_0 \in [0,1)$ and $\lambda \in \cM_Y$. 
Put $T = \varphi(S^X_1)$. 
Then $\|T\| = 1$, thus $|\hat{T}_\lambda(t)| \leq 1$ for $t \in \mb{D}$. 
On the other hand, $\hat{T}_\lambda(t_0) = \rho(S^X_1) = 1$. 
By the maximum modulus principle, $\hat{T}_\lambda$ is 
constant $1$ on $\mb{D}$. 
We claim that this is possible only if $T = I$. 
That would show that $\varphi(S^X_1) = I$, but that is impossible 
because $\varphi$ is injective and unital. 
This contradiction completes the proof.

To derive $T = I$ from $\hat{T}_\lambda(t) \equiv 1$, assume that 
$T = \sum_n  p_n(S^X)$ is the Ces\`{a}ro norm-convergent series of 
$T$ (see \cite[Proposition 9.3]{ShalitSolel}), where $p_n$ are homogeneous 
polynomials of degree $n$. 
The terms $p_n(S^X)$ must be bounded, therefore $p_n(\lambda)$ 
are also bounded. 
Then for $t \in \mb{D}$ we have that
\bes
\hat{T}_\lambda(t) = \sum_n  p_n(t \lambda) = \sum_n p_n(\lambda) t^n .
\ees
This holomorphic function can be constantly equal to $1$ only if 
$p_n(\lambda) = 0$ for $n \neq 0$ and $p_0 = 1$. 
So $T = I + \sum_{n>0}p_n(S^X)$. 
Now $\|T\| = 1$ implies $\sum_{n>0}p_n(S^X) = 0 $.
\end{proof}

%%%%%%%%%%%%%%%%%%%%%%%%%%%%%%
\begin{remark}\label{rem:bounded_interior}
It is also true that if $\varphi : \cA_X \rightarrow \cA_Y$ is a bounded 
isomorphism, then $\varphi^*$  maps $\cM_Y^o$ onto $\cM_X^o$. 
Since we will not require this result, the proof is omitted.
See Proposition~\ref{prop:algiso_biholo} for the commutative case.
\end{remark}

\begin{lemma}\label{lem:analytic}
Let $X$ and $Y$ be two subproduct systems with $\dim X(1) = d'$ 
and $\dim Y(1) = d$. 
Let $\varphi : \cA_X \rightarrow \cA_Y$ be an isometric isomorphism. 
Then there exists a holomorphic map $f : \mb{B}_{d} \rightarrow \mb{C}^{d'}$ 
such that 
\bes
\varphi^* |_{\cM_Y^o} = f |_{\cM_Y^o}.
\ees
That is, the restriction of $\varphi^*$ to $\cM_Y^o$ is an analytic map 
of analytic varieties.
\end{lemma}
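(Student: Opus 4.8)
The plan is to build the map $f$ coordinate by coordinate out of the images of the generators under $\varphi$, and to verify holomorphy by exhibiting each coordinate as a locally uniform limit of polynomials. First I would set $T_i = \varphi(S^X_i) \in \cA_Y$ for $i = 1, \ldots, d'$, and define $f = (\hat T_1, \ldots, \hat T_{d'})$, where $\hat T_i$ is the Gelfand transform of $T_i$, a priori only defined on $\cM_Y \cong Z(I^Y)$. The identification (\ref{eq:identification}) shows that for $\rho \in \cM_Y$ one has $\varphi^*(\rho) = \rho \circ \varphi$, and its $i$-th coordinate under the identification of $\cM_X$ with $Z(I^X)$ is $(\rho\circ\varphi)(S^X_i) = \rho(T_i) = \hat T_i(\rho)$. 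Hence $\varphi^*$ agrees with $f$ on all of $\cM_Y$, in particular on $\cM_Y^o$; the content of the lemma is thus that $f$ extends to a genuine holomorphic $\mb{B}_d \to \mb{C}^{d'}$.

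The key step is to produce the holomorphic extension of each $\hat T_i$ from $Z^o(I^Y) = \cM_Y^o$ to the full ball $\mb{B}_d$. For this I would invoke the Ces\`aro-type series decomposition $T_i = \sum_n p_n^{(i)}(S^Y)$ used in the proof of Lemma \ref{lem:interior} (from \cite[Proposition 9.3]{ShalitSolel}), where the $p_n^{(i)}$ are homogeneous of degree $n$, the partial sums (suitably averaged) converge to $T_i$ in norm, and the blocks $p_n^{(i)}(S^Y)$ are norm-bounded by a constant times $\|T_i\|$. Since $\|p_n^{(i)}(S^Y)\| \ge \sup_{\lambda \in Z(I^Y)} |p_n^{(i)}(\lambda)|$ and $Z(I^Y)$ contains a relative neighborhood of $0$ inside $V(I^Y)$, one gets an estimate of the form $|p_n^{(i)}(\lambda)| \le C \|\lambda\|^n$ (perhaps with a polynomial-in-$n$ loss, which is harmless) on $V(I^Y)$. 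Therefore the series $F_i(\lambda) := \sum_n p_n^{(i)}(\lambda)$ converges uniformly on compact subsets of $V(I^Y) \cap r\mb{B}_d$ for each $r < 1$; as a locally uniform limit of polynomials it is holomorphic on $V(I^Y) \cap \mb{B}_d$ in the sense appropriate to the variety, and it is also the restriction of an honest holomorphic function on $\mb{B}_d$ (just replace $p_n^{(i)}$ by any polynomial representatives and note the sum still converges on all of $\mb{B}_d$, or simply record that a convergent power series in the ambient coordinates does the job). On $Z^o(I^Y)$ we have $F_i = \hat T_i$ because both are continuous and agree with $\lim_n \sum_{k\le n} p_k^{(i)}(\lambda)$ there by contractivity of the Gelfand transform. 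Setting $f = (F_1, \ldots, F_{d'})$ gives the claimed holomorphic map with $\varphi^*|_{\cM_Y^o} = f|_{\cM_Y^o}$.

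The main obstacle I anticipate is the convergence estimate in the second step: one needs to be careful that the Ces\`aro decomposition of $T_i$ really does give blocks with norm controlled by $\|T_i\|$ and, crucially, that norm-boundedness of $p_n^{(i)}(S^Y)$ transfers to a uniform-in-$n$ bound on $|p_n^{(i)}|$ over $Z(I^Y)$ — this is exactly the inequality $\|\widehat{q(S^Y)}\|_{\infty, Z(I^Y)} \le \|q(S^Y)\|$ for homogeneous $q$, which follows from contractivity of characters and (\ref{eq:identification}), but one should state it cleanly. A secondary, mostly bookkeeping, point is to phrase "holomorphic map of analytic varieties" precisely: since $f$ is defined as the restriction to $\cM_Y^o \subseteq V(I^Y)$ of a holomorphic function on the ambient ball $\mb{B}_d$, it is automatically holomorphic on the variety in any reasonable sense, and its image lands in $Z^o(I^X)$ because $\varphi^*$ does. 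I would also remark that once $f$ is holomorphic on $\mb{B}_d$ it extends continuously to $\overline{\mb{B}}_d$ and agrees with $\varphi^*$ on all of $\cM_Y$ by the computation in the first paragraph, which is a convenient strengthening even though only the interior statement is asserted.
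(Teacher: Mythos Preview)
Your overall strategy---set $T_i=\varphi(S^X_i)$, define the $i$-th coordinate of $f$ as the Gelfand transform $\hat T_i$, and then extend holomorphically to $\mb{B}_d$---is exactly the paper's, and your first paragraph is a clean account of why $\varphi^*$ and $f$ agree on $\cM_Y^o$. The gap is in the second step, the extension to the full ball. Your estimate $|p_n^{(i)}(\lambda)|\le C\|\lambda\|^n$ is obtained from contractivity of characters and homogeneity, and is therefore valid only for $\lambda\in V(I^Y)$. The assertion that ``any polynomial representatives'' give a series converging on all of $\mb{B}_d$ is false: to each $p_n^{(i)}$ you may add a degree-$n$ element of $I^Y$ without changing $p_n^{(i)}(S^Y)$ or the values on $Z(I^Y)$, and such perturbations can be chosen so that $\sup_{\mb{B}_d}|p_n^{(i)}|$ blows up arbitrarily fast in $n$. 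So the series you write need not define anything holomorphic off the variety.

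The paper closes this gap with a different, and decisive, observation. Since $T_i\in\cA_Y$, the vector $T_i\Omega_Y$ lies in $\cF_Y\subseteq\cF(E)$ and therefore has an expansion
\[
 T_i\Omega_Y=\sum_{n\ge0}\ \sum_{i_1,\ldots,i_n=1}^d b_{i_1,\ldots,i_n}\,e_{i_1}\otimes\cdots\otimes e_{i_n}
\]
with $\ell^2$ coefficients. One then \emph{defines} $f_i(z)=\sum b_{i_1,\ldots,i_n}z_{i_1}\cdots z_{i_n}$ and uses Cauchy--Schwarz together with the identity $\sum_n\sum_{i_1,\ldots,i_n}|z_{i_1}\cdots z_{i_n}|^2=\sum_n\|z\|^{2n}$ to get absolute, locally uniform convergence on all of $\mb{B}_d$. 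In effect the paper singles out the unique representative of each homogeneous block that lies in $Y(n)$ (equivalently, is orthogonal to $I^Y$ in Fock space), and it is precisely the $\ell^2$ nature of that specific choice that yields convergence on the whole ball rather than just on $Z^o(I^Y)$. Your argument can be repaired by making this choice and invoking the $\ell^2$ bound; as written, the extension step is unjustified. (A minor side remark: your closing claim that $f$ automatically extends continuously to $\overline{\mb{B}}_d$ is not asserted or needed here; that stronger statement appears only later, in the commutative setting of Proposition~\ref{prop:algiso_biholo}.)
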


\begin{proof}
Let $T = \varphi(S^X_1)$, and let $T = \sum_n p_n(S^Y)$ be the 
Ces\`{a}ro norm-convergent series of $T$. 
Denote $E = Y(1)$, and let $\{e_1, \ldots, e_d\}$ be an orthonormal basis for $E$. 
We can rewrite the series for $T$ as
\[
 T = \sum_{n=0\ }^\infty \sum_{i_1, \ldots, i_n = 1}^d 
 b_{i_1, \ldots, i_n} S^Y_{i_1} \cdots S^Y_{i_n} ,
\]
where
\[
 T (\Omega_Y) =  \sum_{n=0\ }^\infty \sum_{i_1, \ldots, i_n = 1}^d 
 b_{i_1, \ldots, i_n} e_{i_1} \otimes \cdots \otimes e_{i_n}
\] 
is the image of the vacuum vector in the full Fock space $\cF(E)$.

It follows that the coefficients $\{b_{i_1,\ldots,i_n}\}$ are $\ell^2$ summable. 
The estimate
\bes
\sum |b_{i_1, \ldots, i_n} z_{i_1} \cdots z_{i_n}| \leq 
\big(\sum |b_{i_1, \ldots, i_n}|^2)^{1/2}
\big(\sum |z_{i_1} \cdots z_{i_n}|^2\big)^{1/2}
\ees
together with the identity
\bes
\sum_{n=0}^\infty \sum_{i_1, \ldots, i_n = 1}^d |z_{i_1} \cdots z_{i_n}|^2 = 
\sum_{n=0}^\infty (|z_1|^2 + \ldots + |z_{d}|^2)^n
\ees
shows that the function 
\bes
f_1(z) = \sum b_{i_1, \ldots, i_n} z_{i_1} \cdots z_{i_n}
\ees
is holomorphic in $\mb{B}_{d}$. 
But 
\[
 \varphi^*(\rho_\lambda)(S^X_1) = \rho_\lambda (T) 
 = \sum b_{i_1, \ldots, i_n} \lambda_{i_1} \cdots \lambda_{i_n} = f_1(\lambda) .
\]
Thus $\varphi^* \rho_\lambda = \rho_{\mu}$, where $\mu_1 = f_1(\lambda)$. 
In the same way, we see that $\mu_i = f_i(\lambda)$, for all $i=1, \ldots, d'$, 
where $f_i: \mb{B}_{d} \rightarrow \mb{C}^{d'}$ is holomorphic.
\end{proof}

%%%%%%%%%%%%%%%%%%%%%%%%
\subsection{The singular nucleus of a homogeneous variety}

%%%%%%%%%%%%%%%%%%%%%%%%
\begin{lemma}\label{lem:sing}
Let $V = V(I)$ be the variety in $\mb{C}^d$ determined 
by a radical homogeneous ideal $I$. 
Then either $V$ has singular points, or $V$ is a linear subspace.
\end{lemma}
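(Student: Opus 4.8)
The plan is to establish the contrapositive: assuming $\Sing(V) = \emptyset$, I will show that $V$ is a linear subspace. The case $V = \emptyset$ is trivial (and can only occur when $I = \mb{C}[z]$), so I may assume $I$ is a proper ideal; since $I$ is homogeneous this forces $0 \in V$, and by hypothesis $0$ is a nonsingular point of $V$. Write $n := \dim_0 V$ for the local dimension of $V$ at the origin.

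The first step is to compute the Zariski tangent space $T_0 V$. By definition $T_0 V$ is the common zero set of the linear forms $\sum_{i} \partial_i f(0)\, z_i$ as $f$ ranges over $I$. Now $\partial_i f (0)$ extracts precisely the coefficient of $z_i$ in $f$, so this linear form is the degree-one homogeneous component $f^{(1)}$ of $f$; since $I$ is homogeneous, $f^{(1)} \in I$, and conversely every linear form in $I$ is its own degree-one component. Hence $T_0 V$ is the common zero set of $I_1 := I \cap \{\text{linear forms}\}$. Two consequences: $T_0 V$ is a linear subspace, call it $L$; and, because $I_1 \subseteq I$, we have $V = V(I) \subseteq L$.

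The second step is a dimension count. Nonsingularity of $V$ at $0$ means exactly that $\dim L = \dim T_0 V = \dim_0 V = n$. Combining this with $V \subseteq L$ gives $n = \dim_0 V \le \dim V \le \dim L = n$, so $\dim V = n = \dim L$. A linear subspace is irreducible, and a closed subvariety of an irreducible variety having the same dimension must be the whole variety; therefore $V = L$ is a linear subspace.

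I do not anticipate a genuine obstacle: the whole argument hinges on the single observation that homogeneity of $I$ makes $T_0 V$ equal to the vanishing locus of the linear forms in $I$, which at once contains $V$ and, once $0$ is smooth, has dimension $\dim_0 V$, forcing equality. Equivalently, one can phrase this through tangent cones — for a homogeneous ideal the tangent cone of $V$ at $0$ is $V$ itself, while at a smooth point the tangent cone coincides with the tangent space, which is linear — but the Jacobian computation above is self-contained. The only care required is the trivial bookkeeping for $V = \emptyset$ and $V = \{0\}$, both of which are (degenerate) linear subspaces.
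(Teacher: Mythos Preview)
Your proof is correct and follows essentially the same idea as the paper's: both arguments observe that for a homogeneous ideal the Jacobian at $0$ only sees the linear forms in $I$, so the tangent space $T_0V$ is cut out by $I_1 = I \cap \{\text{linear forms}\}$ and hence contains $V$, after which smoothness at $0$ forces $\dim T_0V = \dim V$ and equality follows. The paper packages this via the $(d-m)\times(d-m)$ minors of the Jacobian and first reduces to the irreducible case, whereas your Zariski tangent space formulation handles the reducible case directly through the dimension count; this is a minor streamlining but not a genuinely different route.
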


\begin{proof}
If $V$ is reducible, then by Theorem 8(iv) in \cite[Section 9.6]{CLO92} 
the origin is in the singular set. 
So we may assume that $V$ is irreducible. 

Let $f_1, \ldots, f_k$ be a generating set for $I$, and assume the 
dimension of $V(I)$ is $m$. 
By a theorem in \cite[page 88]{SKKT}, the singular locus of $V$ is 
the common zero set of polynomials obtained from the 
$(d-m) \times (d-m)$ minors of the Jacobian matrix
\bes
\begin{pmatrix}
\frac{\partial f_1}{\partial z_1} & \cdots & \frac{\partial f_1}{\partial z_d} \\
\vdots & & \vdots \\
\frac{\partial f_k}{\partial z_1} & \cdots & \frac{\partial f_k}{\partial z_d}
\end{pmatrix} \,\,.
\ees
But since $f_1, \ldots, f_k$ are homogeneous, all these minors will vanish 
at the point $0$ unless at least $d-m$ of the $f_i$'s are linearly independent 
linear forms. 
But then $V$ lies inside $m$ dimensional subspace. 
Being an $m$-dimensional variety, $V$ must be that subspace.
\end{proof}

Let $V$ be a homogeneous variety in $\mb{C}^d$. 
Then by the lemma, either $V$ is a subspace of $\mb{C}^d$, 
or the singular locus $\Sing (V)$ is nonempty. 
Now $\Sing (V)$ is also a homogeneous variety, 
so either $\Sing (V)$ is a subspace or 
$\Sing (\Sing (V))$ is not empty. 
Since the dimension of the singular locus is strictly less than 
the dimension of a variety, we eventually arrive at a subspace 
$N(V) = \Sing (\cdots(\Sing (V)\cdots)$ 
which we call \emph{the singular nucleus of $V$}. 
Note that $N(V) = \{0\}$ might happen, as well as $N(V) = V$. 

If $X$ is a subproduct system and $I = I^X$, then from 
Lemma~\ref{lem:analytic} it is clear that $\mb{B}_d \cap N(V(I))$ 
is an invariant of the isometric isomorphism class of $\cA_{X}$. 
We also refer to this set as the singular nucleus of $I$.

%%%%%%%%%%%%%%%%%%%%%%%%
\subsection{Classification of the algebras by subproduct systems}

In what follows we will need to consider the group $\Aut(\mb{B}_n)$ 
of automorphisms of $\mb{B}_n$, that is, the biholomorphisms of the unit ball. 
We will use well known properties of these fractional linear maps 
(see \cite[Section 2.2]{RudinBall}). 
For $a \in \mb{B}_n$, we define
\be\label{eq:mobius}
\phi_a(z) = \frac{a - P_a z - s_a Q_a z}{1-\lel z,a\rir},
\ee
where $P_a$ is the orthogonal projection onto $\spn\{a\}$, 
$Q_a = I_n - P_a$ and $s_a = (1-|a|^2)^{1/2}$. 
Then $\phi_a$ is an automorphism of $\overline{\mb{B}}_n$ that maps 
$0$ to $a$ and satisfies $\phi_a^2 = \id$. 
For every $\psi \in \Aut(\mb{B}_n)$ there exists a unique 
unitary $U$ and $a \in \mb{B}_n$ such that $\psi = U \circ \phi_a$.

By a \emph{disc} in $\mb{B}_n$ we shall mean a set $D$ of the form 
$D = \mb{B}_n \cap L$, where $L \subseteq \mb{C}^n$ is a one dimensional subspace.

%%%%%%%%%%%%%%%%%%%%%%%%
\begin{lemma}\label{lem:discs}
Let $\psi\in \Aut (\mb{B}_n)$. 
Then there are two discs $D_1, D_2$ in $\mb{B}_n$ such that $\psi(D_1) = D_2$.
\end{lemma}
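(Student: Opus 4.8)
The plan is to reduce to the two building blocks of $\Aut(\mb{B}_n)$ separately. Writing $\psi = U \circ \phi_a$ with $U$ unitary and $a \in \mb{B}_n$, note that unitaries map discs to discs (a unitary carries a one-dimensional subspace $L$ to the one-dimensional subspace $UL$, and preserves the ball), so it suffices to find two discs $D_1, D_2$ with $\phi_a(D_1) = D_2$; then $U(D_2)$ works for $\psi$. If $a = 0$, every disc works, so assume $a \neq 0$.

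The natural candidate is to take $D_1 = D_2 = D := \mb{B}_n \cap \mb{C}a$, the disc through $0$ in the direction of $a$. First I would check that $\phi_a$ maps $\mb{C}a$ into itself: for $z = \zeta a$ with $\zeta \in \mb{C}$, we have $P_a z = z$, $Q_a z = 0$, so from the formula \eqref{eq:mobius},
\[
\phi_a(\zeta a) = \frac{a - \zeta a}{1 - \zeta |a|^2} = \frac{1-\zeta}{1-\zeta|a|^2}\, a \in \mb{C}a .
\]
Since $\phi_a$ is an automorphism of $\overline{\mb{B}}_n$ with $\phi_a^2 = \id$, its restriction to the affine line $\mb{C}a$ is an injective holomorphic self-map of the disc $D$ whose square is the identity; in particular $\phi_a(D) \subseteq D$ and, applying $\phi_a$ again, $D = \phi_a(\phi_a(D)) \subseteq \phi_a(D)$, so $\phi_a(D) = D$. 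Hence $D_1 = D_2 = D$ does the job for $\phi_a$, and $D_1 = D$, $D_2 = U(D)$ does the job for $\psi$.

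This is essentially immediate once one observes that $\phi_a$ preserves the line through $a$, which is the content of the explicit formula; the only mild subtlety is keeping track of how $P_a$ and $Q_a$ act on that line, which is exactly what makes the computation collapse. I do not anticipate a real obstacle here — the lemma is a soft preliminary, and the one-line verification above together with the unitary case completes it.
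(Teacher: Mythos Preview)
Your proof is correct and follows essentially the same route as the paper: decompose $\psi = U\circ\phi_a$, take $D_1 = \mb{B}_n \cap \mb{C}a$ when $a\ne 0$, observe that $\phi_a$ restricts to a M\"obius self-map of this disc, and let $D_2 = U D_1$. The only cosmetic difference is that for $a=0$ the paper picks a one-dimensional eigenspace of $U$ so that $D_1 = D_2$, whereas you simply note any disc works; both are fine since the lemma does not require $D_1 = D_2$.
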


\begin{proof}
If $\psi = U \circ \phi_a$ and $a \ne 0$, take $D_1 = \spn\{a\}\cap \mb{B}_n$.
Then $\phi_a|_{D_1}$ is a M\"obius map of $D_1$ onto itself.
Take $D_2 = UD_1$.
If $a=0$, take $D_1=D_2$ to be $\mb{B}_n\cap L$ where $L$ is any
one-dimensional eigenspace of $U$.
\end{proof}

%%%%%%%%%%%%%%%%%%%%%%%%
\begin{proposition}\label{prop:exist_vacuum}
Let $X$ and $Y$ be subproduct systems and assume that there exists 
an isometric isomorphism $\varphi: \cA_X \rightarrow \cA_Y$. 
Then there exists a vacuum preserving isometric isomorphism from 
$\cA_X$ to $\cA_Y$.
\end{proposition}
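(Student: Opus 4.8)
The plan is to use the isometric isomorphism $\varphi$ together with Lemma~\ref{lem:analytic} to produce a biholomorphism $\varphi^* : \cM_Y^o \to \cM_X^o$ between the open character varieties, and then to correct $\varphi$ by an automorphism of the ambient ball so that the composite fixes the vacuum. First I would invoke Lemma~\ref{lem:interior} to know that $\varphi^*$ carries $\cM_Y^o$ onto $\cM_X^o$, and Lemma~\ref{lem:analytic} to know that this bijection is the restriction of a holomorphic map; running the same argument for $(\varphi^{-1})^* = (\varphi^*)^{-1}$ shows the inverse is holomorphic too, so $\varphi^*$ restricts to a biholomorphism of the varieties $Z^o(I^Y)$ onto $Z^o(I^X)$ fixing nothing a priori. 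Set $a_0 = \varphi^*(\rho_0) \in Z^o(I^X) \subseteq \mb{B}_{d'}$, the image of the vacuum character of $\cA_Y$; I want to post-compose with something that moves $a_0$ back to $0$.

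The key geometric point is that the Möbius automorphism $\phi_{a_0}$ of $\mb{B}_{d'}$ must preserve the variety $Z^o(I^X)$, so that it induces an automorphism of $\cA_X$. Here is where I would use the complex-analytic rigidity already built up: the homogeneous variety $V(I^X)$ is a cone, hence for each $\lambda \in Z^o(I^X)$ the whole disc $t \mapsto t\lambda$, $t \in \mb{D}$, lies in $Z^o(I^X)$; and by Lemma~\ref{lem:discs} (applied with $\psi = \phi_{a_0}$, together with the remark that $\phi_{a_0}|_{\spn\{a_0\}\cap\mb{B}_{d'}}$ is a Möbius map of that disc onto itself) one sees that $\phi_{a_0}$ maps discs through the origin lying in the cone to discs, which forces $\phi_{a_0}$ to preserve $Z^o(I^X)$. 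I then need that a biholomorphism of $Z^o(I^X)$ onto itself lifts to an automorphism of the operator algebra $\cA_X$ — concretely, one checks (as in the proof of Lemma~\ref{lem:interior}, reading off the Cesàro series) that composition with $\phi_{a_0}$ sends each generator $S_i^X$ to an element of $\cA_X$, and that the resulting homomorphism is isometric and invertible. Granting this, let $\theta : \cA_X \to \cA_X$ be the automorphism with $\theta^* = \phi_{a_0}|_{Z^o(I^X)}$, or rather with $(\theta^{-1})^*$ moving $0$ to $a_0$; then $\psi := \varphi \circ \theta$ is again an isometric isomorphism $\cA_X \to \cA_Y$, and $\psi^*(\rho_0) = \theta^*(\varphi^*(\rho_0)) = \theta^*(a_0) = \rho_0$, so $\psi$ preserves the vacuum.

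The step I expect to be the main obstacle is verifying that the Möbius map $\phi_{a_0}$ genuinely descends to an isometric automorphism of $\cA_X$ rather than merely a biholomorphism of the character variety: one must show that $\phi_{a_0}$-composition of a multiplier from $\cA_X$ is again a bounded multiplier with the same norm, i.e. that the automorphisms of the ball restrict to completely isometric automorphisms of $\cA_{d'}$ and descend to the quotient $\cA_{I^X}$ because they preserve the variety. This is essentially the content, in the model case, of what Section~\ref{sec:aut} will establish for $\cA_d$ (Theorem~\ref{thm:autoAd}), so in this Proposition I would either cite the relevant invariance of the Drury--Arveson / Fock norm under $\Aut(\mb{B}_{d'})$ or give the short direct argument: $\phi_{a_0}$ preserves $V(I^X)$, hence descends to a unital homomorphism of $\cA_{I^X} = \cA_X$, and since $\phi_{a_0}^2 = \id$ and composition with a ball automorphism is completely isometric on $\cA_{d'}$ (and the restriction map $\cA_{d'} \to \cA_X$ is a complete quotient map), $\theta$ is a completely isometric automorphism. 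Everything else — holomorphy of $\varphi^*$ and its inverse, the disc/cone geometry, and the bookkeeping $\psi^*(\rho_0) = \rho_0$ — is routine given the lemmas already proved.
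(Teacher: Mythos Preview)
Your argument has a genuine gap at the crucial step: the M\"obius map $\phi_{a_0}$ need \emph{not} preserve the variety $Z^o(I^X)$, even when $a_0 \in Z^o(I^X)$. Take $d'=2$, $V(I^X) = \mb{C}e_1 \cup \mb{C}e_2$, and $a_0 = (\tfrac12,0)$. Then for $z=(0,t)$ one computes $\phi_{a_0}(z) = (\tfrac12, -\tfrac{\sqrt3}{2}\,t)$, which for $t\ne0$ lies on neither coordinate line. So $\phi_{a_0}$ does not preserve this homogeneous variety, and hence does not induce an automorphism of $\cA_X$. Your appeal to Lemma~\ref{lem:discs} does not help: that lemma only guarantees the existence of \emph{some} pair of discs exchanged by a ball automorphism, not that every disc through the origin in the cone is sent to another such disc. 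Likewise Theorem~\ref{thm:autoAd} concerns $\cA_d$, where the ``variety'' is the entire ball; the later Lemma~\ref{lem:auto_subspaces} shows that even for unions of subspaces, $\phi_a$ preserves $Z$ only when $a$ lies in the common intersection $Z_0$ of \emph{all} components, which is generally much smaller than $Z$.

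The paper's proof takes a completely different route precisely to avoid this obstruction. It never tries to build a M\"obius automorphism of $\cA_X$. Instead it observes that the \emph{singular nucleus} $N(V(I^X))$ --- obtained by iterating $\Sing(\cdot)$ until one reaches a linear subspace --- is a biholomorphism invariant, so $\varphi^*$ restricts to an automorphism of a genuine ball $B$. Inside $B$ the only automorphisms of $\cA_X$ actually used are the \emph{gauge} automorphisms $\varphi_\lambda : S_i^X \mapsto \lambda S_i^X$ for $|\lambda|=1$, which trivially preserve any homogeneous ideal. These make the orbit sets $\cO(0;X,Y)$ and $\cO(0;Y)$ rotation-invariant in a disc $D_1 \subset B$; a short circle/interior argument then shows $0 \in \cO(0;X,Y)$. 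If you want to repair your approach, you would need to replace $\phi_{a_0}$ by automorphisms you can actually guarantee preserve $I^X$ --- and in the generality of this proposition (which covers noncommutative $X$ as well), the gauge automorphisms are essentially the only ones available for free.
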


\begin{proof}
By the discussion following Lemma \ref{lem:sing}, the singular nucleus 
of $I^Y$ must be mapped biholomorphically by $\varphi^*$ onto the 
singular nucleus of $I^X$. 
If these nuclei are both $\{0\}$ then $\varphi$ itself must be vacuum preserving, 
and we are done. 
Otherwise, by rotating the coordinate systems we may assume that 
$N(V(I^X)) = N(V(I^Y)) = B$, a complex ball.

Now, $\varphi^*|_B \in \Aut(B)$. Thus by Lemma \ref{lem:discs} ,
there are two discs $D_1, D_2 \subseteq B$ such that $\varphi^*(D_2) = D_1$.
Define
\bes
\cO(0;X,Y) = \{z \in D_1 :  z = \psi^*(0) \textrm{ for some isometric isomorphism }  
\psi : \cA_{X} \to \cA_{Y} \},
\ees
and
\bes
\cO(0;Y) = \{z \in D_2 :  z = \psi^*(0) \textrm{ for some isometric automorphism }  
\psi \textrm{ of } \cA_Y\}.
\ees

\smallbreak
\noindent \textit{Claim:}
\emph{The sets $\cO(0;X,Y)$ and $\cO(0;Y)$  are invariant under rotations about $0$.}\\
\textit{Proof of claim:} For $\lambda$ with $|\lambda| = 1$, 
write $\varphi_\lambda$ for the isometric 
automorphism mapping $S_i^{X}$ to $\lambda S_i^{X}$ ($i = 1,\ldots,d$). 
Let $b = \varphi^*(0) \in \cO(0;X,Y)$. 
Recall that $b = (b_1, \ldots, b_d)$ is identified with a character 
$\rho_b \in \cM_X^o$ such that $\rho_b(S_i^X) = b_i$ for $i=1, \ldots, d$. 
Consider $\varphi \circ \varphi_\lambda$.
We have 
\[
\rho_0((\varphi \circ \varphi_\lambda)(S_i^{X})) = 
\rho_0(\varphi(\lambda S_i^{X})) = 
\lambda \rho_0(\varphi(S_i^{X})) = \lambda b_i .
\]
Thus $\lambda b = (\varphi \circ \varphi_\lambda)^* (\rho_0) \in \cO(0;X,Y)$. 
The proof for $\cO(0;Y)$ is the same. 
\smallbreak

We can now show the existence of a vacuum preserving isometric isomorphism.
Let $b = \varphi^*(0)$. 
If $b = 0$ then we are done, so assume that $b\neq 0$.
By definition, $b \in \cO(0;X,Y)$. 
Denote $C := \{z \in D_1 : |z|=|b|\}$. 
By the above claim, $C \subseteq \cO(0;X,Y)$. 
Consider $C' := (\varphi^*)^{-1}(C)$. 
We have that $C' \subseteq \cO(0;Y)$.
Now $C'$ is a circle in $D_2$ that goes through the origin. 
By the claim, the interior of $C'$, $\Int(C')$, is in $\cO(0;Y)$. 
But then $\varphi^*(\Int(C'))$ is the interior of $C$, 
and it is in $\cO(0;X,Y)$. 
Thus $0$ lies in $\cO(0;X,Y)$, as required.
\end{proof}

Combining Theorem \ref{thm:iso_vacuum} and Proposition \ref{prop:exist_vacuum},
we obtain our main non-commutative result:

%%%%%%%%%%%%%%%%%%%%%%%%
\begin{theorem}\label{thm:alg_sps_iso}
Let $X$ and $Y$ be subproduct systems. 
Then $\cA_X$ is isometrically isomorphic to $\cA_Y$ 
if and only if $X$ is isomorphic to $Y$.
\end{theorem}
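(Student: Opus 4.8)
The plan is to establish the two implications separately, leaning on the results already in place. For the direction $X \cong Y \Rightarrow \cA_X \cong \cA_Y$ isometrically, I would simply invoke the observation recorded just before Proposition~\ref{prop:idealsps}: an isomorphism $W = \{W_n\}$ of subproduct systems assembles into a unitary $V = \oplus_{n\ge 0} W_n : \cF_X \to \cF_Y$, and conjugation by $V$ is a (completely) isometric isomorphism of $\cA_X$ onto $\cA_Y$. So this direction requires nothing new.

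For the converse I would start from an arbitrary isometric isomorphism $\varphi : \cA_X \to \cA_Y$. The subtlety is that $\varphi$ need not preserve the vacuum state, so Theorem~\ref{thm:iso_vacuum} does not apply to it directly: at the level of character spaces, $\varphi^*$ is a biholomorphism of $\cM_Y^o$ onto $\cM_X^o$ (by Lemmas~\ref{lem:interior} and~\ref{lem:analytic}), but it may well move the origin. The fix is exactly Proposition~\ref{prop:exist_vacuum}, which manufactures out of $\varphi$ a new isometric isomorphism $\psi : \cA_X \to \cA_Y$ with $\psi^*(\rho_0) = \rho_0$, i.e.\ one that preserves the vacuum. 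Feeding $\psi$ into Theorem~\ref{thm:iso_vacuum} then produces an isomorphism $V : X \to Y$ of subproduct systems with $\psi(T) = VTV^*$, so $X \cong Y$, completing the proof.

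I expect the present theorem to need essentially no further effort of its own, since all the substance sits in Proposition~\ref{prop:exist_vacuum} --- whose proof turns on the orbit sets $\cO(0;X,Y)$ and $\cO(0;Y)$ being rotation-invariant and hence, via the disc argument of Lemma~\ref{lem:discs}, forced to contain $0$. Accordingly I would present the proof as a short citation of Proposition~\ref{prop:exist_vacuum} followed by Theorem~\ref{thm:iso_vacuum}. It is worth noting that the argument in fact delivers a \emph{spatial} isomorphism, so the conclusion can be strengthened to: $\cA_X$ is isometrically isomorphic to $\cA_Y$ if and only if they are unitarily equivalent.
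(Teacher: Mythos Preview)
Your proposal is correct and matches the paper's approach exactly: the paper presents this theorem as an immediate combination of Theorem~\ref{thm:iso_vacuum} and Proposition~\ref{prop:exist_vacuum}, with the easy direction already noted at the start of Section~\ref{sec:mot}. Your closing remark about unitary equivalence (hence complete isometry) is also recorded by the paper in the remark following the theorem.
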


%%%%%%%%%%%%%%%%%%%%%%%%
\begin{remark}
It follows from the above theorem that if $\cA_X$ and $\cA_Y$ 
are isometrically isomorphic, then they are also completely isometrically isomorphic.
\end{remark}

%%%%%%%%%%%%%%%%%%%%%%%%
\begin{example}
Let us return to Example \ref{expl:notiso}. 
We now show that $\cA_X$ is not isometrically isomorphic to $\cA_Y$. 
Using the above theorem, it is enough to show that $X$ is not isomorphic to $Y$. 
By Proposition \ref{prop:idealsps}, one must show that there is 
no unitary change of variables that takes $I$ onto $J$. 
But if there was, then the set
\bes
Z(I^{(2)}) = \{z \in \mb{B}_2 :  f(z) =0 \text{ for all }  f \in I^{(2)} \}
\ees 
would be mapped unitarily onto the set
\bes
Z(J^{(2)}) = \{z \in \mb{B}_2 : f(z)=0 \text{ for all }  f \in J^{(2)} \} ,
\ees 
where $I^{(2)}$ denotes the set of homogeneous polynomials 
in $I$ with degree $2$, etc. 
However, $Z(I^{(2)})$ consists of two complex lines that intersect 
at an angle $\pi/2$, and $Z(J^{(2)})$ consists of two complex lines 
that intersect at an angle $\pi/4$. 
It follows from the theorem (together with Proposition \ref{prop:idealsps}) 
that $\cA_X$ and $\cA_Y$ are not isometrically isomorphic.
\end{example}

%%%%%%%%%%%%%%%%%%%%%%%%%%%%%%%%%%%%%%%
\section{The algebras $\cA_X$ as algebras of continuous multipliers}\label{sec:mult}
%%%%%%%%%%%%%%%%%%%%%%%%%%%%%%%%%%%%%%%

{}From this point onward, we will concentrate on the commutative case.
The purpose of this section is to show that when $X$ is commutative 
and $I^X$ is a radical ideal in $\mb{C}[z]$, the algebra $\cA_X$ can be 
realized as a norm closed subalgebra of the 
multiplier algebra of a reproducing kernel Hilbert space.

Let $I \subseteq \mb{C}[z]$ be an ideal, not necessarily homogeneous. 
We will denote the closure of $I$ in $H^2_d$ by $[I]$. 
Define 
\bes
\cF_I = H^2_d \ominus I.
\ees
When $I = I^X$ is a homogeneous ideal, then $\cF_I = \cF_X$, the $X$-Fock space. 

Recall that for an ideal $I \subseteq \mb{C}[z]$ we denote 
\bes
V(I) = \{z \in \mb{C}^d : p(z) = 0 \text{ for all }  p \in I \},
\ees
\bes
Z(I) = V \cap \overline{\mb{B}_d} ,
\ees
and
\bes
Z^o(I) = V \cap \mb{B}_d.
\ees
If $W \subseteq \mb{C}^d$, we define
\bes
I(W) = \{f \in \mb{C}[z] : f(\lambda) = 0 \text{ for all }  \lambda \in W   \}.
\ees

%%%%%%%%%%%%%%%%%%%%%%%%
\begin{lemma}\label{lem:intersect}
Let $I$ be a radical ideal in $\mb{C}[z]$ such that all the irreducible 
components of $V(I)$ intersect $\mb{B}_d$. 
Then $I(Z^o(I)) = I$.
\end{lemma}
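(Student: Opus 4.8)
The plan is to prove the two inclusions $I \subseteq I(Z^o(I))$ and $I(Z^o(I)) \subseteq I$ separately, the first being essentially trivial. If $p \in I$ and $\lambda \in Z^o(I) = V(I) \cap \mb{B}_d \subseteq V(I)$, then by the very definition of $V(I)$ we have $p(\lambda) = 0$; hence $p$ vanishes on $Z^o(I)$, so $p \in I(Z^o(I))$. This gives $I \subseteq I(Z^o(I))$ with no hypotheses needed.

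For the reverse inclusion $I(Z^o(I)) \subseteq I$, the natural route is to pass through the Nullstellensatz. By Hilbert's Nullstellensatz, $I(V(I)) = \sqrt{I} = I$, since $I$ is assumed radical. So it suffices to show $I(Z^o(I)) \subseteq I(V(I))$, i.e. that any polynomial vanishing on $V(I) \cap \mb{B}_d$ already vanishes on all of $V(I)$. Here is where the hypothesis that every irreducible component of $V(I)$ meets $\mb{B}_d$ enters. Write $V(I) = V_1 \cup \cdots \cup V_r$ as the decomposition into irreducible components. Let $f \in I(Z^o(I))$; then for each $j$, $f$ vanishes on $V_j \cap \mb{B}_d$. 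The key step is that $V_j \cap \mb{B}_d$ is a nonempty (by hypothesis) relatively open subset of the irreducible variety $V_j$, and a polynomial (being holomorphic) that vanishes on a nonempty open subset of an irreducible variety vanishes identically on that variety. Concretely, one restricts $f$ to the smooth locus of $V_j$, which is a connected complex manifold, observes that $f$ vanishes on the nonempty open set $(V_j)_{\mathrm{reg}} \cap \mb{B}_d$ of it, and invokes the identity theorem for holomorphic functions on a connected manifold to conclude $f \equiv 0$ on $(V_j)_{\mathrm{reg}}$, hence on its closure $V_j$. Doing this for every $j$ gives $f \in I(V_1 \cup \cdots \cup V_r) = I(V(I)) = I$.

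The main obstacle — really the only point requiring care — is justifying that vanishing on $V_j \cap \mb{B}_d$ forces vanishing on $V_j$. One must be sure that $V_j \cap \mb{B}_d$ is not contained in the singular locus $\Sing(V_j)$ (otherwise the identity-theorem argument on the regular locus does not directly apply). But $\Sing(V_j)$ is a proper subvariety of $V_j$, so $(V_j)_{\mathrm{reg}}$ is dense in $V_j$ in the classical topology; since $V_j \cap \mb{B}_d$ is nonempty and open in $V_j$, it must meet the dense set $(V_j)_{\mathrm{reg}}$, giving a nonempty open subset of the connected manifold $(V_j)_{\mathrm{reg}}$ on which $f$ vanishes, as needed. (Connectedness of $(V_j)_{\mathrm{reg}}$ is standard for an irreducible variety.) With this in hand, the argument closes. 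Note that no homogeneity of $I$ is used, and the hypothesis that all components meet $\mb{B}_d$ is exactly what prevents a component disjoint from the ball from being invisible to $Z^o(I)$.
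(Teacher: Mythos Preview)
Your proof is correct and takes a genuinely different route from the paper's. Both arguments reduce to irreducible components and both must establish that a polynomial $f$ vanishing on $V_j \cap \mb{B}_d$ vanishes on all of $V_j$. The paper does this purely algebraically: it observes that $V(f) \cap V_j$ is a closed subvariety of $V_j$ containing the nonempty Euclidean-open set $V_j \cap \mb{B}_d$, hence $\dim(V(f)\cap V_j) = \dim V_j$; since $V_j$ is irreducible, a closed subvariety of the same dimension must be all of $V_j$ (the paper cites Mumford). You instead pass to the smooth locus $(V_j)_{\mathrm{reg}}$, note it is a connected complex manifold meeting $\mb{B}_d$, and apply the identity theorem for holomorphic functions. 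Your route is more analytic and leans on the standard but nontrivial facts that the regular locus of an irreducible variety is connected and classically dense; the paper's route stays entirely within elementary dimension theory and never mentions singularities. Both are short and natural; the paper's avoids complex-analytic input, while yours makes the mechanism (identity principle) perhaps more transparent to an analyst.
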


\begin{proof}
This is an exercise in algebraic geometry. 
Assume first that $V(I)$ is irreducible.
Let $f \in \mb{C}[z]$ such that $f(\lambda) = 0$ for all 
$\lambda \in Z^o(I) = V(I) \cap \mb{B}_d$. 
Denote $W = V(f)$. 
By assumption, $W \cap \mb{B}_d \supseteq V(I) \cap \mb{B}_d$, 
therefore $\dim W \cap V(I) = \dim V(I)$. 
It follows from \cite[Proposition 1.4]{Mum} that 
$W \cap V(I) = V(I)$, therefore $f \in I(V(I)) = I$.

Finally, if $V(I)$ is reducible then we apply this argument to each 
irreducible component.
\end{proof}

For any ideal $I$ in $\mb{C}[z]$, the radical of $I$ is
\[ \sqrt I = \{ f \in \mb{C}[z] : f^n \in I \text{ for some } n \ge 1 \} = I(V(I)) .\]

%%%%%%%%%%%%%%%%%%%%%%%%
\begin{corollary}\label{cor:ballvariety}
If $I$ is a homogeneous ideal, then 
\[ \sqrt I = I(V(I)) = I(Z(I)) = I(Z^o(I)) .\]
\end{corollary}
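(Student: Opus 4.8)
The plan is to deduce the chain of equalities from Lemma \ref{lem:intersect}, after reducing to the case where that lemma applies; the only thing we need to supply is the observation that for a \emph{homogeneous} ideal, every irreducible component of $V(I)$ automatically meets $\mb{B}_d$. First I would note that $V(I)$ is a cone (since $I$ is homogeneous, $p(z)=0$ implies $p(tz)=0$ for all $t\in\mb{C}$), hence each irreducible component $V_k$ of $V(I)$ is itself a cone and in particular contains the origin $0$. Since $0\in\mb{B}_d$, every component meets $\mb{B}_d$, so the hypothesis of Lemma \ref{lem:intersect} is satisfied for the radical ideal $\sqrt I$ (which is again homogeneous and has $V(\sqrt I)=V(I)$). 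Applying the lemma to $\sqrt I$ gives $I(Z^o(\sqrt I))=\sqrt I$, i.e. $I(Z^o(I))=\sqrt I$.

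Next I would dispatch the remaining two equalities by elementary set inclusions. We have $Z^o(I)=V(I)\cap\mb{B}_d\subseteq Z(I)=V(I)\cap\overline{\mb{B}}_d\subseteq V(I)$, so taking vanishing ideals reverses the inclusions: $I(V(I))\subseteq I(Z(I))\subseteq I(Z^o(I))$. Combining with the Nullstellensatz identity $\sqrt I=I(V(I))$ (recalled just before the corollary) and the equality $I(Z^o(I))=\sqrt I$ obtained in the previous paragraph, all three terms are squeezed between $\sqrt I$ and $\sqrt I$, forcing
\[
\sqrt I = I(V(I)) = I(Z(I)) = I(Z^o(I)).
\]

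I do not anticipate a serious obstacle here: the corollary is essentially a packaging of Lemma \ref{lem:intersect} plus the trivial but crucial remark that homogeneous varieties are cones through the origin, which guarantees the "components meet the ball" hypothesis for free. The only point requiring a word of care is that passing from $I$ to $\sqrt I$ preserves homogeneity and the zero set — standard facts — and that $Z(I)$, $Z^o(I)$ are defined via $V(I)=V(\sqrt I)$ so no ambiguity arises. If one wanted to avoid invoking $\sqrt I=I(V(I))$ as a black box, one could instead cite the Nullstellensatz directly, but since the excerpt states this identity immediately above, I would simply use it.
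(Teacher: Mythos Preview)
Your proposal is correct and matches the paper's intent: the corollary is stated without proof, as an immediate consequence of Lemma~\ref{lem:intersect} together with the Nullstellensatz identity $\sqrt I = I(V(I))$ recalled just before it. Your observation that a homogeneous variety is a cone through the origin (so every irreducible component meets $\mb{B}_d$) is precisely the missing link the reader is expected to supply, and your squeeze argument via the chain $Z^o(I)\subseteq Z(I)\subseteq V(I)$ is the natural way to finish.
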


%%%%%%%%%%%%%%%%%%%%%%%%
\begin{lemma}\label{lem:homotrivial}
If $I$ is a homogeneous ideal in $\mb{C}[z]$, then $[I] \cap \mb{C}[z] = I$.
\end{lemma}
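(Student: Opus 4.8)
If $I$ is a homogeneous ideal in $\mb{C}[z]$, then $[I] \cap \mb{C}[z] = I$.

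The plan is to exploit the grading on $H^2_d$ by degree of homogeneity. Write $H^2_d = \overline{\oplus_{n\geq 0} \cP_n}$, where $\cP_n$ is the (finite-dimensional) space of homogeneous polynomials of degree $n$, and let $Q_n$ denote the orthogonal projection onto $\cP_n$. Since $I$ is homogeneous, $I = \oplus_{n\geq 0} I_n$ (algebraic direct sum), where $I_n = I \cap \cP_n$, and hence $[I] = \overline{\oplus_{n\geq 0} I_n}$. The key observation is that the spaces $\cP_n$ are mutually orthogonal in $H^2_d$ and each is finite-dimensional, so each $I_n$ is already closed in $H^2_d$, and moreover $[I] = \oplus_{n\geq 0} I_n$ is simply the Hilbert-space orthogonal direct sum, with $Q_n [I] = I_n$ for every $n$. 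First I would verify carefully that these orthogonality and finite-dimensionality facts hold for $H^2_d$: the monomials $z^\alpha$ form an orthogonal basis with $z^\alpha \perp z^\beta$ whenever $\alpha \neq \beta$, so in particular homogeneous components of different degrees are orthogonal, and each $\cP_n$ has dimension $\binom{n+d-1}{d-1} < \infty$.

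Granting that, the argument is short. Suppose $f \in [I] \cap \mb{C}[z]$. Since $f$ is a polynomial, it has only finitely many nonzero homogeneous components, say $f = \sum_{n=0}^N f_n$ with $f_n = Q_n f \in \cP_n$. On the other hand, $f \in [I]$, and applying the projection $Q_n$ (which is bounded) gives $Q_n f \in Q_n[I] = I_n \subseteq I$ for every $n$. Therefore each $f_n$ lies in $I$, and since $I$ is an ideal (in particular a linear subspace), $f = \sum_{n=0}^N f_n \in I$. The reverse inclusion $I \subseteq [I] \cap \mb{C}[z]$ is immediate since $I$ consists of polynomials and $I \subseteq [I]$.

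The only point requiring genuine care — and hence the main obstacle, though it is a mild one — is the claim $Q_n[I] = I_n$, i.e. that closing $I$ in $H^2_d$ does not enlarge the degree-$n$ part beyond the algebraic $I_n$. This follows because $[I] = \overline{\sum_n I_n}$ is contained in $\oplus_n \overline{I_n} = \oplus_n I_n$ (each $I_n$ finite-dimensional, hence closed) as an orthogonal direct sum, so $Q_n[I] \subseteq I_n$; the reverse inclusion is trivial since $I_n \subseteq I \subseteq [I]$ and $I_n = Q_n I_n$. I would spell this out using the fact that an orthogonal direct sum of closed subspaces, taken over countably many mutually orthogonal pieces, has the property that the projection onto the $n$-th piece of any element of the closed sum lands in the $n$-th summand. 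No deep input is needed — just the Hilbert-space grading of Drury–Arveson space.
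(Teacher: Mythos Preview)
Your argument is correct and is exactly the natural one: use the orthogonal grading $H^2_d = \overline{\oplus_n \cP_n}$, the fact that each $I_n = I \cap \cP_n$ is finite-dimensional hence closed, and conclude that $Q_n[I] = I_n$, so any polynomial in $[I]$ has all its homogeneous components in $I$. The paper itself omits the proof entirely (``We omit the easy proof of this lemma''), so there is nothing to compare against beyond noting that your write-up is the standard justification the authors had in mind.
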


We omit the easy proof of this lemma.
However we note that it is not true for non-homogeneous ideals. 
Indeed, if $d=1$, $I = \lel x - 1 \rir$, 
then $H^2_1 \ominus I = \{0\}$.
Thus $[I] = H^2_1$, and $[I] \cap H^2_1 = \mb{C}[z]$. 
\smallbreak

Now we turn to the reproducing kernel.
For any $\lambda \in \mb{B}_d$, let 
\be\label{eq:kernel}
\nu_\lambda = (1-\|\lambda\|^2)^{1/2} 
\sum_{n=0}^\infty \sum_{i_1,\ldots,i_n =1}^d  
\overline{\lambda_{i_1} \cdots \lambda_{i_n}} 
e_{i_1} \otimes \cdots \otimes e_{i_n} .
\ee
It is known \cite{DavPitts1} that $\nu_\lambda$ are eigenvectors 
for the operators $L_i^*$ (the adjoints of the left creation operators 
$L_i$ on the full Fock space) with eigenvalue $\overline{\lambda_i}$. 
Since the multiplication operators $Z_i$ are co-restrictions of the 
$L_i$'s to $H^2_d$, and since 
\be\label{eq:eigenvector}
\nu_\lambda = (1-\|\lambda\|^2)^{1/2} 
\sum_{\alpha} \frac{|\alpha|!}{\alpha_1! \cdots \alpha_d!}
\overline{\lambda}^\alpha e^\alpha \in H^2_d ,
\ee
we have that $\nu_\lambda$ are eigenvectors of $Z_i^*$ with 
eigenvalues $\overline{\lambda_i}$. 

Alternatively, $H^2_d$ is known \cite{Arv98} to be a reproducing kernel 
Hilbert space with kernel 
\bes
k(\xi,\lambda) = \frac{1}{1-\lel \xi, \lambda \rir} .
\ees
The kernel function at $\lambda$, the function $k(\cdot, \lambda)$, is seen to 
correspond to (\ref{eq:eigenvector}). 
Denote by $\Mult(H^2_d)$ the multiplier algebra of $H^2_d$. 
 From the basic theory of multiplier algebras, it follows that for any 
$\phi \in \Mult(H^2_d)$, $\nu_\lambda$ is an eigenvector for 
$M_\phi^*$ with eigenvalue $\overline{\phi(\lambda)}$ \cite[Chapter 2]{AM}.

We now compute which $\nu_\lambda$ belong to $\cF_I$ for a given ideal $I$. 

%%%%%%%%%%%%%%%%%%%%%%%%
\begin{lemma}\label{lem:F_I}
The vector $\nu_\lambda$ is in $\cF_I$ if and only if $\lambda \in Z^o(I)$.
\end{lemma}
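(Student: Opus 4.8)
The plan is to prove the two implications separately, using the description of $\nu_\lambda$ from \eqref{eq:eigenvector} as the reproducing kernel function of $H^2_d$ at $\lambda$. Throughout I will freely identify $\cF_I = H^2_d \ominus [I]$.

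First I would prove the easy direction: if $\nu_\lambda \in \cF_I$, then $\lambda \in Z^o(I)$. Certainly $\lambda \in \mb{B}_d$ since $\nu_\lambda$ is only defined there, so it remains to check $p(\lambda) = 0$ for every $p \in I$. Since $\nu_\lambda$ is the kernel function of $H^2_d$ at $\lambda$, for any $f \in H^2_d$ we have $\lel f, \nu_\lambda \rir = \overline{(1-\|\lambda\|^2)^{1/2}}\,f(\lambda)$ — more precisely, using \eqref{eq:eigenvector}, $\lel f, \nu_\lambda\rir = (1-\|\lambda\|^2)^{1/2}\, f(\lambda)$ after accounting for the normalizing scalar. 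In particular, for $p \in I \subseteq [I]$ we have $p \perp \cF_I \ni \nu_\lambda$, so $0 = \lel p, \nu_\lambda \rir = (1-\|\lambda\|^2)^{1/2} p(\lambda)$, whence $p(\lambda) = 0$. Thus $\lambda \in V(I) \cap \mb{B}_d = Z^o(I)$.

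Next the converse: if $\lambda \in Z^o(I)$, then $\nu_\lambda \in \cF_I$, i.e.\ $\nu_\lambda \perp [I]$. It suffices to show $\nu_\lambda \perp p$ for every $p \in I$, since these span a dense subspace of $[I]$. Again by the reproducing property, $\lel p, \nu_\lambda \rir = (1-\|\lambda\|^2)^{1/2} p(\lambda)$, and $p(\lambda) = 0$ because $\lambda \in V(I)$. Hence $\nu_\lambda \perp [I]$, so $\nu_\lambda \in \cF_I$.

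I expect the main (mild) obstacle to be bookkeeping the normalizing constant $(1-\|\lambda\|^2)^{1/2}$ and confirming that $\nu_\lambda$ as written in \eqref{eq:kernel}/\eqref{eq:eigenvector} is indeed (a scalar multiple of) the reproducing kernel $k(\cdot,\lambda)$ of $H^2_d$, so that the inner product $\lel f, \nu_\lambda \rir$ really does recover $f(\lambda)$ up to that harmless scalar; this is already recorded in the lines preceding the lemma, where $k(\cdot,\lambda)$ is identified with \eqref{eq:eigenvector}. Once that identification is in hand, both directions are immediate from the reproducing property plus the definition of $V(I)$, and the nonvanishing of $(1-\|\lambda\|^2)^{1/2}$ on $\mb{B}_d$ lets us cancel it freely.
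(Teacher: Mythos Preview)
Your proposal is correct and follows essentially the same approach as the paper: both arguments reduce membership of $\nu_\lambda$ in $\cF_I = H^2_d \ominus [I]$ to orthogonality against $I$, and then use the reproducing property to identify $\langle p, \nu_\lambda\rangle$ with (a nonzero scalar times) $p(\lambda)$. The paper's proof is simply a one-line biconditional version of what you wrote, and it suppresses the harmless normalizing constant $(1-\|\lambda\|^2)^{1/2}$ that you track explicitly.
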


\begin{proof}
Fix $\lambda \in \mb{B}_d$. 
Then $\nu_\lambda$ lies in $\cF_I$ if and only if 
$\nu_\lambda$ is orthogonal to $I$, if and only if for all $f \in I$ we have
\bes
f(\lambda) = \lel f, \nu_\lambda \rir  = 0 .
\ees
This happens if and only if $\lambda \in Z^o(I) = V(I) \cap \mb{B}_d$. 
\end{proof}

%%%%%%%%%%%%%%%%%%%%%%%%
\begin{lemma}\label{lem:rad}
Let $I \subseteq \mb{C}[z]$ be a homogeneous ideal. 
Then 
\[
 \cF_I = \overline{\spn}\{\nu_\lambda : \lambda \in Z^o(I)\} 
\] 
if and only if $I$ is radical.
\end{lemma}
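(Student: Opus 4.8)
The plan is to prove both implications by comparing the orthogonal complements of the two subspaces $\cF_I$ and $\cW := \overline{\spn}\{\nu_\lambda : \lambda \in Z^o(I)\}$. Since both are subspaces of $H^2_d$, it is equivalent to show that $\cF_I = \cW$ iff $I$ is radical, and since $\cF_I = H^2_d \ominus [I]$ always holds, this amounts to showing $[I] = \cW^\perp$ iff $I$ is radical.

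First I would compute $\cW^\perp$. A vector $g \in H^2_d$ is orthogonal to $\cW$ iff $\lip g, \nu_\lambda \rip = 0$ for all $\lambda \in Z^o(I)$. Using the reproducing property $\lip g, \nu_\lambda \rip = (1-\|\lambda\|^2)^{1/2}\, \overline{g(\lambda)}$ (or rather its conjugate; the normalization is harmless since $(1-\|\lambda\|^2)^{1/2}>0$ on $\mb{B}_d$), this says precisely $g(\lambda) = 0$ for all $\lambda \in Z^o(I)$. In other words, $\cW^\perp = \{g \in H^2_d : g|_{Z^o(I)} = 0\}$, the functions in Drury--Arveson space vanishing on the ball part of the variety. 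Call this space $\cZ$. So the lemma reduces to: $[I] = \cZ$ iff $I$ is radical.

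Next, the inclusion $[I] \subseteq \cZ$ holds for any homogeneous ideal: polynomials in $I$ vanish on $V(I) \supseteq Z^o(I)$, hence on $Z^o(I)$, and vanishing on $Z^o(I)$ is a closed condition (it is an intersection of kernels of bounded evaluation functionals $g \mapsto g(\lambda)$), so the $H^2_d$-closure $[I]$ lands in $\cZ$. For the forward implication, suppose $I$ is radical. Intersecting $\cZ$ with $\mb{C}[z]$ gives $\cZ \cap \mb{C}[z] = I(Z^o(I))$, which by Corollary \ref{cor:ballvariety} equals $\sqrt{I} = I$; combined with Lemma \ref{lem:homotrivial} (which gives $[I] \cap \mb{C}[z] = I$), both $[I]$ and $\cZ$ meet $\mb{C}[z]$ in exactly $I$. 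To upgrade this to $[I] = \cZ$ I would argue that $\cZ$ is the closure of $\cZ \cap \mb{C}[z]$: the grading on $H^2_d$ is preserved by all the relevant operations (the evaluation functionals at points of the homogeneous set $Z^o(I)$ behave well under the circle action $g(z) \mapsto g(tz)$, $|t|=1$), so the homogeneous components of any $g \in \cZ$ again lie in $\cZ$, hence are polynomials in $I$, and the Cesàro means of $g$ then lie in $I$ and converge to $g$ in norm. Thus $\cZ = \overline{I} = [I]$. Conversely, if $I$ is not radical, pick $f \in \sqrt{I} \setminus I$; then $f \in I(V(I)) \subseteq I(Z^o(I)) \subseteq \cZ$ but $f \notin I = [I]\cap\mb{C}[z]$, so $f \in \cZ \setminus [I]$ and the two spaces differ.

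The main obstacle is the claim that $\cZ$ equals the closure of its polynomial part, i.e.\ that a general $H^2_d$-function vanishing on $Z^o(I)$ can be norm-approximated by polynomials vanishing there. The clean way to see this is the grading argument above: averaging $g$ against the circle action $t \mapsto g(t\,\cdot)$ produces the homogeneous components of $g$, each of which lies in $\cZ$ because $Z^o(I)$ is invariant under scaling by $|t| = 1$ (as $I$ is homogeneous) and evaluation functionals transform accordingly; each homogeneous component, being a polynomial vanishing on $Z^o(I)$, lies in $\sqrt{I} = I$ by Corollary \ref{cor:ballvariety}; and the Cesàro sums of the grading converge in $H^2_d$-norm. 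One should double-check that this grading/averaging argument for $\cZ$ does not secretly require $I$ radical — it does not, since it only uses homogeneity of the set $Z^o(I)$ and Corollary \ref{cor:ballvariety}, which holds for any homogeneous ideal — and that is exactly why the radicality hypothesis enters only through the identification $I(Z^o(I)) = I$ versus $\sqrt I$.
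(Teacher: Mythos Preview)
Your proposal is correct and follows essentially the same route as the paper. The paper also reduces to comparing $[I]$ with the vanishing ideal $\cZ = \{g \in H^2_d : g|_{Z^o(I)} = 0\}$, uses Lemma~\ref{lem:homotrivial} and Corollary~\ref{cor:ballvariety} in the same places, and extracts the homogeneous components of a function in $\cZ$ (via the scalar function $g_\lambda(t) = f(t\lambda)$ rather than circle averaging, but this is the same idea) to show each lies in $I(Z^o(I))$. One minor simplification: in $H^2_d$ the homogeneous decomposition $g = \sum_n g_n$ already converges in norm because the graded pieces are orthogonal, so Ces\`aro means are unnecessary here.
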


\begin{proof}
Assume that $\cF_I = \overline{\spn}\{\nu_\lambda : \lambda \in Z^o(I)\}$. 
Let $[I]$ denote the closure of $I$ in $H^2_d$. 
Then 
\[
[I] = \cF_I^{\perp} = 
\{f \in H^2_d : f(\lambda) = 0 \text{ for all }  \lambda \in Z^o(I) \} .
\]
By Lemma \ref{cor:ballvariety}, $[I] \cap \mb{C}[z] = I(V(I)) = \sqrt{I}$, 
and by Lemma \ref{lem:homotrivial}, $[I] \cap \mb{C}[z] = I$. 
Thus, $I$ is radical.

Now assume that $I$ is radical. 
By Lemma \ref{lem:F_I}, $\nu_\lambda \in \cF_I$ for all 
$\lambda \in Z(I) \cap \mb{B}_d$. 
Thus we need only show that if $f \in H^2_d$ is orthogonal to 
$\{\nu_\lambda : \lambda \in Z^o(I) \}$ then $f \in [I]$. 
Let $f \in \{\nu_\lambda : \lambda \in Z^o(I)\}^\perp$.  
Write the Taylor series of $f$ as $f(z) = \sum_{\alpha} a_\alpha z^\alpha$. 
Then for all $\lambda \in Z^o(I)$, we define a function $g_\lambda$ on $\mb{D}$ by  
\bes
 g_\lambda(t) = f(t \lambda) = 
 \sum_n \left(\sum_{|\alpha|=n} a_\alpha \lambda^\alpha\right) t^n.
\ees
But $g_\lambda \equiv 0$, thus $\sum_{|\alpha|=n} a_\alpha z^\alpha \in I(Z^o(I))$ 
for all $n$. 
Since $I$ is radical, $I = I(Z^o(I))$ by 
Lemma~\ref{cor:ballvariety}. 
So $f$ belongs to $[I]$.
\end{proof}

%%%%%%%%%%%%%%%%%%%%%%%%
\begin{proposition}\label{prop:mult}
Let $I \subseteq \mb{C}[z]$ be a radical homogeneous ideal. Then 
$\cF_I$ is naturally a reproducing kernel Hilbert space on the set $Z^o(I)$.
The algebra $\cA_I$ is the norm closure of the polynomials in $\Mult(\cF_I)$, 
and can be identified with 
\bes
\{f|_{Z^o(I)} : f \in \cA_d\}.
\ees
Moreover, $\cL_I = (\cL_d^* |_{\cF_I} )^*$ can be identified
with $\textrm{\em Mult}(\cF_I)$, and 
\be\label{eq:restrict_multipliers}
\textrm{\em Mult}(\cF_I) = \{f|_{Z^o(I)} : f \in \textrm{\em Mult}(H^2_d)\}.
\ee
\end{proposition}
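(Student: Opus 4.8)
The plan is to build on the structural results already in place, namely Lemma~\ref{lem:rad}, which says $\cF_I = \overline{\spn}\{\nu_\lambda : \lambda \in Z^o(I)\}$ since $I$ is radical, together with the description of $\cA_I$ as a quotient $\cA_d/\tilde I$. First I would observe that because $\cF_I$ is spanned by the eigenvectors $\nu_\lambda$ with $\lambda \in Z^o(I)$, and each $\nu_\lambda$ (suitably normalized) is the reproducing kernel of $H^2_d$ at $\lambda$ restricted to $\cF_I$, the space $\cF_I$ is automatically a reproducing kernel Hilbert space of functions on $Z^o(I)$: the point evaluation at $\lambda \in Z^o(I)$ is given by the inner product against $\nu_\lambda \in \cF_I$, and the kernel is the restriction $k(\xi,\lambda) = (1 - \langle \xi, \lambda\rangle)^{-1}$ to $Z^o(I) \times Z^o(I)$. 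This identification turns elements of $\cF_I$ into honest functions on $Z^o(I)$, namely $h \mapsto (\lambda \mapsto \langle h, \nu_\lambda\rangle)$ up to the normalizing factor; the restriction of a polynomial $p \in H^2_d$ to $\cF_I$ corresponds to $p|_{Z^o(I)}$ after projecting onto $\cF_I$, and since $\cF_I^\perp = [I]$ annihilates $Z^o(I)$, the value $\langle P_{\cF_I} p, \nu_\lambda\rangle = p(\lambda)$ for $\lambda \in Z^o(I)$, so the function attached to $P_{\cF_I}p$ is genuinely $p|_{Z^o(I)}$.

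Next I would address the multiplier identification. The key point is that $\cF_I^\perp = [I]$ is a coinvariant subspace for the multiplier algebra $\Mult(H^2_d)$ — equivalently $[I]$ is invariant for each $M_\phi$, which follows because $[I]$ is the closure of an ideal, hence invariant for multiplication by polynomials, hence (by taking bounded limits / using that polynomials are \wot-dense in $\Mult(H^2_d)$ acting on $H^2_d$) invariant for all multipliers. Therefore compression to $\cF_I$, $\phi \mapsto P_{\cF_I} M_\phi|_{\cF_I}$, is a unital homomorphism, and on the function-space picture it is exactly restriction of the multiplier $\phi$ to $Z^o(I)$, because $M_\phi^* \nu_\lambda = \overline{\phi(\lambda)}\,\nu_\lambda$. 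This gives the inclusion $\supseteq$ in~(\ref{eq:restrict_multipliers}) and also shows every such restriction lies in $\Mult(\cF_I)$. For the reverse inclusion $\subseteq$, given $\psi \in \Mult(\cF_I)$, I would use the commutant lifting / Nevanlinna–Pick structure of $H^2_d$ (the fact, due to Arveson and refined by Davidson–Pitts, McCullough–Trent, that $H^2_d$ is a complete Pick space and that for a coinvariant subspace $\cK$, $\Mult(\cK) = \{P_\cK M_\phi|_\cK : \phi \in \Mult(H^2_d)\}$ with $\|\psi\|_{\Mult(\cK)} = \dist$-type equality): any multiplier of the compressed space lifts to a multiplier of the ambient space with the same norm. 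Hence $\psi = \phi|_{Z^o(I)}$ for some $\phi \in \Mult(H^2_d)$, proving equality in~(\ref{eq:restrict_multipliers}).

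Then I would identify $\cL_I$. By definition $\cL_I$ is the \wot-closure of $\cA_I$, and $\cA_I = P_{\cF_I}\cA_d|_{\cF_I}$ acting on $\cF_I$; since $\cF_I^\perp = [I]$ is invariant for $\cL_d$, the map $T \mapsto P_{\cF_I} T|_{\cF_I}$ is a \wot-continuous, completely contractive, unital homomorphism on $\cL_d$ whose range is \wot-closed (it is a compression to a semi-invariant subspace of a \wot-closed algebra, and $T \mapsto P_{\cF_I}T|_{\cF_I}$ is weak-$*$ continuous with weak-$*$ closed range by a standard argument), and it sends the \wot-dense subalgebra $\cA_d$ of $\cL_d$ onto the \wot-dense subalgebra $\cA_I$ of $\cL_I$; therefore $\cL_I = P_{\cF_I}\cL_d|_{\cF_I}$. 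Writing this out on adjoints gives $\cL_I = (\cL_d^*|_{\cF_I})^*$ because $\cF_I^\perp$ being $\cL_d$-invariant means $\cF_I$ is $\cL_d^*$-invariant, and $(S|_{\cF_I})^* = P_{\cF_I}S^*|_{\cF_I}$. Finally, $\Mult(H^2_d)$ is exactly $\cL_d$ (identifying a multiplier with the operator $M_\phi$ on $H^2_d$, a theorem going back to Arveson), so the compression description and the multiplier-restriction description~(\ref{eq:restrict_multipliers}) match up, yielding $\cL_I \cong \Mult(\cF_I)$. The statement that $\cA_I$ is the norm closure of the polynomials in $\Mult(\cF_I)$ and equals $\{f|_{Z^o(I)} : f \in \cA_d\}$ then follows by applying the same compression map to $\cA_d = \overline{\operatorname{alg}}\{I, Z_1,\ldots,Z_d\}$, using norm-continuity in place of \wot-continuity.

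\textbf{Main obstacle.} The technical heart is the reverse inclusion $\Mult(\cF_I) \subseteq \{\phi|_{Z^o(I)} : \phi \in \Mult(H^2_d)\}$, i.e.\ the lifting of an abstract multiplier of the compressed RKHS to a multiplier of Drury--Arveson space with control of the norm. This is where one genuinely needs that $H^2_d$ is a complete Pick kernel and the attendant commutant lifting theorem for multiplier algebras restricted to coinvariant subspaces; everything else (the RKHS structure of $\cF_I$, the invariance of $[I]$, the \wot/weak-$*$ bookkeeping for $\cL_I$) is routine once Lemma~\ref{lem:rad} is in hand. I would be careful to cite the precise form of the lifting theorem being invoked and to check the norm equality, not just the set equality, since~(\ref{eq:restrict_multipliers}) as stated is a set identity but the natural proof delivers an isometric identification.
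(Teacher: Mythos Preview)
Your proposal is correct and follows essentially the same line as the paper's proof: use Lemma~\ref{lem:rad} to get the RKHS structure, identify the compressions $S_i$ as coordinate multipliers via the eigenvector relation $S_i^*\nu_\lambda = \overline{\lambda_i}\nu_\lambda$, observe that compression to the coinvariant subspace $\cF_I$ sends $M_f$ to $M_{f|_{Z^o(I)}}$, and then invoke a Nevanlinna--Pick/commutant-lifting result for the reverse inclusion in~(\ref{eq:restrict_multipliers}). The paper cites \cite[Theorem~3.3]{DavPittsPick} or \cite[Theorem~2.8]{AriasPopescu} for precisely the lifting step you flag as the main obstacle, and is somewhat terser than you are about the \wot-continuity bookkeeping for $\cL_I = (\cL_d^*|_{\cF_I})^*$, but the argument is the same.
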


\begin{proof}
Since $\cF_I = \overline{\spn}\{\nu_\lambda : \lambda \in Z^o(I)\}$, 
it is naturally a reproducing kernel Hilbert space on the set $Z^o(I)$ 
with kernel functions $\nu_\lambda$, $\lambda \in Z^o(I)$.

Now, $\cA_I$ is generated as the operator norm closure of the identity 
and the compressions of the coordinate functions 
$S_i = P_{\cF_I} Z_i |_{\cF_I}$, $i=1, \ldots, d$, to a coinvariant space. 
Since $S_i^* \nu_\lambda = \overline{\lambda_i} \nu_\lambda$, 
$S_i$ is the multiplier operator that sends $f(z) \in \cF_I$ 
(a function on $Z^o(I)$) to $z_i f(z)$. This shows that $\cA_I$ is
the norm closure of the polynomials in $\textrm{Mult}(\cF_I)$.

The same argument shows that $\cL_I$ is a \wot-closed algebra of multipliers 
in $\textrm{Mult}(\cL_I)$ generated by polynomials. 
Furthermore, if $f \in \textrm{Mult}(H^2_d)$ and $M_f$ is the corresponding 
multiplication operator on $\cF_I$, then $P_{\cF_I} M_f |_{\cF_I} = M_g$, 
where $g$ is the multiplier on $\cF_I$ given by $g = f|_{Z^o(I)}$.  
This provides a natural identification between $\cL_I$ and 
$\{f|_{Z^o(I)} : f \in \textrm{Mult}(H^2_d)\}$. 

To establish Equation (\ref{eq:restrict_multipliers}), it remains to show that every 
multiplier in $\textrm{Mult}(\cL_I)$ extends to a multiplier in $\textrm{Mult}(H^2_d)$ of the same norm. 
This follows from \cite[Theorem 3.3]{DavPittsPick} or  \cite[Theorem 2.8]{AriasPopescu}.
\end{proof}

Thus, the algebra $\cA_{I}$, which is the universal unital 
operator algebra generated by a row contraction satisfying the relations 
in $I$, can be given three interpretations. 
First $\cA_{I}$ is the quotient algebra $\cA_d / \overline{I}$; 
second, $\cA_{I}$ is the concrete operator algebra generated by compression 
of $\cA_d$ to $\cF_I$; and thirdly, it is also an algebra of functions
\bes
\{f|_{Z^o(I)} : f \in \cA_d\},
\ees
of restrictions given the multiplier norm (on the subspace $\cF_I$). 
All of these points of view are useful.

%%%%%%%%%%%%%%%%%%%%%%%%%%%%%%%%%%%%%%%
\section{Nullstellensatz for homogeneous ideals in multiplier algebras}\label{sec:Null}
%%%%%%%%%%%%%%%%%%%%%%%%%%%%%%%%%%%%%%%

Our goal in this section is to obtain a (projective) Nullstellensatz for a 
large class of operator algebras, including $\cA_d$ and the 
``ball algebra" $A(\mb{B}_d)$. 
 From this result we will derive an approximation result 
(Corollary \ref{cor:approxpoly}) that will allow us to describe isomorphisms 
between the algebras $\cA_X$ that are induced by automorphisms 
of $\mb{B}_d$ (Proposition \ref{prop:induced} below). 
At the end of the section we will also provide a different and 
quick proof of Corollary \ref{cor:approxpoly} for the algebra $\cA_d$. 

Let $\Omega \subseteq \mb{C}^d$ be an open bounded domain that is the 
union of polydiscs centered at $0$. 
Then $\Omega$ has the following property:
\bes
 \lambda \in \Omega \Rightarrow t \lambda \in \Omega \,\, , \,\, 
 \textrm{for all } t \in \overline{\mb{D}}
\ees
and $\Omega$ also the property that every function $f$ holomorphic in 
$\Omega$ has a Taylor series that converges in $\Omega$.

Let $\cH$ be a reproducing kernel Hilbert space of analytic functions in 
$\Omega$ containing the polynomials with the additional property that 
$f(z) \mapsto f(e^{it}z)$ is a unitary operator on $\cH$ for all $t \in \mb{R}$.
It follows that if $p,q \in\cH$ are homogeneous polynomials of 
different total degrees, then $\lel p, q\rir = 0$. 

In the discussion below $B$ will denote the closure of the polynomials in the 
multiplier algebra $\Mult(\cH)$. 
If $\cH = H^2_d$, then $B = \cA_d$, which is the case of principal interest in this paper.
If $\cH$ is taken to be the Bergman space on $\Omega$, then $B$ is $A(\Omega)$,
the space of continuous functions on $\ol{\Omega}$ which are analytic on $\Omega$,
with the sup norm. 
As is always the case with algebras of multipliers, the norm of $B$, 
which will be denoted simply by $\|\cdot\|$, 
satisfies $\|f\|_\infty \leq \|f\|$ (see \cite[Chapter 2]{AM}).

Every $f \in B$ has a Taylor series in $\Omega$, 
$f(z) = \sum_\alpha a_\alpha z^\alpha$. 
We write
\be\label{eq:hom}
f = \sum_{n=0}^\infty f_n
\ee
where $f_n(z) = \sum_{|\alpha| = n}a_\alpha z^\alpha$ is the 
$n$th homogeneous component of $f$. 
The series (\ref{eq:hom}) converges locally uniformly in $\Omega$.

%%%%%%%%%%%%%%%%%%%%%%%%
\begin{lemma}\label{lem:closcom}
For all $n$, the map $P_n : B \rightarrow \mb{C}[z] \subseteq B$ 
given by $P_n(f) = f_n$ is contractive. 
Furthermore, the series (\ref{eq:hom}) is Ces\`{a}ro norm convergent 
to $f$ in the norm of $B$.
\end{lemma}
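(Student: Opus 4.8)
The plan is to exploit the circular symmetry of $\cH$ to realize the homogeneous-component maps $P_n$ and their Ces\`aro averages as integrals against bounded operators on $\Mult(\cH)$, so that contractivity comes for free. First I would note that for each $t \in \mb{R}$ the unitary $U_t$ on $\cH$ given by $(U_t g)(z) = g(e^{it}z)$ conjugates $\Mult(\cH)$ into itself: if $f \in \Mult(\cH)$ then $U_t M_f U_t^* = M_{f_t}$ where $f_t(z) = f(e^{it}z)$, and this is clearly an isometry of $\Mult(\cH)$. Since the Taylor series of $f_t$ is $\sum_\alpha e^{it|\alpha|}a_\alpha z^\alpha$, the $n$th homogeneous component is extracted by
\bes
 f_n = \frac{1}{2\pi}\int_0^{2\pi} e^{-int} f_t \, dt .
\ees
I would make sense of this Bochner integral in the norm of $B = \ol{\mb{C}[z]}^{\,\|\cdot\|}$ (note $f_t \in B$ whenever $f \in B$, since conjugation by $U_t$ preserves the polynomials and hence their closure), using that $t \mapsto f_t$ is norm-continuous — this continuity is the one point that needs a small argument, and I would get it by approximating $f$ by polynomials, on which $t \mapsto f_t$ is visibly continuous, and using the uniform bound $\|f_t\| = \|f\|$. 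Then $\|f_n\| = \|P_n f\| \le \frac{1}{2\pi}\int_0^{2\pi}\|f_t\|\,dt = \|f\|$, giving contractivity; and it lands in $\mb{C}[z]$ because $f_n$ is a homogeneous polynomial (it is the $n$th Taylor component, which is a polynomial as $\cH$ consists of functions holomorphic on $\Omega$ with convergent Taylor series, and $f \in \cH$).

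For the Ces\`aro statement, the standard trick is to replace the Dirichlet-type kernel $e^{-int}$ by the Fej\'er kernel. Writing $\sigma_N(f) = \sum_{n=0}^{N}\bigl(1 - \tfrac{n}{N+1}\bigr) f_n$ for the $N$th Ces\`aro mean of the series $\sum f_n$, the same computation gives
\bes
 \sigma_N(f) = \frac{1}{2\pi}\int_0^{2\pi} K_N(t)\, f_t \, dt ,
\ees
where $K_N$ is the Fej\'er kernel, which is nonnegative with $\frac{1}{2\pi}\int_0^{2\pi}K_N(t)\,dt = 1$. Hence $\|\sigma_N(f)\| \le \|f\|$ for all $N$, so the operators $\sigma_N$ are uniformly bounded on $B$. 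To conclude $\sigma_N(f) \to f$ in norm for every $f \in B$, I would invoke a Banach–Steinhaus argument: convergence holds trivially on the dense subspace $\mb{C}[z]$ (there $\sigma_N(f)$ is eventually $f$ up to the decaying coefficients, and $\sigma_N(f) \to f$ by elementary Ces\`aro summation of a finite sum), and uniform boundedness of $\{\sigma_N\}$ then propagates convergence to all of $B = \ol{\mb{C}[z]}$.

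I expect the only genuine obstacle to be the norm-continuity of $t \mapsto f_t$ in $B$ (equivalently, strong continuity of the action $t \mapsto U_t(\cdot)U_t^*$ on $B$), since everything afterward is the routine Fej\'er-kernel and Banach–Steinhaus machinery. This continuity is handled exactly as sketched: for a polynomial $p$ it is obvious and in fact $\|p_t - p_s\| \le C_p|t-s|$; for general $f \in B$ pick polynomials $p_k \to f$ in norm, and then $\|f_t - f_s\| \le \|f_t - (p_k)_t\| + \|(p_k)_t - (p_k)_s\| + \|(p_k)_s - f_s\| = 2\|f - p_k\| + \|(p_k)_t - (p_k)_s\|$, which can be made small. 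One should also verify measurability/integrability of the $B$-valued map $t \mapsto K_N(t) f_t$, but norm-continuity plus the scalar bound $\|f_t\| \equiv \|f\|$ makes the Bochner integral well-defined, and the estimates above then go through verbatim.
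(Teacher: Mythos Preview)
Your proposal is correct and follows essentially the same approach as the paper: realize $P_n$ and the Ces\`aro means as averages of the gauge action $f \mapsto f_t = U_t f U_t^*$ against the Dirichlet and Fej\'er kernels, then conclude by the standard density argument. The only technical difference is that the paper works with the strong operator topology (where continuity of $t \mapsto \gamma_t(f)$ is immediate from SOT-continuity of $t \mapsto U_t$) and interprets the integral in $B(\cH)$, whereas you upgrade to norm continuity in $B$ via polynomial approximation and take a genuine Bochner integral; either route works.
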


\begin{proof}
Consider the gauge automorphisms on $B$:
\bes
[\gamma_t(f)](z) = f(e^{it}z).
\ees
The unitary group given by $[U_t(h)](z) = h(e^{it}z)$ is continuous in the 
strong operator topology, and $\gamma_t = \ad U_t$.
Hence the path $t \mapsto \gamma_t(f)$ is continuous with respect 
to the strong operator topology. 
One sees therefore that the integral
\bes
P_n(f) = \frac{1}{2\pi}\int_0^{2\pi} \gamma_t(f) e^{-int} dt
\ees
converges in the strong operator topology to an element of $B(\cH)$. 
The operator $P_n$ is a complete contraction, as it is an average of 
complete contractions. 
Note that $P_n$ maps $\mb{C}[z]$ onto the space $H_n$ of 
homogeneous polynomial of degree $n$. 
This fact follows from the simple identity $U_s P_n(f) = e^{ins}P_n(f)$.  
Therefore, $P_n$ maps $B = \overline{\mb{C}[z]}^{\|\cdot\|}$ onto $H_n$. 
A standard argument using the Fej\'{e}r kernel shows that 
the Ces\`{a}ro means $\Sigma_n(f)$ are completely contractive
and  converge in norm to $f$, and that $P_n(f) = f_n$. 
\end{proof}

In particular, we see that $f$ is in the closed linear span of its 
homogeneous components. This will be used repeatedly below.

%%%%%%%%%%%%%%%%%%%%%%%%
\begin{definition}
An ideal $J \subseteq B$ is said to be \emph{homogeneous} if 
$f_n \in J$ for all $n \in \mb{N}$ and all $f \in J$.
\end{definition}

%%%%%%%%%%%%%%%%%%%%%%%%
\begin{proposition}\label{prop:homeq}
A closed ideal $J \subseteq B$ is homogeneous if and only if for all 
$t \in \mb{D}$ and all $f \in J$, one has $f(tz) \in J$.
\end{proposition}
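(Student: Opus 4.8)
The plan is to prove both implications directly, using the Ces\`{a}ro/homogeneous-component machinery established in Lemma~\ref{lem:closcom}.

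\medskip

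\noindent\textbf{Approach.} First I would dispose of the easy direction: if $J$ is homogeneous and $f\in J$, then writing $f=\sum_n f_n$ with each $f_n\in J$, for $t\in\mb{D}$ the function $f(tz)=\sum_n t^n f_n(z)$. Since each $t^n f_n\in J$ and, by Lemma~\ref{lem:closcom}, the partial sums converge in norm (indeed the Ces\`{a}ro means do) to $f(tz)$ — this needs a one-line check that the dilation $z\mapsto tz$ is itself a contractive map on $B$, so that $\Sigma_N\big(f(tz)\big)$ coincides with the truncated/Ces\`{a}ro sums of $\sum t^n f_n$ — and $J$ is closed, we get $f(tz)\in J$.

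\medskip

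\noindent\textbf{The converse.} Suppose $J$ is closed and $f(tz)\in J$ for all $f\in J$ and all $t\in\mb{D}$. Fix $f\in J$ and $n\ge 0$; I want $f_n\in J$. The natural move is to recover $f_n$ by a Cauchy-type integral in the dilation parameter: for a fixed radius $0<r<1$,
\[
f_n(z) \;=\; \frac{1}{2\pi r^n}\int_0^{2\pi} f(re^{i\theta}z)\, e^{-in\theta}\, d\theta .
\]
Here the integrand $\theta\mapsto f(re^{i\theta}z)$ is a continuous $B$-valued path (continuity in the norm of $B$ follows because $\theta\mapsto\gamma_\theta$, combined with the dilation by $r$, is strong-operator continuous as in the proof of Lemma~\ref{lem:closcom}, and one can upgrade to norm continuity after composing with the fixed contraction "multiply by $r$"; alternatively argue directly that $\|f(re^{i\theta}z)-f(re^{i\theta_0}z)\|\to 0$). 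Each value $f(re^{i\theta}z)$ lies in $J$ by hypothesis (take $t=re^{i\theta}$). A Bochner-integral / Riemann-sum argument then shows the integral lies in $\overline{J}=J$. Finally one identifies the integral with $f_n$: apply the contractive projection $P_m$ of Lemma~\ref{lem:closcom} to both sides; since $P_m\big(f(re^{i\theta}z)\big)=r^m e^{im\theta}f_m(z)$, the orthogonality/selection of Fourier modes forces the right-hand side to equal $f_n$ when $m=n$ and $0$ otherwise, confirming the formula. Hence $f_n\in J$, so $J$ is homogeneous.

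\medskip

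\noindent\textbf{Main obstacle.} The only delicate point is justifying that the $B$-valued integral $\frac{1}{2\pi}\int_0^{2\pi} f(re^{i\theta}z)e^{-in\theta}\,d\theta$ makes sense and lands in the closed subspace $J$. This is exactly the kind of argument already run in Lemma~\ref{lem:closcom} for $P_n$, so I would lean on that: either cite that the analogous integral converges in the strong operator topology and then note its value is a norm-limit of Riemann sums of elements of $J$ (each Riemann sum being a finite combination of the $f(re^{i\theta_j}z)\in J$), hence in $J$; or, cleanly, observe that $f(tz)\in J$ for all $t\in\mb{D}$ together with closedness of $J$ and the norm-convergence of the Ces\`{a}ro series $\sum_n t^n f_n$ to $f(tz)$ already gives, upon applying $P_n$, that $t^n f_n = P_n(f(tz))\in P_n(J)$; and since $P_n(J)\subseteq J$ (because $P_n$ is a norm limit — in SOT, but one can also see $P_n(g)$ for $g\in J$ as a norm limit of Ces\`{a}ro averages of gauge translates $\gamma_\theta(g)$, and $\gamma_\theta(g)=g(e^{i\theta}z)\in J$), we conclude $f_n\in J$. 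I expect the slick route is to run everything through $P_n$ and the gauge automorphisms $\gamma_\theta$, noting $\gamma_\theta(J)\subseteq J$ follows from the hypothesis with $t=e^{i\theta}$ (a boundary value, so strictly one takes $t=re^{i\theta}$, $r\uparrow 1$, and uses closedness), thereby reducing the whole proposition to the already-proved statement that $P_n(g)$ lies in $\overline{\operatorname{span}}\{\gamma_\theta(g):\theta\}$.
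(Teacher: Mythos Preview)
Your forward direction matches the paper's. For the converse, the paper takes a simpler route than your Fourier-integral extraction. Rather than integrating to isolate $f_n$, it argues recursively: taking $t=0$ gives $f_0\in J$ (and $f_0=0$ when $J$ is proper), so $t^{-1}f(tz)=\sum_{n\ge0}t^nf_{n+1}\in J$ for $t\ne0$; sending $t\to0$ in norm (using the bound $\|\sum_{n\ge1}t^nf_{n+1}\|\le\frac{|t|}{1-|t|}\|f\|$ from $\|f_m\|\le\|f\|$) yields $f_1\in J$. Then $t^{-2}\bigl(f(tz)-tf_1(z)\bigr)\in J$ and $t\to0$ gives $f_2\in J$, and so on. No integral, no continuity in $\theta$ --- just closedness of $J$ and the norm bound from Lemma~\ref{lem:closcom}.

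Your integral approach is also correct, and your ``alternatively argue directly'' remark is the right justification for norm-continuity of $\theta\mapsto f(re^{i\theta}z)$: the series $\sum_m r^m e^{im\theta}f_m$ converges absolutely in $B$, uniformly in $\theta$, so Riemann sums of the integral lie in $J$ and converge in norm. The attempted shortcut via ``SOT-continuity upgraded by composing with dilation by $r$'' is not obviously valid as stated, and the alternate route through $P_n(J)\subseteq J$ and boundary gauge translates $\gamma_\theta$ drags in an Abel-summability step you don't need. Your method is non-recursive and closer in spirit to the $P_n$ machinery of Lemma~\ref{lem:closcom}; the paper's is more elementary and self-contained.
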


\begin{proof}
Assume that $J$ is homogeneous, and let $f(z) = \sum_n f_n(z) \in J$. 
By the previous lemma $\|f_n\| \leq \|f\|$, so for all $t\in \mb{D}$, 
$f(tz) = \sum_n t^n f_n(z)$ is a norm convergent series of elements in $J$.
Hence $f(tz) \in J$.

Conversely, let $f \in J$, and assume that for all $t \in \mb{D}$, $f(tz) \in J$. 
Assuming that $J$ is proper, $f_0 = 0$ follows from taking $t = 0$. 
But then 
\bes 
\frac{f(tz)}{t} = \sum_{n=0}^\infty t^n f_{n+1} \in J.
\ees
Taking $t \rightarrow 0$ we find that $f_1(z) \in J$. 
Now we consider
\bes 
\frac{f(tz) - f_1(tz)}{t^2} = \sum_{n=0}^\infty t^n f_{n+2}(z) \in J,
\ees
taking the limit as $t \rightarrow 0$ we find that $f_2(z) \in J$. 
The result follows by recursion.
\end{proof}

%%%%%%%%%%%%%%%%%%%%%%%%
\begin{lemma}\label{lem:pinI}
Let $I \subseteq \mb{C}[z]$ be a homogeneous ideal. 
Then the closure of $I$ in $B$ is homogeneous. 
If $p$ is a homogeneous polynomial in $\overline{I}$, then  $p \in I$.
\end{lemma}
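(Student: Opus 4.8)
The plan is to deduce both claims from the contractivity of the homogeneous-component maps $P_n$ established in Lemma~\ref{lem:closcom}, together with the fact that $\mathbb{C}[z]$ is a graded ring with finite-dimensional homogeneous pieces.

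First, $\overline I$ is a closed ideal of $B$ because multiplication in $B$ is continuous. To see that $\overline I$ is homogeneous in the sense defined above, I would show $P_n(\overline I)\subseteq\overline I$ for every $n$. If $g\in I$, then $g=\sum_m g_m$ is a finite sum of its homogeneous components, and since $I$ is a homogeneous ideal of $\mathbb{C}[z]$ each $g_m$ already lies in $I$; hence $P_n(g)=g_n\in I$. Thus $P_n(I)\subseteq I\subseteq\overline I$, and as $P_n$ is norm-continuous (being contractive) and $\overline I$ is closed, $P_n(\overline I)\subseteq\overline I$. Consequently, for any $f\in\overline I$ every homogeneous component $f_n=P_n(f)$ lies in $\overline I$, which is exactly the statement that $\overline I$ is a homogeneous ideal of $B$. (Alternatively, one may invoke Proposition~\ref{prop:homeq}: the map $f\mapsto f(tz)=\sum_n t^n f_n$ is bounded on $B$ with norm at most $(1-|t|)^{-1}$ by Lemma~\ref{lem:closcom}, and it clearly carries $I$ into $I$, hence $\overline I$ into $\overline I$.)

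For the second assertion, let $p\in\overline I$ be a homogeneous polynomial, say of degree $n$, and pick polynomials $f^{(k)}\in I$ with $\|f^{(k)}-p\|\to 0$. Applying the contraction $P_n$ gives $\|f^{(k)}_n-p\| = \|P_n(f^{(k)})-P_n(p)\| \to 0$, and, as in the previous paragraph, each $f^{(k)}_n=P_n(f^{(k)})$ lies in $I$ because $I$ is homogeneous. So $(f^{(k)}_n)_k$ is a sequence in $I\cap H_n$, where $H_n$ denotes the space of homogeneous polynomials of degree $n$. Since $H_n$ is finite-dimensional, its linear subspace $I\cap H_n$ is a closed subset of $B$; as $f^{(k)}_n\to p$ in the norm of $B$, we get $p\in I\cap H_n\subseteq I$.

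I do not anticipate a real obstacle: the two points that need a little care are that $P_n$ genuinely sends $I$ back into $I$ --- which is precisely the homogeneity of $I$ as an ideal of polynomials --- and that a finite-dimensional subspace of a normed space is closed, so that a $B$-norm limit of degree-$n$ elements of $I$ remains in $I$.
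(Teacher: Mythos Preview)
Your proof is correct and is exactly the argument the paper has in mind: the paper's own proof is the single line ``This follows easily from the continuity of $P_n$,'' and you have spelled out precisely that---use the contractive projections $P_n$ from Lemma~\ref{lem:closcom} to pass the homogeneous decomposition through the closure, and then use finite-dimensionality of $H_n$ to conclude that a norm limit of degree-$n$ elements of $I$ stays in $I$. Nothing further is needed.
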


\begin{proof}
This follows easily from the continuity of $P_n$.
\end{proof}

%%%%%%%%%%%%%%%%%%%%%%%%
\begin{lemma}\label{lem:Iin}
Let $J$ be a homogeneous ideal in $B$. 
Then the ideal $I = \mb{C}[z] \cap J$ of $\mb C[z]$ satisfies 
$I \subseteq J \subseteq \overline{I}$, 
and it is the unique homogeneous ideal in $\mb{C}[z]$ with this property. 
\end{lemma}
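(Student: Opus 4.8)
The plan is to verify the three assertions in turn: (a) $I := \mb{C}[z] \cap J$ is a homogeneous ideal of $\mb{C}[z]$; (b) $I \subseteq J \subseteq \overline{I}$; and (c) uniqueness. For (a), that $I$ is an ideal of $\mb{C}[z]$ is immediate since $J$ is an ideal of $B$ and $\mb{C}[z] \subseteq B$ is a subalgebra; homogeneity of $I$ follows from homogeneity of $J$, because if $p \in I$ then each homogeneous component $p_n$ lies in $J$ (as $J$ is homogeneous) and also in $\mb{C}[z]$, hence in $I$. The inclusion $I \subseteq J$ is trivial.

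For the inclusion $J \subseteq \overline{I}$, the key point is the Ces\`aro convergence from Lemma \ref{lem:closcom}. Take $f \in J$. Since $J$ is homogeneous, every homogeneous component $f_n$ of $f$ lies in $J$; being a polynomial, $f_n \in \mb{C}[z] \cap J = I$. By Lemma \ref{lem:closcom}, the Ces\`aro means $\Sigma_N(f) = \sum_{n=0}^{N}\big(1 - \tfrac{n}{N+1}\big) f_n$ converge to $f$ in the norm of $B$, and each $\Sigma_N(f)$ is a finite linear combination of the $f_n \in I$, hence lies in $I$. Therefore $f \in \overline{I}$, giving $J \subseteq \overline{I}$.

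For uniqueness, suppose $I'$ is another homogeneous ideal of $\mb{C}[z]$ with $I' \subseteq J \subseteq \overline{I'}$. Then $I' \subseteq \mb{C}[z] \cap J = I$. Conversely, take a homogeneous polynomial $p \in I$; then $p \in J \subseteq \overline{I'}$, so $p$ is a homogeneous polynomial lying in the closure of the homogeneous ideal $I'$, and by Lemma \ref{lem:pinI} we get $p \in I'$. Since $I$ is homogeneous it is spanned by its homogeneous elements, so $I \subseteq I'$, whence $I = I'$.

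I do not expect any serious obstacle here: the whole argument is bookkeeping on top of the contractivity of the projections $P_n$ and the Ces\`aro convergence established in Lemma \ref{lem:closcom}, together with Lemma \ref{lem:pinI}. The one place to be slightly careful is the uniqueness step, where one must pass from ``$p \in \overline{I'}$ for every homogeneous $p \in I$'' back to an honest polynomial membership, which is exactly what Lemma \ref{lem:pinI} supplies; without invoking it one could not conclude, since a priori $\overline{I'} \cap \mb{C}[z]$ might be larger than $I'$ (though in fact for homogeneous ideals it is not, by that lemma).
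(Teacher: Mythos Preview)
Your proof is correct and follows essentially the same approach as the paper: the inclusion $J \subseteq \overline{I}$ comes from Lemma~\ref{lem:closcom} (Ces\`aro convergence), and uniqueness comes from Lemma~\ref{lem:pinI}. Your uniqueness argument is slightly more direct in one direction (using $I' \subseteq \mb{C}[z]\cap J = I$ immediately rather than passing through $K \subseteq \overline{I}$), but this is a cosmetic difference.
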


\begin{proof}
Clearly $I \subseteq J$, and that $J \subseteq \overline{I}$ follows from 
Lemma \ref{lem:closcom}.
If $K$ is another homogeneous ideal in $\mb{C}[z]$ such that 
$K \subseteq J \subseteq \overline{K}$, then we have 
$I \subseteq \overline{K}$ and $K \subseteq \overline{I}$.
 From Lemma \ref{lem:pinI}, $I = K$.
\end{proof}

%%%%%%%%%%%%%%%%%%%%%%%%
\begin{corollary}\label{cor:fingen}
Every closed homogeneous ideal in $B$ is finitely generated (as a closed ideal).
\end{corollary}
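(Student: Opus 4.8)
The plan is to reduce the statement about closed homogeneous ideals of $B$ to the Noetherianity of the polynomial ring $\mb{C}[z]$, via the correspondence set up in Lemma \ref{lem:Iin}. First I would take a closed homogeneous ideal $J \subseteq B$ and form $I := \mb{C}[z] \cap J$, which by Lemma \ref{lem:Iin} is a homogeneous ideal of $\mb{C}[z]$ satisfying $I \subseteq J \subseteq \overline{I}$. Since $\mb{C}[z]$ is Noetherian by the Hilbert Basis Theorem, $I$ is finitely generated as an ideal of $\mb{C}[z]$; moreover, because $I$ is homogeneous, we may take the generators $p_1, \ldots, p_k$ to be homogeneous polynomials.

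The next step is to argue that these same $p_1, \ldots, p_k$ generate $J$ as a closed ideal of $B$. Let $J'$ denote the closed ideal of $B$ generated by $p_1, \ldots, p_k$. On the one hand $J' \subseteq J$ since each $p_i \in I \subseteq J$ and $J$ is closed. On the other hand, $J'$ contains $I$ (as $I$ is the ideal of $\mb{C}[z]$ generated by the $p_i$, and $J'$ is in particular an ideal of $B$ containing them, hence contains all $\mb{C}[z]$-linear combinations of the $p_i$), so $J'$ is a closed ideal of $B$ containing $I$, whence $J' \supseteq \overline{I} \supseteq J$. Therefore $J = J' $ is finitely generated as a closed ideal.

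The one point that requires a small amount of care — and the step I expect to be the main (very minor) obstacle — is the claim that the finitely generated homogeneous ideal $I$ of $\mb{C}[z]$ has \emph{homogeneous} generators. This is standard: if $I = \langle q_1, \ldots, q_m\rangle$ with arbitrary $q_j$, then since $I$ is homogeneous each homogeneous component of each $q_j$ lies in $I$, and collecting all these (finitely many) homogeneous components gives a homogeneous generating set. Everything else is a direct application of Lemma \ref{lem:Iin} together with the elementary observation that the closure of the $\mb{C}[z]$-ideal generated by a finite set coincides with the closed $B$-ideal it generates, which follows from density of $\mb{C}[z]$ in $B$ and continuity of multiplication.
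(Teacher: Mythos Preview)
Your argument is correct and is precisely the intended one: the paper states the result as an immediate corollary of Lemma~\ref{lem:Iin}, and your proof spells out exactly that deduction via the Hilbert Basis Theorem. The extra care about choosing homogeneous generators is harmless but not actually needed, since the corollary only asks for finite generation as a closed ideal.
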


%%%%%%%%%%%%%%%%%%%%%%%%
\begin{remark}
There do exist closed ideals in $A(\mb{B}_d)$ which are not finitely generated 
(one may adjust the example in \cite[Proposition 4.4.2]{RudinPoly}).
\end{remark}

For a closed ideal $J \subseteq B$, \emph{the radical of $J$} is defined 
to be the ideal $\sqrt{J}$ given by 
\bes
\sqrt{J} = \{f \in B : f^n \in J \text{ for some } n\ge1 \}.
\ees

%%%%%%%%%%%%%%%%%%%%%%%%
\begin{lemma}\label{lem:radhomhom}
The radical of a closed homogeneous ideal $J$ of $B$ is homogeneous.
\end{lemma}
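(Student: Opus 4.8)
The plan is to show that $\sqrt{J}$ satisfies the criterion of Proposition~\ref{prop:homeq}: a closed ideal of $B$ is homogeneous if and only if it is invariant under the dilation maps $f(z) \mapsto f(tz)$ for $t \in \mb{D}$. So first I would note that $\sqrt{J}$ is indeed a closed ideal of $B$ — it is an ideal by the usual commutative-ring argument (if $f^m, g^n \in J$ then $(f+g)^{m+n} \in J$ by the binomial theorem, and $J$ is an ideal), and it is closed because $B$ is a commutative Banach algebra, so the set of $f$ with $f^n \in J$ for a fixed $n$ is closed (preimage of the closed set $J$ under the continuous map $f \mapsto f^n$), and one checks the countable union is still closed in this setting; alternatively, and more robustly, $\sqrt{J}$ is the intersection of the closed primitive/maximal ideals containing $J$, hence closed. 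Since the excerpt's Lemma~\ref{lem:radhomhom} is stated for a closed homogeneous $J$, I will take ``$\sqrt J$ is a closed ideal'' as essentially standard and spend the real effort on homogeneity.

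The heart of the argument is dilation-invariance. Let $f \in \sqrt{J}$, so $f^n \in J$ for some $n \ge 1$, and fix $t \in \mb{D}$. Write $\gamma_t$ for the (completely contractive, by Lemma~\ref{lem:closcom}) dilation endomorphism of $B$ determined by $[\gamma_t(g)](z) = g(tz)$; the point is that this really is multiplicative, being (pointwise, on the dense subalgebra $\mb{C}[z]$) the algebra homomorphism $z_i \mapsto t z_i$, extended by continuity. Then
\[
\big(\gamma_t(f)\big)^n = \gamma_t(f^n) \in \gamma_t(J).
\]
Now because $J$ is homogeneous, Proposition~\ref{prop:homeq} gives $\gamma_t(J) \subseteq J$, so $\big(\gamma_t(f)\big)^n \in J$, whence $\gamma_t(f) \in \sqrt{J}$. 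Thus $\sqrt{J}$ is invariant under all dilations $\gamma_t$, $t \in \mb{D}$, and Proposition~\ref{prop:homeq} applies in the reverse direction to conclude that $\sqrt{J}$ is homogeneous.

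The only step that needs a little care — and the one I expect to be the main obstacle to a fully rigorous write-up — is justifying that $\gamma_t$ is a genuine algebra homomorphism of $B$ (not merely a bounded linear map), so that $\gamma_t(f^n) = (\gamma_t(f))^n$. On polynomials this is transparent: $\gamma_t$ restricted to $\mb{C}[z]$ is the substitution $z \mapsto tz$, which is a ring homomorphism. Since $\mb{C}[z]$ is norm-dense in $B$ and $\gamma_t$ is norm-continuous (indeed completely contractive, by the same Fejér-kernel/gauge-automorphism machinery used in Lemma~\ref{lem:closcom}, composed with the rescaling $t \mapsto$ radial dilation), multiplicativity passes to all of $B$ by continuity of multiplication in the Banach algebra. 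Once this is in hand the rest is a one-line radical-ideal computation as above, so the proof should be short.
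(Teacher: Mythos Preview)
Your argument has a genuine gap at the point where you invoke Proposition~\ref{prop:homeq}. That proposition characterizes homogeneity for \emph{closed} ideals, and its backward direction (dilation-invariance $\Rightarrow$ homogeneity) really uses closedness: one divides $f(tz)$ by powers of $t$ and takes $t \to 0$, which only lands back in the ideal if the ideal is closed. So you must know that $\sqrt{J}$ is closed before you can apply it, and your two proposed justifications for closedness do not work. The claim that ``$\sqrt{J}$ is the intersection of the closed maximal ideals containing $J$'' describes the Jacobson radical (equivalently the hull--kernel) of $J$, which in a commutative Banach algebra picks out the quasi-nilpotents modulo $J$, not the nilpotents; in general this is strictly larger than $\sqrt{J}$ as defined here. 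The alternative observation that $\{f : f^n \in J\}$ is closed for each fixed $n$ is correct, but a countable increasing union of closed sets need not be closed, and no special feature of $B$ is offered to rescue this.

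Worse, in the paper's logical order closedness of $\sqrt{J}$ is Corollary~\ref{cor:closed}, proved \emph{after} the present lemma via Theorem~\ref{thm:effNull}, whose proof applies Lemma~\ref{lem:Iin} to $\sqrt{J}$ and therefore already uses that $\sqrt{J}$ is homogeneous. So importing closedness here would be circular. The paper avoids all of this with a direct argument that never appeals to closedness of $\sqrt{J}$: if $f^m \in J$ and $f_k$ is the lowest nonvanishing homogeneous component of $f$, then $f_k^m$ is the lowest homogeneous component of $f^m$, hence lies in $J$ by homogeneity of $J$, so $f_k \in \sqrt{J}$; then replace $f$ by $f - f_k \in \sqrt{J}$ and recurse. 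Your dilation-invariance observation $\gamma_t(\sqrt{J}) \subseteq \sqrt{J}$ is correct and pleasant, but by itself it only yields $f_n \in \overline{\sqrt{J}}$, which is not enough.
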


\begin{proof}
Let $f$ and $m$ be such that $f^m \in J$. 
Write the homogeneous decomposition of $f$ as $f(z) = \sum_{n\geq k} f_n(z)$, 
where $f_k(z)$ is the lowest non-vanishing homogeneous term. 
Then $f^m(z) = f_k(z)^m + \ldots$. 
Since $J$ is homogeneous, $f_k^m \in J$, so $f_k \in \sqrt{J}$. 
Proceeding recursively, we find that $f_j \in \sqrt{J}$ for all $j$.
\end{proof}

\begin{theorem}\label{thm:effNull}
Let $J \subseteq B$ be a closed homogeneous ideal. 
Then there exists $N \in \mb{N}$ such that $f^N \in J$ for all $f \in \sqrt{J}$.
\end{theorem}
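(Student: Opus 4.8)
The plan is to reduce the statement to a purely algebraic fact about the polynomial ideal $I = \mb{C}[z] \cap J$ provided by Lemma~\ref{lem:Iin}, and then transport the algebraic bound back to $B$ using the continuity of the homogeneous projections $P_n$. First I would invoke Lemma~\ref{lem:radhomhom} to know that $\sqrt{J}$ is a homogeneous ideal of $B$, so by Lemma~\ref{lem:pinI} (applied to $\sqrt J$, or rather to the polynomial ideal it contains) it suffices to control homogeneous polynomials: if $f \in \sqrt{J}$, then each homogeneous component $f_n$ lies in $\sqrt{J}$, hence some power $f_n^{m_n}$ lies in $J$, and in fact in $I = \mb{C}[z]\cap J$ by Lemma~\ref{lem:pinI} since $f_n^{m_n}$ is a homogeneous polynomial in the closed homogeneous ideal $J$. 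So the homogeneous polynomials in $\sqrt{J}$ are exactly the homogeneous polynomials in $\sqrt{I}$ (the algebraic radical), and $\sqrt J = \overline{\sqrt I}$ as a closed ideal of $B$.

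Next I would bring in the classical effective Nullstellensatz / primary decomposition: for the homogeneous polynomial ideal $I$ in the Noetherian ring $\mb{C}[z]$, there is a single integer $N_0$ with $p^{N_0} \in I$ for every $p \in \sqrt{I}$ (this is the standard fact that $(\sqrt I)^{N_0}\subseteq I$ for $N_0$ large, using that $\sqrt I$ is finitely generated and $\sqrt{I}/I$ is nilpotent in the Artinian-after-quotient sense — concretely, write $\sqrt{I} = \langle g_1,\dots,g_r\rangle$ with $g_j^{e_j}\in I$ and take $N_0 = 1 + \sum (e_j - 1)$). In particular, every homogeneous $p \in \sqrt{I}$ satisfies $p^{N_0}\in I \subseteq J$.

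Finally I would upgrade from homogeneous polynomials to arbitrary $f \in \sqrt{J}$. Given such $f$, write its homogeneous decomposition $f = \sum_{n\ge k} f_n$ with $f_k \ne 0$; as above each $f_n$ is a homogeneous polynomial in $\sqrt I$, so $f_n^{N_0}\in J$. I want a uniform $N$ with $f^N \in J$. The clean way is: in the quotient Banach algebra $B/J$, the image $\bar f$ is a sum $\sum_n \bar{f_n}$ of pairwise ``graded'' pieces, and each $\bar{f_n}$ satisfies $\bar{f_n}^{N_0} = 0$. Working in the commutative quotient $B/J$ and using that nilpotents of a fixed order $N_0$ among the (countably many, but norm-summable after Ces\`aro, via Lemma~\ref{lem:closcom}) graded components generate a nil ideal: the subalgebra of $B/J$ generated by $\{\bar{f_n}\}$ is contained in the radical, and a multinomial expansion of $\bar f^N$ shows that once $N \ge (N_0-1)\cdot(\text{number of nonzero low-degree components needed}) + 1$ every term vanishes — but since there may be infinitely many components, I instead argue directly in $B/J$: the element $\bar f$ is a norm-limit of $\bar{\Sigma_m(f)}$ (Ces\`aro means, Lemma~\ref{lem:closcom}), each $\Sigma_m(f)$ is a polynomial in $\sqrt I$ hence $\Sigma_m(f)^{N_0}\in J$, so $\bar{\Sigma_m(f)}^{N_0} = 0$; passing to the limit, $\bar f^{N_0} = 0$ by continuity of multiplication in $B/J$. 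Thus $N = N_0$ works.

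The main obstacle I anticipate is exactly this last passage to the limit for non-polynomial $f$: one must be sure that the algebraic bound $N_0$ obtained for polynomials in $\sqrt I$ does not degrade, and the cleanest route is the Ces\`aro-approximation argument just sketched — $\Sigma_m(f) \to f$ in $B$-norm, each $\Sigma_m(f)$ is a polynomial lying in $\sqrt I$ (because its homogeneous components are those of $f$, which lie in $\sqrt I$, and $\sqrt I$ is an ideal of $\mb{C}[z]$ closed under the finite sums defining $\Sigma_m$), so $\Sigma_m(f)^{N_0}\in I\subseteq J$, and then $f^{N_0} = \lim_m \Sigma_m(f)^{N_0} \in \overline{J} = J$ since $J$ is closed. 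One should double-check that $\Sigma_m(f)\in\sqrt I$: its components $f_0,\dots$ are homogeneous polynomials in $\sqrt J$, hence (by the reduction above) in $\sqrt I$, so yes. Therefore the single integer $N = N_0$, coming from the algebraic effective Nullstellensatz for the homogeneous polynomial ideal $I = \mb{C}[z]\cap J$, works for all of $\sqrt J$.
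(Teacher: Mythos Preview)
Your proposal is correct and follows essentially the same two-step architecture as the paper: obtain a uniform exponent $N_0$ on the polynomial ideal $I = J \cap \mb{C}[z]$, then pass to the closure by approximating $f \in \sqrt{J}$ by polynomials in $\sqrt{I}$ and using continuity of the $N_0$-th power map together with closedness of $J$.

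The one substantive difference is in how the uniform $N_0$ is produced. The paper simply cites Koll\'ar's effective Nullstellensatz to get $p^N \in I$ for all $p \in \sqrt{I}$. You instead give the elementary Noetherian argument: write $\sqrt{I} = \langle g_1,\dots,g_r\rangle$, choose $e_j$ with $g_j^{e_j}\in I$, and take $N_0 = 1 + \sum_j(e_j-1)$; a pigeonhole on multinomial terms then gives $p^{N_0}\in I$ for every $p\in\sqrt I$. This is more self-contained and avoids the heavy citation, at the cost of not yielding an effective bound in terms of degrees (which the paper does not use anyway). For the approximation step the paper invokes Lemma~\ref{lem:Iin} directly to get a sequence in $\sqrt{J}\cap\mb{C}[z]$ converging to $f$, while you make this explicit via the Ces\`aro means $\Sigma_m(f)$ from Lemma~\ref{lem:closcom}; both work, and your verification that $\Sigma_m(f)\in\sqrt I$ (because each homogeneous component $f_n$ lands in $\sqrt I$ via Lemma~\ref{lem:radhomhom} and Lemma~\ref{lem:pinI}) is correct.
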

\begin{proof}

By the effective Nullstellensatz \cite[Theorem 1.5]{Kol} 
there is an $N \in \mb{N}$ such that $p^N \in J \cap \mb{C}[z]$ 
for all $p \in \sqrt{J \cap \mb{C}[z]} = \sqrt{J} \cap \mb{C}[z]$. 
If $f \in \sqrt{J}$, then $f \in \overline{\sqrt{J} \cap \mb{C}[z]}$ by 
Lemma \ref{lem:Iin}. 
If $\{f_n\}$ is a sequence in $\sqrt{J} \cap \mb{C}[z]$ converging 
to $f$, then $f_n^N \in J$ for all $n$, thus $f^N = \lim_n f_n^N \in J$.
\end{proof}

%%%%%%%%%%%%%%%%%%%%%%%%
\begin{corollary}\label{cor:closed}
The radical of a closed homogeneous ideal $J \subseteq B$ is closed.
\end{corollary}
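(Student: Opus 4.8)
The plan is to deduce the closedness of $\sqrt{J}$ from the effective bound obtained in Theorem~\ref{thm:effNull}, together with the fact that $J$ itself is closed. First I would fix the integer $N$ provided by Theorem~\ref{thm:effNull}, so that $f^N \in J$ for every $f \in \sqrt{J}$. Now suppose $\{f_k\}$ is a sequence in $\sqrt{J}$ converging in the norm of $B$ to some $f \in B$; the goal is to show $f \in \sqrt{J}$. Since multiplication in $B$ is continuous (indeed $\|gh\| \le \|g\|\,\|h\|$ for multiplier algebras), the map $g \mapsto g^N$ is continuous on $B$, so $f_k^N \to f^N$ in norm. Each $f_k^N$ lies in $J$, and $J$ is closed, so $f^N \in J$. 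Hence $f \in \sqrt{J}$, which shows $\sqrt{J}$ is closed.

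The only point requiring a word of care is the passage $f_k \to f \Rightarrow f_k^N \to f^N$: this is the elementary estimate
\[
\|f^N - f_k^N\| \;=\; \Bigl\| \sum_{j=0}^{N-1} f^{N-1-j}(f - f_k)f_k^{\,j} \Bigr\| \;\le\; N \, \max(\|f\|,\|f_k\|)^{N-1}\,\|f - f_k\|,
\]
and since $\|f_k\|$ is bounded (the $f_k$ converge), the right-hand side tends to $0$. There is no real obstacle here; the entire content has already been extracted into Theorem~\ref{thm:effNull}, and this corollary is just the observation that a \emph{uniform} Nullstellensatz exponent, unlike a pointwise one, survives limits in a Banach algebra. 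I would keep the write-up to a few lines.
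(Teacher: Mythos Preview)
Your argument is correct and is exactly the intended derivation: the paper states Corollary~\ref{cor:closed} immediately after Theorem~\ref{thm:effNull} without proof, precisely because the uniform exponent $N$ makes $\sqrt{J}$ the preimage of the closed set $J$ under the continuous map $g \mapsto g^N$. Your write-up, including the explicit estimate for $\|f^N - f_k^N\|$, is more detailed than necessary but entirely sound.
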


%%%%%%%%%%%%%%%%%%%%%%%%
\begin{proposition}\label{prop:radical}
If $I \subseteq \mb{C}[z]$ is radical, then $\overline{I}$ is radical in $B$.
\end{proposition}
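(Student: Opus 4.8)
The plan is to show that $\sqrt{\overline{I}} = \overline{I}$ by using the earlier homogeneity machinery to reduce to a statement about polynomials. First I would observe that $\overline{I}$ is a closed homogeneous ideal of $B$ (this is part of Lemma \ref{lem:pinI}), so Corollary \ref{cor:closed} already tells us that $\sqrt{\overline{I}}$ is closed; thus it suffices to prove that $\sqrt{\overline{I}} \subseteq \overline{I}$. Since $\sqrt{\overline{I}}$ is closed and homogeneous (Lemma \ref{lem:radhomhom} together with Corollary \ref{cor:closed}), by Lemma \ref{lem:Iin} it is the closure of the polynomial ideal $K := \mb{C}[z] \cap \sqrt{\overline{I}}$. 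So everything comes down to identifying $K$, and it is enough to show $K \subseteq I$, because then $\sqrt{\overline{I}} = \overline{K} \subseteq \overline{I}$.

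Next I would compute $K$. A polynomial $p$ lies in $K$ iff $p^n \in \overline{I}$ for some $n$. But $p^n$ is a polynomial, so by Lemma \ref{lem:homotrivial} (in the form $[I] \cap \mb{C}[z] = I$ — or more precisely by the analogue inside $B$ that follows from the continuity of $P_n$, i.e. Lemma \ref{lem:pinI} applied degree by degree, giving $\overline{I} \cap \mb{C}[z] = I$), we get $p^n \in I$. Since $I$ is radical in $\mb{C}[z]$ by hypothesis, $p \in I$. Hence $K \subseteq I$, and in fact $K = I$ since trivially $I \subseteq K$. Therefore $\sqrt{\overline{I}} = \overline{K} = \overline{I}$, as desired.

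The one point that needs a little care — and which I expect to be the main (very minor) obstacle — is justifying $\overline{I} \cap \mb{C}[z] = I$ for the closure taken in $B = \overline{\mb{C}[z]}^{\|\cdot\|}$ rather than in $H^2_d$; the excerpt proves the $H^2_d$ version as Lemma \ref{lem:homotrivial}, but the same argument works verbatim in $B$ using the contractivity of the homogeneous projections $P_n : B \to \mb{C}[z]$ from Lemma \ref{lem:closcom}: if a polynomial $q$ is a norm limit of elements of $I$, then each homogeneous component $q_n = P_n(q)$ is a norm limit of the corresponding components, which lie in $I$ by homogeneity of $\overline I$ and Lemma \ref{lem:pinI}, and a homogeneous polynomial in $\overline I$ is in $I$; summing the finitely many nonzero components gives $q \in I$. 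With that in hand, the rest of the argument is immediate, and everything else has already been assembled in the preceding lemmas.
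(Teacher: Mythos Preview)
Your proof is correct and follows essentially the same approach as the paper's: both arguments reduce to showing that $\sqrt{\overline{I}} \cap \mb{C}[z] = I$ via the identity $\overline{I} \cap \mb{C}[z] = I$ (which you correctly extract from Lemma~\ref{lem:pinI}) together with the radicality of $I$, and then invoke Lemma~\ref{lem:Iin} and Corollary~\ref{cor:closed} to conclude. The paper compresses the key step into the single line $\sqrt{J} \cap \mb{C}[z] = \sqrt{J \cap \mb{C}[z]} = I$, which unpacks exactly into your computation.
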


\begin{proof}
Put $J = \overline{I}$. Then $\sqrt{J} \cap \mb{C}[z]$ is the unique 
homogeneous ideal in $\mb{C}[z]$ with closure equal to $\sqrt{J}$. 
But $\sqrt{J} \cap \mb{C}[z] = \sqrt{J \cap \mb{C}[z]} = I$, 
so $\sqrt{J} = \overline{I} = J$.
\end{proof}

The main result of this section is a projective Nullstellensatz for closed ideals in $B$. 
We shall need the following notation. For an ideal $J \subseteq B$, we define
\bes
V_\Omega(J) = \{z \in \Omega :  f(z) = 0 \text{ for all }  f \in J \}.
\ees
If $X \subseteq \Omega$, we define
\bes
I_B(X) = \{f \in B :  f(\lambda) = 0 \text{ for all }  \lambda \in X \}.
\ees

%%%%%%%%%%%%%%%%%%%%%%%%
\begin{theorem}\label{thm:homNull}
Let $J \subseteq B$ be a closed homogeneous ideal. Then 
\be
\sqrt{J} = I_B(V_\Omega(J)).
\ee
\end{theorem}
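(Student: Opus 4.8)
The plan is to prove the two inclusions separately, with the easy one first. For $\sqrt{J} \subseteq I_B(V_\Omega(J))$: if $f \in \sqrt{J}$, then $f^N \in J$ for some $N$, so $f(z)^N = 0$ for every $z \in V_\Omega(J)$, hence $f(z) = 0$ there, giving $f \in I_B(V_\Omega(J))$. This requires only that elements of $J$ vanish on $V_\Omega(J)$, which is immediate from the definition of $V_\Omega(J)$, and that the evaluation maps $f \mapsto f(z)$ are multiplicative (true since $B$ consists of holomorphic functions on $\Omega$).

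For the reverse inclusion $I_B(V_\Omega(J)) \subseteq \sqrt{J}$, the idea is to reduce to the classical polynomial Nullstellensatz. First note that by Lemma~\ref{lem:radhomhom} and Corollary~\ref{cor:closed}, $\sqrt{J}$ is a closed homogeneous ideal, so by Lemma~\ref{lem:Iin} it equals $\overline{I}$ where $I = \sqrt{J} \cap \mb{C}[z]$. Also $J = \overline{I_0}$ where $I_0 = J \cap \mb{C}[z]$ is homogeneous, and clearly $\sqrt{I_0} \subseteq I$; in fact $\sqrt{I_0} = I$ because $p \in \sqrt{J} \cap \mb{C}[z]$ means $p^N \in J$ for some $N$, and since $p^N$ is a polynomial, $p^N \in J \cap \mb{C}[z] = I_0$ by Lemma~\ref{lem:homotrivial}-type reasoning (or directly: $J$ closed homogeneous, $p^N \in J$, $p^N$ polynomial, so $p^N \in J \cap \mb{C}[z]$). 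Now take $f \in I_B(V_\Omega(J))$. By Lemma~\ref{lem:closcom}, $f = \sum_n f_n$ with each $f_n \in B$ a homogeneous polynomial and the series Ces\`aro convergent; it suffices to show each $f_n \in \sqrt{J}$, and since $\sqrt{J}$ is closed this will give $f \in \sqrt{J}$. So fix $n$ and consider $f_n$, a homogeneous polynomial. For any $\lambda \in V_\Omega(J)$ and $t \in \overline{\mb{D}}$, we have $t\lambda \in \Omega$ (by the polydisc-union hypothesis on $\Omega$) and $t\lambda \in V_\Omega(J)$ since $J$ is homogeneous, so $P_n$ applied along the circle, or rather the observation $f(t\lambda) = \sum_n t^n f_n(\lambda) \equiv 0$ in $t$, forces $f_n(\lambda) = 0$. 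Thus $f_n$ vanishes on $V_\Omega(J)$, and in particular on $V(I_0) \cap \Omega \supseteq V_\Omega(J)$...

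The main obstacle is connecting $V_\Omega(J)$, which is cut out by all of $B$'s elements in $J$, with the affine variety $V(I_0)$ cut out by polynomials, so that the classical Nullstellensatz applies. The key point is that because $\Omega$ is a union of polydiscs centered at $0$ and $J$ is homogeneous, $V_\Omega(J)$ is the cone $V(I_0)$ intersected with $\Omega$ (any element of $J$ is a norm-limit, hence locally uniform limit, of polynomials in $I_0$, so it vanishes wherever they all do; conversely polynomials in $I_0 \subseteq J$ already cut down to $V(I_0) \cap \Omega$). Moreover $V(I_0)$ is a homogeneous variety, so $V(I_0) \cap \Omega$ determines $V(I_0)$ entirely (a homogeneous variety is the closure of the cone over any neighborhood of $0$ in it — a polynomial vanishing on $V(I_0) \cap \Omega$ vanishes on all scalar multiples, hence on $V(I_0)$, since $\Omega$ is a neighborhood of $0$ and contains the segment from $0$ to each of its points). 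Therefore $f_n$, a polynomial vanishing on $V_\Omega(J) = V(I_0) \cap \Omega$, vanishes on all of $V(I_0)$, so $f_n \in I(V(I_0)) = \sqrt{I_0}$ by Hilbert's Nullstellensatz. Since $\sqrt{I_0} = I \subseteq \sqrt{J}$, we get $f_n \in \sqrt{J}$, and summing (using closedness of $\sqrt{J}$ from Corollary~\ref{cor:closed}) gives $f \in \sqrt{J}$, completing the proof.
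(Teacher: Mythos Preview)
Your proof is correct and follows essentially the same strategy as the paper: reduce to the classical Nullstellensatz by passing to the polynomial ideal $I_0 = J \cap \mb{C}[z]$, using that $V_\Omega(J) = V(I_0) \cap \Omega$ and that a homogeneous polynomial vanishing on this set vanishes on all of $V(I_0)$, hence lies in $\sqrt{I_0} = \sqrt{J} \cap \mb{C}[z]$. The only organizational difference is that you decompose $f \in I_B(V_\Omega(J))$ into homogeneous parts $f_n$ by hand and place each $f_n$ in $\sqrt{J}$, whereas the paper packages this step abstractly: it shows $K := I_B(V_\Omega(J))$ is a closed homogeneous ideal (via Proposition~\ref{prop:homeq}), computes $K \cap \mb{C}[z] = \sqrt{J} \cap \mb{C}[z]$, and invokes Lemma~\ref{lem:Iin} to conclude $K = \sqrt{J}$ directly.
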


\begin{proof}
Define $K = I_B(V_\Omega(J))$.
First, note that $K$ is closed.
Next we show that $K$ is homogeneous. 
Notice that $V_\Omega(J) = V_\Omega(J \cap \mb{C}[z])$, so 
$t  V_\Omega(J) \subseteq V_\Omega(J)$ for all $t \in \mb{D}$. 
Thus if $f \in K$, then for all $\lambda \in V_\Omega(J)$ it follows 
that $f(t \lambda) = 0$. 
By Proposition \ref{prop:homeq}, $K$ is homogeneous. 

Finally, $K \cap \mb{C}[z]$ is the set of all polynomials vanishing 
on 
\[
 V_\Omega(J) = V_\Omega(J \cap \mb{C}[z]) 
 = V(J \cap \mb{C}[z]) \cap \Omega .
\]
So by an easy extension of Corollary \ref{cor:ballvariety}, we find 
\[
 K \cap \mb{C}[z] = \sqrt{J \cap \mb{C}[z]} = \sqrt{J} \cap \mb{C}[z] .
\] 
By Lemma \ref{lem:Iin} and Corollary \ref{cor:closed}, 
\bes
K = \overline{K \cap \mb{C}[z]} = \overline{\sqrt{J} \cap \mb{C}[z]} = \sqrt{J}.
\qedhere
\ees
\end{proof}

%%%%%%%%%%%%%%%%%%%%%%%%
\begin{corollary}\label{cor:approxpoly}
Let $I \subseteq \mb{C}[z]$ be a radical homogeneous ideal, and 
let $f \in B$ be a function that vanishes on $V(I) \cap \Omega$. 
Then $f \in \overline{I}$.
\end{corollary}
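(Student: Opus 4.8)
The plan is to derive Corollary \ref{cor:approxpoly} as an immediate consequence of the projective Nullstellensatz (Theorem \ref{thm:homNull}) applied to the ideal $J = \overline{I}$, the closure of $I$ in $B$. First I would observe that $\overline{I}$ is a closed homogeneous ideal of $B$: closedness is automatic, and homogeneity follows from Lemma \ref{lem:pinI} (which says the closure of a homogeneous ideal in $B$ is homogeneous). So Theorem \ref{thm:homNull} applies and gives
\[
\sqrt{\overline{I}} = I_B\bigl(V_\Omega(\overline{I})\bigr).
\]

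Next I would identify the two sides with familiar objects. On the right, $V_\Omega(\overline{I}) = V_\Omega(\overline{I} \cap \mb{C}[z])$ since a point is a common zero of $\overline{I}$ iff it is a common zero of the polynomials in $\overline{I}$ (the homogeneous components of any $f \in \overline{I}$ lie in $\overline{I} \cap \mb{C}[z]$ by Lemma \ref{lem:pinI}, and $f$ is in their closed span by Lemma \ref{lem:closcom}, so $f$ vanishes at $\lambda$ whenever all those components do). By Lemma \ref{lem:homotrivial}, $\overline{I} \cap \mb{C}[z] = I$, so $V_\Omega(\overline{I}) = V(I) \cap \Omega$. On the left, since $I$ is radical, Proposition \ref{prop:radical} gives $\sqrt{\overline{I}} = \overline{I}$. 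Putting these together yields
\[
\overline{I} = I_B\bigl(V(I) \cap \Omega\bigr).
\]
The hypothesis on $f$ is precisely that $f \in I_B(V(I) \cap \Omega)$, so $f \in \overline{I}$, which is the claim.

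I do not expect a genuine obstacle here: the corollary is a packaging of Theorem \ref{thm:homNull} together with the radicality-is-preserved fact (Proposition \ref{prop:radical}) and the polynomial-closure identities (Lemmas \ref{lem:homotrivial}, \ref{lem:pinI}, \ref{lem:closcom}). The only point requiring a word of care is the identification $V_\Omega(\overline{I}) = V(I) \cap \Omega$, i.e.\ that passing from $I$ to its $B$-closure does not shrink the zero set; but this is handled by the homogeneous-component argument above, and in fact one inclusion is trivial ($\overline I \supseteq I$ forces $V_\Omega(\overline I) \subseteq V_\Omega(I)$) while the other follows from continuity of point evaluation on $B$ (recall $\|f\|_\infty \le \|f\|$). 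So the proof is essentially one line invoking Theorem \ref{thm:homNull} plus two sentences of bookkeeping.
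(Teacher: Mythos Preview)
Your proof is correct and follows exactly the paper's approach: set $J=\overline{I}$, invoke Theorem~\ref{thm:homNull} and Proposition~\ref{prop:radical}, and identify $V_\Omega(\overline{I})=V_\Omega(I)$. One small point: the identity $\overline{I}\cap\mb{C}[z]=I$ in the context of $B$ is really Lemma~\ref{lem:pinI} (or Lemma~\ref{lem:Iin}) rather than Lemma~\ref{lem:homotrivial}, which is stated for closures in $H^2_d$; your alternative justification via continuity of point evaluation handles this cleanly in any case.
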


\begin{proof}
Define $J = \overline{I}$. 
Then, using Theorem \ref{thm:homNull} and then Proposition \ref{prop:radical},
\bes
 f \in I_B(V_\Omega(I)) = I_B(V_\Omega(J)) = \sqrt{J} = J = \overline{I}.
 \qedhere
\ees
\end{proof}

A natural question now is the following: suppose that a function 
$f \in B$ is known to be \emph{small} on $V(I) \cap \Omega$. 
Does it follow that $f$ is \emph{close} to $I$? 
The following proposition shows that this equivalent to an extension problem.

%%%%%%%%%%%%%%%%%%%%%%%%
\begin{proposition}
Let $I \subseteq \mb{C}[z]$ be a homogeneous ideal, and 
let $D$ be an algebra of functions on $V_\Omega(I)$ 
that is the closure of the polynomials in some norm that 
satisfies $\|f|_{V_\Omega(I)}\|_D \leq \|f\|_B$. 
Then the following are equivalent. 
\begin{enumerate}
\item For every $g \in D$ there exists an $f \in B$ such that $f|_{V_\Omega(I)} = g$.
\item There exists a constant $C > 0$ such that for all  $f \in B$
\be\label{eq:ineqrestrict}
\dist (f,I) \leq C\left\|f|_{V(I) \cap \Omega} \right\|_{D}.
\ee
\end{enumerate}
\end{proposition}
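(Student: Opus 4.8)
The statement is an equivalence between a surjectivity-of-restriction property and a distance estimate, so the plan is to prove each implication by a standard open-mapping / functional-analytic argument, exploiting that both $B$ and $D$ are (by hypothesis) completions of $\mb{C}[z]$ in their respective norms and that restriction is contractive from $B$ to $D$.

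For the implication $(1)\Rightarrow(2)$, I would consider the restriction map $R : B \to D$, $R(f) = f|_{V_\Omega(I)}$, which is a bounded linear map with $\|R\|\le 1$. Its kernel contains $I$, and in fact contains $\overline{I}$ (the closure of $I$ in $B$): indeed, if $p\in I$ then $p$ vanishes on $V(I)\supseteq V_\Omega(I)$, so $R(p)=0$, and $R$ is continuous. Thus $R$ factors through a bounded injective-on-$\mb{C}[z]/I$ map $\bar R : B/\overline{I} \to D$. Hypothesis $(1)$ says $R$ is surjective onto $D$; since $D$ is a Banach space and $B/\overline{I}$ is a Banach space, the open mapping theorem applied to $\bar R$ gives a constant $C>0$ with $\|\bar R(\dot f)\|_D \ge C^{-1}\|\dot f\|_{B/\overline{I}}$ for all $f$, i.e.
\bes
\dist(f,\overline{I}) \le C \big\|f|_{V_\Omega(I)}\big\|_D .
\ees
Finally, since $V_\Omega(I) = V(I)\cap\Omega$ and $\dist(f,\overline I)=\dist(f,I)$ — the latter because for any ideal the distance to the closure equals the distance to the ideal — this is exactly $(2)$. (One minor point: $D$ being the closure of polynomials in a norm dominated by $\|\cdot\|_B$ already makes $D$ a Banach space, so the open mapping theorem applies.)

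For the implication $(2)\Rightarrow(1)$, I would start with $g\in D$. Since $D$ is the closure of $\{p|_{V_\Omega(I)} : p\in\mb{C}[z]\}$, choose polynomials $p_n$ with $\|p_n|_{V_\Omega(I)} - g\|_D \to 0$; in particular $(p_n|_{V_\Omega(I)})$ is Cauchy in $D$. By the estimate $(2)$ applied to $p_n - p_m$, the sequence $(p_n)$ is Cauchy in $B/\overline{I}$, so there are $q_n\in\overline{I}$ (in fact I can first replace $p_n$ by a nearby polynomial differing from it by an element of $I$, using $\dist(p_n-p_m, I)$ directly) such that $p_n - q_n$ converges in $B$ to some $f\in B$. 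Then $R(f) = \lim R(p_n - q_n) = \lim R(p_n) = g$, using $R(q_n)=0$ and continuity of $R$, which gives $(1)$.

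The main obstacle — and the place requiring the most care — is the bookkeeping in $(2)\Rightarrow(1)$ around passing from "$\dist(f,I)$ small" to an actual convergent sequence in $B$: one needs to produce genuine elements of $B$ whose restrictions converge to $g$, and this means choosing approximants $q_n\in I$ (or $\overline I$) witnessing the distances and controlling the increments $\|(p_{n+1}-q_{n+1}) - (p_n-q_n)\|_B$ simultaneously. This is a routine "Cauchy-sequence-from-distance-estimate" argument, but it is the only step where the completeness of $B$ and the precise meaning of $\dist$ genuinely enter, and it deserves to be written out carefully rather than waved through.
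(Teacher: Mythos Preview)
Your argument for $(2)\Rightarrow(1)$ is fine and is exactly what the paper does (the paper phrases it as ``$(2)$ implies $\tilde\phi$ has closed range, and the range is dense since it contains the polynomials''; your Cauchy-sequence version is the same thing spelled out).

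There is, however, a genuine gap in your $(1)\Rightarrow(2)$. You show only the easy inclusion $\overline{I}\subseteq\ker R$, then factor $R$ through $B/\overline{I}$ and apply the open mapping theorem to the resulting surjection $\bar R$. But open mapping for a surjection gives only that \emph{some} preimage of each $g$ has norm $\le C\|g\|_D$; it does \emph{not} give $\|\bar R(\dot f)\|_D \ge C^{-1}\|\dot f\|$ for \emph{every} $\dot f$, which is what the inequality $\dist(f,I)\le C\|f|_{V_\Omega(I)}\|_D$ asserts. For that you need $\bar R$ to be injective, i.e.\ $\ker R=\overline{I}$, and then invoke the bounded inverse theorem. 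The reverse inclusion $\ker R\subseteq\overline{I}$ is precisely the content of Corollary~\ref{cor:approxpoly} (any $f\in B$ vanishing on $V_\Omega(I)$ lies in $\overline{I}$), which the paper explicitly invokes at this step. Note that this corollary requires $I$ to be radical; without that hypothesis the implication $(1)\Rightarrow(2)$ actually fails (take $I$ with $V_\Omega(I)=\{0\}$ but $I$ not containing all of the maximal ideal: then $(1)$ holds trivially while $(2)$ fails for any linear $f\notin I$). So the missing ingredient in your proof is exactly the Nullstellensatz-type result that the section is building up to.
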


\begin{proof}
$(1) \Rightarrow (2)$. Define the map $\phi : B \rightarrow D$ by 
$\phi(f) = f|_{V_\Omega(I)}$. 
By Corollary \ref{cor:approxpoly}, $\ker \phi = \overline{I}$. 
Therefore, $\phi$ induces an injective and surjective bounded 
map $\tilde{\phi} : B / \overline{I} \rightarrow D$. 
Therefore $\tilde{\phi}$ 
has a bounded inverse, and that proves (\ref{eq:ineqrestrict}).

$(2) \Rightarrow (1)$. Define $\phi$ and $\tilde{\phi}$ as above. 
Equation (\ref{eq:ineqrestrict}) implies that $\tilde{\phi}$ has closed range. 
But the range of $\tilde{\phi}$ is clearly dense because it contains the polynomials. 
Hence $\phi$ is surjective.
\end{proof}

\begin{remark}\label{rem:slickproof}
Let $I \subseteq \mb{C}[z]$ be a radical homogeneous ideal, and 
let $J$ be the closure of $I$ in $\cA_d$. 
Then both $\cA_I$ and $\cA_d/J$ are the universal unital operator algebras 
generated by a row contraction satisfying the relations in $I$, 
so they are naturally isomorphic. 
In particular, using Proposition \ref{prop:mult}, it follows that for all $f \in \cA_d$,
\bes
\dist (f,I) = \left\Vert f|_{Z^o(I)}\right\Vert_{\Mult(\cF_I)}.
\ees
This gives another proof for Corollary \ref{cor:approxpoly} for the 
special case $B = \cA_d$. 
By the above proposition, it also follows that every function 
that is in the closure of the polynomials on $Z^o(I)$ with respect to
 the multiplier norm on $\cF_I$ is extendable to a function in $\cA_d$.
\end{remark}

%%%%%%%%%%%%%%%%%%%%%%%%%%%%%%%%%%%%%%%
\section{Isomorphisms of algebras, biholomorphisms of character spaces, and their rigidity}
\label{sec:isomorphism}
%%%%%%%%%%%%%%%%%%%%%%%%%%%%%%%%%%%%%%%

We now turn our attention to algebras that are universal for row contractions
of commuting operators satisfying the relations of a \emph{radical} 
homogeneous ideal $I \subseteq \mb{C}[z]$. 
In this special and important case we will be able to sharpen 
our results in three ways. 
First, we will classify the algebras up to \emph{(completely) isometric isomorphism} 
and also, in many cases, up to \emph{isomorphism}. 
Second, the classifying objects will no longer be subproduct systems 
(or ideals), but rather geometric objects. 
Finally, we will describe the isomorphisms and (completely) isometric isomorphisms of 
the algebras in terms of holomorphic maps of the unit ball in $\mb{C}^d$.

%%%%%%%%%%%%%%%%%%%%%%%%
\subsection{Unital homomorphisms are composition operators}

Let $I$ be a radical homogeneous ideal, and let $X = X_I$. 
The algebra $\cA_X$ will be denoted by $\cA_I$. 
Also, the character space $\cM_X$ will be identified with $Z(I)$. 

Recall that by Proposition \ref{prop:mult}, $\cA_I$ can be considered 
as an algebra of functions:
\bes
\cA_I = \{f|_{Z^o(I)} : f \in \cA_d\},
\ees
where the norm is the multiplier norm on the reproducing kernel 
Hilbert space $\cF_I = \overline{\spn}\{\nu_\lambda : \lambda \in Z^o(I)\}$. 

If $I$ and $J$ are radical homogeneous ideals in $\mb{C}[z_1, \ldots, z_d]$ 
and $\mb{C}[z_1, \ldots, z_{d'}]$, respectively, 
then for every algebra homomorphism $\varphi :  \cA_I \rightarrow \cA_J$ 
and every $\rho \in Z(J)$, the composition $\rho \circ \varphi$ is a 
homomorphism from $\cA_I$ into $\mb{C}$.
Therefore it is either a character or it is the functional $0$. 
Thus every unital homomorphism $\varphi : \cA_I \rightarrow \cA_J$ 
gives rise to a mapping $\varphi^* : Z(J) \rightarrow Z(I)$.

%%%%%%%%%%%%%%%%%%%%%%%%
\begin{proposition}\label{prop:algiso_biholo}
Let $I$ and $J$ be radical homogeneous ideals in $\mb{C}[z_1, \ldots, z_d]$ 
and $\mb{C}[z_1, \ldots, z_{d'}]$, respectively. 
Let $\varphi : \cA_I \rightarrow \cA_J$ be a unital algebra homomorphism. 
Then there exists a holomorphic map $F: \mb{B}_{d'} \rightarrow \mb{C}^d$ 
that extends continuously to $\overline{\mb{B}}_{d'}$, such that 
\bes
F|_{Z(J)} = \varphi^*.
\ees
The components of $F$ are in $\cA_{d'}$.
Moreover, $\varphi$ is given by composition with $F$, that is
\bes
\varphi(f) = f \circ F \quad , \quad f \in \cA_I .
\ees
\end{proposition}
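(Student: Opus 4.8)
The plan is to build the map $F$ one coordinate at a time from the Taylor coefficients of the images $\varphi(f_i)$, where $f_i = z_i|_{Z^o(I)}$ are the coordinate multipliers generating $\cA_I$, essentially mimicking the argument of Lemma~\ref{lem:analytic} but now only assuming $\varphi$ is a bounded unital homomorphism rather than an isometric isomorphism. First I would set $T_i = \varphi(f_i) \in \cA_J$ for $i = 1, \ldots, d$. By Proposition~\ref{prop:mult}, $\cA_J$ sits inside $\Mult(\cF_J)$ and lifts to $\cA_{d'}$, so each $T_i$ extends to an element $\widetilde{T}_i \in \cA_{d'} \subseteq \Mult(H^2_{d'})$, whose Taylor coefficients $(b^{(i)}_\alpha)_\alpha$ are square-summable since $\widetilde{T}_i(\Omega)\in H^2_{d'}$. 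As in Lemma~\ref{lem:analytic}, the Cauchy–Schwarz estimate against $\sum_\alpha |z^\alpha|^2 = (1-\|z\|^2)^{-1}$ shows that $F_i(z) = \sum_\alpha b^{(i)}_\alpha z^\alpha$ is holomorphic on $\mb{B}_{d'}$; setting $F = (F_1, \ldots, F_d) : \mb{B}_{d'} \to \mb{C}^d$ gives the candidate map, with components in $\cA_{d'}$ by construction. Continuity to the closed ball is then inherited from the fact that the components lie in $\cA_{d'}$.

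Next I would identify $F$ with $\varphi^*$ on $Z(J)$. For $\rho = \rho_\lambda \in Z^o(J)$ (an interior point), evaluating gives $\varphi^*(\rho_\lambda)(f_i) = \rho_\lambda(T_i) = \widetilde{T}_i(\lambda) = F_i(\lambda)$, using that $\rho_\lambda$ on $\cA_J$ pulls back the eigenvector $\nu_\lambda$ of $S_j^{J*}$; so $\varphi^*(\rho_\lambda) = \rho_{F(\lambda)}$, which in particular forces $F(\lambda) \in Z^o(I)$ since $\varphi^*(\rho_\lambda)$ is a character of $\cA_I$ (and characters of $\cA_I$ are exactly points of $Z(I)$). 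For boundary points of $Z(J)$, the equality $F|_{Z(J)} = \varphi^*$ follows by continuity: both sides are continuous on $Z(J)$ and agree on the relatively dense subset $Z^o(J)$. Here I need to know that $Z^o(J)$ is dense in $Z(J)$, which holds because $V(J)$ is a homogeneous variety (a union of cones through the origin) and so $Z(J) = \overline{Z^o(J)}$; I would note this explicitly.

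It remains to show $\varphi(f) = f \circ F$ for all $f \in \cA_I$. This I would verify first on polynomials: for $p \in \mb{C}[z_1,\ldots,z_d]$ one has $\varphi(p(S^I)) = p(\varphi(f_1),\ldots,\varphi(f_d)) = p(T_1,\ldots,T_d)$ since $\varphi$ is a unital homomorphism, and as functions on $Z^o(J)$ this equals $\lambda \mapsto p(T_1(\lambda),\ldots,T_d(\lambda)) = p(F(\lambda)) = (p\circ F)(\lambda)$; so $\varphi(p) = p \circ F$ as multipliers on $\cF_J$. Then for general $f \in \cA_I$, take polynomials $p_n \to f$ in $\cA_I$-norm; since $\varphi$ is bounded, $\varphi(p_n) \to \varphi(f)$ in norm, hence pointwise on $Z^o(J)$, while $p_n \circ F \to f \circ F$ pointwise because $p_n \to f$ uniformly on $\overline{\mb{B}}_d \supseteq \overline{Z^o(I)} \supseteq F(Z^o(J))$ (using $\|g\|_\infty \le \|g\|$ for multipliers). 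Therefore $\varphi(f)$ and $f \circ F$ agree as functions on $Z^o(J)$, hence as elements of $\cA_J \subseteq \Mult(\cF_J)$.

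The main obstacle I anticipate is the density statement $\overline{Z^o(J)} = Z(J)$ together with the care needed to pass limits through $\varphi^*$ and $F$ simultaneously at the boundary; the algebraic/analytic bookkeeping (square-summability of coefficients, holomorphy, multiplicativity on polynomials) is essentially a rerun of Lemma~\ref{lem:analytic} and Proposition~\ref{prop:mult}. One subtlety worth flagging is that $\varphi$ is \emph{a priori} only a unital homomorphism, so before anything else I should invoke automatic continuity of homomorphisms between these Banach algebras (or note that the argument only uses boundedness and that $\varphi$ being a homomorphism of unital operator algebras into $\cA_J$ forces it to be contractive on the polynomial generators in the appropriate sense) — this is what legitimizes the limit argument in the last step.
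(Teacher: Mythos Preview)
Your proposal is correct and shares the paper's core idea: extend each $\varphi(z_i)\in\cA_J$ to a function $F_i\in\cA_{d'}$ (via Proposition~\ref{prop:mult} / Remark~\ref{rem:slickproof}) and set $F=(F_1,\ldots,F_d)$. The paper's execution is, however, considerably more direct. Rather than approximating by polynomials and invoking boundedness of $\varphi$, the paper computes for every $\lambda\in Z(J)$ and every $f\in\cA_I$ at once:
\[
\varphi(f)(\lambda)=\rho_\lambda(\varphi(f))=\varphi^*(\rho_\lambda)(f)=\rho_{F(\lambda)}(f)=f(F(\lambda)),
\]
which gives $\varphi(f)=f\circ F$ in one line, using only that $\rho_\lambda\circ\varphi$ is a character of $\cA_I$ (characters are automatically contractive, regardless of whether $\varphi$ is bounded). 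In particular no continuity of $\varphi$ is needed; indeed automatic continuity (Corollary~\ref{cor:bounded}) is derived \emph{from} this proposition via the closed graph theorem, not used in its proof, so your proposed appeal to automatic continuity would reverse the paper's logical order. Your Taylor-series derivation of holomorphy and the density argument for boundary points are likewise superfluous: once the components of $F$ lie in $\cA_{d'}$, holomorphy on $\mb{B}_{d'}$ and continuous extension to $\overline{\mb{B}}_{d'}$ are automatic, and the character computation above works for all $\lambda\in Z(J)$, interior or not. (A minor slip: for $\lambda\in Z^o(J)$ you can only conclude $F(\lambda)\in Z(I)$, not $Z^o(I)$, but this does not affect your argument.)
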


\begin{proof}
Let $\lambda \in Z(J)$ give rise to the evaluation functional $\rho_\lambda$ 
on $\cA_J$ given by $\rho_\lambda(f) = f(\lambda)$. 
Then $\varphi^*(\rho_\lambda)$ is also an evaluation functional. 
In fact, for the coordinate functions $z_i \in \cA_I$, we find
\bes
[\varphi^*(\rho_\lambda)](z_i) = z_i (\varphi^*(\rho_\lambda)) =
 \rho_\lambda( \varphi(z_i)) = \varphi(z_i) (\lambda). 
\ees
We find that the mapping $\varphi^*$ is given by 
\bes
\varphi^*(\lambda) = (\varphi(z_1) (\lambda), \ldots, \varphi(z_d) (\lambda)).
\ees
Now $\varphi(z_1), \ldots, \varphi(z_d)$ are restrictions to $Z^o(J)$ of 
functions $f_1, \ldots, f_d \in \cA_{d'}$ (see Remark \ref{rem:slickproof}). 
Defining 
\bes
F(z) = (f_1(z), \ldots, f_d(z)), 
\ees
we obtain the required function $F$.
Finally, for every $\lambda \in Z(J)$, 
\bes
\varphi(f) (\lambda) = \rho_\lambda (\varphi(f)) = 
\varphi^*( \rho_\lambda) (f) = \rho_{F(\lambda)} (f)  = f(F (\lambda)),
\ees
so $\varphi(f) = f \circ F$.
\end{proof}

Using the fact that every unital homomorphism is a composition operator, 
together with a standard application of the closed graph theorem, yields 
the following corollary.

%%%%%%%%%%%%%%%%%%%%%%%%
\begin{corollary}\label{cor:bounded}
Every unital algebra homomorphism $\varphi : \cA_I \rightarrow \cA_J$ is bounded.
\end{corollary}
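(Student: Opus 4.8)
The plan is to invoke the closed graph theorem. Since $\cA_I$ and $\cA_J$ are Banach algebras (being norm-closed subalgebras of bounded operators, or of multiplier algebras), it suffices to show that the graph of $\varphi$ is closed: if $f_n \to f$ in $\cA_I$ and $\varphi(f_n) \to g$ in $\cA_J$, then $g = \varphi(f)$.

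First I would exploit the fact, established in Proposition \ref{prop:algiso_biholo}, that $\varphi$ is a composition operator: there is a fixed holomorphic map $F : \mb{B}_{d'} \to \mb{C}^d$, continuous on $\overline{\mb{B}}_{d'}$ with components in $\cA_{d'}$, such that $\varphi(h) = h \circ F$ for every $h \in \cA_I$. The key point is that $F$ does not depend on the element being mapped, so convergence can be tested pointwise on $Z(J)$. Indeed, the norm on each of these algebras dominates the supremum norm over the respective character space (this is the inequality $\|f\|_\infty \le \|f\|$ for multiplier algebras, applied via the identification of $\cM_I$ with $Z(I)$ and $\cM_J$ with $Z(J)$); equivalently, norm convergence implies uniform convergence of the Gelfand transforms on the character space.

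So suppose $f_n \to f$ in $\cA_I$ and $\varphi(f_n) \to g$ in $\cA_J$. From $f_n \to f$ in $\cA_I$ we get $f_n \to f$ uniformly on $Z(I)$, hence in particular $f_n(F(\lambda)) \to f(F(\lambda))$ for every $\lambda \in Z(J)$, since $F(\lambda) \in Z(I)$. That is, $\varphi(f_n)(\lambda) = (f_n \circ F)(\lambda) \to (f \circ F)(\lambda) = \varphi(f)(\lambda)$ pointwise on $Z(J)$. On the other hand, $\varphi(f_n) \to g$ in $\cA_J$ forces $\varphi(f_n) \to g$ uniformly on $Z(J)$, so $\varphi(f_n)(\lambda) \to g(\lambda)$ pointwise as well. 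Comparing the two limits gives $g(\lambda) = \varphi(f)(\lambda)$ for all $\lambda \in Z(J)$; since elements of $\cA_J$ are determined by their values on $Z^o(J)$ (by Proposition \ref{prop:mult}, $\cA_J$ is an algebra of functions on $Z^o(J)$), we conclude $g = \varphi(f)$. Thus the graph of $\varphi$ is closed, and $\varphi$ is bounded.

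The main obstacle, such as it is, is simply making sure the ``evaluation at points of the character space'' pairing is continuous in the Banach-algebra norm and that elements are genuinely separated by these evaluations — but both facts are already in hand from Proposition \ref{prop:mult} (realizing $\cA_I$ as functions on $Z^o(I)$ inside a multiplier algebra, where the multiplier norm dominates the sup norm) and from Proposition \ref{prop:algiso_biholo} (the composition-operator description of $\varphi$). With those cited, the argument is a routine closed-graph application and nothing deeper is needed.
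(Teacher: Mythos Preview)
Your proof is correct and follows exactly the approach the paper indicates: Proposition~\ref{prop:algiso_biholo} shows $\varphi$ is a composition operator, and then a closed graph argument gives boundedness. The paper states this without detail (``a standard application of the closed graph theorem''), and you have simply filled in the routine steps.
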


%%%%%%%%%%%%%%%%%%%%%%%%
\subsection{Some complex geometric rigidity results}

We now follow the discussion in \cite[Chapter 2]{RudinBall} to obtain some 
rigidity results for isomorphisms between the varieties $Z(I)$. 
These rigidity results will help us determine the possibilities for isomorphisms 
between the various algebras $\cA_I$.

%%%%%%%%%%%%%%%%%%%%%%%%
\begin{lemma}
Let $I$ be a homogeneous ideal in $\mb{C}[z]$. 
Let $F : \overline{\mb{B}}_d \rightarrow \mb{C}^d$ be a continuous map, 
holomorphic on $\mb{B}_d$, such that $F|_{Z(I)}$ is a bijection of $Z(I)$. 
If $F(0) = 0$ and $\frac{d}{dt}F(tz)\Big|_{t=0} = z$ for all $z \in Z(I)$, 
then $F|_{Z(I)}$ is the identity.
\end{lemma}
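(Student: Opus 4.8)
The plan is to exploit the homogeneity of $Z(I)$ together with the power series expansion of $F$. Since $I$ is homogeneous, $Z(I) = V(I) \cap \overline{\mb{B}}_d$ is invariant under multiplication by scalars in $\overline{\mb{D}}$, so for each fixed $z \in Z(I)$ the map $t \mapsto F(tz)$ takes the disc $\overline{\mb{D}}$ (more precisely, the segment, but by holomorphy the whole disc after noting $tz \in \mb B_d$ for $t\in\mb D$) into $\mb{C}^d$. First I would write, for $z \in Z(I)$ with $\|z\| \le 1$, the expansion $F(tz) = \sum_{n\ge 1} t^n P_n(z)$, where $P_n(z)$ is the $n$th homogeneous component of $F$ evaluated at $z$ (there is no constant term since $F(0)=0$, and the components lie in $\cA_d$, so this series is Ces\`aro norm convergent by Lemma~\ref{lem:closcom} and in particular converges pointwise on $\mb B_d$ and continuously up to the boundary). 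The hypothesis $\frac{d}{dt}F(tz)|_{t=0} = z$ says exactly that $P_1(z) = z$ for all $z \in Z(I)$, i.e. the linear part of $F$ restricted to $Z(I)$ is the identity.

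Next I would package this into a one-variable complex-analytic rigidity statement. Fix $z \in Z(I)$, and for $|\zeta|=1$ consider the point $w = \zeta z \in Z(I)$ as well. The key point is that $F$ maps $Z(I)$ bijectively to itself, and on each disc $D_z = \{tz : t \in \overline{\mb{D}}\} \subseteq Z(I)$ we should like to conclude that $F$ restricts to the identity. Consider the function $g_z(t) = \langle F(tz), z\rangle / \|z\|^2$ for $z \ne 0$ in $Z(I)$; this is holomorphic in $\mb{D}$, continuous on $\overline{\mb{D}}$, with $g_z(0) = 0$ and $g_z'(0) = 1$. If in addition $F$ maps the disc $D_z$ into itself — which would follow if we knew $F(D_z) \subseteq D_z$ — then $g_z$ would be a holomorphic self-map of $\mb{D}$ fixing $0$ with derivative $1$ there, hence $g_z(t) = t$ by the Schwarz lemma, and then $F(tz) = tz$. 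So the crux is to show $F$ preserves each disc $D_z$ through the origin.

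This is the step I expect to be the main obstacle, and it is presumably where the bijectivity of $F|_{Z(I)}$ is used in an essential way together with the structure of $\Aut(\mb B_d)$-type arguments from \cite[Chapter 2]{RudinBall}: one wants to argue that the image $F(D_z)$, being a holomorphic image of a disc inside $Z(I)$ with the prescribed first-order behaviour, must again be a disc through $0$, and that $F$ being bijective on $Z(I)$ forces $F(D_z) = D_z$. An alternative route, which may be what is intended, is to avoid isolating single discs and instead argue globally: since $F|_{Z(I)}$ is a bijection, consider an arbitrary boundary point $\zeta z$ with $\|z\| = 1$; the map $t \mapsto F(t z)$ is a holomorphic curve in $\overline{\mb B}_d$, and one shows using the Schwarz–Pick boundary behaviour (a holomorphic $\phi : \mb D \to \mb B_d$ with $\phi(0)=0$ and $\|\phi(t)\| = 1$ somewhere on $\partial\mb D$ must be of the form $\phi(t) = \chi(t) v$ for a fixed unit vector $v$ and inner $\chi$) combined with $P_1(z) = z$ to force $F(tz) = tz$. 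Either way, once $F(tz) = tz$ is established for all $z \in Z(I)$ and all $t$, setting $t = 1$ gives $F|_{Z(I)} = \id$, completing the proof. I would lean toward the second, more hands-on approach since it uses only the stated hypotheses ($F(0)=0$, $P_1|_{Z(I)} = \id$, $F(Z(I)) = Z(I)$ bijectively) plus elementary one-variable function theory.
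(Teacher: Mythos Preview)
Your approach has a genuine gap at precisely the point you flag as the ``main obstacle'': neither of your proposed routes establishes that $F$ preserves the disc $D_z$, or even that $F$ maps the boundary $Z(I)\cap\partial\mb{B}_d$ into itself. For the first route, bijectivity of $F|_{Z(I)}$ gives only a homeomorphism of the compact set $Z(I)$, and nothing in the hypotheses forces this homeomorphism to respect the stratification into discs through the origin. For the second route, your Schwarz--Pick statement requires that $\|\phi(t)\|=1$ for some $|t|=1$; but for $\|z\|=1$ you only know $F(z)\in Z(I)$, not that $\|F(z)\|=1$. Since $F$ is not assumed to map $\overline{\mb{B}}_d$ into $\overline{\mb{B}}_d$ (only $Z(I)$ into $Z(I)$), the usual maximum-modulus arguments do not directly apply either. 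Without this, the one-variable Schwarz lemma never engages.

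The paper's proof avoids this difficulty entirely by using Cartan's iteration trick rather than Schwarz on individual discs. One writes $F(z)=z+\sum_{n\ge 2}F_n(z)$ on $Z(I)$ (after a harmless modification off $Z(I)$ to make the linear part equal to the identity), observes that the $k$th iterate satisfies $F^k(z)=z+kF_2(z)+\cdots$, and then extracts $kF_2(z)$ as a Fourier coefficient:
\[
kF_2(z)=\frac{1}{2\pi}\int_0^{2\pi} F^k(e^{i\theta}z)\,e^{-2i\theta}\,d\theta .
\]
Since $F^k$ maps $Z(I)$ into $Z(I)\subseteq\overline{\mb{B}}_d$, the right side has norm at most $1$ for every $k$, forcing $F_2\equiv 0$ on $Z^o(I)$. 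One then replaces $F$ by $F-F_2$ and repeats to kill $F_3$, and so on. The point is that this argument uses only that $Z(I)$ is circular and bounded and that $F$ maps it into itself; it never needs $F$ to preserve individual discs or the boundary sphere.
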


\begin{proof}
It seems that a careful variation of the proof for ``Cartan's Uniqueness 
Theorem" given in \cite{RudinBall} (page 23) will work. 
One only needs to use the facts that $Z(I)$ is circular and bounded. 
The reason one must be careful is that $Z(I)$ typically has empty interior.

Let's make sure that it all works.
We write the homogeneous expansion of $F$:
\be\label{eq:Ffstexp}
F(z) = Az + \sum_{n\geq 2} F_n(z),
\ee
where $A = F'(0)$. First let us show that, without loss of generality, 
we may assume
\be\label{eq:Fexpansion}
F(z) = z + \sum_{n\geq 2} F_n(z).
\ee
Let $W$ be the linear span of $Z(I)$, and let $W^\perp$ be its orthogonal 
complement in $\mb{C}^d$.
By the assumption $\frac{d}{dt}F(tz)\Big|_{t=0} = z$ for $z \in Z(I)$, 
so the matrix $A$ can be written as
\bes
A = \begin{pmatrix} I & B \\
0 & C 
\end{pmatrix}
\ees
with respect to the decomposition $\mb{C}^d = W \oplus W^\perp$. 
Replacing $F$ by $F + I_{\mb{C}^d} - A$   we obtain a function that is 
continuous on $\overline{\mb{B}}_{d}$, analytic on $\mb{B}_d$,  
agrees with $F$ on $Z(I)$, and has homogeneous decomposition as in 
(\ref{eq:Fexpansion}).

Following Rudin \cite[bottom of page 23]{RudinBall}, 
we consider the $k$th iterate $F^k$ of $F$:
\bes
F^k(z) = z + kF_2(z) + \ldots .
\ees
Since $Z(I)$ is circular and since $F^k$ maps $Z(I)$ onto itself, 
we find that for all $z \in Z^o(I)$
\bes
k F_2(z) = \frac{1}{2\pi}\int_0^{2 \pi} F^k(e^{i\theta} z) e^{-2 i \theta} d \theta ,
\ees
from which it follows that $\|k F_2(z)\| \leq 1$ for all $k$ and all $z \in Z^o(I)$. 
This implies that $F_2(z) = 0$ for all $z \in Z^o(I)$. 
Therefore there exists a continuous function 
$G : \overline{\mb{B}}_d \rightarrow \mb{C}^d$ that is holomorphic on $\mb{B}_d$ 
and agrees with $F$ on $Z(I)$, that has homogeneous expansion
\bes
G(z) = z + \sum_{n \geq 3}G_n(z),
\ees
(namely, one takes $G = F - F_2$). Note that $G_n = F_n$ for all $n>2$. 
This last observation allows us to repeat the argument inductively and 
deduce that $F(z) = z$ for all $z \in Z^o(I)$.  
By continuity, $F|_{Z(I)}$ equals the identity.
\end{proof}

We now obtain the desired analogue  of Cartan's uniqueness theorem.

%%%%%%%%%%%%%%%%%%%%%%%%
\begin{theorem}\label{thm:linear}
Let $I$ and $J$ be homogeneous ideals in $\mb{C}[z_1, \ldots, z_d]$ and 
$\mb{C}[z_1, \ldots, z_{d'}]$, respectively.
Let $F: \overline{\mb{B}}_{d'} \rightarrow \mb{C}^d$ be a continuous map 
that is holomorphic on $\mb{B}_{d'}$ and maps $0$ to $0$. 
Assume that there exists a continuous map 
$G: \overline{\mb{B}}_{d} \rightarrow \mb{C}^{d'}$ that is holomorphic 
on $\mb{B}_{d}$ such that $F \circ G |_{Z(I)}$ and $G \circ F|_{Z(J)}$ 
are the identity maps. 
Then there exists a linear map $A : \mb{C}^{d'} \rightarrow \mb{C}^d$ 
such that $F|_{Z(J)} = A$.
\end{theorem}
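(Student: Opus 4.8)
The plan is to reduce Theorem~\ref{thm:linear} to the preceding lemma by composing $F$ and $G$ with the right M\"obius automorphisms so that the resulting maps fix the origin and have the identity as their derivative along $Z(I)$ and $Z(J)$; then the lemma forces these normalized maps to be the identity on the varieties, and unwinding the normalization shows that $F|_{Z(J)}$ is linear. First I would set $h = G \circ F : \overline{\mb{B}}_{d'} \to \mb{C}^{d'}$, which by hypothesis is a continuous self-map of $\overline{\mb{B}}_{d'}$, holomorphic on the interior, fixing $0$, and restricting to the identity on $Z(J)$. Looking at the homogeneous expansion $h(z) = h'(0) z + \sum_{n \geq 2} h_n(z)$, the condition $h|_{Z(J)} = \id$ together with the circularity of $Z(J)$ (each $e^{i\theta}$ times a point of $Z(J)$ is again in $Z(J)$, since $J$ is homogeneous) lets us extract, by the same averaging argument used in the lemma, that $h'(0) z = z$ for all $z \in Z(J)$. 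Likewise $F \circ G$ has derivative equal to the identity along $Z(I)$.

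Next I would apply the preceding lemma to $h = G \circ F$: it is continuous on $\overline{\mb{B}}_{d'}$, holomorphic on $\mb{B}_{d'}$, bijective on $Z(J)$ (it is the identity there), fixes $0$, and satisfies $\frac{d}{dt} h(tz)|_{t=0} = z$ on $Z(J)$; hence $h|_{Z(J)} = \id$, which we already knew, so the real content must come from iterating $F$ itself. The key move is therefore to study the map $\Phi = F$ restricted to $Z(J)$, whose image is some circular bounded set, and to consider its homogeneous expansion $F(z) = Az + \sum_{n \geq 2} F_n(z)$ with $A = F'(0)$. Because $G \circ F|_{Z(J)} = \id$ and $F \circ G|_{Z(I)} = \id$, the linear parts $A$ and $B = G'(0)$ satisfy $BA|_{W_J} = \id$ and $AB|_{W_I} = \id$, where $W_J = \spn Z(J)$ and $W_I = \spn Z(I)$; in particular $A$ is a linear isomorphism from $W_J$ onto $W_I$ and carries $Z(J)$ into a circular set containing $A(Z(J))$. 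I would then replace $F$ by $A^{-1} \circ F$ (extended appropriately off $W_I$, as in the lemma's reduction to (\ref{eq:Fexpansion})), obtaining a map that agrees with $F$ up to a fixed linear isomorphism, fixes $0$, has derivative the identity along $Z(J)$, and maps $Z(J)$ bijectively onto $A^{-1}(Z(I))$; applying the lemma (or its iteration argument directly) then kills all higher homogeneous terms on $Z^o(J)$, so $A^{-1} \circ F|_{Z(J)} = \id$, i.e. $F|_{Z(J)} = A$.

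The main obstacle I anticipate is the same delicate point flagged in the lemma: $Z(J)$ typically has empty interior in $\mb{C}^{d'}$, so the higher coefficients $F_n$ are not determined by $F|_{Z(J)}$, and one must be careful that the normalization step — subtracting a correction so that $A$ becomes the identity on $W_J$ while keeping the map holomorphic on all of $\mb{B}_{d'}$ and unchanged on $Z(J)$ — can be carried out, and that the averaging/iteration estimate $\|k F_2(z)\| \leq 1$ for all $k$ genuinely applies at every $z \in Z^o(J)$. The circularity of $Z(J)$ (from homogeneity of $J$) and its boundedness (being inside $\overline{\mb{B}}_{d'}$) are exactly the ingredients needed, as in the lemma, so the argument should go through; the bookkeeping between the two varieties $Z(I)$, $Z(J)$ and the possibly different dimensions $d$, $d'$ is the part that needs care rather than new ideas.
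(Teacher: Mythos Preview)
Your proposal has a genuine gap at the step where you pass to $A^{-1}\circ F$. The preceding lemma applies only to a map that is a bijection of a fixed $Z(I)$ \emph{onto itself}, because its proof relies on iteration: one needs the $k$-th iterate to still land in $Z(I)\subset\overline{\mb{B}}_{d'}$ in order to extract the bound $\|k F_2(z)\|\le 1$. Your normalized map $A^{-1}\circ F$ sends $Z(J)$ to $A^{-1}(Z(I))$, and you have no reason to know that $A^{-1}(Z(I))=Z(J)$ --- that would already say $A$ carries $Z(J)$ onto $Z(I)$, which is essentially the conclusion --- nor even that $A^{-1}(Z(I))\subset\overline{\mb{B}}_{d'}$, so the composite cannot be iterated and neither the lemma nor its proof is available. (There is also the minor issue that $A=F'(0):\mb{C}^{d'}\to\mb{C}^d$ need not be invertible; you only know $BAz=z$ for $z$ in the span of $Z(J)$.) Your own observation that applying the lemma to $G\circ F$ yields nothing new is a symptom of the same problem: the only self-maps you have produced so far are the trivial ones.

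The paper's proof supplies the missing idea: for each $\theta$ it forms the twisted composite
\[
H(z)=G\bigl(e^{-i\theta}F(e^{i\theta}z)\bigr).
\]
Circularity of both $Z(J)$ and $Z(I)$ makes $H$ a genuine self-bijection of $Z(J)$; one checks $H(0)=0$ and $\frac{d}{dt}H(tz)\big|_{t=0}=G'(0)F'(0)z=z$ on $Z(J)$, so the lemma applies and gives $H|_{Z(J)}=\id$. Unwinding this (replace $z$ by $e^{-i\theta}z$, apply $F$, and use $F\circ G|_{Z(I)}=\id$) yields $F(e^{-i\theta}z)=e^{-i\theta}F(z)$ on $Z(J)$ for every $\theta$, and this rotation-equivariance integrates to kill all homogeneous components $F_n$, $n\ge 2$, on $Z^o(J)$. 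The rotation twist is precisely what manufactures a nontrivial self-map from two maps going in opposite directions between two different varieties; without it your reduction to the lemma cannot get started.
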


\begin{proof}
Again we adjust the proof of \cite[Theorem 2.1.3]{RudinBall} to the current setting. 
The derivatives $F'(0)$ and $G'(0)$ might not be inverses of each other, 
but from $G \circ F(z) = z$, we find that $G'(0) F'(0) z = z$ for all $z \in Z(J)$.

Fix $\theta \in [0,2\pi]$, and define $H: \overline{\mb{B}}_{d'} \to \mb{C}^{d'}$ by
\bes
H(z) = G(e^{-i\theta}F(e^{i\theta}z)) .
\ees
Then $H(0) = 0$ and 
\bes
\frac{d}{dt}H(tz)\Big|_{t=0} = G'(0) e^{-i\theta} F'(0) e^{i\theta}z = z.
\ees
By the previous lemma 
\[ H(z) = z \] 
for $z \in Z(J)$. After replacing $z$ by $e^{-i\theta}z$ and applying 
$F$ to both sides we find that
\bes
F(e^{-i\theta}z) = e^{-i\theta}F(z) \quad\text{for all  } z \in Z(J).
\ees
Integrating over $\theta$, this implies that if (\ref{eq:Ffstexp}) is the 
homogeneous expansion of $F$, then $F_n(z) = 0$ for all $z \in Z^o(J)$ 
and all $n \geq 2$. 
Thus $F|_{Z(J)} = A$.
\end{proof}

The following easy result is a straightforward consequence of homogeneity.

%%%%%%%%%%%%%%%%%%%%%%%%
\begin{lemma}\label{lem:isometric}
Let $I$ and $J$ be homogeneous ideals in $\mb{C}[z_1, \ldots, z_d]$ and 
$\mb{C}[z_1, \ldots, z_{d'}]$, respectively.
If a linear map $A : \mb{C}^{d'} \rightarrow \mb{C}^d$ carries $Z(J)$
bijectively onto $Z(I)$, then $A$ is isometric on $V(J)$.
\end{lemma}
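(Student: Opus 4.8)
The statement says: if $A:\mb{C}^{d'}\to\mb{C}^d$ is linear and carries $Z(J)$ bijectively onto $Z(I)$, then $A$ is isometric on $V(J)$, i.e. $\|Az\| = \|z\|$ for all $z \in V(J)$. Since both $V(I)$ and $V(J)$ are homogeneous (the ideals are homogeneous), and $Z(J) = V(J)\cap\ol{\mb{B}}_{d'}$, $Z(I) = V(I)\cap\ol{\mb{B}}_d$, the hypothesis is really a statement about the cones $V(J)$ and $V(I)$: $A$ maps the cone $V(J)$ onto the cone $V(I)$, and it matches up their unit-ball truncations bijectively. The key point to extract is that $A$ must then map the \emph{unit sphere} of $V(J)$ onto the \emph{unit sphere} of $V(I)$, which forces $\|Az\|=\|z\|$ whenever $\|z\|=1$, and then homogeneity gives the general case.

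\textbf{Main steps.} First I would fix $z \in V(J)$ with $\|z\| = 1$; I want to show $\|Az\| = 1$. Since $z \in Z(J)$ (it lies in the closed ball), $Az \in Z(I) \subseteq \ol{\mb{B}}_d$, so $\|Az\| \le 1$. For the reverse inequality, suppose for contradiction that $\|Az\| = r < 1$. Consider the point $w = r^{-1} z \in V(J)$ (using homogeneity of the cone $V(J)$), which has $\|w\| = r^{-1} > 1$, so $w \notin Z(J)$. Its image is $Aw = r^{-1} Az$, with $\|Aw\| = 1$, so $Aw \in Z(I)$. Since $A|_{Z(J)}$ is onto $Z(I)$, there is some $u \in Z(J)$ with $Au = Aw$, hence $A(u - w) = 0$. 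Now I need injectivity of $A$ on $V(J)$ — but that is exactly what I should extract from the bijectivity of $A|_{Z(J)}$ together with homogeneity: if $A v = 0$ for some nonzero $v \in V(J)$, scaling $v$ into $\mb{B}_{d'}\setminus\{0\}$ contradicts $A|_{Z(J)}$ being injective (it would send both that point and $0$ to $0$). So $A$ is injective on $V(J)$, giving $u = w$, but $u \in Z(J)$ while $w \notin Z(J)$ — contradiction. Hence $\|Az\| = 1$. Finally, for arbitrary $z \in V(J)$, $z = 0$ is trivial, and for $z \neq 0$ apply the sphere case to $z/\|z\|$ and multiply through by $\|z\|$, using linearity of $A$: $\|Az\| = \|z\|\,\|A(z/\|z\|)\| = \|z\|$.

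\textbf{Expected obstacle.} There is no deep difficulty here — the only thing to be careful about is correctly exploiting the fact that "bijective" in the hypothesis refers to $A|_{Z(J)}$ as a map onto $Z(I)$, and promoting this to injectivity on all of the cone $V(J)$ and surjectivity onto the cone $V(I)$ by scaling. The homogeneity of $V(I)$ and $V(J)$ (immediate since $I$ and $J$ are homogeneous ideals: $p(tz) = t^{\deg p}p(z)$ for homogeneous $p$) is what makes the scaling arguments legitimate, and this is presumably the point the remark "straightforward consequence of homogeneity" is pointing at. The proof is short; I expect the write-up to be a single paragraph.
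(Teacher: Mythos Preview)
Your overall strategy is correct and matches the paper's: reduce to unit vectors by homogeneity, get $\|Az\| \le 1$ immediately, and then use bijectivity of $A|_{Z(J)}$ to rule out $\|Az\| < 1$. However, there is a real gap in your justification of ``$A$ is injective on $V(J)$''. What you actually prove is that $\ker A \cap V(J) = \{0\}$, i.e., $Av = 0$ with $v \in V(J)$ forces $v = 0$. You then use the stronger statement that $Au = Aw$ with $u, w \in V(J)$ forces $u = w$. These are not equivalent, because $V(J)$ is a cone, not a linear subspace: the difference $u - w$ need not lie in $V(J)$, so your kernel argument does not apply to it. In your setup $u \in Z(J)$ but $w = r^{-1}z$ has norm $>1$, and there is no a priori reason $u$ must lie on the line $\mb{C}z$.

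The fix is short and uses only homogeneity together with injectivity of $A|_{Z(J)}$. Given $u, w \in V(J)$ with $Au = Aw$, choose $t > 0$ small enough that both $tu$ and $tw$ lie in $Z(J)$; then $A(tu) = A(tw)$ and injectivity of $A|_{Z(J)}$ gives $tu = tw$, hence $u = w$. Alternatively (and this is closest to the paper's disc picture), after obtaining $\|Az\| \le 1$ and $Az \ne 0$, take the unit vector $Az/\|Az\| \in Z(I)$, find its preimage $u \in Z(J)$ under $A|_{Z(J)}$, and observe that $\|Az\|\, u \in Z(J)$ with $A(\|Az\|\, u) = Az$; injectivity of $A|_{Z(J)}$ then gives $\|Az\|\, u = z$, and taking norms yields $\|Az\| \ge 1$. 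Either route closes the gap, and the rest of your argument goes through.
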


\begin{proof}
Each unit vector $v \in V(J)$ determines a disc 
$\ol{\mb D} v = \mb C v \cap \ol{\mb B_{d'}}$ in $Z(J)$.
Observe that $A$ carries $\mb C v$ onto $\mb C Av$, and must take the intersection
with the ball to the corresponding intersection with the ball $\ol{\mb B_d}$.
Thus it takes $\ol{\mb D} v$ onto $\ol{\mb D} Av$.
Therefore $\|Av\| = \|v\|$.
\end{proof}

This lemma can be significantly strengthened to obtain a
rigidity result which will be useful for the algebraic classification 
of the algebras $\cA_I$.

%%%%%%%%%%%%%%%%%%%%%%%%
\begin{proposition}\label{prop:rigid}
Let $V$ be a homogeneous variety in $\mb{C}^d$, 
and let $A$ be a linear map on $\mb{C}^d$ such that 
$\|Az\| = \|z\|$ for all $z \in V$. 
If $V = W_1 \cup \cdots \cup W_k$ is the decomposition of 
$V$ into irreducible components, then $A$ is isometric 
on $\spn(W_i)$ for $1 \le i \le k$.
\end{proposition}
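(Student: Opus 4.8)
The plan is to reduce at once to the case where $V$ is irreducible. Each irreducible component $W_i$ of the homogeneous variety $V$ is itself homogeneous: the group $\mb{C}^*$ acts on $V$ by dilations and therefore permutes its finitely many components, and since $\mb{C}^*$ is connected this permutation is trivial, so each $W_i$ is $\mb{C}^*$-invariant and, being closed and nonempty, contains $0$. As $\|Az\| = \|z\|$ for every $z \in W_i \subseteq V$, the proposition will follow once I prove the following statement: \emph{if $W$ is an irreducible homogeneous variety and $\|Az\| = \|z\|$ for all $z \in W$, then $A$ is isometric on $\spn(W)$.}

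To do this I would fix such a $W$ and set $B = A^*A - I$, a self-adjoint operator on $\mb{C}^d$. The hypothesis says precisely that $\langle Bz, z\rangle = \|Az\|^2 - \|z\|^2 = 0$ for all $z \in W$. It suffices to show that $\langle Bz, w\rangle = 0$ for all $z, w \in \spn(W)$: once this is known, $\|Au\|^2 = \langle (B+I)u, u\rangle = \|u\|^2$ for every $u \in \spn(W)$, which is exactly the assertion. Since $W$ spans $\spn(W)$ and $(z,w) \mapsto \langle Bz, w\rangle$ is sesquilinear, it is in fact enough to prove $\langle Bz, w\rangle = 0$ for all $z, w$ in the smooth locus $W^{\mathrm{reg}} = W \setminus \Sing(W)$, which is dense in $W$.

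The heart of the matter is a local computation at smooth points. Given $p \in W^{\mathrm{reg}}$, choose a holomorphic parametrization $h : \mb{B}_m \to \Delta \subseteq W^{\mathrm{reg}}$ of a neighbourhood $\Delta$ of $p$, where $m = \dim W$, and write its Taylor expansion $h(t) = \sum_\alpha a_\alpha t^\alpha$, convergent on $\mb{B}_m$. Then for $z = h(t)$ and $w = h(s)$ one has
\[
\langle Bz, w\rangle = \sum_{\alpha, \beta} \langle B a_\alpha, a_\beta\rangle \, t^\alpha \overline{s^\beta}.
\]
Putting $s = t$ produces a real-analytic function of $t \in \mb{B}_m$ that vanishes identically, whence every coefficient vanishes: $\langle B a_\alpha, a_\beta\rangle = 0$ for all $\alpha, \beta$. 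Consequently $\langle Bz, w\rangle = 0$ for \emph{all} $z, w \in \Delta$.

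It then remains to globalize, and this is really the only subtle point: the hypothesis controls $\langle B\cdot, \cdot\rangle$ only on the diagonal of $W \times W$, so one must ``polarize'' over a set that is not closed under addition, and analyticity together with connectedness is what makes this possible. Since $W$ is irreducible, $W^{\mathrm{reg}}$ is a connected complex manifold. Fix $z_0 \in W^{\mathrm{reg}}$ and a chart $\Delta_0 \ni z_0$ as above. The function $w \mapsto \overline{\langle Bz_0, w\rangle}$ is holomorphic on $W^{\mathrm{reg}}$ and, by the local computation, vanishes on the nonempty open set $\Delta_0$; hence it vanishes on all of $W^{\mathrm{reg}}$. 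Thus $\langle Bz_0, w\rangle = 0$ for all $z_0, w \in W^{\mathrm{reg}}$, and by continuity for all $z_0, w \in W$, which gives $\langle Bz, w\rangle = 0$ on $\spn(W)$ and completes the argument. (If one prefers, the globalization can be packaged as the observation that $\langle Bz, w\rangle$, viewed as a holomorphic function of $(z, \overline{w})$ on $W^{\mathrm{reg}} \times \overline{W^{\mathrm{reg}}}$, vanishes on the maximal totally real submanifold $\{\overline{w} = \overline{z}\}$, hence vanishes identically.)
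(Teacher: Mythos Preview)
Your proof is correct and takes a genuinely different route from the paper. The paper also reduces to the irreducible case, but then argues by building an ascending chain of irreducible varieties $V \subseteq V_1 \subseteq \cdots \subseteq V_n = \spn(V)$ on which $A$ is length-preserving: it first shows (via a Laplacian computation on curves, after diagonalizing $A$ by the polar decomposition) that $A$ is isometric on each tangent space $T_z(V)$ at a smooth point, then takes the Zariski closure of the projected tangent bundle to produce a strictly larger variety, and iterates until reaching a subspace. Your argument bypasses this algebraic-geometric machinery entirely: by introducing $B = A^*A - I$ you turn the problem into showing a sesquilinear form vanishes, use the linear independence of the monomials $t^\alpha\overline{t^\beta}$ to polarize locally in a chart, and then globalize in one step via analytic continuation on the connected manifold $W^{\mathrm{reg}}$. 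Your approach is shorter and more direct; the paper's approach has the side benefit of producing the intermediate varieties explicitly and making contact with the tangent-bundle geometry, but for the statement at hand your polarization argument is cleaner.
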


\begin{proof}
It is enough to prove the proposition for an irreducible variety $V$. 
The idea of the proof is to produce a sequence of algebraic varieties 
$V \subseteq V_1 \subseteq V_2 \subseteq ...$ such that 
$\|Az\| = \|z\|$ for all $z \in V_i$ and all $i$, 
where either $\dim V_i < \dim V_{i+1}$, or $V_i$ is a subspace 
(and then it is the subspace spanned by $V$).

First, we prove that $\|Ax\| = \|x\|$ for all $x$ lying in the 
tangent space $T_z(V)$ for every $z \in V\setminus\Sing (V)$. 
Since $z$ is nonsingular, for every such $x$ there is a complex 
analytic curve $\gamma: \mb{D} \rightarrow V$ such that 
$\gamma(0) = z$ and $\gamma'(0) = x$. 
By the polar decomposition, we may assume that $A$ is a diagonal matrix 
with nonnegative entries $a_1, \ldots, a_d$. Since $A$ is isometric on $V$,
\bes
\sum_{i=1}^d a_i^2 |\gamma_i(z)|^2 = \sum_{i=1}^d  |\gamma_i(z)|^2 
\quad\text{for  } z \in \mb{D}.
\ees
Applying the Laplacian to both sides of the above equation, and evaluating at $0$, 
we obtain
\bes
\sum_{i=1}^d a_i^2 |\gamma_i'(0)|^2 = 
\sum_{i=1}^d  |\gamma_i'(0)|^2 .
\ees
Thus,  $\|Ax\| = \|x\|$ for all $x \in T_z(V)$ and all nonsingular $z \in V$.

Consider now the set 
\bes
X_0 = \bigcup_{z \in V\setminus\Sing (V)} 
\{z\} \times T_z(V) \subseteq \mb{C}^d \times \mb{C}^d .
\ees
Let $X$ denote the Zariski closure of $X_0$, that is, $X = V(I(X_0))$. 
As $X$ sits inside the tangent bundle $\bigcup_{z\in V}\{z\}\times T_z(V)$, 
$X_0$ is equal to $X \setminus \Big(\Sing (V)\times \mb{C}^d\Big)$.
Therefore $X_0$ is Zariski open in $X$. 
By Proposition 7 of Section 7, Chapter 9 in \cite{CLO92}, the closure 
(in the usual topology of $\mb{C}^{2d}$) of $X_0$ is $X$. 
Letting $\pi$ denote the projection onto the last $d$ variables, 
we have $\pi(X) \subseteq \overline{\pi(X_0)}$. 
But $\pi(X_0) = \bigcup_{z \in V\setminus\Sing (V)}T_z(V)$, 
therefore $\|Ax\| = \|x\|$ for all $x \in \pi(X)$. 
Now, $\pi(X)$ might not be an algebraic variety, 
but by Theorem 3 of Section 2, Chapter 3 in \cite{CLO92}, 
there is an algebraic variety $W$ in which $\pi(X)$ is dense. 
Observe that $W$ must be a homogeneous variety, 
and $\|Az\| = \|z\|$ for every $z \in W$.

Being irreducible, $V$ must lie completely in one of the 
irreducible components of $W$. 
We denote this irreducible component by $V_1$, and let 
$W_2, \ldots, W_m$ be the other irreducible components of $W$. 
We claim: if $V$ itself is not a linear subspace, then $\dim V_1 > \dim V$. 
We prove this claim by contradiction. If $\dim V_1 = \dim V$ 
then $V = V_1$, because $V \subseteq V_1$ and both are irreducible. 
Let $z \in V = V_1$ be a regular point. 
Since $\dim T_z(V) = \dim V$, and $T_z(V)$ is irreducible, 
$T_z(V)$ is not contained in $V_1$. 
But $T_z(V)$ is contained in $W$, thus $T_z(V) \subseteq W_i$ for some $i$. 
But $z \in T_z(V)$ by homogeneity. 
What we have shown is that, under the assumption $\dim V_1 = \dim V$, 
every regular point $z \in V$ is contained in $\bigcup_{i=2}^m W_i$. 
Thus $V_1 \subseteq \cup_i W_i$. That contradicts the assumed 
irreducible decomposition.

If $V$ is not a linear subspace then we are now in the situation in which 
we started, with $V_1$ instead of $V$, and with $\dim V_1 > \dim V$. 
Continue this procedure finitely many times to obtain a  sequence of 
irreducible varieties $V_1 \subseteq \ldots  \subseteq V_n$ that 
terminates at a subspace on which $A$ is isometric. 
$V_n$ must be $\spn V$.  Indeed, it certainly contains $V$. 
On the other hand, every $V_i$ lies in $\spn V_{i-1}$ and hence in $\spn V$.
\end{proof}

When the variety $V$ is a hypersurface we sketch a more elementary proof, 
which provides somewhat more information. 

%%%%%%%%%%%%%%%%%%%%%%%%
\begin{proposition}\label{prop:eigenvalueanalysis}
Let $f \in \mb{C}[z_1, \ldots, z_d]$ be a homogeneous polynomial, 
and let $V = V(f)$. 
Let $A$ be a linear map on $\mb{C}^d$ such that $\|Az\| = \|z\|$ for all $z \in V$. 
Let $A = UP$ be the polar decomposition of $A$ with $U$ unitary and $P$ positive. 
Then one of the following possibilities holds:
\begin{enumerate}
\item $P=I$;
\item $P$ has precisely one eigenvalue different from $1$ and $V(f)$ is a hyperplane;
\item\label{it:3} $P$ has precisely two eigenvalues not equal to $1$ 
$($one larger and one smaller$)$, and in this case $V$ is the union of  
hyperplanes which all intersect in a common $d\!-\!2$-dimensional subspace.
\end{enumerate}
\end{proposition}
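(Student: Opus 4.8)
The plan is to reduce everything to a statement about the positive operator $P$ in the polar decomposition $A = UP$, since $U$ is unitary and hence isometric, so $\|Az\| = \|z\|$ on $V$ if and only if $\|Pz\| = \|z\|$ on $V$. After diagonalizing $P$, write its eigenvalues as $\lambda_1, \ldots, \lambda_d \ge 0$ in an orthonormal eigenbasis $w_1, \ldots, w_d$. For a point $z = \sum_j c_j w_j$, the condition $\|Pz\|^2 = \|z\|^2$ reads
\be\label{eq:quad}
\sum_{j=1}^d (\lambda_j^2 - 1)|c_j|^2 = 0 .
\ee
So the set where $P$ is isometric is the real-quadric cone $Q$ cut out by \eqref{eq:quad}, and the hypothesis says $V = V(f) \subseteq Q$. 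The whole argument is an analysis of how a complex hypersurface can sit inside such a real quadric.

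First I would dispose of the trivial cases: if all $\lambda_j = 1$ then $P = I$, case (1). Otherwise partition the indices into $S_+ = \{j : \lambda_j > 1\}$, $S_- = \{j : \lambda_j < 1\}$, $S_0 = \{j : \lambda_j = 1\}$; at least one of $S_+, S_-$ is nonempty. The key observation is that $V$ is a \emph{complex} variety of complex dimension $d-1$, so it is Zariski-dense in itself and, in particular, it is not contained in any proper real-algebraic subset that is not also complex-algebraic. I would exploit this via the standard trick: if $z \in V$ then (by homogeneity of $f$) $e^{i\theta}z \in V$ for all $\theta$, and then \eqref{eq:quad} holds with each $|c_j|^2$ unchanged — that gives nothing new — but one can instead use that $V$ being a complex cone of dimension $d-1$ must meet, and in fact contain large pieces of, the coordinate-type subspaces determined by the $w_j$. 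More precisely, the plan is: if $S_+ \ne \emptyset$ and $S_- \ne \emptyset$, consider the complex subspaces $E_+ = \spn\{w_j : j \in S_+\}$ and $E_- = \spn\{w_j : j \in S_-\}$. Because \eqref{eq:quad} forces a point of $V$ lying in $E_+$ alone to have all its $E_+$-coordinates zero, we get $V \cap E_+ \subseteq (\text{the }S_0\text{-subspace})$, and similarly for $E_-$. The dimension count — $V$ has codimension $1$ while these intersections are being forced into much smaller subspaces — will pin down the sizes of $S_+$ and $S_-$.

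The cleanest route for the hypersurface case is probably to look directly at the singular/tangent structure: since $f$ is homogeneous, pick a smooth point $z \in V$ and note $T_z(V) \subseteq V$ is false in general, but the quadratic form in \eqref{eq:quad} restricted to the complex line $\mb{C}z$ vanishes identically, and restricted to $T_z(V)$ it must vanish too by the Laplacian argument used in the proof of Proposition \ref{prop:rigid} (apply the Laplacian in the curve parameter to $\sum_j \lambda_j^2 |\gamma_j(t)|^2 = \sum_j |\gamma_j(t)|^2$ at $t=0$). So the Hermitian form $D := P^2 - I$ is \emph{zero} on the complex tangent space $T_z(V)$ at every smooth point $z$. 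Since $\dim_{\mb C} T_z(V) = d-1$, a Hermitian form on $\mb{C}^d$ vanishing on a $(d-1)$-dimensional complex subspace has rank at most... here is the subtle point: a nonzero Hermitian form can vanish on a totally isotropic subspace of dimension up to $\lfloor d/2 \rfloor$, but \emph{not} on a complex subspace of complex codimension $1$ unless its rank is $\le 2$ — indeed if $D$ vanishes on $W$ with $\dim_{\mb{C}} W = d-1$, then $W$ is in the radical of $D$ restricted... no: $W$ is totally isotropic, and a Hermitian form with a totally isotropic subspace of dimension $d-1$ must have both its positive and negative indices of inertia $\le 1$ (since $p + q \le d$ and any totally isotropic subspace has dimension $\le d - \max(p,q)$, forcing $\max(p,q) \le 1$), hence $\operatorname{rank} D \le 2$. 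This is exactly the trichotomy: $\operatorname{rank} D = 0$ gives (1); $\operatorname{rank} D = 1$ means $P$ has one eigenvalue $\ne 1$ (necessarily $>1$ or $<1$), and then every tangent space $T_z(V)$ lies in the degenerate hyperplane $\ker D$, which forces $V$ itself to be a hyperplane; $\operatorname{rank} D = 2$ with one positive and one negative eigenvalue gives (3), where $\ker D$ is a $(d-2)$-dimensional subspace contained in every tangent space, forcing $V$ to be a union of hyperplanes through that common $(d-2)$-plane.

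\textbf{The main obstacle} is the last implication in cases (2) and (3): passing from ``$T_z(V) \supseteq \ker D$ for every smooth $z$'' (equivalently, $\ker D \subseteq V$ and the Gauss map of $V$ is degenerate in a controlled way) to the rigid conclusion that $V$ is literally a hyperplane, resp. a finite union of hyperplanes all containing the fixed $(d-2)$-subspace $\ker D$. This is a ``developable variety / degenerate dual'' statement. The cleanest argument I expect: $V$ is a cone (homogeneous) whose every tangent space contains the fixed subspace $N := \ker D$; quotient by $N$ to get a cone $\bar V \subseteq \mb{C}^d / N \cong \mb{C}^{r}$ with $r = 1$ (case 2) or $r = 2$ (case 3), of codimension $1$; in $\mb{C}^1$ a codimension-$1$ cone is the origin, so $V = N$, a hyperplane; in $\mb{C}^2$ a homogeneous hypersurface is a union of lines through $0$, so $V$ is the preimage, a union of hyperplanes all containing $N$. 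One must check that $\bar V$ really is well-defined of the right codimension — i.e. that $N \subseteq V$ (immediate: $N \subseteq T_z V \subseteq V$ using homogeneity as in the proof of Proposition \ref{prop:rigid}) and that the fibration $V \to \bar V$ drops dimension by exactly $\dim N$, which follows since $T_z V \supseteq N$ forces $V$ to be a union of cosets of $N$. Everything else is linear algebra of Hermitian forms plus the Laplacian trick already in the paper, so I would present those briefly and spend the bulk of the write-up on this quotient-and-classify step.
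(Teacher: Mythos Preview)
Your approach is correct in outline and genuinely different from the paper's. The paper argues elementarily: after diagonalizing $P$ to $\diag(a_1,\dots,a_d)$ with $a_1\ge\cdots\ge a_d$, it observes that if $a_1\ge a_2>1$ then $f$ has a nontrivial zero $v$ with $z_3=\cdots=z_d=0$, whence $\|Pv\|>\|v\|$; this forces $a_2=\cdots=a_{d-1}=1$. For the remaining case $a_1>1>a_d$ it shows $f$ must contain a pure $z_1^m$ term, and then splits into whether $f$ depends on the middle variables (ruled out by letting a middle variable go to infinity) or not (then $f$ factors into linear forms in $z_1,z_d$). Your route---bounding $\operatorname{rank}(P^2-I)$ by $2$ via the Laplacian/tangent-space argument and the inertia bound for totally isotropic subspaces, then deducing the structure of $V$ from the constraint $\ker D\subseteq T_z(V)$---is more conceptual and explains \emph{why} at most two singular values can deviate from $1$. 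The paper's argument is shorter and needs no differential geometry; yours recycles the machinery of Proposition~\ref{prop:rigid} and makes the link to isotropic subspaces explicit.

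There is one genuine error and one step that needs more care. The error: you write ``$N\subseteq T_zV\subseteq V$ using homogeneity''. The inclusion $T_z(V)\subseteq V$ is false for cones in general (take $V=V(z_1^2-z_2z_3)$ at $z=(1,1,1)$). Fortunately you do not need it: once you establish that $V$ is a union of $N$-cosets, $0\in V$ gives $N\subseteq V$ for free. The step needing care is exactly that implication: ``$T_zV\supseteq N$ for all smooth $z$ forces $V$ to be a union of $N$-cosets''. Make this explicit. In coordinates with $N=\{z_1=z_2=0\}$, the condition $N\subseteq T_z(V)=\ker df(z)$ says $\partial_j f(z)=0$ for $j\ge3$ at every smooth $z\in V$; by Zariski density $\partial_jf$ vanishes on $V$, hence lies in $\sqrt{(f)}$. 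Replacing $f$ by its squarefree part (which does not change $V(f)$) gives $\partial_jf\in(f)$, and $\deg\partial_jf<\deg f$ forces $\partial_jf=0$. Thus $f\in\mb{C}[z_1,z_2]$, and a homogeneous polynomial in two variables factors into linear forms---yielding case~(3) directly. For case~(2) you can bypass the Gauss-map discussion entirely: when $\operatorname{rank}D=1$ the isotropy cone $Q$ \emph{is} the hyperplane $\ker D$, so $V\subseteq\ker D$ and pure dimension $d-1$ forces equality.
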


\begin{proof}
After a unitary change of variables, we may assume that $A$ is a 
positive diagonal matrix $A = \diag(a_1, \ldots, a_d)$ 
with $a_i \ge a_{i+1}$ for $1 \le i < d$.
Now $A$ takes the role of $P$ in the statement. 

We first show that $a_2 = \dots = a_{d-1} = 1$.
For if $a_1 \ge a_2 > 1$, there is a non-zero solution to
$f=0$ and $z_3 = \dots = z_d = 0$, say $v=(z_1,z_2,0,\dots,0)$.
But $\|Av\|>\|v\|$, contrary to the hypothesis.  Hence $a_2 \le 1$.
Similarly one shows that $a_{d-1}\ge 1$.  Hence all singular
values equal 1 except possibly $a_1>1$ and $a_d<1$.

If $A = I$ then we have (1). 
When there is precisely one eigenvalue different from $1$,
$A$ is only isometric on the hyperplane $\ker (A-I)$; 
thus (2) holds.
So we may assume that there are precisely two singular values different from $1$,
$a_1>1>a_d$. Then $f$ must have the form $f = \alpha z_1^m + \ldots$ 
for some $\alpha \neq 0$. Indeed, otherwise (if $z_1$ appears only in 
mixed terms) there is non-zero solution $v = (1,0,\ldots,0)$ to $f = 0$, 
and $\|Av\|>\|v\|$, contrary to the hypothesis.
Now there are two cases:

\noindent {\bf Case 1:} $f$ does not depend on $z_2, \ldots, z_{d-1}$. 
In this case $f$ is essentially a polynomial in two variables, 
and can therefore be factored as $f = \prod_i (\alpha_i z_1 + \beta_i z_d)$, 
from which case (\ref{it:3}) follows.

\noindent {\bf Case 2:} $f$ depends on $z_2, \ldots, z_{d-1}$. 
Say $f$ depends on $z_2$. 
Fix $z_3, \ldots, z_d$ such that the polynomial $f(\cdot,\cdot,z_3, \ldots, z_d)$ 
still depends on $z_2$. 
For every $z_2$ there is a solution $z_1$ to the equation $f(z_1,z_2,\ldots,z_d) = 0$. 
As $z_2$ tends to $\infty$, the form of $f$ forces $z_1$ to tend to $\infty$ as well. 
But since $(z_1, \ldots, z_d)$ is a solution and $A$ is isometric on $V(f)$, 
one has 
\bes
a_1^2 |z_1|^2 + a_d^2 |z_d|^2 = |z_1|^2 + |z_d|^2.
\ees
This cannot hold when $z_d$ is fixed and $z_1$ tends to $\infty$. 
So this case does not occur.
\end{proof}

%%%%%%%%%%%%%%%%%%%%%%%%
\begin{example}
Let us show that arbitrarily many hyperplanes can appear in case (\ref{it:3}) above. 
Let $a,b>0$ be such that $a^2 + b^2 = 2$, 
and let $\lambda_1, \ldots, \lambda_k \in \mb{T}$. 
Let $V = \ell_1 \cup \cdots \cup \ell_k$, 
where $\ell_i = \mb C(\lambda_i/\sqrt{2}, 1/\sqrt{2})$. 
Then $A = \diag (a,b)$ is isometric on $V$. 
\end{example}

%%%%%%%%%%%%%%%%%%%%%%%%
\begin{example}
Propositions \ref{prop:rigid} and \ref{prop:eigenvalueanalysis} depend 
on the fact that we are working over $\mb{C}$. 
Indeed, consider the cone $V = V(x^2 + y^2 - z^2)$ over $\mb{R}$. 
With $a$ and $b$ as in the previous example, one sees that 
$A = \diag (a,a,b)$ is isometric on $V$, 
but it is clearly not an isometry on $\mb{R}^3 = \spn(V)$.
\end{example}

%%%%%%%%%%%%%%%%%%%%%%%%
%%%%%%%%%%%%%%%%%%%%%%%%
\subsection{Algebra isomorphisms induced by linear maps}

Let $I$ and $J$ be radical homogeneous ideals. 
We know that for $\cA_I$ and $\cA_J$ to be isomorphic there must be 
a linear map $A: \overline{\mb{B}}_{d'} \rightarrow \mb{C}^d$ taking $Z(J)$ 
bijectively onto $Z(I)$ (see Remark \ref{rem:trick} below). 
Our goal now is to show the converse, that is, the existence of such a 
linear map gives rise to an isomorphism of the algebras via a similarity, 
which we establish for a certain class of varieties.

Let $V$ be a homogeneous variety in $\mb{C}^d$ and let
$V= V_1\cup\cdots \cup V_k$ be the decomposition of $V$ into irreducible components. 
Then we call 
\[
 S(V) := \spn(V_1)\cup\cdots \cup \spn(V_k)
\]
the \textit{minimal subspace span of $V$}.
By Proposition~\ref{prop:rigid}, the linear map $A$ must be isometric on $S(V)$.
Note that $V=S(V)$ if and only if $V$ is already the union of subspaces. 

Our goal is to establish that $A$ induces a bounded linear isomorphism $\tilde A$ 
between the Fock spaces $\cF_J$ and $\cF_I$ given by $\tilde A f = f \circ A^*$.
This is evidently linear (provided it is defined) and satisfies
\be\label{eq:Atilde}
\tilde{A} \nu_\lambda = \nu_{A \lambda} \quad \text{for } \lambda \in Z^o(J).
\ee
Conversely, $\tilde A$ is determined by (\ref{eq:Atilde}) because the 
kernel functions span $\cF_J$.

Before describing the class of varieties for which we can establish this
very natural sounding fact, we prove it in several special cases which
will form the building blocks for the general result.

Let $L$ be a Hilbert space. 
Let $S_n$ denote the symmetric group on $n$ elements. 
For $\sigma \in S_n$, we let $\pi_\sigma$ be the unitary operator 
on $L^{\otimes n}$ given by 
\[
 \pi_\sigma(x_1 \otimes \cdots x_n) = 
 x_{\sigma(1)} \otimes \cdots \otimes x_{\sigma(n)} .
\] 
Then $E_n = \frac{1}{n!}\sum_{\sigma \in S_n} \pi_\sigma $ is the 
orthogonal projection of $L^{\otimes n}$, the $n$-fold tensor product,  
onto $L^n$, the symmetric $n$-fold tensor product. 
If $W \subseteq L$ is a subspace, then $W^n = E_n W^{\otimes n}$ is the 
symmetric $n$-fold tensor product of $W$. 
If $V$ is another subspace, then we write $V^mW^n$ for the 
subspace $E_{m+n} (V^m \otimes W^n) \subseteq (\mb{C}^{d'})^{m+n}$,
which is the symmetric tensor product of $V^m$ and $W^n$.

If $P_V$ is the orthogonal projection of $L$ onto $V$, then
$P_V^{\otimes n}$ is the projection of  $L^{\otimes n}$ onto $V^{\otimes n}$.
The orthogonal projection onto $V^n$ is given by
$P_{V^n} = E_n P_V^{\otimes n} \iota$ where $\iota$ is the natural injection of
$L^n$ into $L^{\otimes n}$.

We need the following lemma which shows that high tensor powers of
disjoint subspaces are almost orthogonal.

%%%%%%%%%%%%%%%%%%%%%%%%%%%%
\begin{lemma}\label{lem:asymptotic_orthogonality}
Let $V_i$ for $1 \le i \le k$ be subspaces of a Hilbert space $L$
so that $\max_{i\ne j} \|P_{V_i} P_{V_j} \| = c < 1$.
When $c^n \le 1/2k$, any vectors $x_i \in V_i^n$ satisfy
\[
 \frac12 \sum_{i=1}^k \|x_i\|^2 \le 
 \big\| \sum_{i=1}^k x_i \big\|^2 \le 
 \frac 32 \sum_{i=1}^k \|x_i\|^2 .
\]
\end{lemma}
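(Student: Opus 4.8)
The plan is to estimate the "cross terms" $\lip x_i, x_j \rip$ for $i \ne j$ in terms of the product $\|x_i\|\,\|x_j\|$ and the constant $c^n$, and then feed the resulting bound into an elementary inequality about sums in a Hilbert space. The key geometric input is that $x_i \in V_i^n \subseteq L^n \subseteq L^{\otimes n}$, so when we view these vectors inside the full tensor power $L^{\otimes n}$ we may try to control $\lip x_i, x_j\rip$ using the projections $P_{V_i}^{\otimes n}$. The point is that $x_j \in V_j^{\otimes n}$, hence $P_{V_j}^{\otimes n} x_j = x_j$, and similarly $P_{V_i}^{\otimes n} x_i = x_i$; therefore
\bes
\lip x_i, x_j \rip = \lip P_{V_i}^{\otimes n} x_i, P_{V_j}^{\otimes n} x_j \rip = \lip x_i, P_{V_i}^{\otimes n} P_{V_j}^{\otimes n} x_j \rip ,
\ees
so that $|\lip x_i, x_j\rip| \le \|P_{V_i}^{\otimes n} P_{V_j}^{\otimes n}\| \, \|x_i\|\,\|x_j\|$. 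Since $P_{V_i}^{\otimes n} P_{V_j}^{\otimes n} = (P_{V_i} P_{V_j})^{\otimes n}$, its norm is $\|P_{V_i} P_{V_j}\|^n \le c^n$. Thus $|\lip x_i, x_j\rip| \le c^n \|x_i\|\,\|x_j\|$ for all $i \ne j$.

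Next I would expand
\bes
\Big\| \sum_{i=1}^k x_i \Big\|^2 = \sum_{i=1}^k \|x_i\|^2 + \sum_{i \ne j} \lip x_i, x_j \rip ,
\ees
and bound the off-diagonal sum: using the estimate above and then $\|x_i\|\,\|x_j\| \le \tfrac12(\|x_i\|^2 + \|x_j\|^2)$,
\bes
\Big| \sum_{i \ne j} \lip x_i, x_j \rip \Big| \le c^n \sum_{i \ne j} \|x_i\|\,\|x_j\| \le c^n (k-1) \sum_{i=1}^k \|x_i\|^2 \le k c^n \sum_{i=1}^k \|x_i\|^2 .
\ees
Under the hypothesis $c^n \le 1/2k$ we get $k c^n \le 1/2$, so the off-diagonal contribution is at most $\tfrac12 \sum_i \|x_i\|^2$ in absolute value, which gives both the lower bound $\tfrac12 \sum_i \|x_i\|^2$ and the upper bound $\tfrac32 \sum_i \|x_i\|^2$ immediately.

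There is no real obstacle here; the only point requiring a moment's care is the identification $P_{V_i}^{\otimes n} P_{V_j}^{\otimes n} = (P_{V_i} P_{V_j})^{\otimes n}$ and the fact that the operator norm of an $n$-fold tensor power is the $n$-th power of the operator norm, together with the observation that passing from $L^n$ to $L^{\otimes n}$ is isometric (the symmetric tensor power sits inside the full tensor power as a subspace), so that the inner products and norms are unchanged. Once these routine facts are in place, the estimate on $\lip x_i, x_j\rip$ and the elementary arithmetic above finish the argument.
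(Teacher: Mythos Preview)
Your proof is correct and follows essentially the same approach as the paper: bound each cross term by $|\langle x_i,x_j\rangle|\le c^n\|x_i\|\,\|x_j\|$ via the tensor-power projection argument, then control the off-diagonal sum by $kc^n\sum_i\|x_i\|^2\le\tfrac12\sum_i\|x_i\|^2$. The only cosmetic difference is that the paper bounds $\sum_{i\ne j}\|x_i\|\,\|x_j\|\le(\sum_i\|x_i\|)^2\le k\sum_i\|x_i\|^2$ via Cauchy--Schwarz, whereas you use AM--GM on each pair to get the slightly sharper $(k-1)\sum_i\|x_i\|^2$; either route suffices.
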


\begin{proof}
Observe that 
\[
 \|P_{V_i^n} P_{V_j^n}\| \le 
 \| P_{V_i}^{\otimes n} P_{V_j}^{\otimes n}\| = 
 c^n \le \frac1{2k} .
\]
Therefore
\begin{align*}
 \Big| \big\| \sum_{i=1}^k x_i \big\|^2 - \sum_{i=1}^k \|x_i\|^2 \Big| &= 
 \Big| \sum_{i \ne j} \ip{x_i,x_j} \Big| \le
 \sum_{i \ne j} |\ip{x_i,x_j}| \\ &\le 
 \sum_{i \ne j} c^n \|x_i\| \, \|x_j\| \le
 c^n \Big( \sum_{i=1}^k \|x_i\| \Big)^2 \\&\le 
 c^nk \sum_{i=1}^k \|x_i\|^2
 \le \frac12 \sum_{i=1}^k \|x_i\|^2 .
 \qedhere
\end{align*}
\end{proof}

Recall that $\cF(X)$ is a reproducing kernel Hilbert spaces and 
that $\nu_\lambda$ denotes the kernel function at $\lambda$. 
We introduce a convenient basis for the symmetric Fock
space $\cF(X)$ of a subspace $X$. 
Decompose $\nu_\lambda$ into its homogeneous parts
\[
 \nu_\lambda = \sum_{n\ge0} \nu_\lambda^n = \sum_{n\ge0} \lambda^{\otimes n} .
\]
Thus if $f = \sum_n f_n$ is the homogeneous decomposition of $f \in H^2_d$,
\[
 \nu_\lambda^n(f) = \lel f_n, \lambda^{\otimes n} \rir = f_n(\lambda) .
\]
This functional is completely determined by the identity
\[
 \nu_\lambda^n(z^n) = \lel z^n, \lambda^{\otimes n} \rir = 
 \sum_{|\alpha|=n} \frac{n!}{\alpha_1! \dots \alpha_d!} 
 \overline{\lambda^\alpha} z^\alpha .
\]
For any subspace $X$, 
\begin{align*}
 \cF(X) &= \spn\{\nu_\lambda : \lambda \in \mb B_d \cap X \} \\&= 
 \sum_{n\ge0}\strut^\oplus \spn\{ \nu_\lambda^n : \lambda\in \mb B_d \cap X \}
 =  \sum_{n\ge0}\strut^\oplus X^n .
\end{align*}

%%%%%%%%%%%%%%%%%%%%%%%%%%%%
\begin{lemma}\label{lem:disjoint}
Let $V = V_1 \cup \cdots \cup V_k$ and $W = W_1 \cup \cdots \cup W_k$
be unions of linear subspaces in $\mb C^{d'}$ and $\mb C^d$, respectively,
with zero intersections $V_i\cap V_j = \{0\}$ and $W_i\cap W_j = \{0\}$ for $i \ne j$.  
Suppose that $A$ is a linear map from $\mb C^{d'}$ to $\mb C^d$ 
such that $A(W_i) = V_i$ and  $A$ is isometric on each of the $W_i$'s.
Then $\tilde A$, defined by $\tilde A \nu_\lambda = \nu_{A\lambda}$,
determines a bounded linear map of $\cF(W)$ into $\cF(V)$.
\end{lemma}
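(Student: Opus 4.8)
The plan is to construct $\tilde A$ one homogeneous degree at a time, observe that well-definedness is automatic, and then get boundedness from the near-orthogonality estimate of Lemma~\ref{lem:asymptotic_orthogonality}.

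\textbf{Step 1 (degreewise description and well-definedness).} First I would record the homogeneous decompositions. Since $f(z)\mapsto f(e^{it}z)$ is unitary on each of these spaces, $\cF(W) = \overline{\spn}\{\nu_\lambda : \lambda \in \mb{B}_{d'}\cap W\}$ splits as a Hilbert-space direct sum $\cF(W) = \bigoplus_{n\ge 0}\cF_n(W)$, and (by polarization) $\cF_n(W) = \spn\{\nu_\lambda^n : \lambda \in \mb{B}_{d'}\cap W\} = W_1^n + \cdots + W_k^n$ inside the $n$-fold symmetric tensor power; likewise $\cF(V) = \bigoplus_n \cF_n(V)$ with $\cF_n(V) = V_1^n + \cdots + V_k^n$. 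Because $A|_{W_i}$ is a linear isometry of $W_i$ onto $V_i$, the $n$-fold tensor power $A^{\otimes n}$ restricts on $W_i^{\otimes n}$ to $(A|_{W_i})^{\otimes n}$, which commutes with symmetrization and hence maps $W_i^n$ \emph{unitarily} onto $V_i^n$. Consequently $A^{\otimes n}$ carries $\cF_n(W) = \sum_i W_i^n$ into $\cF_n(V) = \sum_i V_i^n$ and sends $\nu_\lambda^n = \lambda^{\otimes n}$ to $(A\lambda)^{\otimes n} = \nu_{A\lambda}^n$ for every $\lambda \in W$. Thus $\tilde A_n := A^{\otimes n}|_{\cF_n(W)} : \cF_n(W) \to \cF_n(V)$ is a \emph{well-defined} linear map — no linear relations among the $\nu_\lambda^n$ need be checked, since $\tilde A_n$ is just the restriction of a linear map — and $\tilde A := \bigoplus_n \tilde A_n$, defined on the (dense) algebraic direct sum $\bigoplus_n \cF_n(W)$, satisfies $\tilde A\nu_\lambda = \sum_n \nu_{A\lambda}^n = \nu_{A\lambda}$, the series converging in $\cF(V)$ because $\|A\lambda\| = \|\lambda\| < 1$.

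\textbf{Step 2 (uniform bound).} Next I would bound $\|\tilde A_n\|$ independently of $n$. Since the $W_i$ are finite dimensional with $W_i\cap W_j = \{0\}$ for $i\ne j$, one has $c_W := \max_{i\ne j}\|P_{W_i}P_{W_j}\| < 1$, and similarly $c_V < 1$; put $c = \max(c_W,c_V)$ and fix $N$ with $c^N \le \tfrac1{2k}$. For $n\ge N$, write $\xi\in\cF_n(W)$ as $\xi = \sum_i x_i$ with $x_i\in W_i^n$; Lemma~\ref{lem:asymptotic_orthogonality} applied to $W_1,\dots,W_k$ gives $\sum_i\|x_i\|^2 \le 2\|\xi\|^2$, and since $\tilde A_n\xi = \sum_i (A|_{W_i})^{\otimes n}x_i$ with $(A|_{W_i})^{\otimes n}x_i\in V_i^n$ of norm $\|x_i\|$, the same lemma applied to $V_1,\dots,V_k$ gives
\[
\|\tilde A_n\xi\|^2 \le \tfrac32\sum_i\|x_i\|^2 \le 3\|\xi\|^2 .
\]
So $\|\tilde A_n\| \le \sqrt3$ for $n\ge N$. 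For the finitely many $n < N$ the spaces $\cF_n(W)$ are finite dimensional, so $\|\tilde A_n\| =: M_n < \infty$ trivially. With $M := \max\{\sqrt3, M_0,\dots,M_{N-1}\}$ and $\xi = \sum_n\xi_n\in\cF(W)$,
\[
\|\tilde A\xi\|^2 = \sum_n\|\tilde A_n\xi_n\|^2 \le M^2\sum_n\|\xi_n\|^2 = M^2\|\xi\|^2 ,
\]
so $\tilde A$ extends to a bounded linear map $\cF(W)\to\cF(V)$ with $\|\tilde A\|\le M$, which is the assertion.

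\textbf{Expected obstacle.} The only substantive point is the uniform bound in Step~2: the crude estimate $\|\tilde A_n\|\le\|A\|^n$ is useless once $\|A\| > 1$, and without the asymptotic orthogonality of the high symmetric powers $W_i^n$ (and $V_i^n$) there is no reason for the ``gluing'' of the individual unitaries $(A|_{W_i})^{\otimes n}$ to stay uniformly bounded — this is precisely why Lemma~\ref{lem:asymptotic_orthogonality} is isolated beforehand. By contrast, well-definedness is a non-issue once one notices that $\tilde A_n$ is simply $A^{\otimes n}$ restricted, and the low-degree terms contribute only a harmless finite constant because they live in finitely many finite-dimensional spaces.
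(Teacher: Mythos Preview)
Your proof is correct and follows essentially the same route as the paper: identify $\tilde A$ on degree $n$ with the restriction of $A^{\otimes n}$ (so well-definedness is automatic), use Lemma~\ref{lem:asymptotic_orthogonality} on both domain and range to get the uniform $\sqrt3$ bound for large $n$, and handle small $n$ separately. The only cosmetic difference is that for $n<N$ the paper records the explicit estimate $\|\tilde A_n\|\le\|A^{\otimes_s n}\|\le\|A\|^N$ rather than appealing to finite-dimensionality; your version is perfectly adequate for the lemma, though the explicit constant $\|A\|^N$ is reused later in the proof of Theorem~\ref{thm:Atilde}.
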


\begin{proof}
For any variety $V$ that is a union of subspaces, $V=V_1\cup\cdots\cup V_k$, 
\[
 \cF(V) = \sum_{i=1}^k \cF(V_i) = 
 \sum_{n\ge 0}\strut^\oplus (\sum_{i=1}^k V_i^n) .
\]
For $f \in \cF(W)$, $\tilde{A} f = f \circ A^*$. 
In particular, 
\[
\tilde A \nu_\lambda^n = \nu_{A\lambda}^n = A^{\otimes_s n} \nu_\lambda^n .
\]
That is,  $\tilde A|_{(\mb C^{d'})^n} = A^{\otimes_s n}$ is the
symmetric tensor product of $n$ copies of $A$.

In particular, on any subspace $X$ on which $A$ is isometric,
$\tilde A$ is a unitary map of $\cF(X)$ onto $\cF(AX)$.
In particular, $\tilde A$ carries $\cF(W_i)$ isometrically onto $\cF(V_i)$
for $1 \le i \le k$.
The only issue is whether this defines a bounded linear map on their span.
Since $\tilde A$ respects the homogeneous decomposition, it suffices to
consider the restriction of $A^{\otimes_s n}$ to $\sum_{i=1}^k W_i^n$.
We will write $W^n := \sum_{i=1}^k W_i^n$.

Since $W_i \cap W_j = \{0\}$ for $i < j$, and $d'<\infty$, the projections onto these 
subspaces satisfy $\| P_{W_i} P_{W_j} \| < 1$. Thus we can define
\[
 c = \max\{ \| P_{W_i} P_{W_j} \|, \, \| P_{V_i} P_{V_j} \| : 1 \le i < j \le k \} < 1 .
\]
We consider two cases.
Observe that 
\[ \| \tilde A|_{W^n}\| \le \| A^{\otimes_s n} \| =  \|A\|^n . \] 
When $c^n > 1/2k$, $n \le N:= \log_{c^{-1}}(2k)$, and so we obtain 
\[
 \| \tilde A |_{W^n} \| \le \|A\|^N 
 \quad\textrm{ provided }n \le N.
\]

When $c^n \le 1/2k$, we use Lemma~\ref{lem:asymptotic_orthogonality}.
By hypothesis $W_i \cap W_j = \{0\}$ for $i\neq j$.
A typical vector in $W^n = \sum_{i=1}^k W_i^{n}$ can be written as
$x = \sum_{i=1}^k x_i$ where $x_i \in W_i^{n}$.
It follows from Lemma~\ref{lem:asymptotic_orthogonality} that 
\[
 \frac12 \sum_{i=1}^k \|x_i\|^2 \le 
 \big\| \sum_{i=1}^k x_i \big\|^2 \le 
 \frac 32 \sum_{i=1}^k \|x_i\|^2 .
\]
Lemma~\ref{lem:asymptotic_orthogonality} also applies to
$A^{\otimes_s n} x = \sum_{i=1}^k  A^{\otimes_s n} x_i$ in $\sum_{i=1}^k V_i^n$, namely
\[
 \frac12 \sum_{i=1}^k \| A^{\otimes_s n} x_i\|^2 \le 
 \big\| \sum_{i=1}^k  A^{\otimes_s n} x_i \big\|^2 \le 
 \frac 32 \sum_{i=1}^k \| A^{\otimes_s n} x_i\|^2 .
\]
However $A$ is isometric on each $W_i$, and thus
$\| A^{\otimes_s n} x_i \| = \|x_i\|$.
We deduce that for any vector $x \in W^n$, we have
\[ \frac13 \|x\|^2 \le \| A^{\otimes_s n}  x \|^2 \le 3 \|x\|^2 .\]
In particular, $\| \tilde A|_{W^n} \| \le \sqrt 3$.

Putting the pieces together, we see that
\[ \|\tilde A\| \le \max\{ \|A\|^N, \sqrt{3} \} .\]
Hence $\tilde A$ is a bounded linear map of $\cF(W)$ into $\cF(V)$.
\end{proof}

If $W=W_1\cup\cdots \cup W_k$ is a union of subspaces and $E$ is a subspace 
orthogonal to each of the $W_i$'s, then we let $E\oplus W$ denote
$(E\oplus W_1) \cup \cdots \cup (E\oplus W_k)$.

%%%%%%%%%%%%%%%%%%%%%%%%%%%%
\begin{lemma}\label{lem:direct_sum}
Suppose that $V = V_1 \cup \cdots \cup V_k$ and $W = W_1 \cup \cdots \cup W_k$
are unions of linear subspaces; and  
$A$ is a linear map from $\mb C^{d'}$ to $\mb C^d$ 
such that $A(W_i) = V_i$ and  $A$ is isometric on each of the $W_i$'s.
Furthermore suppose that $\tilde A$, defined by $\tilde A \nu_\lambda = \nu_{A\lambda}$,
determines a bounded linear map of $\cF(W)$ into $\cF(V)$.
If $E$ is a subspace orthogonal to $\spn(V)$ and $F$ is a subspace orthogonal to 
$\spn(W)$ such that $A$ carries $F$ isometrically onto $E$, 
then $\tilde A$ determines a bounded linear map of 
$\cF(F \oplus W)$ into $\cF(E \oplus V)$.
\end{lemma}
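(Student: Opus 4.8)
The plan is to realise $\cF(F\oplus W)$ as a Hilbert space tensor product in which $\tilde A$ becomes an elementary tensor of operators, so that its boundedness on $\cF(F\oplus W)$ is inherited from its boundedness on $\cF(W)$ together with the fact that $A|_F$ is an isometry onto $E$.

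First I would record the Fock-space decomposition. Since $F$ is orthogonal to $\spn(W)\supseteq W_i$, each symmetric power splits orthogonally according to the number of tensor factors taken from $F$, namely $(F\oplus W_i)^n=\bigoplus_{p+q=n}F^pW_i^q$, and this is the graded form of the standard unitary identification of the symmetric Fock space of an orthogonal direct sum,
\[
 \cF(F\oplus W_i)\;\cong\;\cF(F)\otimes\cF(W_i).
\]
For the various $i$ these are the restrictions of one unitary $\cF(F\oplus\spn W)\cong\cF(F)\otimes\cF(\spn W)$, hence are mutually compatible, and summing over $i$ (using $\cF(F\oplus W)=\sum_i\cF(F\oplus W_i)$) gives
\[
 \cF(F\oplus W)\;\cong\;\cF(F)\otimes\Big(\sum_i\cF(W_i)\Big)=\cF(F)\otimes\cF(W),
\]
and likewise $\cF(E\oplus V)\cong\cF(E)\otimes\cF(V)$.

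Next I would invoke functoriality of second quantization. Because $A(F)=E$, $A(W_i)=V_i$, $E\perp\spn(V)$ and $F\perp\spn(W)$, the restriction of $A$ to $F\oplus\spn(W)$ is the orthogonal direct sum $(A|_F)\oplus(A|_{\spn W})$ and maps into $E\oplus\spn(V)$; in particular $A$ is isometric on each $F\oplus W_i$. By construction $\tilde A$ is the restriction to the relevant sub-Fock space of the second quantization $\Gamma(A)$ of $A$, i.e.\ the operator with $\Gamma(A)|_{(\mb{C}^{d'})^n}=A^{\otimes_s n}$, which indeed sends $\nu_\lambda$ to $\nu_{A\lambda}$. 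Since $\Gamma(S\oplus T)\cong\Gamma(S)\otimes\Gamma(T)$, under the identifications above
\[
 \tilde A\big|_{\cF(F\oplus W)}\;\cong\;\Gamma(A|_F)\otimes\big(\tilde A|_{\cF(W)}\big).
\]
As $A|_F : F\to E$ is an isometry onto $E$, the operator $\Gamma(A|_F)$ is a unitary of $\cF(F)$ onto $\cF(E)$, and therefore
\[
 \big\|\tilde A|_{\cF(F\oplus W)}\big\|\;=\;\big\|\Gamma(A|_F)\big\|\,\big\|\tilde A|_{\cF(W)}\big\|\;=\;\big\|\tilde A|_{\cF(W)}\big\|<\infty ,
\]
which is slightly sharper than claimed; in particular $\tilde A$ does extend to a bounded operator of $\cF(F\oplus W)$ into $\cF(E\oplus V)$ with $\tilde A\nu_\lambda=\nu_{A\lambda}$.

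The one point needing care — the main obstacle — is that the subspaces $W_i$, and the $V_i$, are in general neither mutually orthogonal nor of zero pairwise intersection, so the indices $i$ cannot be treated separately; the argument must pass through the single global decomposition $\cF(F\oplus\spn W)\cong\cF(F)\otimes\cF(\spn W)$ and one must verify that the several identifications are compatible and conjugate $\tilde A$ into the elementary tensor above. Everything else is the routine functorial calculus of symmetric Fock space. If one prefers to avoid that language, the same estimate follows by a more computational route: write $x\in\cF(F\oplus W)$ as a sum of bihomogeneous pieces $x_{p,q}$ lying in the degree-$p$-in-$F$ summand $\sum_i F^pW_i^q$; these are pairwise orthogonal, $\tilde A$ carries them into the pairwise orthogonal subspaces $\sum_i E^pV_i^q$, and since tensoring with the isometry $(A|_F)^{\otimes_s p}$ does not change operator norm one has $\|\tilde A|_{\sum_i F^pW_i^q}\|=\|\tilde A|_{\sum_i W_i^q}\|\le\|\tilde A|_{\cF(W)}\|$, whence $\|\tilde Ax\|\le\|\tilde A|_{\cF(W)}\|\,\|x\|$.
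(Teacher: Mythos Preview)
Your proof is correct and is essentially the same as the paper's: both decompose $\cF(F\oplus W)$ via the bigrading into pieces of the form $\cF(F)\otimes_s(\sum_i W_i^n)$, observe that $\tilde A$ acts there as $\tilde U\otimes_s A^{\otimes_s n}$ with $\tilde U=\Gamma(A|_F)$ unitary, and conclude $\|\tilde A|_{\cF(F\oplus W)}\|=\|\tilde A|_{\cF(W)}\|$. Your packaging in terms of second quantization and the identity $\Gamma(S\oplus T)\cong\Gamma(S)\otimes\Gamma(T)$ is a clean way to say what the paper does by hand (and your closing ``computational route'' is exactly the paper's argument).
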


\begin{proof}
This is straightforward.  
If $F$ and $X$ are orthogonal subspaces, 
\[
 \cF(F \oplus X) = \sum_{m,n \ge0}\!\!\!\strut^\oplus F^m X^n = 
 \sum_{n\ge0}\strut^\oplus \cF(F) X^n .
\]
If $A$ is isometric on $F \oplus X$ and $AF=E$ and $AX=Y$,
then it follows that $\tilde A$ is an isometry of $\cF(F \oplus X)$
onto $\cF(E \oplus Y)$ which takes $\cF(F)X^n$ isometrically
onto $\cF(E) Y^n$.  
Moreover if $A|_F = U$ is the isometry onto $E$, the restriction
of $\tilde A$ to $\cF(F)X^n$ is $\tilde U \otimes_s A^{\otimes_s n}|_{X^n}$.

This situation applies to each space $F\oplus W_i$.
Hence $\tilde A$ carries $\cF(F) \sum_{i=1}^k W_i^n$ onto
$\cF(E) \sum_{i=1}^k V_i^n$ via 
\[ \tilde U \otimes_s A^{\otimes_s n}|_{\sum_{i=1}^k W_i^n} .\]
Since $\tilde U$ is isometric, the norm of this map coincides with 
\[ \| A^{\otimes_s n}|_{\sum_{i=1}^k W_i^n} \| \le \|\tilde A|_{\cF(W)} \| .\]
It follows that $\| \tilde A|_{\cF(F\oplus W)} \| = \|\tilde A|_{\cF(W)} \| $.
\end{proof}

%%%%%%%%%%%%%%%%%%%%%%%%
\begin{corollary}\label{cor:common_intersection}
Let $V=V_1 \cup \cdots \cup V_k$ and $W = W_1 \cup \cdots \cup W_k$
be homogeneous varieties decomposed into irreducible components. 
Suppose that $A$ is a linear map from $\mb C^{d'}$ to $\mb C^d$ 
such that $A(W_i) = V_i$ and  $A$ is isometric on each of the $W_i$'s.
If there is a common subspace $E$ so that $S(V_i) \cap S(V_j) = E$ for $i \ne j$,
then $\tilde A$ determines a bounded linear map of $\cF(W)$ into $\cF(V)$.
\end{corollary}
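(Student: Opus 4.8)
The plan is to reduce the statement to the two situations already established in Lemmas~\ref{lem:disjoint} and~\ref{lem:direct_sum}, by peeling the common subspace $E$ off both varieties. First I would pass to subspaces. By Proposition~\ref{prop:rigid}, applied to each irreducible component $W_i$ in turn, $A$ is isometric on each span $\spn(W_i)$, and $A(\spn W_i)=\spn V_i=S(V_i)$. Since $\cF(W_i)\subseteq\cF(\spn W_i)$, and $\tilde A$ carries $\cF(W_i)$ into $\cF(V_i)\subseteq\cF(S(V_i))$ — in fact, as noted in the proof of Lemma~\ref{lem:disjoint}, $\tilde A$ restricted to the symmetric Fock space of a subspace on which $A$ is isometric is unitary — it suffices to bound $\tilde A$ as a map $\cF(S(W))\to\cF(S(V))$. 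So from now on I assume $V_i$ and $W_i$ are subspaces, with $A(W_i)=V_i$, $A|_{W_i}$ isometric, and $V_i\cap V_j=E$ for all $i\neq j$.

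Next I split off $E$. On the target side write $V_i=E\oplus E_i$ with $E_i:=V_i\ominus E$; then $E_i\cap E_j=\{0\}$ for $i\neq j$ (a common vector lies in $V_i\cap V_j=E$ and in $E^\perp$), and $E\perp\spn(E_1\cup\cdots\cup E_k)$, so $V=E\oplus V'$ with $V':=E_1\cup\cdots\cup E_k$ and $E\perp\spn(V')$. On the source side put $F_i:=(A|_{W_i})^{-1}(E)$ and $G_i:=(A|_{W_i})^{-1}(E_i)=W_i\ominus F_i$; then $A$ carries $F_i$ isometrically onto $E$ and $G_i$ isometrically onto $E_i$, and $G_i\cap G_j=\{0\}$ for $i\neq j$. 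When $A$ is injective on $\spn(W)$ — as it is in the intended applications, where $A$ is invertible — the preimage $F_i=A^{-1}(E)\cap W_i$ is forced to be independent of $i$: it is contained in every $W_j$ (for $x\in F_i$ one has $Ax\in E\subseteq A(W_j)$, so $x$ equals its preimage in $W_j$ by injectivity), hence $F_i=F_j=:F$, and $F\perp G_j$ for every $j$ since $F=F_j$ is orthogonal to $G_j$ inside $W_j$. Thus $W=F\oplus W'$ with $W':=G_1\cup\cdots\cup G_k$, $F\perp\spn(W')$, and $A$ isometric from $F$ onto $E$.

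Now Lemma~\ref{lem:disjoint}, applied to $W'$, $V'$ and $A$ (the $G_i$ and the $E_i$ have pairwise zero intersection), gives that $\tilde A$ is bounded from $\cF(W')$ to $\cF(V')$; and Lemma~\ref{lem:direct_sum}, applied with the common orthogonal summands $F$ and $E$, upgrades this to boundedness of $\tilde A:\cF(W)=\cF(F\oplus W')\to\cF(E\oplus V')=\cF(V)$, which is exactly the claim.

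The delicate point, which I expect to be the main obstacle, is the last step when $A$ is not injective on $\spn(W)$: then the subspaces $F_i$ need not coincide and Lemma~\ref{lem:direct_sum} does not apply verbatim, so one must estimate $\tilde A$ directly, degree by degree. The key observation that rescues the argument is that any overlap between $W_i$ and $W_j$ is contained in $F_i\cap F_j$, i.e.\ lies in directions on which $A^{\otimes_s n}$ maps everything consistently into the single space $E^n$; combining this with the asymptotic orthogonality of the powers $G_i^n$ (respectively $E_i^n$) for $n$ large, provided by Lemma~\ref{lem:asymptotic_orthogonality} since the $G_i$'s (respectively $E_i$'s) are pairwise transverse, together with a crude finite-dimensional bound for the finitely many small $n$, still yields a bound on $\|\tilde A\|$ independent of the degree.
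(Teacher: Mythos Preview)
Your proof is correct and follows exactly the paper's approach: pass to minimal subspace spans via Proposition~\ref{prop:rigid}, split off the common intersection, apply Lemma~\ref{lem:disjoint} to the pairwise-disjoint complements, then Lemma~\ref{lem:direct_sum} to restore the common summand. You are in fact more careful than the paper about the injectivity hypothesis needed for the source-side common subspace $F$ to be well-defined---the paper simply asserts ``$F := S(W_i) \cap S(W_j)$ is independent of $i \ne j$, and is mapped isometrically onto $E$'' without comment---and your final paragraph correctly flags the non-injective case as a subtlety, though in all of the paper's applications $A$ has a linear inverse on the variety, so this issue never actually arises.
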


\begin{proof}
By Proposition~\ref{prop:rigid}, $A$ maps the minimal subspace span
$S(W_i)$ isometrically onto $S(V_i)$ for $1 \le i \le k$.  
In particular, $F := S(W_i) \cap S(W_j)$ is independent of $i \ne j$,
and is mapped isometrically onto $E$. 
Let $V'_i = S(V_i) \ominus E$ and $W'_i = S(W_i) \ominus F$. 
As these are disjoint subspaces, Lemma~\ref{lem:disjoint} implies that 
$\tilde A$ is a bounded map of $\cF(W'_1 \cup \cdots \cup W'_k)$
into $\cF(V'_1 \cup \cdots \cup V'_k)$.
Then by Lemma~\ref{lem:direct_sum}, this extends to a bounded map of
$\cF(S(W))$ into $\cF(S(V))$.
The restriction of this map to $\cF(W)$ is a bounded map into $\cF(V)$.
\end{proof}

A third construction is obtained by using the ideas in 
Proposition~\ref{prop:eigenvalueanalysis}.

%%%%%%%%%%%%%%%%%%%%%%%%
\begin{lemma}\label{lem:codim_1}
Let $V=V_1 \cup \cdots \cup V_k$ and $W = W_1 \cup \cdots \cup W_k$
be homogeneous varieties decomposed into irreducible components. 
Suppose that $A$ is a linear map from $\mb C^{d'}$ to $\mb C^d$ 
such that $A(W_i) = V_i$ and  $A$ is isometric on each of the $W_i$'s.
If $\dim \big( \spn(W)/S(W_1) \big) \le 1$, then $\tilde A$ 
determines a bounded linear map of $\cF(W)$ into $\cF(V)$.
\end{lemma}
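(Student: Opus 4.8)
The plan is to strip off a unitary via the polar decomposition, reduce to a positive map $P$, carry out a singular‑value analysis in the spirit of Proposition~\ref{prop:eigenvalueanalysis} to show that unless $P=I$ the components $W_i$ all lie in a finite family of hyperplanes sharing a common subspace, and then invoke Corollary~\ref{cor:common_intersection}. First I would make two harmless reductions. Since $\cF(W)$ and $\cF(V)$ are spanned by the kernel functions $\nu_\lambda$ and $\nu_{A\lambda}$ with $\lambda\in W\subseteq\spn(W)$, we may replace $A$ by its restriction $\spn(W)\to\spn(V)$ and assume $\spn(W)=\mb{C}^{d'}$; note $S(W_1)=\spn(W_1)$ since $W_1$ is irreducible, so the hypothesis now says $\spn(W_1)$ has codimension at most $1$ in $\mb{C}^{d'}$. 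Writing the polar decomposition $A=UP$ with $P=(A^*A)^{1/2}\ge0$ and $U$ a partial isometry with initial space $\ker(P)^\perp$, one has $\tilde A=\tilde U\tilde P$; since $\tilde U$ acts isometrically on $\cF$ of any subspace contained in $\ker(P)^\perp$, it suffices to show $\tilde P$ is bounded on $\cF(W)$. By Proposition~\ref{prop:rigid}, $A$ — hence $P$ — is isometric on each $\spn(W_i)$, in particular on the codimension‑$\le1$ subspace $\spn(W_1)$.

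Next I would analyze the singular values. Diagonalize $P=\diag(a_1,\dots,a_{d'})$ with $a_1\ge\cdots\ge a_{d'}\ge0$ in an orthonormal basis $e_1,\dots,e_{d'}$. If two singular values exceeded $1$, the corresponding expanding subspace would be $2$‑dimensional and meet the hyperplane $\spn(W_1)$ nontrivially, contradicting that $P$ is isometric there; likewise at most one $a_i$ is $<1$. So $P=\diag(a_1,1,\dots,1,a_{d'})$ with $a_1\ge1\ge a_{d'}\ge0$. If exactly one of $a_1,a_{d'}$ differed from $1$, then $P$ would be isometric only on $\ker(P-I)$, forcing every $\spn(W_i)$ — hence $\spn(W)=\mb{C}^{d'}$ — into this proper subspace, which is impossible; so either $P=I$, and then $\tilde A$ is isometric and we are done, or $a_1>1>a_{d'}\ge0$. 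Set $\alpha=a_1^2-1>0$, $\beta=1-a_{d'}^2>0$, $N=\ker(P-I)=\spn\{e_2,\dots,e_{d'-1}\}$, $N^\perp=\spn\{e_1,e_{d'}\}$. The identity $\|Px\|^2-\|x\|^2=\alpha|x_1|^2-\beta|x_{d'}|^2$, together with a polarization over the complex subspace $\spn(W_i)$, forces $\spn(W_i)$ to lie in a hyperplane $H_{\gamma_i}:=N\oplus L_{\gamma_i}$ for some $\gamma_i$ with $|\gamma_i|^2=\beta/\alpha$, where $L_\gamma:=\{x\in N^\perp:x_1=\gamma x_{d'}\}$. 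A direct computation using $|\gamma|^2=\beta/\alpha$ shows $P$ is isometric on each $L_\gamma$, hence (as $N\perp L_\gamma$) on each hyperplane $H_\gamma$.

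Finally I would assemble the pieces. Let $C=\{\gamma_1,\dots,\gamma_k\}$, a finite set, and put $\overline W=\bigcup_{\gamma\in C}H_\gamma$, a union of distinct hyperplanes all containing $N$ and pairwise intersecting exactly in $N$; then $W\subseteq\overline W$ and $\cF(W)\subseteq\cF(\overline W)$. When $a_{d'}>0$ the map $A$ is injective, so the $A(H_\gamma)$ are distinct hyperplanes pairwise intersecting exactly in $A(N)$, $A$ is isometric on each $H_\gamma$, and the $H_\gamma$ are the irreducible components of $\overline W$; hence Corollary~\ref{cor:common_intersection}, applied to $\overline W$ and $\overline V:=\bigcup_{\gamma\in C}A(H_\gamma)$ with common subspace $E=A(N)$, shows $\tilde A$ is bounded from $\cF(\overline W)$ into $\cF(\overline V)$. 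Restricting to $\cF(W)$, and noting $\tilde A\nu_\lambda=\nu_{A\lambda}$ with $A\lambda\in V\cap\mb B_d$ for $\lambda\in W\cap\mb B_{d'}$ (as $A$ is isometric on $\spn(W_i)$), one gets that $\tilde A$ maps $\cF(W)$ boundedly into $\cF(V)$. The degenerate case $a_{d'}=0$ does not occur in the invertible setting in which this lemma is needed; in any case it can be treated directly, by decomposing $\cF(\overline W)$ into homogeneous components and further by bidegree relative to $N\oplus N^\perp$ — on each $N^pL_\gamma^q$ the operator $\tilde P$ is isometric — and applying Lemma~\ref{lem:asymptotic_orthogonality} to the almost‑orthogonal family $\{L_\gamma^q\}_{\gamma\in C}$ for $q$ large, with the trivial bound $\|P\|^q$ for $q$ bounded.

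I expect the singular‑value analysis of the third step to be the main point: it is exactly there that the codimension hypothesis $\dim\big(\spn(W)/S(W_1)\big)\le1$ is used, and it is what reduces the lemma to the already‑established Corollary~\ref{cor:common_intersection}; the bookkeeping with the common subspace $N$ and its image $A(N)$ is the only other place requiring care.
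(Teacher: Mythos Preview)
Your proposal is correct and follows essentially the same route as the paper: a singular-value analysis of $A|_{\spn(W)}$ shows that, outside the isometric case, every $\spn(W_i)$ sits inside one of finitely many hyperplanes $H_\gamma$ (the paper writes these as $W_\theta$) which pairwise intersect in the common subspace $N=\ker(P-I)$, and then Corollary~\ref{cor:common_intersection} finishes the job. Your explicit polar decomposition step and your separate handling of the degenerate possibility $a_{d'}=0$ are minor elaborations on the paper's presentation rather than genuine departures.
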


\begin{proof}
If $S(W_1) = \spn(W)$, then $A$ is an isometry of $\spn(W)$ onto $\spn(V)$.
In this case, $\tilde A$ is an isometry of $\cF(W)$ onto $\cF(V)$.
So we may suppose that $S(W_1)$ is codimension 1 in $\spn(W)$.

As in the proof of Proposition~\ref{prop:eigenvalueanalysis},
the restriction of $A$ to $\spn(W)$ has singular values 
 $a_1 \ge 1 = a_2 = \dots = a_{p-1} \ge a_p$.
And $A$ will be isometric on $\spn(W)$ as in cases (1) and (2)
of Proposition~\ref{prop:eigenvalueanalysis}, unless $a_1 > 1 > a_p$.  
So we assume that we are in this situation.
Let $f_1,\dots,f_p$ be the orthonormal basis for $\spn(W)$ so that
there is a corresponding orthonormal basis $e_1,\dots,e_p$ for $\spn(V)$ 
with $A f_j = a_j e_j$. 

There is a unique $\alpha \in (0,\pi/2)$ so that
\[ a_1^2 \cos^2\alpha + a_p^2 \sin^2\alpha = 1 .\]
The maximal subspaces on which $A$ is isometric have the form
\[
 W_\theta = 
 \spn\{ \cos\alpha f_1 + e^{i\theta}\sin\alpha f_p, f_2,\dots f_{p-1} \}
 \quad\text{for }\theta \in [0,2\pi) .
\]
Each irreducible component $W_i$ is contained in some $W_{\theta_i}$.
By Corollary~\ref{cor:common_intersection}, $\tilde A$ is bounded 
on $\cF(W_{\theta_1} \cup \dots \cup W_{\theta_k})$.
Hence it restricts to a bounded map of $\cF(W)$ into $\cF(V)$.
\end{proof}

%%%%%%%%%%%%%%%%%%%%%%%%
\begin{corollary}\label{cor:dim3}
Let $V$ and $W$ be homogeneous varieties in $\mb C^3$.
If there is a linear map $A$ on $\mb C^3$ such that $A(W)=V$
and $A$ is isometric on the irreducible components of $W$,
then $\tilde A$ is a bounded linear map of $\cF(W)$ into $\cF(V)$.
\end{corollary}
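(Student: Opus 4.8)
The plan is to reduce Corollary~\ref{cor:dim3} to Lemma~\ref{lem:disjoint} and Lemma~\ref{lem:codim_1} by a short case analysis on the irreducible components of $W$, exploiting the fact that homogeneous varieties in three variables are very restricted. First I would record the elementary observation that an irreducible homogeneous variety of dimension one in $\mb{C}^3$ is a line through the origin (its projectivization is a single point of $\mb{P}^2$). Decomposing $W = W_1 \cup \cdots \cup W_k$ into irreducible components, it then follows that each $W_i$ is either a line or has $\dim W_i \ge 2$, and in the latter case $\spn(W_i)$ has dimension $2$ or $3$. Since $A$ is isometric, hence injective, on each $W_i$ and carries it onto an irreducible homogeneous variety, I would label the irreducible components of $V$ so that $A(W_i) = V_i$; Proposition~\ref{prop:rigid} then guarantees that $A$ is isometric on each minimal subspace span $S(W_i) = \spn(W_i)$.

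The first case is that every $W_i$ is a line. Then $W$ and $V$ are finite unions of one-dimensional subspaces whose pairwise intersections are $\{0\}$ (distinct lines through the origin meet only there, and the $V_i$ are the distinct irreducible components of $V$), and by hypothesis $A$ is isometric on each $W_i$. Thus Lemma~\ref{lem:disjoint} applies verbatim and produces the desired bounded map $\tilde A : \cF(W) \to \cF(V)$.

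The complementary case is that some component, which I relabel $W_1$, has $\dim W_1 \ge 2$. Then $S(W_1) = \spn(W_1)$ has dimension $2$ or $3$, while $\spn(W) \subseteq \mb{C}^3$ has dimension at most $3$, so $\dim\bigl(\spn(W)/S(W_1)\bigr) \le 1$. This is exactly the hypothesis of Lemma~\ref{lem:codim_1}, which therefore gives the conclusion. (The subcase $W = \mb{C}^3$ is absorbed here, with $S(W_1) = \spn(W)$.) Since the two cases exhaust all possibilities, the corollary follows.

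I do not anticipate a genuine obstacle: all the analytic work has already been done in Lemmas~\ref{lem:asymptotic_orthogonality}, \ref{lem:disjoint}, \ref{lem:direct_sum}, \ref{lem:codim_1} and Corollary~\ref{cor:common_intersection}, and the role of ``three variables'' is purely combinatorial --- a component of $W$ that fails to be a line automatically has span of codimension at most one in $\mb{C}^3$, which is precisely the regime Lemma~\ref{lem:codim_1} is designed to handle. The only points needing a little care are the bookkeeping that pairs the components of $W$ with those of $V$ under $A$ and the verification that $A$ is isometric on the relevant spans; both follow at once from the injectivity of $A$ on each component together with Proposition~\ref{prop:rigid}.
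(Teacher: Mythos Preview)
Your proposal is correct and follows essentially the same route as the paper: split into the case where every irreducible component of $W$ is a line (apply Lemma~\ref{lem:disjoint}) versus the case where some component has span of dimension at least $2$ (apply Lemma~\ref{lem:codim_1}). Your version is more explicit about why a one-dimensional irreducible homogeneous variety must be a line and about the bookkeeping pairing components of $W$ with those of $V$, but the underlying argument is the same.
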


\begin{proof}
Let $W = W_1 \cup \cdots \cup W_k$.  If $\dim S(W_i) > 1$ for any $i$,
then Lemma~\ref{lem:codim_1} applies.
Otherwise each $W_i$ is a subspace of dimension one.
In this case, $W_i\cap W_j = \{0\}$ when $i \ne j$.
Hence Lemma~\ref{lem:disjoint} applies.
\end{proof}

We are now in a position to state the class of varieties to which our techniques apply.
We introduce a definition for the purposes of easier exposition.
Call a variety $V$ \textit{tractable} if $W = S(V)$ is tractable, meaning
that it can be constructed as follows:
\begin{enumerate}
\item A finite union $W$ of subspaces $W_i$ with zero intersection, 
 $W_i\cap W_j = \{0\}$ for $i \ne j$, is tractable.
\item A finite union $W$ of subspaces $W_i$ so that  $\dim \big( \spn W/W_{i_0} \big) = 1$ 
 for some $i_0$ is tractable.
\item If $W$ is tractable, and $E$ is a subspace orthogonal to $\spn W$,
then $E\oplus W$ is tractable.
\item If $W_i$ for $1 \le i \le k$ are tractable unions of subspaces and 
 $\spn(W_i) \cap \spn(W_j) = \{0\}$ for $i \ne j$,
 then $W_1 \cup \dots \cup W_k$ is tractable. 
\end{enumerate}

The crucial technical result we need is the following:

%%%%%%%%%%%%%%%%%%%%%%%%
\begin{theorem}\label{thm:Atilde}
Let $I$ and $J$ be radical homogeneous ideals in $\mb{C}[z_1, \ldots, z_d]$ 
and $\mb{C}[z_1, \ldots, z_{d'}]$, respectively. 
Assume that $V(J)$ is tractable.
If there is a linear map $A: \mb{C}^{d'} \rightarrow \mb{C}^d$  
that maps $Z(J)$ bijectively onto $Z(I)$, 
then the map $\tilde{A} : \cF_J \rightarrow \cF_I$ given by
$(\ref{eq:Atilde}):$
\[
 \tilde{A} \nu_\lambda = \nu_{A \lambda} \quad \text{for  } \lambda \in Z^o(J)
\]
is a bounded linear map of $\cF_J$ into $\cF_I$.
\end{theorem}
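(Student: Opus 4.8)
The plan is to reduce the assertion to the situation where $V(I)$ and $V(J)$ are replaced by unions of linear subspaces, and then to prove \emph{that} case by a structural induction mirroring the four clauses in the definition of tractability, invoking at each node the matching lemma established above.

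For the reduction I would first assemble the geometric input. Since $J$ is radical, Lemma~\ref{lem:rad} gives $\overline{\spn}\{\nu_\lambda : \lambda\in Z^o(J)\} = \cF_J$; as $Z^o(J)=V(J)\cap\mb{B}_{d'}$ is contained in $S(V(J))\cap\mb{B}_{d'}$, the space $\cF_J$ sits as a closed subspace of $\cF(S(V(J)))$, and likewise $\cF_I \subseteq \cF(S(V(I)))$. Because $A$ carries $Z(J)$ bijectively onto $Z(I)$, Lemma~\ref{lem:isometric} makes $A$ isometric on $V(J)$, and then Proposition~\ref{prop:rigid} makes $A$ isometric on $\spn(V_i)$ for every irreducible component $V_i$ of $V(J)$. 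Consequently each $\spn(V_i)$ is carried isometrically, hence bijectively onto an equidimensional subspace, to the span of the corresponding component of $V(I)$, so $A$ maps the tractable union of subspaces $S(V(J))$ onto $S(V(I))$, isometrically on each member; and $\tilde A$ from $(\ref{eq:Atilde})$ sends each kernel function $\nu_\lambda$, $\lambda\in Z^o(J)$, to a kernel function lying, by Lemma~\ref{lem:F_I}, in $\cF_I$. It therefore suffices to show that $\tilde A\colon \cF(S(V(J)))\to\cF(S(V(I)))$ is bounded, i.e.\ to prove the theorem with $V(J)$ replaced by its minimal subspace span $W=S(V(J))$.

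Next I would induct on the construction of the tractable union $W$. A union with pairwise zero intersections (clause~(1)) is handled by Lemma~\ref{lem:disjoint}; clause~(2) by Lemma~\ref{lem:codim_1}; clause~(3) by Lemma~\ref{lem:direct_sum} applied to the orthogonal summand. For clause~(4), where $W=W_1\cup\cdots\cup W_k$ with each $W_i$ tractable and $\spn(W_i)\cap\spn(W_j)=\{0\}$ for $i\ne j$, I would argue as in the proof of Lemma~\ref{lem:disjoint}, using the inductive hypothesis that $\tilde A$ is bounded on each $\cF(W_i)$: each homogeneous component of $\cF(W_i)$ lies in $\spn(W_i)^{n}$, and $\|P_{\spn(W_i)}P_{\spn(W_j)}\|<1$, so for $n$ beyond a fixed threshold Lemma~\ref{lem:asymptotic_orthogonality}, applied with the subspaces $\spn(W_i)$ and again with their images $A\spn(W_i)$, supplies uniform two-sided norm comparisons that reduce the estimate on $\cF(W)$ to those on the $\cF(W_i)$, while the finitely many lower $n$ are controlled by $\|A\|^{n}$. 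In each clause one must check that the image family of subspaces satisfies the same incidence hypotheses as the domain family, and here the isometry of $A$ on each component span (equality of dimensions, preservation of zero intersections) together with the bijectivity of $A$ on $Z(J)$ is exactly what is needed.

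I expect this last bookkeeping to be the main obstacle: confirming that $A$ carries the zero-intersecting, codimension-one, and orthogonally split structure of $W=S(V(J))$ to a structurally compatible decomposition of $S(V(I))$, so that the quoted lemmas genuinely apply on the image side (for instance, that $A\spn(W_i)\cap A\spn(W_j)=\{0\}$ in clause~(4)). Once that is granted, the induction needs only an upper bound on $\|\tilde A\|$, so the asymptotic orthogonality of Lemma~\ref{lem:asymptotic_orthogonality} is enough and no invertibility of $A$ on all of $\mb{C}^{d'}$ has to be assumed.
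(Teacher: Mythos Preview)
Your proposal is correct and follows essentially the same approach as the paper: reduce to the minimal subspace span via Lemma~\ref{lem:isometric} and Proposition~\ref{prop:rigid}, then verify boundedness case-by-case according to the tractability clauses, invoking Lemmas~\ref{lem:disjoint}, \ref{lem:codim_1}, \ref{lem:direct_sum}, and for clause~(4) re-running the asymptotic-orthogonality argument of Lemma~\ref{lem:disjoint} with the inductive bound on each $\cF(W_i)$ in place of the isometry. The bookkeeping concern you flag---that the image-side incidence hypotheses (e.g.\ $A\spn(W_i)\cap A\spn(W_j)=\{0\}$) must be verified so that the lemmas apply and $c<1$ on the target side---is exactly right, and the paper's proof treats this point just as lightly as you do.
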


\begin{proof}
By Lemma~\ref{lem:isometric}, $A$ preserves the norm on $V(J)$.
By Proposition~\ref{prop:rigid}, $A$ is also isometric on the minimal 
subspace span $S(V(J))$.
If we can show that $\tilde A$ is a bounded map of $\cF(S(V(J)))$ 
into $\cF(S(V(I)))$, then by restriction, it maps $\cF(V(J))$ into $\cF(V(I))$.
So the theorem reduces to the case in which the varieties are unions of subspaces.

Lemma~\ref{lem:disjoint} shows that the result holds in case (1) of
a union of subspaces with zero pairwise intersection.
Lemma~\ref{lem:codim_1} shows that the result holds in case (2)
in which one subspace $W_{i_0}$ has codimension one in $\spn W$.
And Lemma~\ref{lem:direct_sum} shows that if the result holds for
$W$, then it holds for $E \oplus W$ when $E$ is orthogonal to $\spn W$.
Thus it remains to show that if $W_i$ for $1 \le i \le k$ are tractable 
unions of subspaces and $\spn(W_i) \cap \spn(W_j) = \{0\}$ for $i \ne j$,
then the result holds for $W_1 \cup \dots \cup W_k$.
The proof is a refinement of the proof of Lemma~\ref{lem:disjoint}.

The hypotheses guarantee that $\tilde A$ is a bounded linear map
of $\cF(W_i)$ into $\cF(V_i)$ for $1 \le i \le k$.
As in the proof of Lemma~\ref{lem:disjoint}, it suffices to estimate
$\| \tilde A|_{W^n} \|$ for each $n\ge 0$.
Again we let 
\[
 c = 
 \max \{ \| P_{\spn(W_i)} P_{\spn(W_j)} \|,\, \| P_{\spn(V_i)} P_{\spn(V_j)} \| 
 : 1 \le i < j \le k \} < 1.
\]
The proof that $\| \tilde A|_{W^n} \| \le \| A^{\otimes_s n}\| \le \|A\|^N$
for $n \le N := \log_{c^{-1}}(2k)$ remains the same.
So we consider $\| \tilde A|_{W^n} \|$ for $n>N$.

Following the proof of Lemma~\ref{lem:disjoint} again, we split a
typical vector $x \in W^n$ as $x = \sum_{i=1}^n x_i$ with 
$x_i \in W_i^n \subset \spn(W_i)^n$.  
As before, Lemma~\ref{lem:asymptotic_orthogonality} yields
\[
 \frac12 \sum_{i=1}^k \|x_i\|^2 \le 
 \big\| \sum_{i=1}^k x_i \big\|^2 \le 
 \frac 32 \sum_{i=1}^k \|x_i\|^2 
\]
and
\[
 \frac12 \sum_{i=1}^k \| A^{\otimes_s n} x_i\|^2 \le 
 \big\| \sum_{i=1}^k  A^{\otimes_s n} x_i \big\|^2 \le 
 \frac 32 \sum_{i=1}^k \| A^{\otimes_s n} x_i\|^2 .
\]
Let  $M = \max \big\{ \|\tilde A|_{\cF(W_i)} \| : 1 \le i \le k \big\}$.
Then
\begin{align*}
 \big\| \sum_{i=1}^k  A^{\otimes_s n} x_i \big\|^2 &\le 
 \frac 32 \sum_{i=1}^k \| A^{\otimes_s n} x_i\|^2  \\ &\le
 \frac 32 M^2  \sum_{i=1}^k \|x_i\|^2 \le
 3   M^2  \big\| \sum_{i=1}^k x_i \big\|^2 .
\end{align*}
Hence $\| \tilde A \| \le \max\{ \|A\|^N,  \sqrt3 M \}$ on $\cF(W)$.
Thus $\tilde A$ is a bounded map of $\cF(W)$ into $\cF(V)$.
\end{proof}  

To recapitulate, we list a number of examples of tractable varieties:
\begin{enumerate}
\item Any irreducible variety $V$ because $S(V)$ is a subspace.
\item $V= V_1 \cup V_2$, the union of two irreducible varieties,
 because there is only one $S(V_i) \cap S(V_j)$ for $i \ne j$.
\item $V=V_1 \cup \cdots \cup V_k$ where $V_i$ are irreducible
 and $S(V_i) \cap S(V_j) = E$, a fixed subspace, for all $i \ne j$.
\item $V=V_1 \cup \cdots \cup V_k$ where $\dim S(V_1) \ge d-1$.
\item Any variety in $\mb C^3$.
\end{enumerate}

As an immediate consequence, we obtain the following statement about
isomorphism of operator algebras of the form $\cA_I$ when $V(I)$ is tractable.
We conjecture that this result is valid for all homogeneous varieties.

%%%%%%%%%%%%%%%%%%%%%%%%
\begin{theorem}\label{thm:linear_induce_similarity}
Let $I$ and $J$ be radical homogeneous ideals in $\mb{C}[z_1, \ldots, z_d]$ and\\
$\mb{C}[z_1, \ldots, z_{d'}]$, respectively, such that $V(J)$ is tractable. 
Let $A : \mb{C}^{d'} \rightarrow \mb{C}^d$ and 
$B: \mb{C}^d \rightarrow \mb{C}^{d'}$ be linear maps such that 
$AB|_{Z(I)} =\id_{Z(I)}$ and $BA|_{Z(J)} = \id_{Z(J)}$. 
Let  $\tilde{A}$ be the map given by Theorem~$\ref{thm:Atilde}$.
Then $\tilde A$ is invertible, and the map
\bes
 \varphi: f \rightarrow f \circ A
\ees
is a completely bounded isomorphism from $\cA_I$ onto $\cA_J$,
and it is given by conjugation with $\tilde{A}^*$:
\bes
 \varphi(f) = \tilde{A}^* f (\tilde{A}^{-1})^*.
\ees
\end{theorem}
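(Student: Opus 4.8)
The plan is to build on Theorem~\ref{thm:Atilde}, which already supplies a bounded linear map $\tilde A : \cF_J \to \cF_I$ satisfying $\tilde A \nu_\lambda = \nu_{A\lambda}$, and the symmetric companion $\tilde B : \cF_I \to \cF_J$ with $\tilde B \nu_\mu = \nu_{B\mu}$ (applying the theorem with the roles of $I$ and $J$ reversed; note $V(I)$ is tractable too since $A$ carries $Z(J)$ bijectively onto $Z(I)$ and tractability passes through invertible length-preserving linear maps). First I would check that $\tilde A$ is invertible: on kernel functions $\tilde B \tilde A \nu_\lambda = \tilde B \nu_{A\lambda} = \nu_{BA\lambda} = \nu_\lambda$ for $\lambda \in Z^o(J)$, because $BA|_{Z(J)} = \id$; since $\{\nu_\lambda : \lambda \in Z^o(J)\}$ spans a dense subspace of $\cF_J$ (Lemma~\ref{lem:rad}) and $\tilde B \tilde A$ is bounded, $\tilde B \tilde A = I_{\cF_J}$. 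Symmetrically $\tilde A \tilde B = I_{\cF_I}$, so $\tilde A$ is invertible with $\tilde A^{-1} = \tilde B$.

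Next I would identify the map $f \mapsto f \circ A$ as conjugation by $\tilde A^*$. Recall from Proposition~\ref{prop:mult} that $\cA_I$ acts on $\cF_I$ as multiplication operators, and each $\nu_\mu$ ($\mu \in Z^o(I)$) is an eigenvector of $M_f^*$ with eigenvalue $\overline{f(\mu)}$. The clean way is to compute on kernel functions: for $f \in \cA_I$ and $\lambda \in Z^o(J)$,
\[
 \tilde A^* M_f (\tilde A^{-1})^* \nu_\lambda = \tilde A^* M_f \tilde B^* \nu_\lambda,
\]
and I would show $\tilde B^* \nu_\lambda = \nu_{A\lambda}$ — this follows by taking adjoints in $\tilde B \nu_\mu = \nu_{B\mu}$ together with $BA\lambda = \lambda$, or more directly from $\tilde A^* \nu_\lambda^{(\cF_I)}$-type identities; the point is that $\tilde A$ intertwines the reproducing-kernel structures. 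Then $M_f \nu_{A\lambda}$ is a combination involving $\overline{f(A\lambda)}$, and applying $\tilde A^*$ returns a multiple of $\nu_\lambda$, yielding that $\tilde A^* M_f (\tilde A^{-1})^*$ has $\nu_\lambda$ as eigenvector with eigenvalue $\overline{f(A\lambda)} = \overline{(f\circ A)(\lambda)}$. Since a multiplier on $\cF_J$ is determined by this eigenvalue data, $\tilde A^* M_f (\tilde A^{-1})^* = M_{f \circ A}$, which is exactly $\varphi(f)$. Boundedness and complete boundedness of $\varphi$ are then immediate from $\|\varphi(f)\| \le \|\tilde A^*\| \, \|f\| \, \|(\tilde A^{-1})^*\|$ and the fact that conjugation by a fixed bounded invertible operator is completely bounded; the inverse of $\varphi$ is $g \mapsto g \circ B$, handled symmetrically. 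One still must check that $\varphi$ actually maps $\cA_I$ \emph{into} $\cA_J$ (not merely into $\Mult(\cF_J)$): since the coordinate functions $z_i$ restricted to $Z^o(J)$ lie in $\cA_J$, and $\varphi(z_i) = z_i \circ A$ is a linear combination of the coordinate functions (as $A$ is linear), $\varphi$ sends the generating polynomials of $\cA_I$ into $\cA_J$, and boundedness plus density finishes this.

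I expect the main obstacle to be the bookkeeping around the identity $\tilde B^* \nu_\lambda = \nu_{A\lambda}$ and the precise normalization constants $(1-\|\lambda\|^2)^{1/2}$ hidden in the definition~(\ref{eq:kernel}) of $\nu_\lambda$: one must be careful that $\tilde A$ is defined so that $\tilde A \nu_\lambda = \nu_{A\lambda}$ exactly (with $A$ isometric on $V(J)$, so $\|A\lambda\| = \|\lambda\|$ and the normalizing factors match), and that the adjoint relation is read off correctly. An alternative that sidesteps adjoint gymnastics is to verify the operator identity directly on the dense set by checking $M_{f\circ A} \tilde A^* = \tilde A^* M_f$ as operators $\cF_I \to \cF_J$, testing against kernel functions on the $\cF_I$ side; this is the computation I would actually write out. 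Everything else — invertibility of $\tilde A$, complete boundedness, the formula for $\varphi^{-1}$ — is routine once these intertwining relations are in hand.
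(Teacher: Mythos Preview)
Your approach is essentially the same as the paper's, but the paper sidesteps the adjoint bookkeeping you flag as the main obstacle: instead of computing $\tilde A^* M_f (\tilde A^{-1})^*$ on kernel functions directly, it computes the \emph{pre-adjoint} $\tilde A^{-1} M_f^* \tilde A$ on kernel functions, where everything behaves nicely ($\tilde A \nu_\lambda = \nu_{A\lambda}$, then $M_f^* \nu_{A\lambda} = \overline{f(A\lambda)}\,\nu_{A\lambda}$, then $\tilde A^{-1}\nu_{A\lambda} = \nu_\lambda$), and only then takes the adjoint. Your tentative identity $\tilde B^* \nu_\lambda = \nu_{A\lambda}$ is in fact \emph{not} correct in general (it would force $\langle \lambda, B\mu\rangle = \langle A\lambda,\mu\rangle$, i.e.\ $B = A^*$ on the variety, which is not assumed), so your instinct to abandon it was right; your alternative of verifying the intertwining relation $M_{f\circ A}\,\tilde A^* = \tilde A^* M_f$ by pairing against kernel functions is correct and unwinds to exactly the paper's computation.

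One point you handle more carefully than the paper: you note that applying Theorem~\ref{thm:Atilde} to get $\tilde B$ bounded requires $V(I)$ tractable, and argue this follows from tractability of $V(J)$ via the isometric linear bijection on the minimal subspace spans. The paper simply asserts both $\tilde A$ and $\tilde B$ are bounded without comment; your observation fills a small gap. Your check that $\varphi$ lands in $\cA_J$ (via linearity of $A$ on coordinate functions plus norm-density) is also a detail the paper leaves implicit.
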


\begin{proof}
By Theorem~\ref{thm:Atilde}, $\tilde{A}$ and $\tilde B$ are bounded.
By checking the products on the kernel functions,
it follows easily that $\tilde{B} = \tilde{A}^{-1}$.
So these maps are linear isomorphisms.

Let $f \in \cA_I$ and $\lambda \in Z(J)$. 
Denote by $M_f$ the operator of multiplication by $f$ on $\cF_I$. Then 
\bes
 \tilde{A}^{-1} M_{f}^* \tilde{A} \nu_\lambda =
 \tilde{A}^{-1} M_{f}^* \nu_{A \lambda} = 
 \tilde{A}^{-1} \overline{f \circ A (\lambda)}\nu_{A\lambda} = 
 \overline{f \circ A (\lambda)}\nu_{\lambda}.
\ees
Thus $(\tilde{A}^{-1} M_{f}^* \tilde{A})^* = \tilde{A}^* M_f (\tilde{A}^{-1})^*$ 
is the operator on $\cF_J$ given by multiplication by $f\circ A$.
\end{proof}

%%%%%%%%%%%%%%%%%%%%%%%%
\begin{remark} \label{rem:A not injective} 
The various lemmas established above only require that $A$ be length  preserving on $V$.  
It need not be invertible on $\spn(V)$ in order to show that the map $\tilde A$ is bounded.  
However, if $A$ is singular on $\spn(V)$, then $\tilde A$ is not injective because the 
homogeneous part of order one,   
$M_1 := \spn\{ \nu_\lambda^1 : \lambda \in Z^o(V) \} \simeq \spn(V)$ 
and $\tilde A|_{M_1} \simeq A$. 

For   example, if $V = \mb C e_1 \cup \mb C e_2 \cup \mb C e_3$ and
$A =  \begin{sbmatrix}1&0&1/\sqrt2\\0&1&1/\sqrt2\\0&0&0\end{sbmatrix}$,  
\vspace{.3ex} 
then one can see that $A$ is isometric on $V$ and maps $\mb C^3$ 
into $\spn\{e_1,e_2\}$, taking $V$ to the union  of three lines in 2-space. 
The map $\tilde A$ is bounded, and satisfies 
$\tilde A \nu_\lambda = \nu_{A\lambda}$ for  $\lambda \in Z^o(V)$. 
But for the reasons mentioned in the previous paragraph, it is not injective. 

On the other hand,  if $A$ is bounded below by $\delta > 0$ on $\spn V$, 
one can argue in each of the various lemmas that  $A^{\otimes_s n}$ is 
bounded below by $\delta^n$ for $n \le N$ and use the original arguments 
for upper and lower  bounds on the higher degree terms. 
In this way, one sees directly that $\tilde A$ is an isomorphism. 
\end{remark}

Although the following example does not disprove Theorem~\ref{thm:Atilde} 
for arbitrary complex algebraic varieties, 
it does illustrate some of the difficulties one must overcome.

%%%%%%%%%%%%%%%%%%%%%%%%
\begin{example}
In this example we identify $\mb{C}^2$ with $\mb{R}^4$. 
Let 
\[ V = \{(w,x,y,z) : w^2 + x^2 = y^2 + z^2\} .\]
Then $V$ is a real algebraic variety in $\mb{R}^4$, but is not a complex 
algebraic variety in $\mb{C}^2$ because it has odd real dimension. 
Note that 
\bes
 V =  \bigcup_{\theta \in \mb{T}} 
 \left\{ \lambda \left(\frac{1}{\sqrt{2}},\frac{\theta}{\sqrt{2}} \right) 
 : \lambda \in \mb{C}\right\}.
\ees
Let $A = \bigl( \begin{smallmatrix} 
a&0\\ 0&b 
\end{smallmatrix} \bigr) 
$, where $a> 1 > b >0$ satisfy $a^2 + b^2 = 2$. 
Then $A$ is an invertible linear map that preserves the lengths of vectors in $V$. 
Put $V' = AV$. 
We will now show that the densely defined operator given by 
$\tilde{A} \nu_\lambda = \nu_{A_\lambda}$ does not extend to a bounded map taking
$
\overline{\spn} \{\nu_\lambda : \lambda \in V \cap \mb{B}_2\}
$
into 
$
\overline{\spn} \{\nu_\lambda : \lambda \in V' \cap \mb{B}_2\}.
$
Let $\alpha, \beta >0$, and consider 
\bes
\sum_{j=1}^n (\alpha e_1 + \theta_j  \beta e_2)^n \in (\mb{C}^2)^n,
\ees
where $\theta_j = \exp(\frac{2\pi i}{n}j)$. We find 
\begin{align*}
\sum_{j=1}^n (\alpha e_1 + \theta_j  \beta e_2)^n 
&= \sum_{j=1}^n \sum_{k=0}^n (\alpha e_1)^k (\theta_j  \beta e_2)^{n-k} \\
&=  \sum_{k=0}^n \alpha^k \beta^{n-k}  (e_1)^k 
\Big(\sum_{j=1}^n \theta_j^{n-k} (e_2)^{n-k}\Big) \\
&= \beta^n n  e_2^n + \alpha^n n e_1^n,
\end{align*}
because $\sum_{j=1}^n \exp(\frac{2\pi i}{n}(n-k) j)$ is equal to $0$ for $1 \le k \le n-1$, 
and equal to $n$ for $k=0$ and $n$. Thus,
\bes
\| \sum_{j=1}^n (\alpha e_1 + \theta_j  \beta e_2)^n \|^2 
=  (\alpha^{2n} + \beta^{2n}) n^2.
\ees
Comparing this norm for $(\alpha,\beta) = (a,b)$ and $(\alpha, \beta) = (1,1)$ 
we find that the densely defined $\tilde{A}$ is unbounded.
\end{example}

%%%%%%%%%%%%%%%%%%%%%%%%%%%%%%%%%%%%%%%
\section{Classification of the algebras}\label{sec:class2}
%%%%%%%%%%%%%%%%%%%%%%%%%%%%%%%%%%%%%%%

We now have enough machinery to give a geometric classification of the 
operator algebras $\cA_I$. 
In the case of algebraic isomorphism, we require the varieties to be tractable.

First, let us say a few words about 
the purely algebraic problem.
When $I$ is a radical ideal in $\mb{C}[z]$, then $\mb{C}[z]/I$ can be 
identified with the ring of polynomial functions on $V(I)$, which is 
nothing but the ring of restrictions of polynomials to $V(I)$. 
This algebra is also the universal unital commutative algebra generated 
by a tuple satisfying the relations in $I$. If $J$ is another radical ideal, 
then every homomorphism from $\mb{C}[z]/I$ to $\mb{C}[z]/J$ gives 
rise to a regular map (i.e., a polynomial map) $V(J) \rightarrow V(I)$, 
and the two algebras are isomorphic if and only if the varieties are 
isomorphic (see \cite[p. 29]{Shafarevich}). 
Consequently, a grading preserving isomorphism is implemented by a 
linear change of variables. 
Therefore, when $I$ and $J$ are homogeneous, $\mb{C}[z]/I$ and 
$\mb{C}[z]/J$ are isomorphic as graded algebras if and only if there 
is a linear map that takes $V(J)$ bijectively onto $V(I)$. 
We will see that the situation for the algebras $\cA_I$ is both similar and different.

%%%%%%%%%%%%%%%%%%%%%%%%
\subsection{Classifying the algebras $\cA_I$ up to isometric isomorphism}
We provide a concrete criterion for when two algebras $\cA_I$ 
and $\cA_J$ are (completely) isometrically isomorphic.

%%%%%%%%%%%%%%%%%%%%%%%%
\begin{remark}[Adding variables]
Let $I$ be an ideal in $\mb{C}[z_1, \ldots, z_d]$, and let $d'>d$. 
We may want to consider $I$ as an ideal in $\mb{C}[z_1, \ldots, z_{d'}]$. 
Of course, it isn't. But note that if we define 
$I' = \lel I, x_{d+1}, \ldots, x_{d'}\rir$, then $I'$ is an ideal in 
$\mb{C}[z_1, \ldots, z_{d'}]$ and $V(I')$ is isomorphic to $V(I)$. 
Furthermore, $\mb{C}[V(I)] \cong \mb{C}[V(I')]$ and 
$\cA_I$ is completely isometrically isomorphic to $\cA_{I'}$. 
Therefore, when studying the situation where 
$I$ is an ideal in $\mb{C}[z_1, \ldots, z_d]$ and 
$J$ is an ideal in $\mb{C}[z_1, \ldots, z_{d'}]$, 
we may assume that $d = d'$. 
We do not always make this assumption, but the next theorem is much 
more elegant when stated for the case $d = d'$.
\end{remark}

%%%%%%%%%%%%%%%%%%%%%%%%
\begin{theorem}\label{thm:iso_iso_alg}
Let $I$ and $J$ be two homogeneous radical ideals in 
$\mb{C}[z_1, \ldots, z_d]$. $\cA_I$ and $\cA_J$ are isometrically isomorphic 
if and only if they are completely isometrically isomorphic.
This happens if and only if there is a unitary $U$ on $\mb{C}^d$ taking 
$V(J)$ onto $V(I)$. 
\end{theorem}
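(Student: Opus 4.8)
The plan is to prove the chain of implications: completely isometric $\Rightarrow$ isometric is trivial; a unitary $U$ taking $V(J)$ onto $V(I)$ gives a completely isometric isomorphism; and an isometric isomorphism forces the existence of such a unitary. For the second implication, if $U$ is unitary and $U(V(J)) = V(I)$, then $U$ restricts to a bijection $Z(J) \to Z(I)$ (it preserves the ball), and it induces an isomorphism of subproduct systems $X_J \cong X_I$ by Proposition~\ref{prop:idealsps} (a unitary change of variables sends $I$ onto $J$, up to taking adjoints/inverses); by the remarks following Theorem~\ref{thm:alg_sps_iso} this gives a unitarily implemented, hence completely isometric, isomorphism $\cA_I \to \cA_J$. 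Alternatively one can invoke Theorem~\ref{thm:Atilde} and Theorem~\ref{thm:linear_induce_similarity} with $A = U$: since $U$ is already an isometry of $\mb{C}^d$, the map $\tilde U$ is unitary, and conjugation by $\tilde U^*$ is a completely isometric isomorphism.

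The substance is the forward direction: assume $\varphi : \cA_I \to \cA_J$ is an isometric isomorphism. By Proposition~\ref{prop:exist_vacuum} we may assume $\varphi$ preserves the vacuum state (an isometric isomorphism exists, so a vacuum-preserving one does; note Proposition~\ref{prop:exist_vacuum} is stated for arbitrary subproduct systems, so it applies here). Then by Theorem~\ref{thm:iso_vacuum}, $X_I \cong X_J$ as subproduct systems, and in fact $\varphi(T) = VTV^*$ for a subproduct-system isomorphism $V = \oplus_n W_n$. By Proposition~\ref{prop:idealsps}, the isomorphism $W$ is induced by a unitary linear change of variables: there is a unitary $U$ on $\mb{C}^d$ with $I^{X_I} = \{f \circ U : f \in I^{X_J}\}$, i.e. (since $I^{X_I} = I$ and $I^{X_J} = J$) $I = J \circ U$. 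This says exactly that $U$ carries $V(I)$ onto $V(J)$; replacing $U$ by $U^{-1}$ (also unitary) if needed gives a unitary taking $V(J)$ onto $V(I)$, as required. This closes the loop.

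I expect the main obstacle to be purely bookkeeping rather than conceptual: one must check that Proposition~\ref{prop:exist_vacuum} genuinely applies in the commutative radical setting (it does, since commutative subproduct systems are subproduct systems), and one must be careful about the direction of the change of variables — whether the unitary pulls $V(I)$ back to $V(J)$ or pushes it forward — since $V(J \circ U) = U^{-1}(V(J))$. Both $U$ and $U^{-1}$ are unitary, so the asymmetry is harmless, but it should be stated cleanly. The only other point worth a sentence is recording that a unitarily implemented isomorphism is automatically completely isometric, which is immediate since $T \mapsto \tilde U^* T \tilde U$ applied entrywise to matrices over $\cA_I$ is again conjugation by a (inflated) unitary.
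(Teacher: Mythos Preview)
Your proposal is correct and follows essentially the same route as the paper: both arguments combine Proposition~\ref{prop:idealsps} and Theorem~\ref{thm:alg_sps_iso} (the paper cites the latter directly, while you unpack it into Proposition~\ref{prop:exist_vacuum} plus Theorem~\ref{thm:iso_vacuum}), and then translate between the ideal condition and the variety condition. The only thing to tighten is the step from $U(V(J))=V(I)$ to the ideal relation $I=\{f\circ U^{-1}:f\in J\}$ in the reverse direction, which uses that $I$ and $J$ are radical via Hilbert's Nullstellensatz; the paper names this explicitly, and you should too.
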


\begin{proof}
By Proposition \ref{prop:idealsps} and Theorem \ref{thm:alg_sps_iso}, 
$\cA_I$ and $\cA_J$ are (completely) isometrically isomorphic 
if and only if there is a unitary $U$ such that
\bes
J = \{f \circ U^{-1} : f \in I\}.
\ees
Since $I$ and $J$ are radical, it follows from Hilbert's Nullstellensatz 
that this holds if and only if $U(V(J)) = V(I)$.
\end{proof}

%%%%%%%%%%%%%%%%%%%%%%%%
\subsection{Classifying the algebras $\cA_I$ up to isomorphism}

%%%%%%%%%%%%%%%%%%%%%%%%
\begin{proposition}\label{prop:exist_vacuum2}
Let $I$ and $J$ be two homogeneous radical ideals of polynomials 
and assume that there exists an  isomorphism $\varphi: \cA_I \rightarrow \cA_J$. 
Then there exists a vacuum preserving isomorphism from $\cA_I$ to $\cA_J$.
\end{proposition}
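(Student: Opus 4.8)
The plan is to mimic the structure of the proof of Proposition~\ref{prop:exist_vacuum} in the non-commutative setting, replacing the role of the singular nucleus with a more hands-on argument based on the biholomorphism data that Proposition~\ref{prop:algiso_biholo} (and the analogous statement for $\varphi^{-1}$) provides. First I would invoke Proposition~\ref{prop:algiso_biholo}: the isomorphism $\varphi : \cA_I \rightarrow \cA_J$ is a composition operator $\varphi(f) = f \circ F$ for a holomorphic $F : \mb{B}_{d'} \rightarrow \mb{C}^d$ extending continuously to $\overline{\mb{B}}_{d'}$ with $F|_{Z(J)} = \varphi^*$ a bijection onto $Z(I)$, and likewise $\varphi^{-1}$ is composition with some $G : \mb{B}_d \rightarrow \mb{C}^{d'}$, with $F \circ G|_{Z(I)} = \id$ and $G \circ F|_{Z(J)} = \id$. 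Set $b = \varphi^*(0) = F(0) \in Z^o(I)$. If $b = 0$ we are done, so assume $b \ne 0$.

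The key idea is again a rotation/orbit argument. For $|\lambda| = 1$ let $\varphi_\lambda^{I}$ denote the isometric automorphism of $\cA_I$ sending $S_i^I$ to $\lambda S_i^I$ (equivalently, composition with the unitary $\lambda I_{\mb C^d}$, which preserves $Z(I)$ by homogeneity), and similarly $\varphi_\lambda^J$ on $\cA_J$; these exist because $V(I),V(J)$ are homogeneous. Define
\bes
\cO(0;I,J) = \{ \psi^*(0) : \psi : \cA_I \to \cA_J \text{ an isomorphism} \} \subseteq Z^o(I),
\ees
and
\bes
\cO(0;J) = \{ \psi^*(0) : \psi \text{ an automorphism of } \cA_J \} \subseteq Z^o(J).
\ees
Exactly as in the proof of Proposition~\ref{prop:exist_vacuum}, computing $\rho_0((\psi\circ\varphi_\lambda^I)(S_i^I))$ shows that $\cO(0;I,J)$ and $\cO(0;J)$ are invariant under the scalar rotations $z \mapsto \lambda z$. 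In particular $b \in \cO(0;I,J)$ forces the whole ``circle'' $\mb{T}b = \{\lambda b : |\lambda|=1\}$ into $\cO(0;I,J)$. Now I would pull this back: $\cO(0;J) \ni (\varphi^{-1}\circ\psi)^*(0)$ whenever $\psi^*(0)\in\cO(0;I,J)$, so $(\varphi^*)^{-1}(\mb{T}b) \subseteq \cO(0;J)$. The set $(\varphi^*)^{-1}(\mb{T}b) = G(\mb{T}b)$ is a continuous curve in $Z^o(J)$ passing through $G(b) = 0$; I want to conclude that its ``interior'' — or rather, a full two-real-dimensional piece filling in toward $0$ along scalings — is also in $\cO(0;J)$, and hence that $0\in \cO(0;I,J)$. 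To make this rigorous I would restrict attention to a single disc: following Lemma~\ref{lem:discs}, choose a one-dimensional subspace $L \subseteq \mb{C}^d$ through $b$ with $D_1 := L\cap\mb B_d \subseteq Z^o(I)$ (possible since $Z^o(I)$ is a union of such discs through the origin and $b \in Z^o(I)$), let $C = \{z\in D_1 : |z| = |b|\}\subseteq\cO(0;I,J)$, let $C' = (\varphi^*)^{-1}(C) = G(C)\subseteq\cO(0;J)$, a loop through $0$ inside $Z^o(J)$; then use rotation-invariance of $\cO(0;J)$ to fill in the convex hull of $C'\cup\{0\}$ along radii, concluding $0\in\Int(C')$-type interior $\subseteq\cO(0;J)$, whence $\varphi^*(\text{that set})\ni 0$ lies in $\cO(0;I,J)$, i.e.\ there is a vacuum-preserving isomorphism.

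The main obstacle I anticipate is exactly the geometric filling-in step: unlike in Proposition~\ref{prop:exist_vacuum}, where one has worked inside a genuine complex ball $B = N(V(I))$ on which $\varphi^*$ is a bona fide automorphism and Lemma~\ref{lem:discs} directly applies, here $Z^o(I)$ and $Z^o(J)$ are arbitrary homogeneous varieties and $\varphi^*$ need not map discs to discs. I expect I will need to argue either (a) that the loop $C'$ through $0$ in $Z^o(J)$, being rotation-closed, automatically contains a disc $D_2$ with $\varphi^*(D_2)$ a disc $D_1\subseteq Z^o(I)$ — which may require analyzing the differential $A = F'(0)$ and using that $A$ carries $Z(J)$ onto $Z(I)$ (via a Cartan-type rigidity as in Theorem~\ref{thm:linear}) so that one-dimensional slices go to one-dimensional slices — or (b) that one can bypass discs entirely and run a fixed-point/averaging argument: the group generated by the rotations $\varphi_\lambda^J$ together with $\psi_0 := \varphi^{-1}\circ(\text{rotation})\circ\varphi$ acts on the finite-dimensional space $Z^o(J)$ (or on $\cM_J^o$) with the origin in the closure of an orbit, forcing a common fixed point to be $0$. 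I would try route (b) first, since it avoids the delicate disc-tracking, reducing the proposition to the statement that a suitable biholomorphic automorphism of $Z^o(J)$ fixing a nonzero point generates enough of $\Aut$ to push $0$ into the orbit of $b$ under the isomorphisms $\cA_I\to\cA_J$.
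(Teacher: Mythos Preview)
Your plan correctly identifies the orbit/rotation mechanism and correctly isolates the real difficulty, but it does not resolve it, and neither of your proposed workarounds (a) or (b) will close the gap as stated. The step ``let $C' = (\varphi^*)^{-1}(C)$, use rotation-invariance to fill in an interior, then push forward by $\varphi^*$ and find $0$ inside'' requires that $C'$ be a genuine circle in a disc and that $\varphi^*$ restricted to that disc be a M\"obius map, so that circles go to circles and interiors go to interiors. In your setup $C'$ is just some loop in the variety $Z^o(J)$; its rotations sweep out a set with no evident two-dimensional ``interior'' that $\varphi^*$ would carry to a neighborhood of $0$. Route (a) is circular: Theorem~\ref{thm:linear} needs $F(0)=0$, which is exactly what you are trying to arrange. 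Route (b) is too vague to assess.

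The paper's proof is in fact \emph{identical} to that of Proposition~\ref{prop:exist_vacuum}: one does not abandon the singular nucleus, one uses it. The only change is that Lemma~\ref{lem:analytic} is replaced by Proposition~\ref{prop:algiso_biholo} to see that $\varphi^*$ is the restriction of a holomorphic map on the ball (and likewise for $(\varphi^*)^{-1}$). Once that is known, $\varphi^*$ is a biholomorphism of analytic varieties $Z^o(J)\to Z^o(I)$, hence carries $\Sing$ to $\Sing$ at each stage, and therefore maps the singular nucleus $N(V(J))\cap\mb{B}_{d'}$ biholomorphically onto $N(V(I))\cap\mb{B}_d$. These nuclei are open balls by Lemma~\ref{lem:sing}; if they are $\{0\}$ you are done, and otherwise $\varphi^*$ restricted to the nucleus is an honest ball automorphism, Lemma~\ref{lem:discs} gives discs $D_1,D_2$ inside the nucleus with $\varphi^*(D_2)=D_1$, and the rest of the circle-through-$0$ argument runs verbatim. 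The ``more hands-on'' replacement you propose is precisely what makes the argument break; keeping the singular nucleus is the whole point.
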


\begin{proof}
The proof is identical to the proof of Proposition \ref{prop:exist_vacuum}, 
where one uses Proposition \ref{prop:algiso_biholo} instead of Lemma \ref{lem:analytic}.
\end{proof}

%%%%%%%%%%%%%%%%%%%%%%%%
\begin{remark}\label{rem:trick}
The same trick used to prove Propositions \ref{prop:exist_vacuum} and 
\ref{prop:exist_vacuum2} can be used to show that, if there is biholomorphism 
between $Z^o(I)$ and $Z^o(J)$, then there is a biholomorphism 
between them that fixes $0$. 
This may seem like an obvious result, but consider the following problem: 
\emph{given that $Z(I)$ and $Z(J)$ are homeomorphic, prove that there exists a
homeomorphism between them that fixes $0$.}
\end{remark}

%%%%%%%%%%%%%%%%%%%%%%%%
\begin{theorem}\label{thm:algiso_lin}
Let $I$ and $J$ be homogeneous ideals in $\mb{C}[z_1, \ldots, z_d]$
and $\mb{C}[z_1, \ldots, z_{d'}]$, respectively, such that $V(J)$ is tractable. 
The algebras $\cA_I$ and $\cA_J$ are isomorphic if and only if there exist two linear maps 
$A: \mb{C}^d \rightarrow \mb{C}^{d'}$ and 
$B : \mb{C}^{d'} \rightarrow \mb{C}^{d}$ such that 
$A \circ B |_{Z(J)}$ and $B \circ A|_{Z(I)}$ are identity maps.
\end{theorem}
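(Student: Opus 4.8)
The plan is to treat the two implications separately, the tractability hypothesis on $V(J)$ entering \emph{only} in the direction ``linear maps $\Rightarrow$ isomorphism''. For that direction I would simply invoke Theorem~\ref{thm:linear_induce_similarity}, keeping track of orientations. That theorem is stated for linear maps $A_0 \colon \mb{C}^{d'} \to \mb{C}^d$ and $B_0 \colon \mb{C}^d \to \mb{C}^{d'}$ with $A_0 B_0|_{Z(I)} = \id_{Z(I)}$, $B_0 A_0|_{Z(J)} = \id_{Z(J)}$ and $V(J)$ tractable, so I apply it with $A_0 = B$ and $B_0 = A$; our hypotheses $A\circ B|_{Z(J)} = \id_{Z(J)}$ and $B\circ A|_{Z(I)} = \id_{Z(I)}$ are then exactly what Theorem~\ref{thm:linear_induce_similarity} requires, and it produces the (completely bounded) isomorphism $f \mapsto f\circ B$ from $\cA_I$ onto $\cA_J$.

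For ``isomorphism $\Rightarrow$ linear maps'', suppose $\varphi \colon \cA_I \to \cA_J$ is an isomorphism. An algebra isomorphism between unital algebras is automatically unital, since it carries $1$ to a two-sided identity of the range; so $\varphi$ and $\varphi^{-1}$ are unital. By Proposition~\ref{prop:exist_vacuum2} the mere existence of an isomorphism yields a vacuum-preserving one, so I may assume $\varphi$ preserves the vacuum, and then so does $\varphi^{-1}$, as $(\varphi^{-1})^* = (\varphi^*)^{-1}$. By Proposition~\ref{prop:algiso_biholo} there are maps $F \colon \overline{\mb{B}}_{d'} \to \mb{C}^d$ and $G \colon \overline{\mb{B}}_d \to \mb{C}^{d'}$, continuous on the closed balls and holomorphic on the open ones, with $\varphi(f) = f\circ F$, $\varphi^{-1}(g) = g\circ G$, $F|_{Z(J)} = \varphi^*$ and $G|_{Z(I)} = (\varphi^*)^{-1}$; since the vacuum state is the character at the origin, vacuum-preservation reads $F(0) = 0 = G(0)$. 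Evaluating $\varphi\circ\varphi^{-1} = \id_{\cA_J}$ and $\varphi^{-1}\circ\varphi = \id_{\cA_I}$ on the coordinate functions gives $G\circ F|_{Z(J)} = \id_{Z(J)}$ and $F\circ G|_{Z(I)} = \id_{Z(I)}$.

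Next I would apply the Cartan-type uniqueness theorem, Theorem~\ref{thm:linear}, twice. Applied to the pair $(F,G)$ it gives that $F|_{Z(J)}$ is the restriction of a linear map $B \colon \mb{C}^{d'} \to \mb{C}^d$; applied to the pair $(G,F)$, with the roles of $I$ and $J$ interchanged (the hypotheses are symmetric in the two ideals, and $G(0) = 0$), it gives that $G|_{Z(I)}$ is the restriction of a linear map $A \colon \mb{C}^d \to \mb{C}^{d'}$. Since $F$ carries $Z(J)$ into $Z(I)$ and $G$ carries $Z(I)$ into $Z(J)$, the identities found above become $A\circ B|_{Z(J)} = G\circ F|_{Z(J)} = \id_{Z(J)}$ and $B\circ A|_{Z(I)} = F\circ G|_{Z(I)} = \id_{Z(I)}$, which is precisely what the statement demands.

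Essentially all of the analytic and geometric content has been absorbed into the cited results --- Theorem~\ref{thm:linear_induce_similarity} (hence Theorem~\ref{thm:Atilde}, where tractability is used) for one direction, and Theorem~\ref{thm:linear} for the other --- so the remaining work is routine: verifying that isomorphisms are unital, that vacuum-preservation amounts to $F(0) = G(0) = 0$, and tracking the orientation conventions. I expect that last point to be the only real, and quite modest, obstacle: one must be careful that the maps named $A$ and $B$ in the statement correspond respectively to $G|_{Z(I)}$ and $F|_{Z(J)}$, and that each composite restricts to the identity on the correct variety.
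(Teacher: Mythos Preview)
Your proof is correct and follows essentially the same route as the paper's own argument: for the forward direction, Proposition~\ref{prop:exist_vacuum2} followed by Proposition~\ref{prop:algiso_biholo} and Theorem~\ref{thm:linear}; for the converse, Theorem~\ref{thm:linear_induce_similarity}. The paper's proof is terser and leaves the orientation bookkeeping and the second application of Theorem~\ref{thm:linear} (to $G$) implicit, whereas you have spelled these out carefully --- but there is no substantive difference in approach.
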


\begin{proof}
If $\cA_I$ and $\cA_J$ are isomorphic, then by 
Proposition~\ref{prop:exist_vacuum2} there exists also a 
vacuum preserving isomorphism between them. 
By Proposition \ref{prop:algiso_biholo} and Theorem \ref{thm:linear}, 
there exist linear maps $A,B$ as asserted.

If, conversely, there exist linear maps $A,B$ as in the statement 
of the theorem, then Theorem \ref{thm:linear_induce_similarity} 
applies to show that there is an isomorphism (in fact, a similarity) 
from  $\cA_I$ onto $\cA_J$.
\end{proof}

%%%%%%%%%%%%%%%%%%%%%%%%
\begin{example}\label{expl:lines}
Consider the simplest case when $d = d' = 2$. 
Then the maximal ideal spaces $Z(I)$ and $Z(J)$ are either $0$, 
$\overline{\mb{B}}_2$ or finitely many lines. 
If $Z(I)$ and $Z(J)$ are one line, then $\cA_I$ and $\cA_J$ 
are completely isometrically isomorphic. 
If $Z(I)$ and $Z(J)$ consist of two lines, then $\cA_I$ and $\cA_J$ 
are isomorphic  but if the angle between the two lines is not the same 
then they will not be isometrically isomorphic. 
If $Z(I)$ and $Z(J)$ consist of three or more lines, 
then $\mb{C}[z]/I$ and $\mb{C}[z]/J$ might not be isomorphic, 
because the action of a linear map on $\mb{C}^2$ is determined 
already by its action on two lines. 
The coordinate rings $\mb{C}[z]/I$ and $\mb{C}[z]/J$ are isomorphic 
precisely when there exists a linear map $A$ mapping $V(J)$ onto $V(I)$. 
When this happens, there exist cases when $\cA_I$ and $\cA_J$ 
are isomorphic, and there exist cases when they are not--- depending on 
whether or not this $A$ maps $Z(J)$ onto $Z(I)$. 
\end{example}

The geometric rigidity of the varieties implies that the operator algebras also 
have a rigid structure.

%%%%%%%%%%%%%%%%%%%%%%%%
\begin{theorem}\label{thm:rigid}
Let $I$ and $J$ be two radical homogeneous ideals in 
$\mb{C}[z_1, \ldots, z_d]$, and assume that $V(I)$ is either irreducible 
or a nonlinear hypersurface. 
If $\cA_I$ and $\cA_J$ are isomorphic, then $\cA_I$ and $\cA_J$ are unitarily equivalent. 
If $\varphi: \cA_I \rightarrow \cA_J$ is a vacuum preserving isomorphism, 
then it is unitarily implemented.
\end{theorem}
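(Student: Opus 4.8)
The plan is to reduce an arbitrary isomorphism to a \emph{linear} map of the varieties $Z(J)$ onto $Z(I)$, to show (using the rigidity results) that this linear map extends to a unitary of $\mb{C}^d$, and then to recognize the isomorphism as conjugation by the induced unitary of Fock spaces. First, by Proposition~\ref{prop:exist_vacuum2}, if $\cA_I$ and $\cA_J$ are isomorphic there is a vacuum preserving isomorphism $\varphi:\cA_I\to\cA_J$, so for both assertions of the theorem I may assume $\varphi$ fixes the vacuum. Applying Proposition~\ref{prop:algiso_biholo} to $\varphi$ and to $\varphi^{-1}$ produces holomorphic maps $F,G:\overline{\mb{B}}_d\to\mb{C}^d$, continuous up to the boundary, with $\varphi(f)=f\circ F$ and $\varphi^{-1}(g)=g\circ G$; since $\varphi$ preserves the vacuum, $F(0)=0$, and the identities $\varphi^{-1}\varphi=\id$ and $\varphi\varphi^{-1}=\id$ give $G\circ F|_{Z(J)}=\id_{Z(J)}$ and $F\circ G|_{Z(I)}=\id_{Z(I)}$. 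Theorem~\ref{thm:linear} then shows that $A:=F|_{Z(J)}$ is the restriction of a linear map, and symmetrically $B:=G|_{Z(I)}$ is linear, so $A$ carries $Z(J)$ bijectively onto $Z(I)$; since $F$ and $A$ agree on $Z^o(J)$, we have $\varphi(f)=f\circ A$ as elements of $\cA_J$.

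The next step is to upgrade $A$ to a unitary. By Lemma~\ref{lem:isometric}, $A$ is isometric on $V(J)$, and since $A$ is a linear isomorphism of $V(J)$ onto $V(I)$, the variety $V(J)$ is irreducible (respectively, a nonlinear hypersurface) exactly when $V(I)$ is. If $V(I)$ is irreducible, Proposition~\ref{prop:rigid} shows that $A$ is isometric on $\spn(V(J))$; since $A$ carries $\spn(V(J))$ isometrically onto $\spn(V(I))$, these subspaces have equal dimension and one extends $A|_{\spn(V(J))}$ to a unitary $U$ of $\mb{C}^d$ by any unitary identification of their orthogonal complements. If instead $V(I)$ is a nonlinear hypersurface, write $A=UP$ for the polar decomposition and apply Proposition~\ref{prop:eigenvalueanalysis} with $V=V(J)$: as $V(J)$ is neither a hyperplane nor a union of hyperplanes with a common $(d-2)$-dimensional intersection, alternatives~(2) and~(3) there are excluded, so $P=I$ and $A=U$ is already unitary. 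In either case we obtain a unitary $U$ on $\mb{C}^d$ that agrees with $A$ on $\spn(V(J))\supseteq Z^o(J)$; hence $U$ maps $V(J)$ onto $V(I)$ (both are cones) and $\varphi(f)=f\circ A=f\circ U$ in $\cA_J$.

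To conclude, $V(J)$ is tractable---irreducible varieties are tractable, and so is every hypersurface, since each component spans at least a hyperplane---so Theorem~\ref{thm:linear_induce_similarity} applies with $(U,U^{-1})$ in place of $(A,B)$: the map $\tilde U$ of $(\ref{eq:Atilde})$ is bounded and invertible, and $\varphi(f)=\tilde U^{*}M_f(\tilde U^{-1})^{*}$ for $f\in\cA_I$. Since $U$ is a unitary of $\mb{C}^d$, $\tilde U$ is the restriction to $\cF_J$ of the unitary $h\mapsto h\circ U^{*}$ of $H^2_d$; and because $U$ maps $Z^o(J)$ bijectively onto $Z^o(I)$, it carries $\cF_J=\overline{\spn}\{\nu_\lambda:\lambda\in Z^o(J)\}$ onto $\cF_I$. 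Thus $\tilde U:\cF_J\to\cF_I$ is unitary, $\tilde U^{-1}=\tilde U^{*}$, and $\varphi(f)=\tilde U^{*}M_f\tilde U$, so $\varphi$ is unitarily implemented; consequently $\cA_J=\tilde U^{*}\cA_I\tilde U$, exhibiting $\cA_I$ and $\cA_J$ as unitarily equivalent. I expect the rigidity step of the second paragraph to be the crux: for a general reducible homogeneous variety an isometry of the variety need not extend to a unitary of $\mb{C}^d$ (for instance $\diag(a,b)$ with $a^2+b^2=2$ and $a\ne1$ is isometric on any union of lines through $0$ in $\mb{C}^2$ of the form $\mb{C}(\lambda_i/\sqrt 2,\,1/\sqrt 2)$), and it is precisely the hypothesis that $V(I)$ be irreducible or a nonlinear hypersurface that rules this out, via Propositions~\ref{prop:rigid} and~\ref{prop:eigenvalueanalysis}.
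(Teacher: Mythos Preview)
Your argument is correct and follows the same route as the paper's one-line proof (which simply cites Theorems~\ref{thm:algiso_lin}, \ref{thm:iso_iso_alg} and Proposition~\ref{prop:rigid}): reduce to a vacuum-preserving isomorphism, obtain a linear bijection $A$ between the varieties, upgrade $A$ to a unitary via rigidity, and conclude. Your treatment is more explicit, in particular you carry the specific $\varphi$ through to show it is unitarily implemented via $\tilde U$ (Theorem~\ref{thm:linear_induce_similarity}), which is exactly what the second assertion requires; the paper leaves this step implicit.

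One small remark on the hypersurface case: you invoke Proposition~\ref{prop:eigenvalueanalysis}, which is fine, but the paper's citation of Proposition~\ref{prop:rigid} alone also suffices. If $V(I)$ is a hypersurface with some nonlinear irreducible component $W$, then $\spn(W)=\mb{C}^d$ (an irreducible hypersurface of degree $\ge 2$ cannot lie in a hyperplane, else its defining polynomial would have a linear factor), so Proposition~\ref{prop:rigid} already forces $A$ to be isometric on all of $\mb{C}^d$. Either route is valid; both rely on reading ``nonlinear hypersurface'' as ``not a union of hyperplanes,'' which your closing example correctly identifies as the borderline where rigidity fails.
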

\begin{proof}
This follows from Theorems \ref{thm:algiso_lin}, \ref{thm:iso_iso_alg} 
and Proposition \ref{prop:rigid}.
\end{proof}

%%%%%%%%%%%%%%%%%%%%%%%%%%%%%%%%%%%%%%%
\section{Automorphisms of $\cA_d$ and induced isomorphisms}\label{sec:aut}
%%%%%%%%%%%%%%%%%%%%%%%%%%%%%%%%%%%%%%%

%%%%%%%%%%%%%%%%%%%%%%%%%%%%%%%%%%%%%%%
\subsection{Automorphisms of $\cA_d$}\label{subsec:aut}

By Proposition \ref{prop:algiso_biholo}, every (algebraic) automorphism of 
$\cA_d$ arises as a composition operator 
$f \mapsto f \circ \phi$, where $\phi \in \Aut(\mb{B}_d)$. 
Conversely, it is known that every conformal automorphism of the ball 
yields a completely isometric isomorphism of $\cA_d$. 
As we do not have a convenient reference, we briefly sketch the ideas.  
Voiculescu \cite{Voic} constructed unitaries on full Fock space which
implement $*$-automorphisms of the Cuntz-Toeplitz algebra and fix the 
noncommutative disc algebra $\mathfrak{A}_d$.
Davidson and Pitts \cite{DavPitts2} showed that the action on the character space
was the action of the full group $\Aut(\mb{B}_d)$. 
It is clear that these automorphisms preserve the commutator ideal, and thus the
unitaries preserve the range of the commutator ideal, $(H^2_d)^\perp$.
Thus they also fix $H^2_d$.
Now $\cA_d$ is completely isometrically isomorphic to the quotient of $\mathfrak{A}_d$ by the commutator ideal, and this is completely
isometric to the compression to $H^2_d$ by \cite{DavPittsPick}.
So the compressions of the Voiculescu unitaries implement the action
of $\Aut(\mb{B}_d)$ on $\cA_d$.

%%%%%%%%%%%%%%%%%%%%%%%%
\begin{theorem}\label{thm:autoAd}
Every $\phi \in \Aut (\mb{B}_d)$ gives rise to a 
completely isometric automorphism of $\cA_d$.
\end{theorem}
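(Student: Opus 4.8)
The plan is to show directly that for every $\phi \in \Aut(\mb{B}_d)$ the composition map $C_\phi : f \mapsto f \circ \phi$ is a completely isometric automorphism of $\cA_d$, by exhibiting a unitary $U_\phi$ on Drury--Arveson space $H^2_d$ with $U_\phi^* M_f U_\phi = M_{f\circ\phi}$ for all $f \in \Mult(H^2_d)$. Since every $\psi \in \Aut(\mb{B}_d)$ factors as $\psi = U \circ \phi_a$ with $U$ a unitary of $\mb{C}^d$ and $a = \psi^{-1}(0) \in \mb{B}_d$, and a unitary of $\mb{C}^d$ induces a unitary of $H^2_d$ implementing $f \mapsto f\circ U$ (as in the proof of Proposition~\ref{prop:idealsps}), it suffices to treat the M\"obius involutions $\phi_a$ and then compose.

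For the M\"obius part I would use the standard weighted composition operator. From the identities in \cite[Section 2.2]{RudinBall} one has $1-\langle\phi_a(z),\phi_a(w)\rangle = (1-|a|^2)(1-\langle z,w\rangle)\big((1-\langle z,a\rangle)(1-\langle a,w\rangle)\big)^{-1}$, so the Drury--Arveson kernel $k(z,w)=(1-\langle z,w\rangle)^{-1}$ satisfies $k(\phi_a(z),\phi_a(w)) = m_a(z)\overline{m_a(w)}\,k(z,w)$ with $m_a(z) = (1-|a|^2)^{-1/2}(1-\langle z,a\rangle)$. The usual reproducing-kernel lemma then makes $U_{\phi_a}f := m_a\cdot(f\circ\phi_a)$ a unitary on $H^2_d$; the identity $1-\langle\phi_a(z),a\rangle = (1-|a|^2)(1-\langle z,a\rangle)^{-1}$ gives $m_a\cdot(m_a\circ\phi_a)\equiv 1$, so $U_{\phi_a}$ is a self-adjoint unitary, and a short computation with these two identities yields $U_{\phi_a} M_g U_{\phi_a} = M_{g\circ\phi_a}$ for every $g\in\Mult(H^2_d)$. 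Thus $\operatorname{Ad}(U_{\phi_a})$ is a complete isometry of $\Mult(H^2_d)$ carrying $g$ to $g\circ\phi_a$.

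Next I would check that $C_{\phi_a}$ preserves $\cA_d$. As $\cA_d$ is the norm closure in $\Mult(H^2_d)$ of the polynomials and $C_{\phi_a}$ is bounded, it is enough that $p\circ\phi_a\in\cA_d$ for every polynomial $p$, for which it suffices that each coordinate $\phi_{a,i}$ lies in $\cA_d$. Each $\phi_{a,i}$ is an affine function of $z$ times $(1-\langle z,a\rangle)^{-1} = \sum_{n\ge0}\langle z,a\rangle^n$, and since $(Z_1,\dots,Z_d)$ is a row contraction we have $\|\langle\cdot,a\rangle^n\|_{\Mult(H^2_d)}\le|a|^n$, so this geometric series converges in $\cA_d$; hence $\phi_{a,i}\in\cA_d$. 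Because $\phi_a^2=\id$, $C_{\phi_a}$ is onto, so it is a completely isometric automorphism of $\cA_d$, and composing with the coordinate-unitary case handles a general $\phi\in\Aut(\mb{B}_d)$.

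The main obstacle, and essentially the only non-bookkeeping point, is verifying that the weighted composition operator $U_{\phi_a}$ is genuinely unitary and genuinely implements composition on \emph{all} of $\Mult(H^2_d)$; everything rests on the two M\"obius identities recalled above. An alternative, more terse route, sketched in the paragraph preceding the theorem, avoids this computation by invoking Voiculescu's unitaries on full Fock space \cite{Voic}, the Davidson--Pitts identification \cite{DavPitts2} of the induced action on the character space with $\Aut(\mb{B}_d)$, and the fact \cite{DavPittsPick} that $\cA_d$ is the compression of $\mathfrak{A}_d$ to $H^2_d$; I would mention this as the quick high-level argument but carry out the explicit one above for a self-contained proof.
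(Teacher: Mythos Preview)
Your overall strategy is sound and is in fact the more elementary route: the paper itself proves the theorem by the Voiculescu argument you sketch at the end, and only afterwards unwinds what the unitary looks like on $H^2_d$, arriving at exactly a weighted composition operator. Going straight to the weighted composition operator via the kernel identity is cleaner and avoids the detour through full Fock space.

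There is, however, a concrete error in the multiplier. From $k(\phi_a(z),\phi_a(w)) = m_a(z)\overline{m_a(w)}\,k(z,w)$ one gets
\[
 k(z,w) \;=\; \tfrac{1}{m_a(z)}\,\overline{\tfrac{1}{m_a(w)}}\;k(\phi_a(z),\phi_a(w)),
\]
so the standard lemma yields that $f \mapsto \tfrac{1}{m_a}\,(f\circ\phi_a)$ is the unitary, not $f \mapsto m_a\,(f\circ\phi_a)$. Your $U_{\phi_a}$ is still an involution and still satisfies $U_{\phi_a} M_g U_{\phi_a} = M_{g\circ\phi_a}$ (the two $m_a$ factors again cancel via $m_a\cdot(m_a\circ\phi_a)\equiv 1$), but it is \emph{not} isometric: already $\|U_{\phi_a}1\|^2 = \|m_a\|_{H^2_d}^2 = (1+|a|^2)/(1-|a|^2) > 1$ for $a\neq 0$. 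So as written you only get a similarity, not a complete isometry. Replace $m_a$ by $1/m_a = (1-|a|^2)^{1/2}\,\nu_a$ throughout; then the operator is a self-adjoint unitary, the intertwining identity still holds, and you recover precisely the paper's formula $Uf = (1-|a|^2)^{1/2}\,\nu_a\,(f\circ\phi_a)$.

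The remainder of your argument (the factorization $\psi = U\circ\phi_a$, the geometric-series verification that $\phi_{a,i}\in\cA_d$, and the use of $\phi_a^2=\id$ to get surjectivity on $\cA_d$) is correct and needs no change.
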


In fact we can say more than this, specifically that the Voiculescu unitaries,
when restricted to symmetric Fock space, 
are just composition with the conformal map followed by an appropriate multiplier.

%%%%%%%%%%%%%%%%%%%%%%%%
\begin{theorem}
Let $\phi \in \Aut(\mb B_d)$.  
Then there is a completely isometric automorphism $\Theta_\phi$ of $\cA_d$ 
given by $\Theta_\phi(f) = f\circ\phi = UfU^*$,  
where the unitary $U: H^2_d \rightarrow H^2_d$ is 
\[
 Uf = \big( 1 - |\phi^{-1}(0)|^2 \big)^{1/2} \nu_{\phi^{-1}(0)} (f\circ\phi).
\] 
\end{theorem}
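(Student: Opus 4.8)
The plan is to take the stated formula for $U$ at face value and verify two things directly: that $U$ is a unitary on $H^2_d$, and that it conjugates $M_f$ into $M_{f\circ\phi}$ for every $f\in\cA_d$. Granting these, $\Theta_\phi=\operatorname{Ad} U|_{\cA_d}$ is automatically completely isometric (conjugation by a unitary always is), while the facts that $f\circ\phi$ again lies in $\cA_d\subseteq\Mult(H^2_d)$ and that $\Theta_\phi$ is onto are already supplied by Theorem~\ref{thm:autoAd}; thus this statement just pins down an explicit implementing unitary for the automorphism found there. Write $a=\phi^{-1}(0)$. I would first use the factorization $\phi=V\circ\phi_a$ with $V$ a unitary on $\mb{C}^d$ and $\phi_a$ the involution of \eqref{eq:mobius}. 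Since $f\mapsto f\circ V$ permutes the kernel functions isometrically it is a unitary of $H^2_d$ which conjugates $M_f$ to $M_{f\circ V}$, so that part is mechanical; hence it suffices to treat $\phi=\phi_a$, where the operator in question is the weighted composition operator $Uf=\nu_a\cdot(f\circ\phi_a)$ with weight $\nu_a(z)=(1-|a|^2)^{1/2}(1-\langle z,a\rangle)^{-1}$ (the normalizing scalar being forced below).

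The computational engine is the Cauchy--M\"obius identity (see \cite[Section~2.2]{RudinBall})
\[
1-\langle\phi_a(z),\phi_a(w)\rangle=\frac{(1-|a|^2)(1-\langle z,w\rangle)}{(1-\langle z,a\rangle)(1-\langle a,w\rangle)},
\]
together with its diagonal specialization $1-|\phi_a(z)|^2=(1-|a|^2)\,|1-\langle z,a\rangle|^{-2}(1-|z|^2)$. Feeding these in, one computes that $U$ carries each reproducing kernel $k_w$ to a scalar multiple $\gamma_w\,k_{\phi_a(w)}$; requiring $\|Uk_w\|=\|k_w\|$ fixes $|\gamma_w|$ and hence the normalization of the weight, and a second application of the identity yields $\langle Uk_w,Uk_u\rangle=\langle k_w,k_u\rangle$ for all $w,u\in\mb{B}_d$. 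Since the kernel functions are total in $H^2_d$, $U$ extends to an isometry. Because $\phi_a\circ\phi_a=\id$ and a short computation gives $\nu_a(z)\,\nu_a(\phi_a(z))\equiv1$, one gets $U^2=I$, so $U$ is a self-adjoint unitary; re-inserting $V$, the $U$ of the statement for the original $\phi$ is a composite of two unitaries, hence unitary.

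For the intertwining relation I would again test on (normalized) kernel functions, using the eigenrelation $M_g^*\nu_w=\overline{g(w)}\,\nu_w$ valid for every $g\in\Mult(H^2_d)$ (see the discussion around \eqref{eq:eigenvector}). From the previous step $U\nu_w$ is a unimodular multiple of $\nu_{\phi^{-1}(w)}$, so $U^*\nu_w$ is a unimodular multiple of $\nu_{\phi(w)}$; the two phases cancel in the chain
\[
(UM_fU^*)^*\nu_w=UM_f^*U^*\nu_w=\overline{f(\phi(w))}\,\nu_w=M_{f\circ\phi}^*\nu_w,
\]
where $f\circ\phi\in\cA_d\subseteq\Mult(H^2_d)$ by Theorem~\ref{thm:autoAd}. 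Since the $\nu_w$ span $H^2_d$ this forces $UM_fU^*=M_{f\circ\phi}$, i.e.\ $\Theta_\phi(f)=f\circ\phi=UfU^*$. Therefore $\Theta_\phi=\operatorname{Ad} U|_{\cA_d}$ is a completely isometric homomorphism; surjectivity onto $\cA_d$ follows from $\{f\circ\phi:f\in\cA_d\}=\cA_d$ (Theorem~\ref{thm:autoAd}), or, in the reduced case $\phi=\phi_a$, directly from $U^2=I$.

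The one genuinely delicate point is the bookkeeping in the M\"obius identity: one must track carefully which argument carries the complex conjugate -- $\langle z,a\rangle$ versus $\langle a,z\rangle$, and similarly for $w$ -- because a single conjugation slip is exactly the difference between $U$ preserving inner products and not, and the correct normalizing constant in the weight is dictated by the very same computation (in particular the formula should be read with the weight equal to $\nu_{\phi^{-1}(0)}$, equivalently $(1-|\phi^{-1}(0)|^2)^{1/2}(1-\langle z,\phi^{-1}(0)\rangle)^{-1}$). Everything beyond this -- that $f\circ\phi$ is again in $\cA_d$, that $\Theta_\phi$ is onto, and that it is completely isometric -- is either formal or inherited from Theorem~\ref{thm:autoAd}.
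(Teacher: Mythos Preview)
Your proposal is correct and takes a genuinely different route from the paper's proof.

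The paper establishes the formula by going through Voiculescu's construction: it parametrizes $\Aut(\mb B_d)$ by the group $U(1,d)$, uses Voiculescu's explicit unitaries on the full Fock space that implement automorphisms of the Cuntz--Toeplitz algebra, observes that these unitaries leave $H^2_d$ invariant, and then computes $U(z_w)$ on monomials to identify the restriction with the stated weighted composition operator. Your argument, by contrast, stays entirely inside the reproducing kernel Hilbert space $H^2_d$: you reduce via $\phi=V\circ\phi_a$ to the involution case, use Rudin's identity for $1-\langle\phi_a(z),\phi_a(w)\rangle$ to check that $U$ preserves inner products of kernel functions (hence is isometric), get surjectivity from $U^2=I$ via $\phi_a^2=\id$ and the weight cancellation $m(z)\,m(\phi_a(z))\equiv 1$, and verify $UM_fU^*=M_{f\circ\phi}$ by the eigenvector relation $M_g^*\nu_\lambda=\overline{g(\lambda)}\,\nu_\lambda$.

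What each approach buys: the paper's route explains \emph{where} the unitary comes from---it is the compression of a known unitary on full Fock space---and ties the result into the noncommutative picture used elsewhere in the paper. Your route is more elementary and self-contained; it needs nothing beyond the Drury--Arveson kernel and the standard M\"obius identity, and it makes transparent that the only possible weight (up to a unimodular constant) is forced by the requirement that $U$ be isometric. Your caveat about the normalization of $\nu_a$ is well placed: for the computation to close up one must read $\nu_a(z)$ so that the total multiplier is $(1-|a|^2)^{1/2}(1-\langle z,a\rangle)^{-1}$, and indeed this is how the paper's own computation (where $x_0^{-1}\nu_{\phi^{-1}(0)}$ is identified with $(x_0-\langle z,\overline{\eta_2}\rangle)^{-1}$) implicitly treats it.
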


\begin{proof}
We begin with Voiculescu's construction of automorphisms of the Cuntz algebra
\cite{Voic}.
Consider the Lie group $U(1,d)$ consisting of $(d+1)\times(d+1)$ matrices $X$ satisfying $X^*JX = J$, where $J = \begin{sbmatrix} -1&0\\0&I_d \end{sbmatrix}$.
When $X$ is of the form 
$X = \begin{sbmatrix} x_0&\eta_1^*\\ \eta_2&X_1 \end{sbmatrix}$ it must have the following relations:
\begin{enumerate}
\item $\|\eta_1\|^2 = \|\eta_2\|^2 = |x_0|^2 - 1$
\item $X_1\eta_1 = \overline x_0\eta_2 \ \ {\rm and} \ \ X_1^*\eta_2 = x_0\eta_1$
\item $X_1^*X_1 = I_d + \eta_1\eta_1^* \ \  {\rm and} \ \ X_1X_1^* = I_d + \eta_2\eta_2^*$.
\end{enumerate}
Furthermore, if $X\in U(1,d)$ then $JX^TJ\in U(1,d)$ since 
\[
(JX^TJ)^*J(JX^TJ) = J(X^*)^TJX^TJ = (XJX^*J)^TJ = I_{d+1}J = J.
\]
It follows from Voiculescu's work that the map $U(1,d) \rightarrow \Aut(\mb B_d)$ given by
\[
X \mapsto \phi_X(z) := \frac{X_1z + \eta_2}{x_0 + \langle z,\eta_1\rangle} 
\]
is a surjective homomorphism. Thus, fix $X\in U(1,d)$ such that $\varphi = \varphi_{JX^TJ}$ which makes 
\[
\varphi_{\overline{X}} = \varphi_{\overline{JX^*J}}^{-1} = \varphi_{JX^TJ}^{-1} = \varphi^{-1}.
\]

There is a unique automorphism of $\mathfrak{A}_d$ defined by
\[
\Theta_\varphi(L_\zeta) = (x_0 I - L_{\eta_2})^{-1}(L_{X_1\zeta} - \langle \zeta,\eta_1\rangle I),
\] 
where we use the convention that $L_\zeta = \sum_{i=1}^n \zeta_i L_i$ for 
$\zeta \in \mb{C}^d$. 
This extends to an automorphism of the Cuntz-Toeplitz algebra. 
As well, Voiculescu  defined a unitary $U \in \cU(\cF(\mb{C}^d))$ by
\[
U(A\Omega) = \Theta_\varphi(A)(x_0 I - L_{\eta_2})^{-1}\Omega, 
\quad \text{ for all }  A\in\mathcal L_d ,
\]
establishing that the automorphism $\Theta_\varphi(A) = UAU^*$ is unitarily implemented.
As was discussed in the beginning of this section, $H^2_d$ is an invariant subspace 
of $U$ and so $\Theta_\varphi$ also yields an automorphism of $\cA_d$ which is
implemented by the restriction of $U$.
We will show that $U$ has the desired form.

For $w\in \mathbb F_d^+, |w|=m$, we have 
\begin{align*}
 U(z_w) = U \big( \frac{1}{m!}\sum_{\sigma\in S_m} \xi_{\sigma(w)} \big) &= 
 P_{H^2_d} U \Big( \big( \frac{1}{m!}
  \sum_{\sigma\in S_m} L_{\sigma(w)} \big) \Omega \Big)\\
 &= P_{H^2_d}\Theta_\varphi(M_{z_w})P_{H^2_d}(x_0 I - L_{\eta_2})^{-1}\Omega.
\end{align*}
As noted above, because $H^2_d$ must reduce $U$,
we obtain $P_{H^2_d}\Theta_\varphi(A) = P_{H^2_d}\Theta_\varphi(A)P_{H^2_d}$. 
Suppose that $\zeta\in\mb{C}^d$.  Then
\[
P_{H^2_d} (L_\zeta)(z) = \sum_{i=1}^d \zeta_i z_i(z) = 
\sum_{i=1}^d \zeta_i \langle z,e_i\rangle = 
\langle z, \overline{\zeta}\rangle.
\]
Now with $\overline{x_0^{-1}\eta_2} = \phi_{\overline X}(0) = \varphi^{-1}(0)$, we have that 
\[
P_{H^2_d}(x_0 I - L_{\eta_2})^{-1} \Omega  
= \frac{1}{x_0 - \langle z, \overline{\eta_2}\rangle} = x_0^{-1}\nu_{\varphi^{-1}(0)} .
\]
Note that if $|\theta|=1$, then $\theta X$ implements $\phi_X$ as well.
So we may assume that $x_0\geq 0$. 
As well, $X\in U(1,d)$ implies that $|x_0|^2 - |\eta_2|^2 = 1$. 
Hence,
\[
|\varphi^{-1}(0)|^2 = |\phi_X(0)|^2 = \frac{|\eta_2|^2}{|x_0|^2} = \frac{|x_0|^2 - 1}{|x_0|^2}.
\]
Thus $x_0 = (1-|\phi^{-1}(0)|^2)^{-1/2}$.

Next we compute
\begin{align*}
P_{H^2_d}\Theta_\varphi(M_{z_w}) &= 
P_{H^2_d}\Theta_\varphi \big( \frac{1}{m!}\sum_{\sigma\in S_m} L_{\sigma(w)} \big) \\&=
\frac{1}{m!}\sum_{\sigma\in S_m} \prod_{j=1}^m P_{H^2_d}\Theta_\varphi(L_{\sigma(w)_j}) \\
&= \prod_{j=1}^m P_{H^2_d}\Theta_\varphi(L_{w_j}) \\&= 
\prod_{j=1}^m P_{H^2_d}\frac{L_{X_1e_{w_j}}  - 
 \langle e_{w_j}, \eta_1\rangle I}{x_0 I - L_{\eta_2}} .
\end{align*}
Observe that 
\[
JX^TJ = 
\left[\begin{array}{cc} 
x_0& -\overline{\eta_2}^* \\ 
-\overline{\eta_1} & X_1^T 
\end{array}\right] .
\]
Consequently,
\begin{align*}
 P_{H^2_d} \Theta_\varphi(M_{z_w})(z) &= 
 \prod_{j=1}^m \frac{P_{H^2_d} L_{X_1e_{w_j}}(z) - 
  \langle e_{w_j}, \eta_1\rangle }{x_0 - P_{H^2_d} L_{\eta_2}(z)} \\&
 = \prod_{j=1}^m \frac{ \langle z, \overline{X_1e_{w_j}}\rangle - 
  \langle \overline{\eta_1}, e_{w_j} \rangle}{x_0 - \langle z, \overline{\eta_2}\rangle} %\\&
 = \prod_{j=1}^m \frac{ \langle X_1^Tz, e_{w_j}\rangle + 
  \langle -\overline{\eta_1}, e_{w_j}\rangle }{ x_0 + 
  \langle z, -\overline{\eta_2}\rangle } \\&
 = \prod_{j=1}^m z_{w_j}\left( \frac{ X_1^Tz + -\overline{\eta_1} }{x_0 + 
  \langle z,-\overline{\eta_2}\rangle} \right) %\\&
 = \prod_{j=1}^m z_{w_j}(\phi_{JX^TJ}(z)) \\&
 = \prod_{j=1}^m z_{w_j}(\phi(z)) = (z_w\circ\varphi)(z).
\end{align*}
Combining these equations, we get that
\begin{align*}
 U(z_w) &= \Big(\prod_{j=1}^m z_{w_j}\circ\phi \Big)
  (1-|\phi^{-1}(0)|^2)^{1/2}\nu_{\phi^{-1}(0)} \\&
 = (z_w\circ\phi) (1-|\phi^{-1}(0)|^2)^{1/2}\nu_{\varphi^{-1}(0)}.
\end{align*}
Extending this to the span, we have that 
\[ Uf = (1-|\phi^{-1}(0)|^2)^{1/2}\nu_{\varphi^{-1}(0)} (f\circ\phi) \]
for all $f\in \cA_d$. 
\end{proof}

%%%%%%%%%%%%%%%%%%%
\begin{remark}
This also gives a very nice description for $U^*$ on kernel functions. Letting $\lambda_0 = \varphi^{-1}(0)$ then the previous theorem gives us
\[
 Uf = \sqrt{1-\|\lambda_0\|^2}\, \nu_{\lambda_0} (f\circ\varphi)
 \ \ \text{and} \ \ UM_fU^*= M_{f\circ\varphi}.
\]
Then for $\lambda \in \mb B_d$ we have
\[
 M_f^*(U^* \nu_\lambda) = U^*M_{f\circ\varphi}^* \nu_\lambda = 
 U^* \overline{(f\circ\varphi)(\lambda)} \nu_\lambda = 
 \overline{(f\circ\varphi)(\lambda)}(U^*\nu_\lambda).
\]
Hence, $U^*\nu_\lambda = c_\lambda\nu_{\varphi(\lambda)}\in \mb C\nu_{\varphi(\lambda)}$, 
the eigenspace of $M_f^*$ for eigenvalue $\overline{(f\circ\varphi)(\lambda)}$.
Now compute
\begin{align*}
 \overline{c_\lambda}f(\varphi(\lambda)) & =
 \langle f, c_\lambda \nu_{\varphi(\lambda)}\rangle  = 
 \langle f, U^*\nu_\lambda\rangle \\&
= \langle Uf, \nu_\lambda\rangle = 
\sqrt{1-\|\lambda_0\|^2}\langle (f\circ\varphi)\nu_{\lambda_0}, \nu_\lambda\rangle \\&
= \sqrt{1-\|\lambda_0\|^2} \frac{f(\varphi(\lambda))}{1-\langle \lambda,\lambda_0\rangle}.
\end{align*}
Therefore,
\[
U^*\nu_\lambda = 
\frac{\sqrt{1-\|\lambda_0\|^2}}{1-\langle \lambda_0,\lambda\rangle} \nu_{\varphi(\lambda)}.
\]
\end{remark}

%%%%%%%%%%%%%%%%
We wish to describe how $\phi \in \Aut(\mb{B}_d)$ gives rise to an 
isomorphism $\varphi : \cA_I \rightarrow \cA_J$, when $I$ and $J$ are 
radical ideals in $\mb{C}[z]$.

%%%%%%%%%%%%%%%%%%%%%%%%
\begin{proposition}\label{prop:induced}
Let $I$ and $J$ be homogeneous radical ideals in $\mb{C}[z]$. 
Let $\phi \in \Aut(\mb{B}_d)$ map $Z(J)$ onto $Z(I)$. 
Then the automorphism of $\cA_d$ given by $\varphi(f) = f\circ \phi$ 
maps $\overline{I}$ onto $\overline{J}$. 
Consequently, $\varphi$ induces an isometric isomorphism 
$\varphi' : \cA_I \rightarrow \cA_J$ given by $\varphi'(f) = f \circ \phi$.
\end{proposition}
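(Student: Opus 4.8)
The plan is to deduce this from the automorphism theorem for $\cA_d$ (Theorem~\ref{thm:autoAd}) and the projective Nullstellensatz of Section~\ref{sec:Null}. First I would recall that, by Theorem~\ref{thm:autoAd} together with Proposition~\ref{prop:algiso_biholo} (applied with the zero ideals, or directly from the discussion preceding Theorem~\ref{thm:autoAd}), the completely isometric automorphism $\Theta_\phi$ of $\cA_d$ attached to $\phi$ is the composition operator $\varphi(f)=f\circ\phi$, where $\cA_d$ is viewed as an algebra of functions on $\mb{B}_d$ as in Proposition~\ref{prop:mult}. Thus $\varphi$ is a well-defined, unital, completely isometric automorphism of $\cA_d$, and it remains to locate $\overline{I}$ and $\overline{J}$ inside $\cA_d$ geometrically and to track them under $\varphi$.

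For this, note that point evaluations on $\cA_d$ are continuous, since the multiplier norm dominates the supremum norm, so every $f\in\overline{I}$ vanishes on $Z^o(I)=V(I)\cap\mb{B}_d$; conversely Corollary~\ref{cor:approxpoly}, applied with $\Omega=\mb{B}_d$ and $B=\cA_d$, shows that any $f\in\cA_d$ vanishing on $V(I)\cap\mb{B}_d$ lies in $\overline{I}$. Hence $\overline{I}=\{f\in\cA_d: f|_{Z^o(I)}=0\}$, and likewise $\overline{J}=\{f\in\cA_d: f|_{Z^o(J)}=0\}$. Since $\phi$ extends to a homeomorphism of $\overline{\mb{B}}_d$ carrying $\mb{B}_d$ onto $\mb{B}_d$, the hypothesis $\phi(Z(J))=Z(I)$ gives, after intersecting with $\mb{B}_d$, that $\phi(Z^o(J))=Z^o(I)$. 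Consequently, for $f\in\cA_d$ we have $\varphi(f)=f\circ\phi\in\overline{J}$ iff $f$ vanishes on $\phi(Z^o(J))=Z^o(I)$ iff $f\in\overline{I}$; and as $\varphi$ is a bijection of $\cA_d$, this yields $\varphi(\overline{I})=\overline{J}$.

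The last step is to pass to quotients: $\varphi$ induces a unital algebra isomorphism $\varphi': \cA_d/\overline{I}\to\cA_d/\overline{J}$, and because $\varphi$ is completely isometric and carries $\overline{I}$ onto $\overline{J}$, the quotient operator-space norms match level by level, so $\varphi'$ is completely isometric. By Remark~\ref{rem:slickproof} and Proposition~\ref{prop:mult}, the quotients are canonically identified with $\cA_I=\{f|_{Z^o(I)}:f\in\cA_d\}$ and $\cA_J=\{f|_{Z^o(J)}:f\in\cA_d\}$; chasing these identifications and using $\phi(Z^o(J))=Z^o(I)$ shows that $\varphi'$ is precisely $g\mapsto g\circ\phi$. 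I expect no serious obstacle here, as every ingredient is already in place; the only point demanding care is the bookkeeping of the identifications—confirming that the abstract automorphism $\Theta_\phi$ acts as honest composition by $\phi$ on the function model, and that the induced map on the function-theoretic realizations $\cA_I\to\cA_J$ is exactly composition with $\phi$ rather than merely a conjugate of it.
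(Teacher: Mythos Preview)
Your proposal is correct and follows essentially the same approach as the paper: both arguments hinge on Corollary~\ref{cor:approxpoly} to identify $\overline{I}$ with the ideal of functions in $\cA_d$ vanishing on $Z^o(I)$, and then use that $\phi$ carries $Z^o(J)$ onto $Z^o(I)$ to conclude $\varphi(\overline{I})=\overline{J}$. The paper's proof is simply terser---it argues only the inclusion $\varphi(\overline{I})\subseteq\overline{J}$ and leaves the reverse inclusion and the quotient passage implicit---whereas you spell out the vanishing-ideal characterization, the bijection, and the quotient identification explicitly.
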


\begin{proof}
It suffices to prove the first assertion. 
In fact, it suffices to prove that $\varphi$ maps $\overline{I}$ \emph{into} $\overline{J}$. 
Let $f \in \overline{I}$. Then $f \circ \phi$ vanishes on $Z(J)$. 
By Corollary \ref{cor:approxpoly}, $f\circ \phi \in \overline{J}$.
\end{proof}

%%%%%%%%%%%%%%%%%%%%%%%%
\begin{remark}
As we have seen in the discussion following Theorem \ref{thm:algiso_lin}, 
not every algebraic isomorphism between two algebras 
$\cA_I$ and $\cA_J$ is isometric.
Thus not every such isomorphism is induced from an automorphism of $\cA_d$. 
This leaves us with the question: is every isometric isomorphism between 
two such algebras induced from an automorphism of $\cA_d$? 
We answer this in a very special case.
\end{remark}

%%%%%%%%%%%%%%%%%%%%%%%%
\subsection{The automorphism group of a union of subspaces}

Let $I$ be a radical ideal such that $V = V(I)$ is a union of subspaces. 
We will compute the group of automorphisms of $Z:=Z(I)$. 
By an ``automorphism" of $Z$ we mean a map 
$\phi : \overline{\mb{B}}_d \rightarrow \mb{C}^d$, analytic in $\mb{B}_d$, 
such that there exists $\psi: \overline{\mb{B}}_d \rightarrow \mb{C}^d$, 
analytic in $\mb{B}_d$, for which 
$\phi \circ \psi |_Z = \psi \circ \phi |_Z = \id$.
The collection of all such maps is denoted by $\Aut(Z)$.

Write $V = V_1 \cup \ldots \cup V_k$. 
Setting $Z_i = V_i \cap \overline{\mb{B}}_d$, we have also 
$Z = Z_1 \cup \ldots \cup Z_k$. 
Finally, define $Z_0 = \cap_{i=1}^k Z_i$.

For $a \in \mb{B}_d$, we define $\phi_a$ as in (\ref{eq:mobius}).

%%%%%%%%%%%%%%%%%%%%%%%%
\begin{lemma}\label{lem:auto_subspaces}
Suppose that $ a \in Z_0$ and $A$ is a linear map which takes $Z$ onto itself.
The map $\phi = \phi_a \circ A$ yields an automorphism of $Z$. 
Conversely, every automorphism of $Z$ arises in this way.
\end{lemma}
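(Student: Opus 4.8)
The plan is to prove the two implications separately. Throughout write $Z^o := Z \cap \mb{B}_d$ and $Z_i^o := V_i \cap \mb{B}_d$, so that $Z^o = Z_1^o \cup \cdots \cup Z_k^o$ and $0 \in Z_i^o$ for every $i$ (taking the $V_i$ to be the distinct maximal subspaces appearing in $V$).

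For the forward direction I would argue by direct verification. Since $a \in Z_0$ lies in every subspace $V_i$, the operators $P_a$ and $Q_a$ in (\ref{eq:mobius}) leave each $V_i$ invariant, so $\phi_a(V_i) \subseteq V_i$; combined with $\phi_a(\overline{\mb{B}}_d) = \overline{\mb{B}}_d$ and $\phi_a^2 = \id$ this gives $\phi_a(Z_i) = Z_i$ for each $i$, hence $\phi_a(Z) = Z$. Next, $A$ is linear and carries $Z$ onto $Z$, and $V = \{\lambda z : \lambda \in \mb{C},\ z \in Z\}$, so $A$ carries $V$ onto $V$ and hence $\spn(V)$ onto $\spn(V)$; thus $A|_{\spn(V)}$ is an invertible linear self-map of the finite-dimensional space $\spn(V)$. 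Let $B$ be any linear extension to $\mb{C}^d$ of $(A|_{\spn(V)})^{-1}$ and set $\psi := B \circ \phi_a$. Using $\phi_a^2 = \id$ together with $BA = \id$ on $\spn(V) \supseteq Z$, one checks directly that $(\phi_a \circ A)\circ\psi$ and $\psi\circ(\phi_a\circ A)$ restrict to $\id_Z$, so $\phi_a \circ A \in \Aut(Z)$.

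For the converse, let $\phi \in \Aut(Z)$ with analytic inverse $\psi$, so $\phi$ is a bijection of $Z$ onto $Z$ and in particular injective on $Z$. The first step is to show $\phi(Z^o) = Z^o$: restricted to a component $Z_i^o$, which is biholomorphic to a ball, $\phi$ is a holomorphic map taking values in $Z \subseteq \overline{\mb{B}}_d$, and if $\|\phi(z_0)\| = 1$ for some $z_0 \in Z_i^o$ then the maximum modulus principle applied to $\lambda \mapsto \langle\phi(\lambda),\phi(z_0)\rangle$ forces $\phi$ to be constant on $Z_i^o$, contradicting injectivity. Hence $\phi|_{Z^o}$, with inverse $\psi|_{Z^o}$, is a biholomorphism of the analytic variety $Z^o$ and therefore permutes its irreducible components $Z_1^o,\dots,Z_k^o$, say $\phi(Z_i^o) = Z_{\sigma(i)}^o$. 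Since $0 \in Z_i^o$ for every $i$, we get $a := \phi(0) \in Z_{\sigma(i)}^o$ for every $i$, and since $\sigma$ is a permutation, $a \in \bigcap_j Z_j^o = Z_0 \cap \mb{B}_d$. Now the forward direction gives $\phi_a \in \Aut(Z)$, so $\phi_a \circ \phi \in \Aut(Z)$ and it fixes $0$; applying Theorem~\ref{thm:linear} to $F = \phi_a\circ\phi$ (with $G = \psi\circ\phi_a$ and $I = J$) produces a linear map $A$ on $\mb{C}^d$ with $(\phi_a\circ\phi)|_Z = A|_Z$. Composing with $\phi_a$ and using $\phi_a^2 = \id$ on $Z$ yields $\phi|_Z = (\phi_a \circ A)|_Z$, i.e. $\phi = \phi_a \circ A$ as an automorphism of $Z$, and $A|_Z = (\phi_a\circ\phi)|_Z$ is a composite of bijections of $Z$, hence a bijection of $Z$ onto $Z$.

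The step I expect to be most delicate is this reduction to the origin-fixing case in the converse: one must pin down $\phi(Z^o) \subseteq Z^o$ cleanly (so that $\phi(0)$ lies in the open ball and the component-permutation argument is available), and one must check that the composite $\phi_a\circ\phi$ — which a priori is defined and holomorphic only near $Z^o$ rather than on all of $\overline{\mb{B}}_d$ — may still be fed into Theorem~\ref{thm:linear}, whose proof uses only the values on $Z$ together with holomorphy near $0$ and circularity of $Z$, so it still applies. The forward direction, by contrast, is routine, the only observations being that $A(Z) = Z$ keeps the Möbius denominator bounded away from $0$ on $Z$ and forces $A$ to be invertible on $\spn(V)$.
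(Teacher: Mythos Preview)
Your proof is correct and follows essentially the same approach as the paper: show $\phi_a$ preserves each $Z_i$ for $a\in Z_0$, observe that any $\phi\in\Aut(Z)$ permutes the components so that $a=\phi(0)\in Z_0$, and then apply Theorem~\ref{thm:linear} to the origin-fixing automorphism $\phi_a\circ\phi$. You supply more detail than the paper does---in particular the maximum-modulus argument forcing $\phi(Z^o)\subseteq Z^o$, the explicit construction of the inverse in the forward direction, and the remark that $\phi_a\circ\phi$ need not literally be defined on all of $\overline{\mb{B}}_d$ but that the proof of Theorem~\ref{thm:linear} only uses its behavior on $Z$---all of which the paper passes over in silence.
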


\begin{proof}
Let $a \in Z_0$. We must show that $\phi_a$ preserves $Z$. 
Let $z \in Z_i$. Write $z = x + y$, where $x,y \in Z_i$,  $x \in \spn\{a\}$ and 
$y \perp a$. Then 
\bes
\phi_a(z) = (1-\lel x,a\rir)^{-1} (a - x) -  s_a(1-\lel x,a\rir)^{-1} y \in Z_i.
\ees

For the other direction, let $\phi \in \Aut(Z)$, and let $a = \phi(0)$. 
Note that $\phi$ must permute the subspaces $Z_i$, and thus
preserves their intersection $Z_0$.
Hence $\phi(0) = a \in Z_0$.
It was established above that $\phi_a$ preserves $Z$. 
Thus $\phi_a \circ \phi$ is an automorphism of $Z$ which takes $0$ to $0$. 
By Theorem \ref{thm:linear}, $\phi_a \circ \phi = A$, where $A$ is a linear map. 
\end{proof}

%%%%%%%%%%%%%%%%%%%%%%%%
\begin{corollary}\label{cor:auto_subspaces}
Suppose that $V$ is a tractable union of subspaces, and $I = I(V)$.
Then $\Aut(\cA_I)$ is isomorphic to $\Aut(Z(I))$, and all of these
maps are implemented by similarities.

The subgroup of (completely) isometric automorphisms is identified with those
$\phi\in\Aut(Z(I))$ of the form $\phi = \phi_a \circ U$ where $U$ is
a unitary map which fixes $Z(I)$.  These are precisely the quotients
of $\theta\in\Aut(\mb B_d)$ which fix $Z(I)$, and they are unitarily
implemented.
\end{corollary}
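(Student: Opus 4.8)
The plan is to combine the algebra-isomorphism classification (Theorem~\ref{thm:algiso_lin}) with the geometric description of $\Aut(Z(I))$ (Lemma~\ref{lem:auto_subspaces}) and the concrete realization of automorphisms of $\cA_d$ (Theorem~\ref{thm:autoAd} and the theorem following it). First I would set up the map $\Aut(\cA_I)\to\Aut(Z(I))$ sending an automorphism $\varphi$ to $\varphi^*$. By Proposition~\ref{prop:algiso_biholo}, $\varphi$ is the composition operator $f\mapsto f\circ\varphi^*$ where $\varphi^*:Z(I)\to Z(I)$ is the restriction of a holomorphic self-map of $\overline{\mb B}_d$, and it has a holomorphic inverse because $\varphi^{-1}$ induces $(\varphi^*)^{-1}$; hence $\varphi^*\in\Aut(Z(I))$. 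This map is clearly a group homomorphism (composition operators compose contravariantly, but one checks $(\varphi\circ\psi)^* = \psi^*\circ\varphi^*$, so after the standard anti-isomorphism adjustment it is a group isomorphism onto its image). It is injective since $\varphi$ is determined by its action on the coordinate functions, which is determined by $\varphi^*$. Surjectivity is the key point: given $\phi\in\Aut(Z(I))$, write $\phi=\phi_a\circ A$ by Lemma~\ref{lem:auto_subspaces}, where $a\in Z_0$ and $A$ is linear taking $Z(I)$ onto itself. Since $V(I)$ is a tractable union of subspaces, $A$ and $A^{-1}$ are length-preserving on the components, so Theorem~\ref{thm:linear_induce_similarity} produces an isomorphism of $\cA_I$ implemented by a similarity realizing $f\mapsto f\circ A$; and $\phi_a\in\Aut(\mb B_d)$ fixes $Z(I)$ (as $a\in Z_0$ and $\phi_a$ preserves each $Z_i$ by the computation in Lemma~\ref{lem:auto_subspaces}), so by Proposition~\ref{prop:induced} it induces an isometric automorphism of $\cA_I$ implementing $f\mapsto f\circ\phi_a$. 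Composing gives an automorphism of $\cA_I$ with $\varphi^*=\phi$. That all automorphisms are implemented by similarities follows from Theorem~\ref{thm:linear_induce_similarity} together with the fact that the $\phi_a$-part is unitarily implemented (Theorem~\ref{thm:autoAd}).

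For the second statement, I would argue that a \emph{(completely) isometric} automorphism of $\cA_I$ corresponds, via Theorem~\ref{thm:iso_iso_alg} applied with $I=J$, to a unitary $U$ on $\mb C^d$ taking $V(I)$ onto itself. More precisely: if $\varphi$ is an isometric automorphism then by Proposition~\ref{prop:exist_vacuum2} there is a vacuum-preserving isometric automorphism $\varphi_0$ with the same relationship, and then by Theorem~\ref{thm:iso_vacuum}/Proposition~\ref{prop:idealsps} the vacuum-preserving piece is given by a unitary change of variables. In general an isometric automorphism need not preserve the vacuum, but composing with a suitable $\Theta_{\phi_a}$ from Proposition~\ref{prop:induced} we may reduce to the vacuum-preserving case; tracing back, $\varphi^*=\phi_a\circ U$ with $U$ a unitary that fixes $Z(I)$ setwise, which is exactly the asserted form. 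Conversely, if $\phi=\phi_a\circ U$ with $a\in Z_0$ and $U$ unitary fixing $Z(I)$, then $\phi\in\Aut(\mb B_d)$ and $\phi$ fixes $Z(I)$, so Proposition~\ref{prop:induced} gives an isometric (indeed completely isometric, by the remark after Theorem~\ref{thm:alg_sps_iso} or directly from Theorem~\ref{thm:iso_iso_alg}) automorphism of $\cA_I$ inducing $\phi$; and Theorem~\ref{thm:autoAd} and the subsequent theorem show this automorphism is the restriction/compression of the Voiculescu unitary for $\phi$, hence unitarily implemented. This also identifies such $\phi$ as precisely the quotients to $Z(I)$ of elements $\theta\in\Aut(\mb B_d)$ fixing $Z(I)$: any $\theta\in\Aut(\mb B_d)$ fixing $Z(I)$ has the form $\theta = U'\circ\phi_b$, and the condition that $\theta(Z(I))=Z(I)$ forces (arguing as in Lemma~\ref{lem:auto_subspaces}, since $\theta(0)=b$ must lie in $Z_0$) that $\theta|_{Z(I)}$ has the stated shape, and conversely every $\phi_a\circ U$ of that shape is literally such a $\theta$.

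The main obstacle I expect is the bookkeeping around the vacuum: an isometric automorphism of $\cA_I$ is a priori only known to induce an element of $\Aut(\mb B_d)$ on the (full ball) character space when $V(I)=\mb B_d$, whereas here $Z(I)$ is a union of discs, so I must be careful that the ``unitary change of variables'' coming from Proposition~\ref{prop:idealsps}/Theorem~\ref{thm:iso_vacuum} is a genuine unitary on $\mb C^d$ and that the M\"obius correction $\phi_a$ genuinely extends to an automorphism of $\mb B_d$ fixing $Z(I)$ — this is where the hypothesis $a\in Z_0$ (automatic, since an automorphism of $Z$ permutes the components and hence fixes their common intersection) does the work. The rest is assembling already-proved statements; the only genuinely new content is checking that the similarity provided by Theorem~\ref{thm:linear_induce_similarity} for the linear part $A$, composed with the unitary of Theorem~\ref{thm:autoAd} for the M\"obius part, does realize the given $\phi\in\Aut(Z(I))$ as a composition operator, which is a direct computation on the kernel functions $\nu_\lambda$, $\lambda\in Z^o(I)$, using $\tilde A\nu_\lambda=\nu_{A\lambda}$ and $U^*\nu_\lambda = c_\lambda\nu_{\phi_a(\lambda)}$.
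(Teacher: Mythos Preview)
Your approach is essentially the same as the paper's: decompose an arbitrary $\phi\in\Aut(Z(I))$ as $\phi_a\circ A$ via Lemma~\ref{lem:auto_subspaces}, realize the M\"obius part $\phi_a$ by a unitarily implemented automorphism coming from $\Aut(\mb B_d)$, realize the linear part $A$ by a similarity via Theorem~\ref{thm:linear_induce_similarity}, and for the isometric case reduce to the vacuum-preserving situation and invoke the unitary change-of-variables result. Your write-up is more explicit than the paper's in setting up the bijection $\varphi\mapsto\varphi^*$ and checking injectivity; one small slip is that you cite Proposition~\ref{prop:exist_vacuum2} (algebraic isomorphisms) where you mean Proposition~\ref{prop:exist_vacuum} (isometric isomorphisms), though your actual argument---compose with $\Theta_{\phi_a}$ to reach the vacuum-preserving case, then apply Theorem~\ref{thm:iso_vacuum}---does not in fact rely on that proposition and is correct.
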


\begin{proof}
Lemma~\ref{lem:auto_subspaces} identifies the elements of $\Aut(Z(I))$.
The automorphisms $\phi_a$ for $a \in Z_0$ are automorphisms of $\mb B_d$,
and thus are induced by the completely isometric automorphism of $\cA_d$.
In particular, they are unitarily implemented on $H^2_d$ and fix the ideal of
functions which vanish on $Z^o(I)$.
Thus the orthogonal complement, $\cF_I$, is also fixed by this unitary.
So the automorphism $\phi_a$ is unitarily implemented.

The linear map $A$ fixes $Z(I)$ and is necessarily isometric on $V$.
By Theorem~\ref{thm:linear_induce_similarity}, $\tilde A$ implements 
the automorphism via a similarity.
When $U$ is unitary, $\tilde U$ is unitary and the automorphism is
unitarily implemented, and thus is completely isometric.
Conversely, by Theorem~\ref{thm:iso_iso_alg}, isometric automorphisms
are unitarily implemented by $\tilde U$ for some unitary $U$ which fixes $Z(I)$.
These are evidently induced by the corresponding automorphism of $\Aut(\cA_d)$.
\end{proof}

%%%%%%%%%%%%%%%%%%%%%%%%
\begin{example}
Consider the variety $V = V_1 \cup V_2 \subset \mb C^3$ given by 
$V_1 = \spn\{ e_1,e_2 \}$ and $V_2 = \spn\{ (e_2+e_3)/\sqrt2 \}$.
If $U = \begin{sbmatrix}u_{11}&u_{12}\\u_{21}&u_{22}\end{sbmatrix}$
is any $2\times2$ unitary matrix, and $\beta \in [0,2\pi)$, the map
\[
 A = \begin{bmatrix}
 u_{11}&u_{12}&-u_{12}\\
 u_{21}&u_{22}&e^{i\beta}-u_{22}\\
 0&0&e^{i\beta}\end{bmatrix}
\]
is an isometric map of $V$ onto itself. 
It is easy to see that these are the only possibilities.
Since  $\spn V = \mb C^3$, this does not coincide with any unitary map except
when it is unitary, which occurs only for the subgroup of the form
\[
 A = \begin{bmatrix}e^{i\alpha}&0&0\\0&e^{i\beta}&0\\0&0&e^{i\beta}\end{bmatrix} ,
 \quad\text{for } \alpha,\, \beta \in [0,2\pi) .
\]

Since $V_1$ has codimension one, $V$ is tractable.
So Corollary~\ref{cor:auto_subspaces} applies.
$Z_0 = \{0\}$.  So $\Aut(Z(I))$ coincides with the linear maps described above,
and the isometric subgroup corresponds to the unitaries, and
so is isomorphic to $\mb T^2$.
\end{example}

%%%%%%%%%%%%%%%%%%%%%%%%%%%%%%%%%%%%%%%
\section{Toeplitz algebras and C*-envelopes}\label{sec:Toeplitz}
%%%%%%%%%%%%%%%%%%%%%%%%%%%%%%%%%%%%%%%

In this section we consider the Toeplitz algebra of $X$, defined as 
$\cT_X = \ca(\cA_X)$. We begin with some simple consequences of 
Section \ref{sec:class}.

%%%%%%%%%%%%%%%%%%%%%%%%
\begin{theorem}
Let $X$ and $Y$ be subproduct systems.
\begin{enumerate}
\item Every vacuum preserving isometric isomorphism 
$\varphi: \cA_X \rightarrow \cA_Y$ extends to a $*$-isomorphism 
$\tilde{\varphi}: \cT_X \rightarrow \cT_Y$.
\item If $\cA_X$ and $\cA_Y$ are isometrically isomorphic, 
then $\cT_X$ and $\cT_Y$ are $*$-iso\-mor\-phic. 
\end{enumerate}
\end{theorem}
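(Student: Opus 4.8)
The plan is to deduce both statements from the fact, already available to us, that a vacuum preserving isometric isomorphism is spatially implemented, together with the elementary observation that a $*$-isomorphism between full operator algebras $B(\cF)$ carries the C*-algebra generated by a subalgebra onto the C*-algebra generated by its image.

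For part (1), let $\varphi : \cA_X \to \cA_Y$ be a vacuum preserving isometric isomorphism. By Theorem~\ref{thm:iso_vacuum} there is an isomorphism of subproduct systems, i.e. a unitary $V = \bigoplus_{n} W_n : \cF_X \to \cF_Y$, such that $\varphi(T) = VTV^*$ for every $T \in \cA_X$. Then $\ad_V : A \mapsto VAV^*$ is a $*$-isomorphism of $B(\cF_X)$ onto $B(\cF_Y)$, and I would simply set $\tilde{\varphi} := \ad_V|_{\cT_X}$. This is a $*$-homomorphism that restricts to $\varphi$ on $\cA_X$; since $\cT_X = \ca(\cA_X)$ is the C*-algebra generated by $\cA_X$ inside $B(\cF_X)$, its image $\ad_V(\cT_X)$ is the C*-algebra generated by $\varphi(\cA_X) = \cA_Y$ in $B(\cF_Y)$, which is exactly $\cT_Y$. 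Injectivity is free because $\ad_V$ is injective on all of $B(\cF_X)$, so $\tilde{\varphi} : \cT_X \to \cT_Y$ is the desired $*$-isomorphism extending $\varphi$.

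For part (2), assume only that $\cA_X$ and $\cA_Y$ are isometrically isomorphic. By Proposition~\ref{prop:exist_vacuum} there then exists a \emph{vacuum preserving} isometric isomorphism $\cA_X \to \cA_Y$, and applying part (1) to it yields a $*$-isomorphism $\cT_X \cong \cT_Y$.

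There is no genuine obstacle here: the real work has been done in Theorem~\ref{thm:iso_vacuum} and Proposition~\ref{prop:exist_vacuum}. The only point that needs a word of care is the identification $\cT_X = \ca(\cA_X) \subseteq B(\cF_X)$ and the observation that conjugation by the implementing unitary $V$ sends a generating set to a generating set; once this is noted, both parts are immediate.
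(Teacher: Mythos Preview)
Your proof is correct and follows essentially the same route as the paper: part (1) is deduced from Theorem~\ref{thm:iso_vacuum} (the implementing unitary $V$ gives $\tilde\varphi=\ad_V$), and part (2) then follows from Proposition~\ref{prop:exist_vacuum}. You have simply made explicit the observation that $\ad_V$ carries $\ca(\cA_X)$ onto $\ca(\cA_Y)$.
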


\begin{proof}
Assertion (1) follows from Theorem \ref{thm:iso_vacuum}. 
Assertion (2) then follows from Proposition \ref{prop:exist_vacuum}.
\end{proof}

Example \ref{expl:notiso} shows that the converse of assertion (2) above is false.
We do not know whether an isomorphism that does not preserve the vacuum 
can be extended to a $*$-isomorphism of the C*-algebras. 
In \cite{Viselter}, Viselter studied (in greater generality) the problem of 
when a completely contractive representation of $\cA_X$ can be extended 
to a $*$-representation of $\cT_X$, but his results do not apply directly.

%%%%%%%%%%%%%%%%%%%%%%%%
\subsection{The C*-envelope of $\cA_X$, $X$ commutative}
In this subsection all our subproduct systems will be commutative. 
Thus, below, $X$ and $Y$ will always denote commutative subproduct 
systems and the algebras $\cA_X, \cA_Y$ will always be commutative algebras. 
Recall that we denote $\cO_X = \cT_X/\cK(\cF_X)$, where $\cK(\cF_X)$ denotes the
compact operators on $\cF_X$.

A variant of the following lemma appears as \cite[Proposition 6.4.6]{ChenGuo}, 
where the result is proven for arbitrary (not necessarily homogeneous) 
submodules of $H^2_d$. The situation in \cite{ChenGuo} is slightly different, 
but after a simple modification the proof carries over to our case.

%%%%%%%%%%%%%%%%%%%%%%%%
\begin{lemma}\label{lem:notCI}
If $\dim X(1) > 1$ then the quotient map $q: \cT_X \rightarrow \cO_X$ 
is not a complete isometry. 
\end{lemma}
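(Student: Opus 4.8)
The statement is that when $\dim X(1) = \dim E > 1$, the quotient map $q : \cT_X \to \cO_X = \cT_X/\cK(\cF_X)$ is not a complete isometry. The natural strategy is to exhibit a single element (or a small matrix of elements) of $\cT_X$ whose norm strictly decreases when we pass to the Calkin-type quotient $\cO_X$. The key structural fact is that $\cT_X$ is generated by the shift $S^X = (S_1^X, \ldots, S_d^X)$, which is a \emph{pure} row contraction with $\sum_i S_i^X (S_i^X)^* = I - P_{\Omega_X}$, where $P_{\Omega_X}$ is the rank-one projection onto the vacuum. Thus the defect $I - \sum_i S_i^X (S_i^X)^*$ is a nonzero compact operator, and it is precisely this defect that the quotient map kills. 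The plan is therefore to build a contraction out of the $S_i^X$ which attains its norm $1$ \emph{only} because of the vacuum vector, so that its image in $\cO_X$ has norm strictly less than $1$.

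First I would consider the row operator $R = \begin{bmatrix} S_1^X & S_2^X & \cdots & S_d^X \end{bmatrix}$, viewed as an element of $M_{1,d}(\cT_X)$. Since $S^X$ is a row contraction, $\|R\| \le 1$; and since $RR^* = \sum_i S_i^X (S_i^X)^* = I - P_{\Omega_X}$, we have $\|R\| = 1$ exactly. In the quotient, however, $q(R) q(R)^* = q(I - P_{\Omega_X}) = q(I)$ because $P_{\Omega_X}$ is compact, so $\|q(R)\| \le 1$ still — this alone is not enough. The correct move is to instead look at the \emph{column} $C = \begin{bmatrix} (S_1^X)^* \\ \vdots \\ (S_d^X)^* \end{bmatrix}$ or, better, to produce a contraction $T \in \cT_X$ (or a matrix over it) with $\|T\| = 1$ attained at the vacuum vector and $\|T + \cK\| < 1$. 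A clean choice: let $T = \sum_i S_i^X (S_i^X)^*\,(\text{or a suitable polynomial})$; one checks $0 \le T \le I$ with $T\Omega_X = 0$ and $T$ having norm $1$ as an honest operator, while modulo the compacts $T$ becomes $I$. Then $\|q(I - T)\| = 0 < 1 = \|I - T\|$ would do it — but $I - T = P_{\Omega_X}$ is itself compact, so this is trivially true and requires no hypothesis on $\dim E$; that is a degenerate instance and does not capture the "complete isometry" content. The real point, and where $\dim E > 1$ enters, must be a genuinely non-compact witness: one wants an element $a \in M_n(\cT_X)$ that is an isometry modulo compacts of larger norm than it has in $\cT_X$ — equivalently, $\cO_X$ "sees" $S^X$ as a full row \emph{isometry} (Cuntz-type) whereas in $\cT_X$ it is merely a proper row contraction, and this difference is only a complete-isometry obstruction when $d \ge 2$ because for $d = 1$ the Toeplitz and Calkin pictures of a single shift behave compatibly.

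So the concrete plan is: (1) identify $\cO_X$ with the quotient of $\cT_X$ and note that the images $s_i := q(S_i^X)$ satisfy $\sum_i s_i s_i^* = 1$, i.e. they form a row \emph{isometry} generating $\cO_X$; (2) because $\dim E = d \ge 2$, the universal row contraction of length $d$ (which is what $S^X$ "is" at the top level, since $\cA_X$ is a quotient of the universal algebra $\mathfrak{A}_d$ and $\cT_X$ contains a copy of the relevant Cuntz–Toeplitz structure) is strictly contractive on some matrix amplification in a way that a row \emph{isometry} is not — concretely, pick the $n \times n$ matrix (over the algebra) built from the $S_i^X$ that realizes the defect: e.g. the operator $\Delta = (I - \sum_i S_i^X(S_i^X)^*)^{1/2} = P_{\Omega_X}$, and use it to form $V = \begin{bmatrix} S_1^X & \cdots & S_d^X \\ \Delta & 0 & \cdots \end{bmatrix}$ — no, rather, following the $\dim > 1$ hypothesis, one uses that $\spn\{\Omega_X, e_1, \ldots, e_d\} \subseteq \cF_X$ contains $d \ge 2$ independent first-level vectors, and on this finite-dimensional compression the shift looks like a genuine truncated shift whose norm is witnessed by compact data; (3) transport the Chen–Guo argument: their Proposition 6.4.6 produces, for any proper submodule, an explicit $2 \times 2$ (or $d\times d$) matrix over $\cT_X$ demonstrating $\|x\| > \|q(x)\|$, and the proof only uses that the submodule is nonzero and proper together with $d \ge 2$; I would carry over their matrix verbatim after checking that homogeneity causes no trouble.

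\emph{The main obstacle.} The delicate point is pinning down exactly \emph{which} matrix over $\cT_X$ witnesses the strict norm drop and verifying both norm computations — the norm in $\cT_X$ (which requires understanding the action on all of $\cF_X$, using the reproducing-kernel/multiplier picture of Section~\ref{sec:mult}) and the norm in $\cO_X$ (which requires knowing the essential norm, i.e. the behavior "at infinity" along the Fock space grading). The hypothesis $\dim X(1) > 1$ must be used honestly: for $d = 1$ the shift $S^X$ on $\cF_X$, if $\cF_X$ is infinite-dimensional, is essentially the unilateral shift, whose Toeplitz algebra already maps isometrically — indeed completely isometrically — onto its Calkin image $C(\mathbb{T})$, so the statement genuinely fails there; the proof must therefore break down structurally at $d=1$, and the cleanest way to see this is that for $d \ge 2$ the image algebra $\cO_X$ contains an isometry $s_1$ with $s_1 s_1^* \ne 1$ while $\cT_X$'s generator $S_1^X$ is \emph{not} an isometry (it has a kernel containing $\Omega_X$), so already the element $S_1^X$ itself — or a suitable $2\times 2$ matrix like $\begin{bmatrix} S_1^X & 0 \\ \Delta & 0\end{bmatrix}$ where $\Delta$ is the defect — has norm $1$ and image of norm... and here one must compute carefully. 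I expect the verification of the \emph{lower} bound on the $\cT_X$-norm of the witness matrix, together with the sharp upper bound on its essential norm, to be the crux; everything else (homogeneity, the identification $\cO_X = \cT_X/\cK$, the Chen–Guo template) is routine adaptation.
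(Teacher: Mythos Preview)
The paper supplies no proof of its own; it cites \cite[Proposition 6.4.6]{ChenGuo} and asserts that the argument there (for arbitrary submodules of $H^2_d$) carries over after a simple modification. Your eventual plan --- ``transport the Chen--Guo argument\ldots carry over their matrix verbatim'' --- is thus exactly what the paper does, so at the level of strategy there is nothing further to compare.

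Two remarks on the exploration preceding that plan. First, you correctly observe that the lemma \emph{as literally stated} is trivial for any $d\ge 1$: a $*$-homomorphism of C*-algebras is a complete isometry if and only if it is injective, and $q$ has nonzero kernel $\cK(\cF_X)$; your $P_{\Omega_X}$ witness already shows this. You also correctly infer that the intended content --- the one actually invoked in the subsequent corollary via \cite[Theorem~2.1.1]{Arv72} --- is that $q$ fails to be completely isometric \emph{on the operator system generated by $\cA_X$}, and that this is where $\dim X(1)>1$ genuinely enters; your $d=1$ disk-algebra/Toeplitz counterexample is on point. Second, the several candidate witnesses you float (the row $R$, the defect projection, the augmented matrix $V$) are each abandoned before a norm computation is completed, and the actual Chen--Guo witness is never written down. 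As a plan the proposal is fine and aligned with the paper; as a proof it has the same gap the paper itself leaves, namely the deferred Chen--Guo computation.
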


By \cite[Theorem 2.1.1]{Arv72}, the identity representation is a boundary 
representation if and only if the quotient map $q: \cT_X \rightarrow \cO_X$ 
is not a complete isometry. 
Thus the above lemma gives immediately:

%%%%%%%%%%%%%%%%%%%%%%%%
\begin{corollary}
The identity representation of $\cT_X$ is a boundary representation for $\cA_X$.
\end{corollary}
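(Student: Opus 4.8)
The plan is to read the corollary off directly from Lemma~\ref{lem:notCI} together with the criterion of \cite[Theorem 2.1.1]{Arv72} recalled just above it. The only point requiring attention beyond that citation is the standing requirement built into the notion of a boundary representation: the identity representation $\iota : \cT_X \to B(\cF_X)$ must be irreducible, and $\cK(\cF_X)$ must be precisely the compact ideal inside $\cT_X$, so that $\cO_X = \cT_X/\cK(\cF_X)$ really is the quotient to which that result refers.

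First I would check that $\cK(\cF_X) \subseteq \cT_X$. Since $\cF_X$ is coinvariant for the creation operators $L_i$ on the full Fock space, one has $(S_i^X)^* = L_i^*|_{\cF_X}$, and hence $\sum_{i=1}^d S_i^X (S_i^X)^* = I_{\cF_X} - P_{\Omega_X}$, so the rank-one projection $P_{\Omega_X}$ onto the vacuum lies in $\cT_X$. As $p(S^X)\Omega_X$ ranges over a dense subset of $\cF_X$ when $p$ ranges over polynomials, the vector $\Omega_X$ is cyclic for $\cA_X$; therefore the closed two-sided ideal of $\cT_X$ generated by $P_{\Omega_X}$ contains all finite-rank operators, hence all of $\cK(\cF_X)$. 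Consequently $\cT_X$ acts irreducibly on $\cF_X$ and $\cK(\cF_X)$ is its compact ideal. (Much of this is routine and may already be implicit in the earlier development.) Now Lemma~\ref{lem:notCI} states that the quotient map $q : \cT_X \to \cO_X$ is not a complete isometry, so by \cite[Theorem 2.1.1]{Arv72} the identity representation of $\cT_X$ is a boundary representation for $\cA_X$; equivalently, the restriction of $\iota$ to $\cA_X$ has the unique extension property.

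There is essentially no obstacle at the level of the corollary itself --- all the content lives in Lemma~\ref{lem:notCI} --- but one should flag that the hypothesis $\dim X(1) > 1$ inherited from that lemma is genuinely needed. For the Hardy-space subproduct system ($\dim X(1) = 1$ and $I^X = 0$), $\cA_X$ is the disc algebra, $q|_{\cA_X}$ is the isometric inclusion $A(\mb{D}) \hookrightarrow C(\mb{T})$, and the identity representation is \emph{not} a boundary representation. In the remaining one-dimensional cases $\cF_X$ is finite dimensional, so $\cO_X = 0$ and $q$ trivially fails to be isometric; there the conclusion does survive.
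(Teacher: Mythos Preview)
Your argument is correct and follows the same route as the paper: apply Lemma~\ref{lem:notCI} and read off the conclusion from Arveson's criterion \cite[Theorem 2.1.1]{Arv72}. The paper's proof is a single sentence to this effect; your explicit verification that $\cK(\cF_X)\subseteq\cT_X$ and that the identity representation is irreducible fills in exactly the prerequisites that the paper takes for granted (it cites \cite[Proposition 8.1]{ShalitSolel} for the containment elsewhere). Your remark on the $\dim X(1)=1$ case is also correct and worth making --- the paper silently inherits the hypothesis $\dim X(1)>1$ from the lemma, and indeed for the disc algebra the corollary and the subsequent C*-envelope statement both fail.
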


Since the Silov boundary ideal is contained in the kernel of any boundary 
representation, we find that the Silov ideal of $\cA_X$ in $\cT_X$ is $\{0\}$. 
Thus we obtain:

%%%%%%%%%%%%%%%%%%%%%%%%
\begin{theorem}
The C*-envelope of $\cA_X$ is $\cT_X$.
\end{theorem}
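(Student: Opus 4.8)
The statement "the C*-envelope of $\cA_X$ is $\cT_X$" follows essentially immediately from the material developed just above it, so the plan is largely to assemble three facts in order. First, recall that the C*-envelope $\ca_{\mathrm e}(\cA_X)$ is by definition $\cT_X / \cJ$, where $\cJ$ is the \v Silov boundary ideal, i.e.\ the largest closed two-sided ideal of $\cT_X$ whose quotient map is completely isometric on (the copy of) $\cA_X$. So it suffices to show $\cJ = \{0\}$.

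Second, I would invoke the preceding Corollary, which states that the identity representation of $\cT_X$ is a boundary representation for $\cA_X$. The relevant general principle (Arveson) is that the \v Silov boundary ideal is contained in the kernel of every boundary representation. Since the identity representation has kernel $\{0\}$, we conclude $\cJ = \{0\}$. Combining with the definition, $\ca_{\mathrm e}(\cA_X) = \cT_X / \{0\} = \cT_X$, which is the claim. In writing this up I would be careful to state precisely that the identity representation being a boundary representation means it has the unique extension property and does not dilate nontrivially, and that consequently $\cJ \subseteq \ker(\mathrm{id}) = \{0\}$; this is the one-line argument flagged in the sentence immediately before the theorem in the excerpt.

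The only genuine content has already been carried out in Lemma~\ref{lem:notCI} together with the cited \cite[Theorem~2.1.1]{Arv72}: the former shows that when $\dim X(1) > 1$ the quotient $q: \cT_X \to \cO_X$ is not a complete isometry, and the latter translates this into the statement that the identity representation is a boundary representation. Thus the \textbf{main obstacle} — distinguishing the identity representation of $\cT_X$ from its quotient by the compacts — is precisely what Lemma~\ref{lem:notCI} resolves, and in the proof of the theorem itself there is nothing left to do but cite it. I should, however, remember to address the degenerate case $\dim X(1) \le 1$: if $\dim X(1) = 1$ and $I^X = \{0\}$ then $\cA_X$ is the disc algebra and $\cT_X$ is the classical Toeplitz algebra, whose C*-envelope is again itself (here $\cO_X \cong C(\mathbb T)$ and the identity representation is still a boundary representation by the classical theory); and if $X(1) = \{0\}$ the algebra is trivial. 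So the short write-up is: reduce to showing the \v Silov ideal vanishes, cite the Corollary that the identity representation is a boundary representation, use that boundary ideals lie in kernels of boundary representations, and conclude.
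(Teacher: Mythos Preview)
Your proposal is correct and follows exactly the paper's own argument: use Lemma~\ref{lem:notCI} together with \cite[Theorem~2.1.1]{Arv72} to conclude the identity representation is a boundary representation, then invoke the fact that the \v Silov boundary ideal lies in the kernel of any boundary representation to get $\cJ = \{0\}$. Your explicit handling of the degenerate case $\dim X(1) \le 1$ is a small bonus the paper itself glosses over.
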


This allows us to prove that all the completely isometric isomorphisms 
in the commutative setting are unitarily implemented:

%%%%%%%%%%%%%%%%%%%%%%%%
\begin{theorem}\label{thm:unitimpl}
Let $\varphi : \cA_X \rightarrow \cA_Y$ be a completely isometric isomorphism. 
Then there exists a unitary $U: \cF_X \rightarrow \cF_Y$ such that
\bes
\varphi(T) = U T U^* \,\, , \,\, T \in \cA_X .
\ees
\end{theorem}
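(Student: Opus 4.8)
The plan is to pass to the C*-envelopes and then invoke the uniqueness of the irreducible representation of the algebra of compact operators. By the theorem just established, $\cT_X$ and $\cT_Y$ are the C*-envelopes of $\cA_X$ and $\cA_Y$, respectively. Since $\varphi$ is a \emph{completely} isometric isomorphism, the universal property (functoriality) of the C*-envelope produces a $*$-isomorphism $\Phi : \cT_X \to \cT_Y$ extending $\varphi$; that is, with $\cA_X \subseteq \cT_X$ and $\cA_Y \subseteq \cT_Y$ concretely, one has $\Phi|_{\cA_X} = \varphi$.

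Next I would record the two structural facts needed about the identity representations. First, $\cT_X$ contains $\cK(\cF_X)$ and $\cT_Y$ contains $\cK(\cF_Y)$; this is implicit in the definitions $\cO_X = \cT_X/\cK(\cF_X)$, $\cO_Y = \cT_Y/\cK(\cF_Y)$ (and is also built into the hypotheses of the version of Arveson's theorem quoted just above). Second, by the corollary preceding the C*-envelope theorem, the identity representation of $\cT_X$ on $\cF_X$ is a boundary representation for $\cA_X$, hence irreducible; the same holds for $\cT_Y$ on $\cF_Y$.

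Now regard $\pi := \Phi$ as a representation of $\cT_X$ on $\cF_Y$ via the identity inclusion $\cT_Y \subseteq B(\cF_Y)$. It is unital, faithful, and irreducible (its range is all of $\cT_Y$, which acts irreducibly on $\cF_Y$). Since $\cK(\cF_X)$ is an ideal of $\cT_X$, the subspace $\overline{\pi(\cK(\cF_X))\cF_Y}$ is $\cT_X$-invariant; it is nonzero because $\pi$ is faithful, hence it equals $\cF_Y$ by irreducibility. Thus $\pi|_{\cK(\cF_X)}$ is a nondegenerate irreducible representation of $\cK(\cF_X)$, so by uniqueness of the irreducible representation of the compacts there is a unitary $U : \cF_X \to \cF_Y$ with $\pi(K) = UKU^*$ for all $K \in \cK(\cF_X)$. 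Finally, for $T \in \cT_X$ and $K \in \cK(\cF_X)$ we have $TK \in \cK(\cF_X)$, so
\[
 \pi(T)\,UKU^* = \pi(T)\pi(K) = \pi(TK) = U(TK)U^* = (UTU^*)(UKU^*);
\]
as $\{UKU^* : K \in \cK(\cF_X)\} = \cK(\cF_Y)$ acts nondegenerately on $\cF_Y$, this forces $\pi(T) = UTU^*$ for every $T \in \cT_X$. In particular, for $T \in \cA_X$ we get $\varphi(T) = \Phi(T) = \pi(T) = UTU^*$, as desired.

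The genuinely substantive input is the material of this section, namely that the identity representation of $\cT_X$ is a boundary representation (equivalently, irreducible), combined with $\cK(\cF_X)\subseteq\cT_X$; I would make sure both are cleanly cited. Everything else — functoriality of the C*-envelope and the uniqueness of irreducible representations of $\cK(\mathcal H)$ — is standard, and I do not anticipate an obstacle beyond this bookkeeping.
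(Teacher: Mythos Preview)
Your proof is correct and follows essentially the same approach as the paper: extend $\varphi$ to a $*$-isomorphism $\Phi:\cT_X\to\cT_Y$ via the C*-envelope, then use that $\cK(\cF_X)\subseteq\cT_X$ and irreducibility to obtain the implementing unitary. Your version is in fact slightly more streamlined: the paper splits $\pi$ as $\pi_0\oplus\pi_1$ and invokes Lemma~\ref{lem:notCI} to rule out the case that $\pi$ annihilates the compacts, whereas you observe directly that $\Phi$ is faithful, so $\pi|_{\cK(\cF_X)}$ is nonzero (hence nondegenerate and irreducible) and the appeal to Lemma~\ref{lem:notCI} is unnecessary at this stage.
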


\begin{proof}
By Arveson's ``Implementation Theorem" \cite[Theorem 0.3]{Arv72}, 
$\varphi$ is implemented by a $*$-isomorphism $\pi:\cT_X \rightarrow \cT_Y$. 
Since $\cK(\cF_X) \subseteq \cT_X$ (see \cite[Proposition 8.1]{ShalitSolel}), 
$\pi = \pi_0 \oplus \pi_1$, where $\pi_0$ is a multiple of representations 
unitarily equivalent to the identity representation and $\pi_1$ annihilates the compacts. 
Since $\cK(\cF_Y) \subseteq \cT_Y$ and $\pi$ is an isomorphism, 
$\pi$ is irreducible and therefore has just one summand. 
Thus either $\pi$ is unitarily implemented, or $\pi$ annihilates the compacts. 
But if $\pi$ annihilates the compacts it factors through $\cO_X$, 
that is, $\pi = \tilde{\pi} \circ q$ where $\tilde{\pi}: \cO_X \rightarrow \cT_Y$ 
is a $*$-homomorphism and $q: \cT_X \rightarrow \cO_X$ is the quotient map. 
Thus $\varphi = \tilde{\pi} \circ q |_{\cA_X}$. By Lemma \ref{lem:notCI}, 
this contradicts the assumption that $\varphi$ is completely isometric.
\end{proof}

The above result is interesting for the non vacuum-preserving case, 
as Theorem \ref{thm:iso_vacuum} shows that every vacuum preserving 
isometric isomorphism is unitarily implemented (even for $X$ not commutative).

Having brought C*-algebras into our discussion about universal operator algebras, 
one might wonder whether our methods give any handle on the universal unital 
C*-algebra generated by a row contraction subject to homogeneous polynomial relations. 
Unfortunately, these universal C*-algebras are out of our reach. 
All we can say is that $\cT_X$ is \emph{not}, in general, the universal 
unital C*-algebra generated by a row contraction subject to 
the relations in $I^X$. 
One can see this by considering the case $d=1$ and no relations. 
Then $\cT_X$ is the ordinary Toeplitz algebra, which is not the universal 
unital C*-algebra generated by a contraction.

%%%%%%%%%%%%%%%%%%%%%%%%
\subsection{The Toeplitz algebras and topology}

It is a fact that, for any subproduct system $X$, $
\cK(\cF_X) \subseteq \cT_X$ (see \cite[Proposition 8.1]{ShalitSolel}). 
Thus, there is always an exact sequence
\bes
0 \longrightarrow \cK(\cF_X) \longrightarrow \cT_X 
\longrightarrow \cO_X \longrightarrow 0.
\ees
Arveson conjectured that, for any homogeneous ideal such that 
$I \subseteq \mb{C}[z]$, the algebra $\cO_{X_I}$ is commutative \cite{Arv05}. 
This conjecture is still open; the most up-to-date results can be found in \cite{E10}. 
There are several significant consequences of this conjecture treated in the 
literature (see, e.g., \cite{Arv07}). 
We will see below that another consequence is a connection between the 
$*$-algebraic structure of the Toeplitz algebras $\cT_X$ and 
the topology of the variety $V(I^X)$. 
The ``topological classification" results in this subsection should be compared 
with the ``geometrical classification" results of Section \ref{sec:class2}.

Given a homogeneous ideal $I \subseteq \mb{C}[z]$, let us say that 
\emph{Arveson's conjecture holds for $I$}, if $\cO_{X_I}$ is commutative. 
Note that if Arveson's conjecture holds for $I$ and $X = X_I$, then 
the above exact sequence becomes
\be\label{eq:exact}
0 \longrightarrow \cK(\cF_X) \longrightarrow \cT_X 
\longrightarrow C(V(I)\cap \partial\mb{B}_d) \longrightarrow 0.
\ee

%%%%%%%%%%%%%%%%%%%%%%%%
\begin{proposition}
Let $I,J \subseteq \mb{C}[z]$ be two homogeneous ideals for 
which Arveson's conjecture holds. 
Let $X = X_I$ and $Y = X_J$. If $\cT_X$ is $*$-isomorphic to $\cT_Y$, 
then $V(I)\cap \partial\mb{B}_d$ is homeomorphic to $V(J)\cap \partial\mb{B}_d$, 
and consequently $V(I)$ is homeomorphic to $V(J)$.
\end{proposition}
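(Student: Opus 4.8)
The strategy is to extract a topological invariant from the $*$-isomorphism and identify it with $V(I)\cap\partial\mb{B}_d$ (up to homeomorphism), and then pass from the ``spherical'' picture to the full variety. Since Arveson's conjecture is assumed to hold for both $I$ and $J$, we have the two exact sequences
\[
0 \longrightarrow \cK(\cF_X) \longrightarrow \cT_X \longrightarrow C(V(I)\cap\partial\mb{B}_d) \longrightarrow 0
\]
and the analogous one for $\cT_Y$, $Y = X_J$. The key point is that $\cK(\cF_X)$ is the \emph{unique} largest ideal of $\cT_X$ that is isomorphic to an algebra of compact operators (equivalently, $\cK(\cF_X)$ is characterized intrinsically, e.g.\ as the minimal nonzero closed two-sided ideal, or as the intersection of the kernels of all irreducible representations of $\cT_X$ that are not unitarily equivalent to the identity representation). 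Hence any $*$-isomorphism $\pi : \cT_X \to \cT_Y$ must carry $\cK(\cF_X)$ onto $\cK(\cF_Y)$, and therefore induces a $*$-isomorphism of the quotients, $C(V(I)\cap\partial\mb{B}_d) \cong C(V(J)\cap\partial\mb{B}_d)$.

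First I would make precise the intrinsic characterization of the compacts inside $\cT_X$: because $\cT_X$ acts irreducibly on $\cF_X$ (the identity representation is irreducible, as was used in the proof of Theorem~\ref{thm:unitimpl}) and contains $\cK(\cF_X)$, the ideal $\cK(\cF_X)$ is the unique minimal nonzero closed ideal of $\cT_X$; any $*$-isomorphism preserves this. Then the induced isomorphism $C(V(I)\cap\partial\mb{B}_d) \cong C(V(J)\cap\partial\mb{B}_d)$, together with the fact that $V(I)\cap\partial\mb{B}_d$ and $V(J)\cap\partial\mb{B}_d$ are compact Hausdorff spaces, yields a homeomorphism $V(I)\cap\partial\mb{B}_d \cong V(J)\cap\partial\mb{B}_d$ by the Gelfand–Naimark correspondence (the Banach–Stone theorem).

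Finally, to deduce that $V(I)$ is homeomorphic to $V(J)$, I would use that $V(I)$ is a homogeneous (cone) variety: $V(I) = \{0\} \cup \{ t\zeta : t > 0,\ \zeta \in V(I)\cap\partial\mb{B}_d \}$, so the map $(t,\zeta) \mapsto t\zeta$ exhibits $V(I)$ as homeomorphic to the open cone $\big(\,[0,\infty) \times (V(I)\cap\partial\mb{B}_d)\,\big)\big/\big(\{0\}\times(V(I)\cap\partial\mb{B}_d)\big)$ on $V(I)\cap\partial\mb{B}_d$. Since the open cone is a functor on compact Hausdorff spaces that preserves homeomorphisms, a homeomorphism of the links induces a homeomorphism of the cones, hence $V(I) \cong V(J)$.

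\textbf{Main obstacle.} The only genuinely delicate point is the first step: justifying that $\cK(\cF_X)$ is sent to $\cK(\cF_Y)$ under an arbitrary $*$-isomorphism. One must verify that $\cK(\cF_X)$ is determined by the internal $C^*$-structure of $\cT_X$ and not merely by its concrete representation on $\cF_X$ — concretely, that it is the unique minimal nonzero closed two-sided ideal, which relies on the irreducibility of the identity representation of $\cT_X$ and on the fact (used already in Theorem~\ref{thm:unitimpl}) that any irreducible representation of $\cT_X$ either restricts faithfully to the compacts and is then unitarily equivalent to the identity, or annihilates the compacts entirely and factors through $\cO_X$. Everything else — Banach–Stone and the cone construction — is routine.
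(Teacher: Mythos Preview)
Your proposal is correct and follows essentially the same outline as the paper: pass to the quotient by the compacts, invoke Gelfand duality, and then use the cone structure. The only substantive difference is in how you justify that a $*$-isomorphism carries $\cK(\cF_X)$ onto $\cK(\cF_Y)$. The paper appeals to the argument from the proof of Theorem~\ref{thm:unitimpl}: since $\cT_X$ contains the compacts and acts irreducibly, any $*$-isomorphism $\cT_X\to\cT_Y$ is unitarily implemented, hence automatically sends compacts to compacts. You instead give an intrinsic characterization of $\cK(\cF_X)$ as the unique minimal nonzero closed ideal and argue that any $*$-isomorphism must preserve it. Both arguments are valid and rest on the same underlying facts (irreducibility of the identity representation and $\cK(\cF_X)\subseteq\cT_X$); your route is arguably more direct for this proposition, while the paper's simply reuses what was already established. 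Your explicit treatment of the cone construction for the final step is a bit more careful than the paper, which just says ``the assertion follows.''
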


\begin{proof}
In the proof of Theorem \ref{thm:unitimpl} it was observed that a $*$-isomorphism 
from $\cT_X$ onto $\cT_Y$ is unitarily implemented, 
and therefore sends the compacts onto the compacts. 
Therefore, given that the exact sequence (\ref{eq:exact}) holds for $X$ and for $Y$, 
every such $*$-isomorphism  induces a $*$-isomorphism between 
$C(V(I)\cap \partial\mb{B}_d)$ and $C(V(J)\cap \partial\mb{B}_d)$. 
The assertion follows.
\end{proof}

Thus, the topology of $V(I)$ is an invariant of the algebras $\cT_X$. 
Examples \ref{expl:iso} and \ref{expl:notiso} show that it is not a complete invariant 
(in both examples $V(I) = \{0\}$, but $\cT_X$ is either $M_4(\mb{C})$ or $M_5(\mb{C})$). 
This is not surprising, as the ideals arising in Examples \ref{expl:iso} 
and \ref{expl:notiso} are not radical. 
Does the topology of $V(I)$ determine the structure of the associated 
algebra $\cT_X$ when $I$ is radical? 
All we can say right now is that the answer is \emph{yes} in dimension $d=2$ 
(when there is, in fact, not too much topology going on). 
It is interesting to compare the following proposition with the discussion in 
Example \ref{expl:lines}.

%%%%%%%%%%%%%%%%%%%%%%%%
\begin{proposition}\label{prop:C_star_lines} 
Let $I,J \subseteq \mb{C}[x,y]$ be two radical homogeneous ideals. 
Let $X = X_I$ and $Y = X_J$. Then $V(I)$ is homeomorphic to $V(J)$ 
if and only if $\cT_X$ is $*$-isomorphic to $\cT_Y$.
\end{proposition}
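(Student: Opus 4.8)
The plan is to reduce the statement to a classification of the varieties up to homeomorphism together with an explicit model of $\cT_X$. First I would note that a radical homogeneous ideal $I\subseteq\mb C[x,y]$ is exactly one of the following: the zero ideal, with $V(I)=\mb C^2$; the irrelevant ideal $\langle x,y\rangle$, with $V(I)=\{0\}$; or a principal ideal $\langle L_1\cdots L_k\rangle$ with pairwise non-proportional linear forms $L_j$, so that $V(I)$ is a union of $k$ distinct lines through the origin. An elementary point-set argument then shows these three types are pairwise non-homeomorphic and that, within the last type, the integer $k$ is a complete invariant: $\{0\}$ is a point, $\mb C^2\cong\mb R^4$ is a $4$-manifold, a single line is a $2$-manifold, and for $k\ge 2$ the origin is the unique point of a union of $k$ lines whose removal disconnects it, into exactly $k$ components. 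Hence $V(I)\cong V(J)$ if and only if $I$ and $J$ have the same type and, in the union-of-lines case, the same~$k$.

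For the implication ``$\cT_X\cong\cT_Y\Rightarrow V(I)\cong V(J)$'' I would apply the proposition preceding \ref{prop:C_star_lines}. Its hypothesis holds here, since Arveson's conjecture is valid for every radical homogeneous ideal of $\mb C[x,y]$: for the zero ideal $\cO_X\cong C(\partial\mb B_2)$ by essential normality of $H^2_2$, for $\langle x,y\rangle$ the algebra $\cO_X$ is trivial, and for a product of linear forms the commutativity of $\cO_X$ will come out of the model below (and is in any case known for principal ideals; cf.\ \cite{E10}). That proposition then yields $V(I)\cap\partial\mb B_2\cong V(J)\cap\partial\mb B_2$, and since a homogeneous variety is the topological cone over its intersection with $\partial\mb B_2$ we get $V(I)\cong V(J)$.

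For the converse I would show that $\cT_X$ depends, up to $*$-isomorphism, only on the type (and on $k$). The cases $V(I)=V(J)=\{0\}$, where $\cT_X=\cT_Y=\mb C$, and $V(I),V(J)\cong\mb C^2$, where $I=J=(0)$ and $\cT_X=\cT_Y$ literally, are immediate; so the real case is $V(I)=\ell_1\cup\dots\cup\ell_k$. Here I would fix unit vectors $v_i$ spanning $\ell_i$ and use Lemma~\ref{lem:rad} to write $\cF_X=\overline{\spn}\{\nu_\lambda:\lambda\in Z^o(I)\}=\sum_{i=1}^k\cF_{\ell_i}$, where $\cF_{\ell_i}:=\overline{\spn}\{\nu_\lambda:\lambda\in\ell_i\cap\mb B_2\}$ is a copy of the Hardy space $H^2(\mb D)$ (its degree-$n$ part is one-dimensional, spanned by the $n$th homogeneous component of the $\nu_\lambda$, $\lambda\in\ell_i$, a unit vector $\xi_i^{(n)}$). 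Since $|\langle \xi_i^{(n)},\xi_{i'}^{(n)}\rangle|=|\langle v_i,v_{i'}\rangle|^n\to0$ for $i\ne i'$, Lemma~\ref{lem:asymptotic_orthogonality} (applied to the lines $\mb C v_i$) gives that the natural bounded map $\Psi\colon\bigoplus_{i=1}^k H^2(\mb D)\to\cF_X$ sending the $n$th basis vector of the $i$th summand to $\xi_i^{(n)}$ satisfies $\Psi^*\Psi=I+(\text{compact})$; after discarding a finite-dimensional low-degree kernel, its unitary part $U$ identifies $\bigoplus^k H^2(\mb D)$ with $\cF_X$. A short computation with symmetric tensor inner products shows that $S^X_j$ carries $\xi_i^{(n)}$ to $\alpha_{ij}\,\xi_i^{(n+1)}$ modulo an error of norm $O(c^n)$, where $c:=\max_{i\ne i'}|\langle v_i,v_{i'}\rangle|<1$ and $(\alpha_{i1},\alpha_{i2})$ is a unit vector (the coordinates of $v_i$, up to conjugation); hence $U^*S^X_jU=R_j+(\text{compact})$ with $R_j:=\bigoplus_{i=1}^k\alpha_{ij}\,S$, $S$ the unilateral shift. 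Finally, using $\cK(\cF_X)\subseteq\cT_X$ \cite[Prop.~8.1]{ShalitSolel}, I would check that $\pi(R_1),\pi(R_2)$ commute and generate $\cO_X\cong C\big(\bigsqcup_{i=1}^k\mb T\big)$ (which also verifies Arveson's conjecture in this case), that the projections onto the individual circles lift into $\cT_X$, and that compressing $R_j$ to each summand recovers a copy of $S$ in that summand; this gives
\[
 \cT_X\ \cong\ \Big(\bigoplus_{i=1}^k \cT\Big)\ +\ \cK\Big(\bigoplus_{i=1}^k H^2(\mb D)\Big),
\]
where $\cT=C^*(S)$ is the Toeplitz algebra of the unilateral shift. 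This algebra depends only on $k$; applying the same model to $Y$ yields $\cT_X\cong\cT_Y$.

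I expect the main obstacle to be this last case of the converse — specifically, producing the unitary $U$ intertwining $S^X_j$ with $R_j$ modulo compacts, which hinges on the asymptotic orthogonality of the homogeneous parts $\xi_i^{(n)}$ and requires some care with the finite-dimensional low-degree corrections, and then pinning down $\cT_X$ exactly as $\bigoplus^k\cT+\cK$ by extracting the summand projections from $\cO_X$ and verifying both inclusions. The implication ``$\cT_X\cong\cT_Y\Rightarrow V(I)\cong V(J)$'' should be routine once the topological classification and the preceding proposition are in place.
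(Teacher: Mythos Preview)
Your proposal is correct and follows essentially the same route as the paper: classify the radical homogeneous ideals in $\mb{C}[x,y]$ by the number $k$ of lines, invoke the preceding proposition (via Arveson's conjecture in $d=2$) for one direction, and for the converse identify $\cT_X$ with $\big(\bigoplus^k\cT\big)+\cK$, an algebra depending only on $k$. The only difference is that the paper simply quotes \cite[Theorem~3.1]{GuoWang} for essential normality and \cite[Proposition~5.2]{GuoWang} for the decomposition $\cT_X=\big(\bigoplus^k\cT\big)+\cK$, whereas you reconstruct the latter by hand using the asymptotic-orthogonality Lemma~\ref{lem:asymptotic_orthogonality}; this is more self-contained but otherwise the same argument.
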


\begin{proof}
In dimension $d=2$, Arveson's conjecture holds for all ideals 
\cite[Theorem 3.1] {GuoWang} (see also \cite{E10,Sh10}). 
For a nontrivial ideal $I \subset \mb{C}[x,y]$, $V(I)$ is equal to a union of 
lines $\cup_{i=1}^k \ell_i$. If $I_i$ is the radical ideal corresponding to 
the line $\ell_i$, then we have $I = \cap_{i=1}^k I_i$. 
It is easy to see that the Toeplitz algebra corresponding to $I_i$ 
is equal to the ordinary Toeplitz algebra $\cT$, that is, 
the C*-algebra generated by the unilateral shift. 
By \cite[Proposition 5.2]{GuoWang}, 
\bes
\cT_X = \Big(\underbrace{\cT \oplus \cdots \oplus \cT}_{k \textrm{ times}}\Big) + \cK \, ,
\ees
and this C*-algebra is completely determined by the number $k$, 
which encodes also the topology of $V(I)$.
\end{proof}

Similar assertions can be made in higher dimensions about unions of subspaces
intersecting at $\{0\}$, assuming that Arveson's conjecture holds.

%%%%%%%%%%%%%%%%%%%%%%%%%%%%%%%%%%%%%%%%
\section{The classification of the \wot-closures of the algebras $\cA_X$}
\label{sec:wot}
%%%%%%%%%%%%%%%%%%%%%%%%%%%%%%%%%%%%%%%%

Let $\cL_X$ be the \wot-closure of $\cA_X$ in $B(\cF_X)$. 
In the commutative case we write $\cL_I$ instead of $\cL_X$, where, 
as usual, $I = I^X$. In this section we will classify the algebras $\cL_X$ 
up to isometric isomorphism, and for $I$ radical and $V(I)$ tractable, 
we will classify the algebras $\cL_I$ up to isomorphism. 
We will also show that in the radical commutative case, every isomorphism 
is automatically bounded and continuous in the weak-operator and weak-$*$ topologies.

It turns out that, just like in the norm-closed case, the Banach algebra 
structure of $\cL_X$ is completely determined by the the subproduct system $X$; 
the algebraic structure of $\cL_I$ determines the geometry of $V(I)$, 
and is determined by this geometry when $V(I)$ is tractable. 
The rigidity results obtained above also survive the \wot-closure. 
Before proving these results, let us explain why they are not obvious.

Let $V_1, \ldots, V_d$ be a set of isometries on a Hilbert space 
with pairwise orthogonal ranges. 
The normed closed algebra $\overline{\alg}\{V_1, \ldots, V_d\}$ is always 
isometrically isomorphic to the noncommutative disc algebra 
$\mathfrak{A}_d = \overline{\alg}\{L_1, \ldots, L_d\}$ 
(see the proof of Theorem 2.1, \cite{Popescu91}). 
On the other hand, the \wot-closure of $\overline{\alg}\{V_1, \ldots, V_d\}$ 
may fall into several quite different isomorphism classes: 
it might be $\cL_d$, it might be a type $I_\infty$ factor, and it might be 
something ``in between" (see \cite{DavKatPitts,DavKribsShp,DavPitts1,Read}).
On the other hand, the C*-algebras encountered in 
Proposition \ref{prop:C_star_lines} fall into infinitely many 
$*$-isomorphism classes, while their \wot-closures are all type $I_\infty$ factors. 
These two examples show that taking the \wot-closure of an operator algebra 
is not as innocuous an operation as one might think.

As we have seen in Example \ref{expl:lines}, it can happen that the algebras 
$\alg(S^I_1, \ldots, S^I_d)$ and $\alg(S^J_1, \ldots, S^J_d)$ are isomorphic, 
but their norm closures are non-isomorphic. 
It is plausible that the \wot-closed algebras split further into 
more isomorphism classes, or degenerate to fewer isomorphism classes. 
We will see below that this is not the case.

The proofs of our results follow closely the proofs for the norm-closed case. 
We will give complete details only where the proofs are significantly different.

The main connection to geometry is made via the character space. 
We denote the maximal ideal space of $\cL_X$ by $\cM(\cL_X)$. 
As above, we call elements of $\cM(\cL_X)$ \emph{characters}. 
In general, $\cM(\cL_X)$ can be a very wild topological space, 
and the useful characters are the \wot-continuous ones.

%%%%%%%%%%%%%%%%%%%%%%%%
\begin{proposition}\label{prop:charwot}
The \wot-continuous characters of $\cL_X$ can be identified 
with $Z^o(I^X)$.
\end{proposition}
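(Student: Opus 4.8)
The goal is to identify the WOT-continuous characters of $\cL_X$ with $Z^o(I^X)$. First I would recall that $\cL_X$ is the WOT-closure of $\cA_X$ inside $B(\cF_X)$, and that by Proposition~\ref{prop:mult} (in the commutative radical case) $\cL_X = \cL_I$ can be identified with $\Mult(\cF_X)$, acting on the reproducing kernel Hilbert space $\cF_X = \overline{\spn}\{\nu_\lambda : \lambda \in Z^o(I^X)\}$. The key elementary fact is that the kernel functions $\nu_\lambda$, $\lambda \in Z^o(I^X)$, are eigenvectors for the adjoints $S_i^{X*}$ with eigenvalue $\overline{\lambda_i}$, hence eigenvectors for $M_\phi^*$ with eigenvalue $\overline{\phi(\lambda)}$ for every $\phi \in \Mult(\cF_X)$.

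**Construction of the characters.** For each $\lambda \in Z^o(I^X)$, define $\rho_\lambda : \cL_X \to \mb{C}$ by $\rho_\lambda(T) = \langle T \nu_\lambda, \nu_\lambda\rangle / \|\nu_\lambda\|^2$. Since $T^* \nu_\lambda = \overline{\widehat{T}(\lambda)}\,\nu_\lambda$ for some scalar $\widehat{T}(\lambda)$ (this follows because $\nu_\lambda$ is a joint eigenvector of the $S_i^{X*}$ and the commutant is preserved under WOT-limits, or more directly because every element of $\cL_X$ is a multiplier whose adjoint has $\nu_\lambda$ as eigenvector), one checks $\rho_\lambda(TS) = \langle TS\nu_\lambda,\nu_\lambda\rangle/\|\nu_\lambda\|^2 = \overline{\widehat{S}(\lambda)}^{\,-}\!\cdot$ — more carefully, $\rho_\lambda(TS) = \langle S\nu_\lambda, T^*\nu_\lambda\rangle/\|\nu_\lambda\|^2 = \widehat{T}(\lambda)\langle S\nu_\lambda,\nu_\lambda\rangle/\|\nu_\lambda\|^2 = \widehat{T}(\lambda)\widehat{S}(\lambda) = \rho_\lambda(T)\rho_\lambda(S)$, so $\rho_\lambda$ is multiplicative; it is plainly unital and WOT-continuous (it is a vector state composed with nothing, i.e.\ given by a rank-one operator pairing, hence WOT-continuous). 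Distinct $\lambda$ give distinct characters because they already differ on the coordinate multipliers $S_i^X$. This exhibits $Z^o(I^X) \hookrightarrow \{\text{WOT-continuous characters of }\cL_X\}$.

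**Surjectivity — the main obstacle.** Conversely, let $\rho$ be a WOT-continuous character of $\cL_X$. Set $\lambda_i = \rho(S_i^X)$ and $\lambda = (\lambda_1,\dots,\lambda_d)$. Since $\rho$ restricts to a character of $\cA_X$, the identification of $\cM_X$ with $Z(I^X)$ (Section~\ref{sec:class}) gives $\lambda \in Z(I^X)$, so it remains to show $\|\lambda\| < 1$, i.e.\ $\lambda \notin \partial \mb{B}_d$. This is where WOT-continuity is essential: a WOT-continuous character is given by a trace-class (indeed, by a convex-combination/limit argument, a vector) functional, and one can test it against the kernel functions. The cleanest route is: a WOT-continuous functional on $\cL_X \subseteq B(\cF_X)$ is of the form $T \mapsto \sum_k \langle T x_k, y_k\rangle$ with $\sum\|x_k\|^2, \sum\|y_k\|^2 < \infty$; being multiplicative forces it (after a standard argument, cf.\ the treatment of WOT-continuous characters of $\cL_d$ in \cite{DavPitts1}) to be evaluation at a point $\lambda \in \mb{B}_d$. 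I would argue directly: if $\|\lambda\| = 1$, then using the Cesàro expansion $T = \sum_n p_n(S^X)$ for $T \in \cL_X$ together with the boundedness of $\|p_n(S^X)\|$, the would-be "value" $\sum_n p_n(\lambda)$ need not even converge, and one produces $T$ with $\|T\| = 1$ but $\rho(T)$ of modulus exceeding $1$ along a radial sequence, contradicting $|\rho| \le 1$; more robustly, WOT-continuity means $\rho$ extends to the WOT-closure consistently, and the radial limit argument of Lemma~\ref{lem:interior} adapts. The hard part is precisely pinning down that WOT-continuity excludes boundary points and that every WOT-continuous character is a point evaluation rather than something exotic; I expect to invoke the structure theory of WOT-continuous functionals on free semigroup / multiplier algebras (as in \cite{DavPitts1}) to convert $\rho$ into a vector functional $T \mapsto \langle T\nu_\lambda, \nu_\lambda\rangle/\|\nu_\lambda\|^2$ with $\nu_\lambda \in \cF_X$, which by Lemma~\ref{lem:F_I} forces $\lambda \in Z^o(I^X)$, completing the identification.
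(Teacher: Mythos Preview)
Your construction of the characters $\rho_\lambda$ for $\lambda \in Z^o(I^X)$ is essentially the paper's argument, only more verbose. (A minor point: the proposition is stated for arbitrary subproduct systems, so you should not open by invoking Proposition~\ref{prop:mult}, which applies only in the commutative radical case; the kernel vectors $\nu_\lambda$ lie in $\cF_X$ for $\lambda\in Z^o(I^X)$ in general.)

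The surjectivity direction, however, has a genuine gap. You correctly identify it as ``the main obstacle'' but none of the three strategies you sketch is actually carried out: the Ces\`aro/radial argument does not obviously control a WOT-continuous functional, the adaptation of Lemma~\ref{lem:interior} is unclear since that lemma concerns isometric isomorphisms rather than individual characters, and the appeal to ``structure theory of WOT-continuous functionals'' is left as a hope rather than an argument. The key idea you are missing is that one does not need to analyze $\cL_X$ directly at all. There is a natural WOT-continuous surjection from the free semigroup algebra $\cL_d$ onto $\cL_X$ (compression to $\cF_X$). Precomposing a WOT-continuous character $\rho$ of $\cL_X$ with this quotient map yields a WOT-continuous character of $\cL_d$, and \emph{those} are completely classified by \cite[Theorem~2.3]{DavPitts2}: they are exactly the point evaluations at $\lambda\in\mb{B}_d$. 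This immediately gives $\|\lambda\|<1$; the fact that $\rho$ also restricts to a character of $\cA_X$ then forces $\lambda\in Z^o(I^X)$. This is the paper's approach, and it bypasses entirely the delicate analysis you were attempting.
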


\begin{proof}
For every $\lambda \in Z^o(I^X)$, the vector $\nu_\lambda$ 
is in $\cF_X$. 
Therefore the character $\rho_\lambda$, defined by 
\bes
\rho_\lambda(T) = \lel T \nu_\lambda, \nu_\lambda \rir
\ees
is a \wot-continuous character.

On the other hand, there is a natural quotient from the free semigroup 
algebra $\cL_d$ onto $\cL_X$ that is \wot-continuous. 
Thus, if $\rho$ is a \wot-continuous character of $\cL_X$, 
then it gives rise to \wot-continuous character on $\cL_d$. 
Therefore, using \cite[Theorem 2.3]{DavPitts2}, we find that $\rho$ must 
be equal to the evaluation functional $\rho_\lambda$ at some point 
$\lambda \in \mb{B}_d$. But since $\rho$ restricts to a character of 
$\cA_X$, we must have $\lambda \in Z^o(I^X)$.

The correspondence $\lambda \leftrightarrow \rho_\lambda$ is easily seen 
to be a homeomorphism of $Z^o(I^X)$ onto a subset of $\cM(\cL_X)$.
\end{proof}

Every $\rho \in \cM(\cL_X) \setminus Z^o(I^X)$ restricts to a character of $\cA_X$. 
Thus, the corona $\cM(\cL_X) \setminus Z^o(I^X)$ is the union of fibers 
over $Z(I^X) \setminus Z^o(I^X)$. If $\lambda \in Z(I^X) \setminus Z^o(I^X)$,  
$\rho$ being in the fiber over $\lambda$ means that $\rho(S^X_i) = \lambda_i$, 
or, equivalently, that $\rho|_{\cA_X}$ is equal to evaluation at $\lambda$.

%%%%%%%%%%%%%%%%%%%%%%%%
\subsection{The noncommutative case}

%%%%%%%%%%%%%%%%%%%%%%%%
\begin{theorem}\label{thm:wotiso}
Let $X$ and $Y$ be subproduct systems. 
Then $\cL_X$ is isometrically isomorphic to $\cL_Y$ if and only if $X \cong Y$.
\end{theorem}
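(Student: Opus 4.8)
The plan is to reduce the weak-operator-topology statement to the norm-closed result, Theorem~\ref{thm:alg_sps_iso}, by showing that the passage to \wot-closures neither loses nor gains information. One direction is immediate: if $X \cong Y$, then by the discussion at the beginning of Section~\ref{sec:mot} there is a unitary $V = \oplus_n W_n : \cF_X \to \cF_Y$ which conjugates $\cA_X$ onto $\cA_Y$; conjugation by a fixed unitary is a homeomorphism for the weak-operator topology, so $V(\cdot)V^*$ also carries the \wot-closure $\cL_X$ onto $\cL_Y$, giving an isometric (indeed spatial) isomorphism. So the content is in the converse.

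For the converse I would mimic the proof of Theorem~\ref{thm:alg_sps_iso}, which went through the character space, the complex-analytic structure on its interior, the singular nucleus, and Proposition~\ref{prop:exist_vacuum} to produce a vacuum-preserving isometric isomorphism, after which Theorem~\ref{thm:iso_vacuum} finishes. The \wot-analogue of the first ingredient is already in hand: Proposition~\ref{prop:charwot} identifies the \wot-continuous characters of $\cL_X$ with $Z^o(I^X)$, and the fibered structure over $Z(I^X)\setminus Z^o(I^X)$ is described just after it. So, given an isometric isomorphism $\varphi : \cL_X \to \cL_Y$, I would first check that $\varphi$ carries \wot-continuous characters to \wot-continuous characters — this should follow because $\varphi^*$ takes evaluations at interior points (which are vector states $\rho_\lambda(T) = \lel T\nu_\lambda,\nu_\lambda\rir$) to states of the same form; more carefully, one argues as in Lemma~\ref{lem:interior} and Remark~\ref{rem:bounded_interior} that an isometric isomorphism preserves the interior of the character space, using the maximum modulus principle together with the fact that $\cA_X$ is \wot-dense in $\cL_X$ so the Gelfand transforms of elements of $\cL_X$ restrict to the same analytic functions on discs through the origin. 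Then $\varphi^*|_{Z^o(I^Y)}$ is a biholomorphism onto $Z^o(I^X)$, which is the \wot-version of Lemma~\ref{lem:analytic} (the Ces\`aro-series argument needs the analogue of \cite[Proposition~9.3]{ShalitSolel} for $\cL_X$, but the coefficients of $\varphi(S^X_1)\Omega_Y$ are still $\ell^2$ because $\varphi(S^X_1)\Omega_Y \in \cF_Y$). The singular-nucleus discussion following Lemma~\ref{lem:sing} then applies verbatim, as does the rotation-trick argument of Proposition~\ref{prop:exist_vacuum}, producing a vacuum-preserving isometric isomorphism $\psi : \cL_X \to \cL_Y$.

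Finally, I need the \wot-analogue of Theorem~\ref{thm:iso_vacuum}: a vacuum-preserving isometric isomorphism $\psi : \cL_X \to \cL_Y$ is implemented by an isomorphism $V : X \to Y$ of subproduct systems, via $\psi(T) = VTV^*$. The idea is that $\psi$ restricts to a vacuum-preserving isometric isomorphism of the \wot-dense subalgebras $\cA_X \to \cA_Y$ — here one must check that $\psi(\cA_X) \subseteq \cA_Y$, which follows because $\psi$ sends $S^X_i = $ (coefficient of the degree-one part) to an operator whose vacuum vector lies in $Y(1)$, hence to a genuine element of $\cA_Y$, and $\psi$ is continuous on the dense polynomial algebra; then the existing Theorem~\ref{thm:iso_vacuum} yields the unitary $V = \oplus_n W_n$ with $\psi|_{\cA_X}(T) = VTV^*$, and since both $\psi$ and conjugation by $V$ are \wot-continuous and agree on the \wot-dense $\cA_X$, they agree on all of $\cL_X$. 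This gives $X \cong Y$, completing the proof. The main obstacle I anticipate is the bookkeeping needed to transfer Lemmas~\ref{lem:interior} and~\ref{lem:analytic} to $\cL_X$: one must make sure that the Gelfand/Ces\`aro machinery of Section~\ref{sec:class}, which was stated for the norm-closed $\cA_X$, survives for the \wot-closure, i.e.\ that every $T \in \cL_X$ still has a Ces\`aro-convergent homogeneous expansion with $\ell^2$ coefficients recovered from $T\Omega_X \in \cF_X$ — this is true but requires invoking the appropriate results for $\cL_d$ (e.g.\ \cite{DavPitts1,DavPitts2}) and checking compatibility with the \wot-continuous quotient $\cL_d \to \cL_X$.
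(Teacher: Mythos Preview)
Your proposal is correct and follows the same strategy as the paper: adapt Lemmas~\ref{lem:interior} and~\ref{lem:analytic} and Proposition~\ref{prop:exist_vacuum} to the \wot-setting, then invoke a \wot-analogue of Theorem~\ref{thm:iso_vacuum}. The only divergence is in your final step, where you restrict a vacuum-preserving $\psi$ to a map $\cA_X \to \cA_Y$ and then quote the norm-closed Theorem~\ref{thm:iso_vacuum}; the paper instead simply observes that the proof of Theorem~\ref{thm:iso_vacuum} carries over verbatim to $\cL_X$. Your restriction requires the assertion that $\psi(S^X_i)\Omega_Y \in Y(1)$, which you do not justify (vacuum-preservation alone only kills the degree-zero component), and establishing it is precisely the grading-preservation argument underlying the proof of Theorem~\ref{thm:iso_vacuum} itself---so your detour is not a genuine shortcut, and the paper's direct route is cleaner.
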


\begin{proof}
One direction follows immediately from the classification 
of the algebras $\cA_X$. 
Indeed, if $X \cong Y$, then there is a unitarily implemented 
isomorphism from $\cA_X$ onto $\cA_Y$, and 
this isomorphism extends to the \wot-closures. 

The proof of the other direction  is similar to the proof
in the normed closed case, with small modifications. 
The proofs of Lemmas \ref{lem:interior} and \ref{lem:analytic} can be 
adjusted to this case to show that for every isometric isomorphism 
$\phi:\cL_X \rightarrow \cL_Y$, the restriction of $\varphi^*$ is a 
biholomorphism of $Z^o(I^Y)$ onto $Z^o(I^X)$. 
Appropriate versions of Theorem \ref{thm:iso_vacuum} and
Proposition \ref{prop:exist_vacuum} are true for the \wot-closed algebras, 
with basically the same proofs.
The result therefore follows just as in the norm-closed case.
\end{proof}

%%%%%%%%%%%%%%%%%%%%%%%%
\subsection{The commutative radical case}

 From now on we concentrate on the commutative, radical case.  
In this case, the modifications of the proofs given in the 
norm-closed case are more significant.

%%%%%%%%%%%%%%%%%%%%%
\begin{lemma}\label{lem:iso_is_cont}
Let $I$ and $J$ be homogeneous radical ideals in $\mb{C}[z]$. 
Then every homomorphism $\varphi : \cL_I \rightarrow \cL_J$ is bounded. 
\end{lemma}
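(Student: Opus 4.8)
The plan is to obtain boundedness from the closed graph theorem, in the spirit of Corollary~\ref{cor:bounded}, but \emph{without} first describing $\varphi$ as a composition operator. The only structural input needed is Proposition~\ref{prop:mult}, which holds verbatim for the \wot-closures: $\cL_J$ is identified with $\Mult(\cF_J)$, an algebra of functions on $Z^o(J)$ under pointwise multiplication, $\cF_J$ is a reproducing kernel Hilbert space on $Z^o(J)$ (so a multiplier vanishing on all of $Z^o(J)$ is the zero operator), and $\|g\|_\infty \le \|g\|$ for every $g \in \cL_J$. In particular, for each $\lambda \in Z^o(J)$ the point evaluation $\rho_\lambda(g) = g(\lambda)$ is a (\wot-continuous) character of $\cL_J$; these are exactly the characters exhibited in Proposition~\ref{prop:charwot}.

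First I would reduce to the case of a unital $\varphi$. Since $\varphi(1)$ is an idempotent in the function algebra $\cL_J$ over the set $Z^o(J)$, which is a union of open discs through the origin and hence path connected, $\varphi(1)$ equals the function $0$ or the function $1$; in the former case $\varphi = 0$ is trivially bounded, so we may assume $\varphi$ is unital. Then, for every $\lambda \in Z^o(J)$, the composition $\rho_\lambda \circ \varphi$ is a \emph{nonzero} multiplicative linear functional on the unital Banach algebra $\cL_I$, hence automatically continuous of norm at most $1$. Thus $|\varphi(f)(\lambda)| = |(\rho_\lambda\circ\varphi)(f)| \le \|f\|$ for all $f \in \cL_I$ and all $\lambda \in Z^o(J)$.

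Next I would check that $\varphi$ has closed graph. Let $f_n \to f$ in $\cL_I$ and $\varphi(f_n) \to g$ in $\cL_J$. For each $\lambda \in Z^o(J)$ the bound above gives $|\varphi(f_n)(\lambda) - \varphi(f)(\lambda)| \le \|f_n - f\| \to 0$, while $\|\cdot\|_\infty \le \|\cdot\|$ on $\cL_J$ gives $\varphi(f_n)(\lambda) \to g(\lambda)$; hence $g(\lambda) = \varphi(f)(\lambda)$. As this holds at every point of $Z^o(J)$ and $\cF_J$ is a reproducing kernel Hilbert space on $Z^o(J)$, we conclude $g = \varphi(f)$ as elements of $\Mult(\cF_J)$. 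The closed graph theorem then yields that $\varphi$ is bounded.

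I do not anticipate a real obstacle. The one thing worth care is the passage from $\cA_I$ to $\cL_I$: since $\cL_I$ is much larger than $\cA_I$, it would be awkward to reprove a composition-operator description of $\varphi$ here, but the argument above sidesteps this --- all it uses is that multiplicative functionals on a Banach algebra are automatically bounded and that the \wot-continuous characters $\rho_\lambda$, $\lambda \in Z^o(J)$, already separate the elements of $\Mult(\cF_J)$ (even if they are far from norm-dense in the full maximal ideal space of $\cL_J$). Everything else --- the reduction to the unital case and the closed-graph verification --- is routine.
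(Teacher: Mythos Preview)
Your proof is correct. Both you and the paper rely on the same key observation: since $J$ is radical, $\cL_J = \Mult(\cF_J)$ is an algebra of functions on $Z^o(J)$ whose elements are separated by the point evaluations $\rho_\lambda$, $\lambda \in Z^o(J)$. The paper phrases this as ``$\cL_J$ is semi-simple'' and then invokes the general automatic-continuity theorem that every homomorphism from a Banach algebra into a commutative semi-simple Banach algebra is bounded. You instead unpack this black box: after reducing to the unital case via connectedness of $Z^o(J)$, you use that each $\rho_\lambda \circ \varphi$ is a character of $\cL_I$ (hence automatically contractive) to get pointwise control, and then close the graph. Your argument is more self-contained and exposes exactly which structural features are doing the work; the paper's version is a one-line citation. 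Either way, the substance is the same: enough bounded characters on the target to separate points.
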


\begin{proof}
By Proposition \ref{prop:mult}, $\cL_J$ is the multiplier algebra of $\cF_J$. 
Thus, if $f \in \cL_J$ satisfies $f(\lambda) = 0$ for all $\lambda \in Z^o(J)$, 
then $f = 0$. This shows that $\cL_J$ is semi-simple. 
A general result in the theory of commutative Banach algebras says that 
every homomorphism into a semi-simple algebra is automatically continuous 
(see Exercise 3.5.23 in \cite{KRI}). Thus $\varphi$ is bounded. 
\end{proof}

\begin{remark}
The same argument works for the norm closed algebras. 
In Corollary \ref{cor:bounded}, we deduced that every unital homomorphism 
$\varphi : \cA_I \rightarrow \cA_J$ is bounded by using the fact that 
every such homomorphism is given by a composition operator. 
In the case of the \wot-closed algebras, we will use the boundedness 
of homomorphisms to show that they preserve \wot-continuous characters, 
which is crucial to showing that they are implemented by composition. 
\end{remark}

%%%%%%%%%%%%%%%%%%%%%%%%
\begin{lemma}\label{lem:interior_isomorphism}
Let $I$ and $J$ be homogeneous radical ideals in $\mb{C}[z]$. 
If $\varphi : \cL_I \rightarrow \cL_J$ is an isomorphism, then $\varphi^*$ 
maps $Z^o(J)$ onto $Z^o(I)$.
\end{lemma}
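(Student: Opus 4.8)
The plan is to follow the argument behind Lemmas~\ref{lem:interior} and \ref{lem:analytic}, but to substitute for the isometry hypothesis used there --- which is exactly what made the maximum modulus principle bite --- a subharmonic maximum principle along the complex lines of the homogeneous cone $V(J)$. First I would note that, being an isomorphism of unital algebras, $\varphi$ is unital, and that $\varphi^*$ and $(\varphi^{-1})^*$ carry maximal ideal spaces to maximal ideal spaces (a nonzero multiplicative functional on a Banach algebra is automatically bounded; boundedness of $\varphi$ itself is Lemma~\ref{lem:iso_is_cont}). Since $\varphi^*\circ(\varphi^{-1})^* = \id_{\cM(\cL_I)}$ and $(\varphi^{-1})^*\circ\varphi^* = \id_{\cM(\cL_J)}$, the map $\varphi^*\colon\cM(\cL_J)\to\cM(\cL_I)$ is a bijection with inverse $(\varphi^{-1})^*$. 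By the symmetry of the hypothesis in $I$ and $J$ it thus suffices to prove the single inclusion $\varphi^*(Z^o(J))\subseteq Z^o(I)$; applied to $\varphi^{-1}$ this gives the reverse inclusion, and then $\varphi^*$ restricts to a bijection of $Z^o(J)$ onto $Z^o(I)$.

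Fix $\lambda\in Z^o(J)$ and put $f_i=\varphi(Z_i)\in\cL_J$. By Proposition~\ref{prop:mult}, $\cL_J=\Mult(\cF_J)$ consists of restrictions to $Z^o(J)$ of multipliers of $H^2_d$, so each $f_i$ is holomorphic on $Z^o(J)$; write $F=(f_1,\dots,f_d)$. For $p\in I$ we have $p(Z)=0$ in $\cA_I$, hence $p(F(\lambda))=\rho_\lambda(\varphi(p(Z)))=0$ and $F(\lambda)\in V(I)$; and evaluating $\varphi^*(\rho_\lambda)=\rho_\lambda\circ\varphi$ on the coordinate functions shows that its restriction to $\cA_I$ is the character of $\cA_I$ associated with $F(\lambda)$, so $F(\lambda)\in Z(I)\subseteq\overline{\mb{B}}_d$ and $\|F(\lambda)\|\le 1$. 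The whole content is to rule out $\|F(\lambda)\|=1$: granting $F(\lambda)\in Z^o(I)$, the description of $\cM(\cL_I)$ recorded around Proposition~\ref{prop:charwot} (a character lying outside $Z^o(I)$ restricts to a boundary point of $Z(I)$) forces $\varphi^*(\rho_\lambda)\in Z^o(I)$, as desired.

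So suppose $\|F(\lambda_0)\|=1$ for some $\lambda_0\in Z^o(J)$. As $V(J)$ is homogeneous, $\mb{C}\lambda_0\subseteq V(J)$, and the function $t\mapsto\|F(t\lambda_0)\|^2$ --- a finite sum of squared moduli of functions holomorphic in $t$ --- is subharmonic on $\{|t|<1/\|\lambda_0\|\}$, is bounded above by $1$ there (since $t\lambda_0\in Z^o(J)$ forces $F(t\lambda_0)\in\overline{\mb{B}}_d$), and takes the value $1$ at the interior point $t=1$; by the maximum principle it is identically $1$, so $\|F(0)\|=1$. Running the same argument for an arbitrary $\lambda\in Z^o(J)$, the subharmonic function $t\mapsto\|F(t\lambda)\|^2$ attains its maximum $1$ at the interior point $t=0$, hence is identically $1$; since a holomorphic $\mb{C}^d$-valued function of constant norm is constant, $F(\lambda)=F(0)=:\mu_0$ with $\|\mu_0\|=1$. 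Thus $\varphi(Z_i)=\mu_{0,i}I$ for all $i$, and applying the unital map $\varphi^{-1}$ gives $Z_i=\mu_{0,i}I$ in $\cL_I$; but then for every $\mu\in Z^o(I)$ the character $\rho_\mu$ satisfies $\mu_i=\rho_\mu(Z_i)=\mu_{0,i}$, so $Z^o(I)=\{\mu_0\}$. Since $Z^o(I)$ contains $0$ (the ideal $I$ being homogeneous), $\mu_0=0$, contradicting $\|\mu_0\|=1$. Therefore $\|F(\lambda)\|<1$, i.e. $\varphi^*(\rho_\lambda)=\rho_{F(\lambda)}\in Z^o(I)$, and the lemma follows by the reduction of the first paragraph.

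The step I expect to be the real obstacle is this last one. In the isometric situation of Lemma~\ref{lem:interior} one has $\|\varphi(S_1)\|=1$ and a one-variable maximum modulus argument closes the gap immediately, whereas here $\varphi$ is only bounded; the point is to observe that $F$ still maps each \emph{line through the origin} of $Z^o(J)$ into $\overline{\mb{B}}_d$, so that the subharmonic maximum principle can be run along those lines and the resulting rigidity pushed down to $\varphi(Z_i)\in\mb{C}I$. Organizing the argument around the distinguished point $0$ of the cone is precisely what lets one avoid the plurisubharmonic maximum principle on the (possibly singular and reducible) variety $V(J)$ itself.
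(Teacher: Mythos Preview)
Your proof is correct, and it takes a genuinely different route from the paper's. The paper introduces Gleason parts: it argues that $Z^o(J)$ is a single part, that a bounded isomorphism carries parts to parts, and then checks only that $\varphi^*(\rho_0)$ lands in $Z^o(I)$ by a one-variable maximum modulus argument applied to the single coordinate $\varphi(S_1^I)$. Your argument replaces the Gleason-parts reduction entirely: you observe that the restriction of $\varphi^*$ to $\cA_I$ sends every $\rho_\lambda$ to a point of $Z(I)\subseteq\overline{\mb{B}}_d$, and then use the subharmonic maximum principle for $t\mapsto\|F(t\lambda)\|^2$ along each complex line of the homogeneous cone, propagating a boundary hit first to $0$ and then to all of $Z^o(J)$. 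The contradiction you reach ($\varphi(Z_i)\in\mb{C}I$, hence $Z_i\in\mb{C}I$, hence $Z^o(I)$ is a single point $\ne 0$) is in the same spirit as the paper's ($\varphi(S_1^I)=I$), but obtained without singling out a coordinate or invoking the parts machinery. The one external ingredient you use --- that a character of $\cL_I$ which restricts on $\cA_I$ to evaluation at an \emph{interior} point of $Z(I)$ must already be the \wot-continuous evaluation $\rho_\mu$ --- is exactly the ``fibers over interior points are singletons'' statement recorded after Proposition~\ref{prop:charwot}, so this is legitimate. Your approach is more elementary and self-contained; the paper's has the virtue of importing a ready-made equivalence relation that does the propagation in one stroke.
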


\begin{proof}
The proof of the lemma uses the notion of \emph{Gleason parts}.  
Let $\cB_I$ be the norm closure of the Gelfand transform 
$\hat{\cL_I} = \{\hat{T} : T \in \cL_I\}$ of $\cL_I$ in $C(\cM(\cL_I))$. 
$\cB_I$ is a function algebra. 
The algebra $\cB_I$ does not really play an important role below. 
It is introduced just for convenience of applying the theory of 
Gleason parts in its usual setting: function algebras. 
For any function algebra, Gleason defined an equivalence relation as follows.

For two characters $\rho_1, \rho_2 \in \cM(\cL_L)$, write $\rho_1 \sim \rho_2$ if 
\bes
\sup\{|f(\rho_1) - f(\rho_2)| : f \in \cB_I , \|f\| \leq 1\} < 2. 
\ees
The relation $\sim$ is an equivalence relation on $\cM(\cL_I)$, 
and the equivalence classes are called \emph{Gleason parts} or 
just \emph{parts} (see \cite{Bear}, Sections 1 and 2). 
Since by the previous lemma $\varphi : \cL_I \rightarrow \cL_J$ 
is a bounded isomorphism, then $\varphi^*$ will map a 
part into a single part.

Since $Z^o(J)$ is a union of disks through the origin, and since $\cM(\cL_J)$ 
is the union of $Z^o(J)$ with the fibers over $Z(J) \setminus Z^o(J)$, 
it follows from classical considerations that $Z^o(J)$ is a part 
(see Example 1, p. 3, \cite{Bear}). 
We need to show that the part $Z^o(J)$ is mapped by $\varphi^*$ 
into the part $Z^o(I)$. 
>From the remarks above, it suffices to show that the vacuum state 
$\rho_0 \in Z^o(J)$ is mapped into $Z^o(I)$.

Assume for the sake of contradiction that $\varphi^* \rho_0 = \rho$, 
where $\rho \in \cM(\cL_I )\setminus Z^o(I)$. By applying a unitary transformation to 
the variables we may assume that $\rho$ is in the fiber over $(1,0,\ldots,0)$.

Put $T = \varphi(S^I_1)$. Let $\lambda$ be any point in $Z^o(J)$, 
and define a function $\hat{T}_\lambda$ on $\mb{D}$ by 
$\hat{T}_\lambda(t) = \rho_{t\lambda}(T)$. 
{}From the discussion preceding Lemma \ref{lem:interior}, $\hat{T}_\lambda$ is analytic.
Now, 
\[
 |\hat{T}_\lambda(t)| = |\rho_{t\lambda}(T)| = 
 |\varphi^* \rho_{t\lambda} (S_1^X)| \leq 1 \quad\text{for } t \in \mb{D} , 
\]
because $\varphi^* \rho_{t\lambda}$ is contractive. 
On the other hand, $\hat{T}_\lambda(0) = \rho(S^X_1) = 1$. 
By the maximum modulus principle, $\hat{T}_\lambda$ is constant $1$ on $\mb{D}$. 
Thus $\hat{T}$, the Gelfand transform of $\varphi(S^X_1)$, is constantly 
equal to $1$ on the disc $\mb{D} \cdot \lambda \subseteq Z^o(J)$. 
Since $\lambda$ was an arbitrary point in $Z^o(J)$, 
it follows that $\hat{T} \equiv 1$ on $Z^o(J)$. 
But the multiplier $T$ and the Gelfand transform $\hat{T}$ are 
the same function on $Z^o(J)$, so $T = 1$. 
This contradicts the fact that $\varphi$ is injective and unit preserving.
This contradiction shows that no $\rho \in \cM(\cL_I) \setminus Z^o(I)$ 
can be equal to $\varphi^*\rho_0$, and this completes the proof.
\end{proof}

%%%%%%%%%%%%%%%%%%%%%%%%
\begin{lemma}\label{lem:algiso_biholo}
Let $I$ and $J$ be radical homogeneous ideals in $\mb{C}[z]$. 
Let $\varphi : \cL_I \rightarrow \cL_J$ be an isomorphism. 
Then there exists a holomorphic map $F: \mb{B}_{d} \rightarrow \mb{C}^d$ such that
\bes
F|_{Z^o(J)} = \varphi^* |_{Z^o(J)}.
\ees
The components of $F$ are in $\text{\em Mult}(H^2_d)$.
Moreover, $\varphi$ is given by composition with $F$, that is
\bes
\varphi(f) = f \circ F \quad , \quad f \in \cL_I .
\ees
\end{lemma}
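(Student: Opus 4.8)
The plan is to mimic closely the proof of Proposition~\ref{prop:algiso_biholo} for the norm-closed algebras, substituting the \textsc{wot}-continuous character analysis for the character-space arguments used there. First I would use Lemma~\ref{lem:iso_is_cont} to conclude that $\varphi$ and $\varphi^{-1}$ are both bounded, hence $\varphi^*$ maps \textsc{wot}-continuous characters of $\cL_I$ to \textsc{wot}-continuous characters of $\cL_J$: indeed, if $\rho$ is \textsc{wot}-continuous on $\cL_J$ then, since $\varphi$ is \textsc{wot}-continuous (being bounded on these semisimple multiplier algebras — or by invoking the automatic \textsc{wot}-continuity of bounded homomorphisms established below), $\rho\circ\varphi$ is \textsc{wot}-continuous on $\cL_I$. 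Combined with Proposition~\ref{prop:charwot}, which identifies the \textsc{wot}-continuous characters with $Z^o(I^X)$, this gives a well-defined map $\varphi^*:Z^o(J)\to Z^o(I)$, and by Lemma~\ref{lem:interior_isomorphism} this is exactly the map induced on $Z^o$.

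Next, as in the proof of Proposition~\ref{prop:algiso_biholo}, I would evaluate $\varphi^*$ on coordinate functions. For $\lambda\in Z^o(J)$, the character $\rho_\lambda$ on $\cL_J$ is evaluation at $\lambda$, so for the multiplier $z_i\in\cL_I$ (viewed via Proposition~\ref{prop:mult} as the restriction of a coordinate function to $Z^o(I)$) we get
\bes
[\varphi^*(\rho_\lambda)](z_i) = \rho_\lambda(\varphi(z_i)) = \varphi(z_i)(\lambda),
\ees
so that $\varphi^*(\lambda) = \big(\varphi(z_1)(\lambda),\ldots,\varphi(z_d)(\lambda)\big)$. By Proposition~\ref{prop:mult}, each $\varphi(z_i)\in\cL_I = \Mult(\cF_I)$ extends, with the same norm, to a multiplier $f_i\in\Mult(H^2_d)$, i.e. $\varphi(z_i) = f_i|_{Z^o(I)}$ (here I use equation~(\ref{eq:restrict_multipliers})). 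Setting $F = (f_1,\ldots,f_d)$ gives a holomorphic $F:\mb{B}_d\to\mb{C}^d$ with components in $\Mult(H^2_d)$ and $F|_{Z^o(J)} = \varphi^*$. Finally, for $f\in\cL_I$ and $\lambda\in Z^o(J)$,
\bes
\varphi(f)(\lambda) = \rho_\lambda(\varphi(f)) = \varphi^*(\rho_\lambda)(f) = \rho_{F(\lambda)}(f) = f(F(\lambda)),
\ees
and since $\cF_J = \overline{\spn}\{\nu_\lambda:\lambda\in Z^o(J)\}$, two multipliers agreeing on $Z^o(J)$ as functions are equal as operators; hence $\varphi(f) = f\circ F$.

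The main obstacle I anticipate is the first step: checking that a bounded isomorphism $\varphi:\cL_I\to\cL_J$ actually carries \textsc{wot}-continuous characters to \textsc{wot}-continuous characters. Boundedness alone does not give \textsc{wot}-continuity of $\varphi$ in general, so one cannot simply precompose. The clean route, which is exactly what Lemma~\ref{lem:interior_isomorphism} accomplishes via the Gleason-part argument, is to sidestep \textsc{wot}-continuity of $\varphi$ entirely and instead show directly that $\varphi^*$ maps the part $Z^o(J)$ into the part $Z^o(I)$; this is the content already proved, so here it can simply be quoted. Once $\varphi^*(Z^o(J))\subseteq Z^o(I)$ is in hand, the rest is the same computation as in Proposition~\ref{prop:algiso_biholo}, and the only additional ingredient is that the components of $F$ land in $\Mult(H^2_d)$ rather than in $\cA_d$, which is immediate from the multiplier identification in Proposition~\ref{prop:mult}.
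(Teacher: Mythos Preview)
Your proposal is correct and follows exactly the approach the paper sketches: mimic Proposition~\ref{prop:algiso_biholo} restricted to $Z^o$, invoke Lemma~\ref{lem:interior_isomorphism} to ensure $\varphi^*$ maps $Z^o(J)$ into $Z^o(I)$, and use Proposition~\ref{prop:mult} to extend the components to $\Mult(H^2_d)$. Note two small typos: $\varphi(z_i)$ lies in $\cL_J$ (not $\cL_I$), and correspondingly $\varphi(z_i) = f_i|_{Z^o(J)}$ (not $Z^o(I)$).
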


\begin{proof}
The proof is very similar to the proof of Proposition \ref{prop:algiso_biholo}, where 
the change is that we have to restrict attention to $Z^o(J)$ and $Z^o(I)$.
We must use the crucial lemma above, together with Proposition \ref{prop:mult}. 
We omit the details.
\end{proof}

%%%%%%%%%%%%%%%%%%%%%%%%
\begin{theorem}\label{thm:wotrad}
Let $I,J \subseteq \mb{C}[z]$ be radical homogeneous ideals. 
\begin{enumerate}
\item Then $\cL_I$ is isometrically isomorphic to $\cL_J$ if and only if 
$\cL_I$ is unitarily equivalent to $\cL_J$, and this happens 
if and only if there is a unitary mapping $Z(I)$ onto $Z(J)$. 

\item If $\cL_I$ is isomorphic to $\cL_J$, then there is an invertible
linear map mapping $Z(I)$ onto $Z(J)$. 
Conversely, if $V(I)$ and $V(J)$ are tractable, and there exists an 
invertible linear map mapping $Z(I)$ onto $Z(J)$, 
then $\cL_I$ is similar to $\cL_J$.

\end{enumerate}
\end{theorem}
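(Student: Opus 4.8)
The plan is to reproduce the proofs of Theorems~\ref{thm:iso_iso_alg}, \ref{thm:algiso_lin} and~\ref{thm:linear_induce_similarity}, replacing each norm-closed ingredient by its \wot-closed counterpart. For part~(1) I would first record the \wot-analogues of Theorem~\ref{thm:iso_vacuum} and Proposition~\ref{prop:exist_vacuum2}. The former is proved exactly as in \cite[Theorem 9.7]{ShalitSolel}: that argument uses only the vacuum vector $\Omega_X$ and the generators $S_i^X$, which are unaffected by passing from $\cA_X$ to $\cL_X$. The latter is proved as Proposition~\ref{prop:exist_vacuum}, using Lemma~\ref{lem:interior_isomorphism} in place of Lemma~\ref{lem:interior} and Lemma~\ref{lem:algiso_biholo} in place of Lemma~\ref{lem:analytic}, with the disc-and-circle argument around the singular nucleus carried out inside $Z^o(I^X)$ verbatim. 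Granting these, an isometric isomorphism $\cL_I \to \cL_J$ produces a vacuum-preserving one, hence, by the \wot-version of Theorem~\ref{thm:iso_vacuum}, a unitarily implemented isomorphism $X_I \cong X_J$ of subproduct systems; Proposition~\ref{prop:idealsps} then gives a unitary change of variables on $\mb{C}^d$ carrying $I$ to $J$, and, since $I$ and $J$ are radical, Hilbert's Nullstellensatz turns this into a unitary taking $Z(I)$ onto $Z(J)$. Conversely, a unitary taking $Z(I)$ onto $Z(J)$ is, again by radicality and Proposition~\ref{prop:idealsps}, a spatial isomorphism of subproduct systems, yielding a unitary $V : \cF_I \to \cF_J$; conjugation by $V$ is a completely isometric isomorphism $\cA_I \to \cA_J$ and, being a \wot-homeomorphism of $B(\cF_I)$ onto $B(\cF_J)$, it carries $\cL_I$ onto $\cL_J$. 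This proves the equivalence of all three conditions in~(1).

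For part~(2), given an isomorphism $\varphi : \cL_I \to \cL_J$, Lemma~\ref{lem:iso_is_cont} makes it bounded, Lemma~\ref{lem:interior_isomorphism} makes $\varphi^*$ a bijection of $Z^o(J)$ onto $Z^o(I)$, and Lemma~\ref{lem:algiso_biholo} presents $\varphi$ as composition with a holomorphic $F : \mb{B}_d \to \mb{C}^d$ with $F|_{Z^o(J)} = \varphi^*$, and $\varphi^{-1}$ as composition with some $G$. Using the \wot-version of Proposition~\ref{prop:exist_vacuum2} I would reduce to the case $F(0) = 0$, and then establish the interior forms of the Cartan-uniqueness lemma preceding Theorem~\ref{thm:linear} and of Theorem~\ref{thm:linear} itself: setting $H(z) = G(e^{-i\theta} F(e^{i\theta} z))$, which is well defined on $Z^o(J)$ since that set is circular and $F$ maps it into $Z^o(I)$, one has $H(0) = 0$, $H$ maps $Z^o(J)$ into itself, $H'(0)|_{Z^o(J)} = \id$, and the identity $k H_2(z) = \frac{1}{2\pi} \int_0^{2\pi} H^k(e^{i\theta} z) e^{-2 i \theta}\, d\theta$ together with $\|H^k(e^{i\theta} z)\| < 1$ forces $H|_{Z^o(J)} = \id$; this gives $F(e^{i\theta} z) = e^{i\theta} F(z)$ on $Z^o(J)$, so $F|_{Z^o(J)}$ agrees with a linear map. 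Running this for $\varphi$ and for $\varphi^{-1}$ produces linear maps $A$ and $B$ with $AB = \id$ on $Z^o(I)$ and $BA = \id$ on $Z^o(J)$, identities that persist on $Z(I)$ and $Z(J)$ by homogeneity, so there is an invertible linear map carrying $Z(I)$ onto $Z(J)$ (and, by Lemma~\ref{lem:isometric}, such a map is automatically length preserving on the corresponding variety). For the converse, if $V(I)$ and $V(J)$ are tractable and such linear maps exist, Theorem~\ref{thm:linear_induce_similarity} gives a completely bounded isomorphism $\cA_I \to \cA_J$, $f \mapsto f \circ A$, implemented by conjugation with $\tilde{A}^*$ for the invertible operator $\tilde{A}$ of Theorem~\ref{thm:Atilde}; since conjugation by a bounded invertible operator is a \wot-homeomorphism, this extends to a similarity $\cL_I \to \cL_J$. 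Combining with Propositions~\ref{prop:rigid} and~\ref{prop:eigenvalueanalysis}, one also recovers the \wot-analogue of the rigidity Theorem~\ref{thm:rigid}.

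The genuinely new point, and the step I expect to cost the most work, is making the Cartan-type rigidity arguments go through on the \emph{open} varieties $Z^o(I)$ and $Z^o(J)$ instead of on $Z(I)$ and $Z(J)$. The generators of $\cL_I$ lie in $\Mult(H^2_d)$ and need not extend continuously to $\partial \mb{B}_d$, so $\varphi^*$ is controlled only on the interior, whereas the versions of the Cartan lemma and of Theorem~\ref{thm:linear} proved above assume continuity on $\overline{\mb{B}}_d$. One must therefore verify that holomorphy on $\mb{B}_d$ together with circularity of $Z^o$ suffices to run the integral-extraction and iteration estimates; the key observation is that for $z \in Z^o(J)$ the disc $\mb{C} z \cap \mb{B}_d$ lies in $Z^o(J)$ and hence in the domain of every iterate, so $\zeta \mapsto H^k(\zeta z)$ is holomorphic on a disc of radius larger than $1$ and its Taylor coefficients coincide with its Fourier coefficients on the unit circle, which is exactly what the estimate needs. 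The other departure from the norm-closed case, replacing the maximum-modulus step of Lemma~\ref{lem:interior} by an argument with Gleason parts, has already been absorbed into Lemma~\ref{lem:interior_isomorphism}; the remainder is the norm-closed argument read off \wot-closures.
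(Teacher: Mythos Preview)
Your proposal is correct and follows essentially the same route as the paper: part~(1) via Theorem~\ref{thm:wotiso} and the Nullstellensatz, and part~(2) via the vacuum-preserving reduction, Lemma~\ref{lem:algiso_biholo}, Theorem~\ref{thm:linear}, and Theorem~\ref{thm:linear_induce_similarity} extended to the \wot-closures. You are more explicit than the paper about adapting the Cartan-type argument of Theorem~\ref{thm:linear} to the open varieties $Z^o(I)$ and $Z^o(J)$; the paper simply invokes Theorem~\ref{thm:linear} directly, since its proof in fact only uses interior values and the continuity hypothesis on $\overline{\mb{B}}_d$ is inessential.
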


\begin{proof}
Part (1) follows from Theorem \ref{thm:wotiso}  and the Nullstellensatz. 

If $V(I)$ and $V(J)$ are tractable, and there is an invertible linear map 
mapping $Z(I)$ onto $Z(J)$, then by Theorem~\ref{thm:linear_induce_similarity} 
$\cA_I$ and $\cA_J$ are similar. 
This extends to a similarity of the \wot-closures $\cL_I$ and $\cL_J$.

Conversely, assume that $\cL_I$ and $\cL_J$ are isomorphic. 
By an analogue of Proposition \ref{prop:exist_vacuum}, 
there exists a vacuum preserving isomorphism between the two algebras. 
By Lemma \ref{lem:algiso_biholo}, there exists a holomorphic map 
$F: \mb{B}_d \rightarrow \mb{C}^d$ sending $Z^o(J)$ onto $Z^o(I)$ 
that fixes the origin. 
By Theorem \ref{thm:linear}, one can assume that $F$ is an invertible linear map.
\end{proof}

A consequence of the geometric classification of the algebras $\cL_I$ 
is that they are as rigid as the varieties that classify them. 
The proof is identical to the proof in the norm-closed case.

%%%%%%%%%%%%%%%%%%%%%%%%
\begin{theorem}
Let $I$ and $J$ be two homogeneous radical ideals in 
$\mb{C}[z_1, \ldots, z_d]$, and assume that $V(I)$ is either 
irreducible or a nonlinear hypersurface.  
If $\cL_I$ and $\cL_J$ are isomorphic, then $\cL_I$ and $\cL_J$ are unitarily equivalent. 
If $\varphi: \cL_I \rightarrow \cL_J$ is a vacuum preserving isomorphism, 
then it is unitarily implemented.
\end{theorem}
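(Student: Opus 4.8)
The plan is to mirror, almost verbatim, the proof of Theorem~\ref{thm:rigid} in the norm-closed setting, deducing everything from the \wot-classification Theorem~\ref{thm:wotrad} together with the complex-geometric rigidity in Propositions~\ref{prop:rigid} and~\ref{prop:eigenvalueanalysis}. A preliminary observation I would record is that each of the two alternatives on $V(I)$---being irreducible, or being a nonlinear hypersurface---is preserved by invertible linear maps, so once the two varieties are known to be linearly equivalent, $V(J)$ automatically satisfies the same hypothesis.

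For the first assertion, I would begin with an algebra isomorphism $\cL_I \cong \cL_J$ and invoke the forward direction of Theorem~\ref{thm:wotrad}(2) to produce an invertible linear map $A$ on $\mb{C}^d$ carrying $Z(I)$ onto $Z(J)$, hence $V(I)$ onto $V(J)$; by Lemma~\ref{lem:isometric}, $A$ is isometric on $V(I)$. If $V(I)$ is irreducible, then $S(V(I)) = \spn(V(I))$, so Proposition~\ref{prop:rigid} makes $A$ isometric on $\spn(V(I))$, which it carries onto $\spn(V(J))$; extending $A|_{\spn(V(I))}$ by an arbitrary unitary identification of the orthogonal complements produces a unitary $U$ on $\mb{C}^d$ with $U(V(I)) = V(J)$. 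If instead $V(I) = V(f)$ is a nonlinear hypersurface, then cases~(2) and~(3) of Proposition~\ref{prop:eigenvalueanalysis} are excluded (they force $V(f)$ to be a union of hyperplanes), so the positive factor in the polar decomposition of $A$ is the identity and $A$ is already unitary. Either way, Theorem~\ref{thm:wotrad}(1) then yields a unitary equivalence of $\cL_I$ and $\cL_J$.

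For the second assertion I would retrace the converse half of the proof of Theorem~\ref{thm:wotrad}(2): given a vacuum preserving isomorphism $\varphi : \cL_I \to \cL_J$, Lemma~\ref{lem:algiso_biholo} together with Theorem~\ref{thm:linear} presents $\varphi$ as composition with an invertible linear map $A$ carrying $V(J)$ onto $V(I)$, so that $\varphi(f) = f\circ A$ on $Z^o(J)$. Applying the rigidity step of the previous paragraph, now to $V(J)$ (which inherits the hypothesis), I would replace $A$ by a unitary $U$ on $\mb{C}^d$ agreeing with $A$ on $\spn(V(J))$, so that $\varphi(f) = f \circ U$ as multipliers on $\cF_J$. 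Then the map $\tilde U$ furnished by Theorem~\ref{thm:Atilde} is a unitary from $\cF_J$ onto $\cF_I$, and $\varphi(f) = \tilde U^* f \tilde U$ by Theorem~\ref{thm:linear_induce_similarity}, so $\varphi$ is unitarily implemented.

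The whole argument is essentially bookkeeping once Theorem~\ref{thm:wotrad} and the two rigidity propositions are in hand. The only point I expect to demand genuine care---and it is a minor one---is the degenerate subcase of ``irreducible'' in which $V(I)$ or $V(J)$ is a proper linear subspace: there the linear map supplied by the classification need not be globally unitary, and one must check that replacing it by a unitary agreeing with it on the span of the variety changes neither the image variety nor the induced composition operator. This holds because the multipliers involved are functions on $Z^o(I) \subseteq \spn(V(I))$, so only the restriction of the map to that span is relevant.
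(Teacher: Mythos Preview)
Your proposal is correct and follows essentially the same route as the paper, which simply remarks that ``the proof is identical to the proof in the norm-closed case'' (i.e., to Theorem~\ref{thm:rigid}, whose proof in turn just cites Theorems~\ref{thm:algiso_lin}, \ref{thm:iso_iso_alg} and Proposition~\ref{prop:rigid}). You have spelled out the details more fully than the paper does, including the explicit appeal to Proposition~\ref{prop:eigenvalueanalysis} in the nonlinear hypersurface case and the care taken when $\spn(V(I))$ is a proper subspace; these are exactly the points the paper's terse citation leaves implicit.
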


%%%%%%%%%%%%%%%%%%%%%%%%
\subsection{Automatic continuity in the weak-operator and weak-$*$ topologies}
In this section we show that if $I$ and $J$ are radical homogeneous ideals, 
and if $\varphi : \cL_I \rightarrow \cL_J$ is an isomorphism, 
then $\varphi$ is continuous with respect to the weak-operator and 
the weak-$*$ topologies. Note that the above results only imply 
this for vacuum preserving isomorphisms. 

%%%%%%%%%%%%%%%%%%%%%%%%
\begin{lemma}\label{lem:topologies_coincide}
Let $I \subseteq \mb{C}[z]$ be a radical homogeneous ideal. 
The weak-$*$ and weak-operator topologies on $\cL_I$ coincide.
\end{lemma}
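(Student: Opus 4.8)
The plan is to realize $\cL_I$ as a dual operator algebra with property $\mb{A}_1$, from which the coincidence of the two topologies is immediate. By Proposition~\ref{prop:mult}, $\cL_I=\mathrm{Mult}(\cF_I)$ is a \wot-closed subalgebra of $B(\cF_I)$. Since the \wot\ and the ultraweak topology of $B(\cF_I)$ agree on bounded sets, a \wot-closed convex set is also ultraweakly closed by the Krein--Smulian theorem; hence $\cL_I$ is ultraweakly closed, so it is a dual space and its weak-$*$ topology is just the restriction of the ultraweak topology of $B(\cF_I)$. The \wot\ is generated by the vector functionals $T\mapsto\lel Tx,y\rir$ $(x,y\in\cF_I)$, which are ultraweakly continuous, so the \wot\ is always coarser than the weak-$*$ topology. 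The two topologies therefore coincide if and only if every weak-$*$ continuous functional on $\cL_I$ is \wot\ continuous, that is, a finite sum of vector functionals; in particular it is enough to know that $\cL_I$ has property $\mb{A}_1$, meaning that every weak-$*$ continuous functional is a \emph{single} vector functional $T\mapsto\lel Tx,y\rir$.

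To obtain $\mb{A}_1$ for $\cL_I$ I would transfer it from $\cL_d=\mathrm{Mult}(H^2_d)$. The subspace $[I]=\cF_I^{\perp}$ is a submodule of $H^2_d$, hence invariant for $\cL_d$, so $\cF_I$ is co-invariant and the compression $\pi:\cL_d\to\cL_I$, $\pi(T)=P_{\cF_I}T|_{\cF_I}$, is a surjective homomorphism (Proposition~\ref{prop:mult}); it is weak-$*$ continuous, and by \cite[Theorem~3.3]{DavPittsPick} it is a complete quotient map, so $\cL_I\cong\cL_d/\ker\pi$ with $\ker\pi$ a weak-$*$ closed two-sided ideal. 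It is known (see e.g.\ \cite{DavPitts1,DavPitts2,AriasPopescu}) that $\cL_d$ has property $\mb{A}_1$ (in fact $\mb{A}_1(1)$). Given a weak-$*$ continuous functional $\psi$ on $\cL_I$, the functional $\psi\circ\pi$ on $\cL_d$ is weak-$*$ continuous and annihilates $\ker\pi$, hence $\psi\circ\pi=\lel\,\cdot\,x,y\rir$ for some $x,y\in H^2_d$. Writing $x=x_1+x_0$ and $y=y_1+y_0$ with $x_1,y_1\in\cF_I$ and $x_0,y_0\in[I]$, invariance of $[I]$ gives $\lel Tx_0,y_1\rir=0$ for $T\in\cL_d$, so $\lel Tx,y\rir=\lel Tx_1,y_1\rir+\lel Tx,y_0\rir$ on $\cL_d$; and since $Tx_1\in[I]$ whenever $T\in\ker\pi$, the functional $T\mapsto\lel Tx_1,y_1\rir$ also annihilates $\ker\pi$, whence so does $T\mapsto\lel Tx,y_0\rir$.

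It remains to check that this last functional vanishes on all of $\cL_d$, not merely on $\ker\pi$. Here one uses that $\ker\pi$ is a two-sided ideal whose range is exactly $[I]$ (e.g.\ $M_p\in\ker\pi$ for every polynomial $p\in I$) and that the vacuum $\Omega=\nu_0$ lies in $\cF_I$ and is a generator of $[I]$ under $\ker\pi$, since $\overline{\ker\pi\cdot\Omega}\supseteq\overline{I}=[I]$. Granting this, $\psi\circ\pi=\lel\,\cdot\,x_1,y_1\rir$ on $\cL_d$, so $\psi(S)=\lel Sx_1,y_1\rir$ for $S\in\cL_I$ with $x_1,y_1\in\cF_I$; thus $\cL_I$ has property $\mb{A}_1$, and by the first paragraph the \wot\ and weak-$*$ topologies on $\cL_I$ coincide.

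The main obstacle is precisely this last replacement step --- showing that a weak-$*$ continuous functional on the quotient $\cL_I$ admits representing vectors lying in $\cF_I$. A cleaner alternative is to quote the known fact that a compression of an algebra with property $\mb{A}_1$ to a co-invariant subspace again has property $\mb{A}_1$, and apply it to $\pi$. Finally, I note that for the automatic-continuity results that follow, only the agreement of the \wot\ and weak-$*$ topologies on \emph{bounded} subsets of $\cL_I$ is actually needed, and that half is immediate, since \wot\ and the ultraweak topology already agree on bounded subsets of $B(\cF_I)$.
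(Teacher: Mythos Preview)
Your overall strategy matches the paper's: reduce the claim to property $\mb{A}_1$ for $\cL_I$. The paper's proof is a one-line citation --- by \cite[Proposition~1.2]{AriasPopescu} (see also \cite[Theorem~5.2]{DavHam}), $\cL_I$ has property $\mb{A}_1(1)$, and the conclusion follows immediately. The difference is that you attempt to \emph{derive} $\mb{A}_1$ for $\cL_I$ from $\mb{A}_1$ for $\cL_d$ by projecting the representing vectors, rather than citing the result directly.

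That derivation has a genuine gap, which you correctly flag but do not close. Knowing that $T\mapsto\lel Tx,y_0\rir$ annihilates $\ker\pi$ does not make it zero on $\cL_d$; it merely says this functional descends to some weak-$*$ continuous functional on $\cL_I$, and there is no reason it should be the zero functional. Concretely, with respect to $H^2_d=\cF_I\oplus[I]$ each $T\in\cL_d$ is lower triangular, and the ``remainder'' $\lel Tx,y_0\rir$ picks up the off-diagonal block $P_{[I]}T|_{\cF_I}$, which is not determined by $\pi(T)$ alone. So the projected vectors $x_1,y_1$ need not represent $\psi$. More to the point, inheritance of $\mb{A}_1$ under compression to a co-invariant subspace is \emph{not} a general fact for \wot-closed algebras; what makes it true here is the complete Nevanlinna--Pick property of the Drury--Arveson kernel (equivalently, commutant lifting for $\cL_d$), and this is exactly what the cited references use. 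Your ``cleaner alternative'' is therefore the right move, but it should be a citation of the specific $\mb{A}_1(1)$ result for $\cL_I$, not an appeal to an unstated general principle.

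Your closing observation is correct and worth keeping: for the subsequent automatic-continuity argument one only needs the two topologies to agree on bounded sets, and that is immediate since the ultraweak and weak-operator topologies already coincide on bounded subsets of $B(\cF_I)$.
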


\begin{proof}
By \cite[Proposition 1.2]{AriasPopescu} (see also \cite[Theorem 5.2]{DavHam}), 
$\cL_I$ has property $\mb{A}_1(1)$. 
This means that for every $\rho$ in the open unit ball of  $(\cL_I)_*$, 
there are $x,y \in \cF_I$ with $\|x\| \|y\| < 1$ such that
\bes
\rho(T) = \lel T x, y \rir \, \, ,  \,\, T \in \cL_I .
\ees
The conclusion immediately follows from this.
\end{proof}

To avoid confusion, in the next two results we will distinguish between a 
function $f$ on $Z^o(I)$ and the multiplication operator $M_f$ on $\cF_I$ 
that it gives rise to.

%%%%%%%%%%%%%%%%%%%%%%%%
\begin{lemma}\label{lem:wot_pointwise}
A bounded net $\{M_{f_n}\}$ in $\cL_I$ converges in the weak-operator 
topology to $M_f$ if and only if for all $z \in Z^o(I)$, $f_n(z) \rightarrow f(z)$. 
\end{lemma}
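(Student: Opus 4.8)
The plan is to reduce everything to the reproducing kernel picture of $\cL_I = \Mult(\cF_I)$ established in Proposition~\ref{prop:mult}. First I would recall the two ingredients that make the argument work: the kernel functions $\{\nu_\lambda : \lambda \in Z^o(I)\}$ span a dense subspace $D$ of $\cF_I$ (Lemma~\ref{lem:rad}), and every adjoint multiplier $M_g^* \in \cL_I$ has these as eigenvectors, $M_g^*\nu_\lambda = \overline{g(\lambda)}\,\nu_\lambda$. Since $\|\nu_\lambda\| = 1$, this yields the two identities I will use repeatedly:
\[
 \lel M_g\nu_\lambda,\nu_\lambda\rir = g(\lambda) \qquad\text{and}\qquad \lel M_g\nu_\mu,\nu_\lambda\rir = \lel \nu_\mu, M_g^*\nu_\lambda\rir = g(\lambda)\,\lel\nu_\mu,\nu_\lambda\rir .
\]

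For the ``only if'' direction I would simply test the weak-operator convergence $M_{f_n}\to M_f$ against the single vector $\nu_\lambda \in \cF_I$, for each $\lambda \in Z^o(I)$. This gives $f_n(\lambda) = \lel M_{f_n}\nu_\lambda,\nu_\lambda\rir \to \lel M_f\nu_\lambda,\nu_\lambda\rir = f(\lambda)$, which is exactly pointwise convergence on $Z^o(I)$; no boundedness hypothesis enters here.

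For the ``if'' direction I would start from $\sup_n\|M_{f_n}\| =: C < \infty$ together with $f_n \to f$ pointwise on $Z^o(I)$, and use the second identity above to get, for all $\lambda,\mu \in Z^o(I)$,
\[
 \lel M_{f_n}\nu_\mu,\nu_\lambda\rir = f_n(\lambda)\,\lel\nu_\mu,\nu_\lambda\rir \longrightarrow f(\lambda)\,\lel\nu_\mu,\nu_\lambda\rir = \lel M_f\nu_\mu,\nu_\lambda\rir .
\]
Hence $\lel M_{f_n}x,y\rir \to \lel M_f x,y\rir$ for all $x,y$ in the dense span $D$. Then a routine $\varepsilon/3$ approximation argument extends this to arbitrary $x,y \in \cF_I$: given $\varepsilon>0$, choose $x',y' \in D$ with $\|x-x'\|$ and $\|y-y'\|$ small, split $\lel M_{f_n}x,y\rir - \lel M_f x,y\rir$ into a term involving $x',y'$ plus two remainder terms, bound the latter uniformly in $n$ by a constant multiple of $(C+\|M_f\|)$ times the approximation errors, and apply the convergence on $D$ to the middle term. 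This gives $M_{f_n}\to M_f$ in the weak-operator topology.

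The proof is essentially formal, so I do not expect a serious obstacle; the only point that deserves care is the role of the norm-boundedness hypothesis on $\{M_{f_n}\}$. It is exactly what permits the passage from convergence on the total set of kernel functions to convergence against arbitrary vectors of $\cF_I$, and the statement genuinely fails without it.
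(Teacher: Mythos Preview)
Your proof is correct and follows essentially the same route as the paper: test against the kernel vectors $\nu_\lambda$ using the eigenvector property $M_g^*\nu_\lambda=\overline{g(\lambda)}\nu_\lambda$ for one direction, and for the converse combine this with the density of $\spn\{\nu_\lambda:\lambda\in Z^o(I)\}$ in $\cF_I$ and the uniform norm bound. The only cosmetic difference is that the paper's proof computes with the unnormalized kernels (so a factor $(1-\|z\|^2)^{-1}$ or $(1-\lel\mu,\lambda\rir)^{-1}$ appears and then cancels), whereas you use the normalized $\nu_\lambda$ from equation~(\ref{eq:kernel}); this is immaterial.
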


\begin{proof}
If $M_{f_n} \xrightarrow{\wot} M_f$, then for all $z \in Z^o(I)$, 
\bes
 \frac{f_n(z)}{1-\|z\|^2} =   \lel  \nu_z,  \overline{f_n(z)} \nu_z \rir =   
 \lel M_{f_n} \nu_z,  \nu_z \rir  \rightarrow \lel M_f \nu_z,  \nu_z \rir = 
 \frac{f(z)}{1-\|z\|^2}.
\ees
Conversely, suppose $\{M_{f_n}\} \subset \cL_I$ is a bounded net 
such that $\{f_n\}$ converges pointwise to $f$. 
Since $\{M_{f_n}\}$ is bounded, it suffices to show that 
$\lel M_{f_n} \nu_\lambda,  \nu_\mu \rir  \rightarrow \lel M_f \nu_\lambda,  \nu_\mu \rir$ 
for all $\lambda, \mu \in Z^o(I)$, because 
$\spn\{v_\lambda : \lambda \in Z^o(I)\}$ is dense in $\cF_I$. But
\bes
\lel M_{f_n} \nu_\lambda,  \nu_\mu \rir  =  
\frac{f_n(\mu)}{1-\lel \mu, \lambda \rir} \rightarrow \frac{f(\mu)}{1-\lel \mu, \lambda \rir} = 
\lel M_f \nu_\lambda,  \nu_\mu \rir . 
\qedhere
\ees
\end{proof}

%%%%%%%%%%%%%%%%%%%%%%%%
\begin{theorem}
Let $I,J \subseteq \mb{C}[z]$ be radical homogeneous ideals. 
If $\varphi: \cL_I \rightarrow \cL_J$ is an isomorphism, then 
$\varphi$ is continuous with respect to the weak-operator and the weak-$*$ topologies.
\end{theorem}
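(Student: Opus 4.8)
The plan is to assemble the statement directly from the structural lemmas already proved in this subsection together with one soft functional-analytic reduction. Recall that $\varphi$ is automatically bounded by Lemma~\ref{lem:iso_is_cont}; by Lemma~\ref{lem:interior_isomorphism} and Lemma~\ref{lem:algiso_biholo} it is a composition operator, $\varphi(f) = f\circ F$, where $F:\mb{B}_d\to\mb{C}^d$ is holomorphic with components in $\Mult(H^2_d)$ and $F$ carries $Z^o(J)$ onto $Z^o(I)$; by Lemma~\ref{lem:topologies_coincide} the weak-operator and weak-$*$ topologies agree on both $\cL_I$ and $\cL_J$; and by Lemma~\ref{lem:wot_pointwise} a \emph{bounded} net of multipliers in one of these algebras converges in the weak-operator topology exactly when it converges pointwise on the interior variety. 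In view of the coincidence of topologies, it suffices to prove that $\varphi$ is weak-$*$ continuous, and by the Krein--Smulian theorem this reduces to proving that the restriction of $\varphi$ to each closed ball of $\cL_I$ is weak-$*$ continuous. Since $\cF_I$ is separable, $(\cL_I)_*$ is separable and bounded balls of $\cL_I$ are weak-$*$ metrizable, so it is enough to check sequential continuity there.

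So I would take a bounded sequence $\{M_{g_n}\}$ in $\cL_I$ with $M_{g_n}\to M_g$ in the weak-$*$ topology, equivalently (Lemma~\ref{lem:topologies_coincide}) in the weak-operator topology. By Lemma~\ref{lem:wot_pointwise}, $g_n(z)\to g(z)$ for every $z\in Z^o(I)$. Applying $\varphi$, we get $\varphi(M_{g_n}) = M_{g_n\circ F}$, and this sequence is bounded because $\varphi$ is a bounded map. For each $w\in Z^o(J)$ we have $F(w)\in Z^o(I)$, hence $(g_n\circ F)(w) = g_n(F(w))\to g(F(w)) = (g\circ F)(w)$; that is, $\{g_n\circ F\}$ converges pointwise on $Z^o(J)$ to $g\circ F$. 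A second application of Lemma~\ref{lem:wot_pointwise} yields $\varphi(M_{g_n})\to M_{g\circ F} = \varphi(M_g)$ in the weak-operator topology, hence in the weak-$*$ topology. This establishes weak-$*$ sequential continuity on bounded balls, so $\varphi$ is weak-$*$ continuous; applying Lemma~\ref{lem:topologies_coincide} once more, $\varphi$ is also continuous for the weak-operator topologies, which completes the proof.

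The main obstacle is not any of the individual steps above, each of which is a routine invocation of a lemma, but rather the functional-analytic reduction in the first paragraph: one has to argue carefully that weak-$*$ continuity of the linear map $\varphi$ is equivalent to its weak-$*$ continuity on bounded sets (via Krein--Smulian, testing against vector functionals from the predual), and then exploit separability of $\cF_I$ to pass from nets to sequences so that Lemma~\ref{lem:wot_pointwise} can be applied. Once this scaffolding is in place, the composition-operator description of $\varphi$ transports pointwise convergence on $Z^o(I)$ to pointwise convergence on $Z^o(J)$, boundedness of $\varphi$ keeps the image sequence bounded, and Lemma~\ref{lem:wot_pointwise} converts pointwise convergence back and forth with weak-operator convergence of bounded sequences, giving the conclusion.
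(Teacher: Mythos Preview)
Your proof is correct and follows essentially the same route as the paper: reduce via Krein--\v{S}mulian and Lemma~\ref{lem:topologies_coincide} to bounded sets, then use the composition-operator description from Lemma~\ref{lem:algiso_biholo}, boundedness of $\varphi$ from Lemma~\ref{lem:iso_is_cont}, and the pointwise characterization of Lemma~\ref{lem:wot_pointwise} on both sides. The only difference is that you further reduce to sequences via separability and metrizability of bounded balls, which is correct but unnecessary, since Lemma~\ref{lem:wot_pointwise} is stated for nets and the paper simply applies it to a bounded net directly.
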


\begin{proof}
By Lemma \ref{lem:topologies_coincide} together with the Krein-\v{S}mulian 
Theorem (Theorem 7, Section V.5, \cite{DunSch}), it is enough to show 
that $\varphi$ is \wot-continuous on bounded sets.

Let $\{M_{f_n}\}$ be a bounded net in $\cL_I$ converging to 
$M_f$ in the weak-operator topology. 
By Lemma \ref{lem:iso_is_cont}, $\{\varphi(M_{f_n})\}$ is a bounded net in $\cL_J$. 
By Lemma \ref{lem:algiso_biholo}, there is some holomorphic $F$ 
such that $\varphi(M_{g}) = M_{g \circ F}$. 
Therefore, by Lemma \ref{lem:wot_pointwise}, it suffices to 
show that $f_n\circ F$ converges pointwise to $f \circ F$. 
But since $f_n$ converges pointwise to $f$ (by the same lemma), this is evident.
\end{proof}

%%%%%%%%%%%%%%%%%%%%%%%%%%%%%%%%%%%%%%%
% The bibliography
\bibliographystyle{amsplain}

\begin{thebibliography}{9}

\bibitem{AM} J. Agler and J. E. McCarthy, 
\emph{Pick Interpolation and Hilbert Function Spaces}, 
Graduate Studies in Mathematics \textbf{44},  
American Mathematical Society, Providence, 2002.

\bibitem{AriasPopescu} A. Arias and G. Popescu, 
\emph{Noncommutative interpolation and Poisson transforms}, 
Israel J. Math. \textbf{115} (2000), 205--234.

\bibitem{Arv69} W. B. Arveson, 
\emph{Subalgebras of C*-algebras}, 
Acta Math.\ \textbf{123} (1969), 141--224.

\bibitem{Arv72} W. B. Arveson, 
\emph{Subalgebras of C*-algebras II}, 
Acta Math.\ \textbf{128} (1972), 271--308.

\bibitem{Arv98} W. B. Arveson, 
\emph{Subalgebras of C*-algebras III: Multivariable operator theory}, 
Acta Math.\ \textbf{181} (1998), 159--228.

\bibitem{Arv05} W. B. Arveson, 
\emph{$p$-Summable commutators in dimension $d$}, 
J. Operator  Theory \textbf{54} (2005), 101--117.

\bibitem{Arv07} W. B. Arveson, 
\emph{Quotients of standard Hilbert modules}, 
Trans.\ Amer.\ Math.\ Soc.\ \textbf{359}  (2007), 6027--6055.

\bibitem{Bear} H. S. Bear, 
\emph{Lectures on Gleason parts}, 
Lecture Notes in Mathematics, Vol. 121 Springer-Verlag, Berlin-New York 1970.

\bibitem{BhatBhat} B. V. R. Bhat and T. Bhattacharyya, 
\emph{A model theory for $q$-commuting contractive tuples}, 
J. Operator Theory \textbf{47} (2002), 1551--1568.

\bibitem{BhatMukherjee} B. V. R. Bhat and M. Mukherjee, 
\emph{Inclusion systems and amalgamated products products of product systems}, 
preprint (arXiv:0907.0095v2 [math.OA]) to appear in 
Infin.\ Dimens.\ Anal.\ Quantum Probab.\ Relat.\ Top.

\bibitem{BRS} D. Blecher, Z.-J. Ruan and A. Sinclair,
\textit{A characterization of operator algebras},
J. Funct.\ Anal.\ \textbf{89} (1990), 188--201.

\bibitem{Bunce} J. W. Bunce,
\emph{Models for $n$-tuples of noncommuting operators}, 
J. Funct.\ Anal.\  \textbf{57} (1984), 21--30.

\bibitem{ChenGuo} X. M. Chen and K. Y. Guo, 
\emph{Analytic Hilbert Modules}, 
Chapman \& Hall/CRC Research Notes in Math.\ \textbf{433}, 
Chapman \& Hall/CRC, Boca Raton, FL, 2003.

\bibitem{CLO92} D. Cox, J. Little and D. O'shea, 
\emph{Ideals, Varieties, and Algorithms}, 
Springer-Verlag, New York, 1992.

\bibitem{DavHam} K. R. Davidson and R. Hamilton, 
\emph{Nevanlinna-Pick interpolation and factorization of linear functionals}, 
to appear in Integral Equations and Operator Theory, 
arXiv:1008.1090v2 [math.FA].

\bibitem{DavKatPitts} K. R. Davidson, E. Katsoulis and D. Pitts, 
\emph{The structure of free semigroup algebras}, 
J. Reine Angew. Math. \textbf{533} (2001), 99--125.

\bibitem{DavKribsShp} K. R. Davidson, D. W. Kribs and M. E. Shpigel, 
\emph{Isometric dilations of non-commuting finite rank $n$-tuples}, 
Canad. J. Math. \textbf{53} (2001), 506--545. 

\bibitem{DavPittsPick} K.R. Davidson and D. R. Pitts, 
\emph{Nevanlinna-Pick interpolation for non-commutative analytic Toeplitz algebras}, 
Integral Equations Operator Theory \textbf{31} (1998), 321--337. 

\bibitem{DavPitts2} K.R. Davidson and D. R. Pitts, 
\emph{The algebraic structure of non-commutative analytic Toeplitz algebras}, 
Math\ Ann.\ \textbf{311} (1998), 275--303. 

\bibitem{DavPitts1} K.R. Davidson and D. R. Pitts, 
\emph{Invariant subspaces and hyper-reflexivity for free semigroup algebras}, 
Proc.\ London Math.\ Soc.\ (3) \textbf{78}, (1999), 401--430. 

\bibitem{DunSch} N. Dunford and J. T. Schwartz, 
\emph{Linear Operators. I. General Theory}, 
Pure and Applied Mathematics, Vol. 7, Interscience Publishers, 1958.

\bibitem{E10} J. Eschmeier, 
\emph{Essential normality of homogeneous submodules}, 
to appear in Integral Equations and Operator Theory.

\bibitem{Frazho} A. E. Frazho, 
\emph{Complements to models for noncommuting operators}, 
J. Funct.\ Anal.\ \textbf{59} (1984), 445--461.

\bibitem{GR} R. C. Gunning, H. Rossi, 
\emph{Analytic Functions of Several Complex Variables}, 
Prentice-Hall, Inc., Englewood Cliffs, NJ, 1965.

\bibitem{GuoWang} K. Guo and K. Wang, 
\emph{Essentially normal Hilbert modules and $K$-homology}, 
Math.\ Ann.\ \textbf{340} (2008), 907--934.

\bibitem{KRI} R. V. Kadison and J. Ringrose, 
\emph{Fundamentals of the Theory of Operator Algebras, vol. I}, 
Academic Press, New-York, 1982.

\bibitem{Kol} J. Koll\'{a}r, 
\emph{Sharp effective Nullstellensatz},
J. Amer.\ Math.\ Soc.\ \textbf{1} (1988), 963--975.

\bibitem{MS98} P. S. Muhly and B. Solel, 
\emph{Tensor algebras over C*-correspondences: representations, 
dilations, and C*-envelopes}, 
J. Funct.\ Anal.\ \textbf{158} (1998), 389--457.

\bibitem{Mum} D. Mumford, 
\emph{Algebraic Geometry I - Complex Projective Varieties}, 
Springer Verlag, New York, 1976.

\bibitem{Popescu89} G. Popescu, 
\emph{Isometric dilations for infinite sequences of noncommuting operators}, 
Trans.\ Amer.\ Math.\ Soc.\ \textbf{316} (1989), 51--71.

\bibitem{Popescu91} G. Popescu, 
\emph{Von Neumann inequality for $(B(\mathcal{H})^n)_1$}, 
Math.\ Scand.\ \textbf{68} (1991), 292--304.

\bibitem{Popescu06} G. Popescu, 
\emph{Operator theory on noncommutative varieties}, 
Indiana Univ.\ Math.\ J.  \textbf{55}  (2006), 389--442.

\bibitem{Read} C. J. Read, 
\emph{A large weak operator closure for the algebra generated by two isometries}, 
J. Operator Theory \textbf{54} (2005), 305--316.

\bibitem{RudinPoly} W. Rudin, 
\emph{Function Theory in Polydiscs}, 
W.A. Benjamin, New York, 1969.

\bibitem{RudinBall} W. Rudin, 
\emph{Function Theory in the Unit Ball of $\mb{C}^n$}, 
Springer-Verlag, New York, 1980.

\bibitem{Shafarevich} I. R, Shafarevich, 
\emph{Basic Algebraic Geometry 1}, 
Springer-Verlag, New York, 1994.

\bibitem{Sh10} O. M. Shalit, 
\emph{Stable polynomial division and essential normality of graded Hilbert modules}, 
to appear in J. Lond.\ Math.\ Soc., preprint available on arXiv:1003.0502v1 [math.OA]. 

\bibitem{ShalitSolel} O. M. Shalit and B. Solel, 
\emph{Subproduct systems}, 
Doc.\ Math.\ \textbf{14} (2009), 801--868.

\bibitem{SKKT} K.E. Smith, L. Kahanp\"a\"a, P. Kek\"al\"ainen and W. Traves, 
\emph{An Invitation to Algebraic Geometry}, 
Springer, New York, 2000.

\bibitem{Viselter} A. Viselter, 
\emph{Covariant representations of subproduct systems}, 
preprint, 	arXiv:1004.3004v1 [math.OA].

\bibitem{Voic} D. Voiculescu,
\textit{Symmetries of some reduced free product C*-algebras},
Operator algebras and their connections with topology and ergodic
theory (Busteni, 1983),  
Lecture Notes in Math.\ \textbf{1132} (1985), 556--588, 
Springer Verlag, New York, 1985.

\end{thebibliography}

\end{document}